\documentclass[11pt,oneside]{amsart}
\usepackage{amsmath, ifthen, amsfonts, amssymb, srcltx, amsopn, enumerate, amsthm, comment,mathabx,adforn} 

\usepackage[dvipsnames]{xcolor}

\usepackage[colorlinks=true, 
linkcolor=Periwinkle!110, citecolor=Periwinkle!110, linktoc=none]{hyperref}

\usepackage[all]{xy}
\usepackage{pb-diagram, pb-xy}
\usepackage{overpic}
\usepackage{tikzsymbols}
\usepackage{tikz-cd}
\usepackage{glossaries}
\usepackage{fontawesome}
\usepackage{lmodern}
\usepackage{pb-diagram}

\usepackage[T1]{fontenc}
\usepackage{stmaryrd}

\newcommand{\showcommentsbox}{yes}

\newsavebox{\commentbox}
\newenvironment{com}%
{\ifthenelse{\equal{\showcommentsbox}{yes}}%
	{\footnotemark
		\begin{lrbox}{\commentbox}
			\begin{minipage}[t]{1.25in}\raggedright\sffamily\tiny
				\footnotemark[\arabic{footnote}]}
			{\begin{lrbox}{\commentbox}}}%
			{\ifthenelse{\equal{\showcommentsbox}{yes}}%
				{\end{minipage}\end{lrbox}\marginpar{\usebox{\commentbox}}}
		{\end{lrbox}}}

\def\notorth{\:\ensuremath{\reflectbox{\rotatebox[origin=c]{90}{$\nvdash$}}}}

\usepackage[overload]{keytheorems}

\newkeytheoremstyle{linkedrestate}
{
	headformat={%
		\protect\IfRestatingTF
		{%
			\NAME \space \protect\hyperlink{restated #1 #2}{\NUMBER}\NOTE}
		{%
			\protect\hypertarget{restated #1 #2}{\NAME\thmnumber{ #2}}%
			\NOTE}},
}

\newkeytheorem{thm}[name=Theorem,parent=section, style=linkedrestate]
\newkeytheorem{cor}[name=Corollary,sibling=thm, style=linkedrestate]

\newcounter{ax}
\newtheorem{lem}[thm]{Lemma}

\newtheorem{conj}[thm]{Conjecture}

\newtheorem{prop}[thm]{Proposition}

\theoremstyle{definition}
\newtheorem{defn-inner}[thm]{Definition}
\newenvironment{defn}{%
	\def\qedsymbol{\large\adfast{1}}
	\pushQED{\qed}%
	\begin{defn-inner}%
	}{%
		\popQED
	\end{defn-inner}%
}

\newtheorem{exmp-inner}[thm]{Example}

\newtheorem{notation-inner}[thm]{Notation}
\newenvironment{notation}{%
	\def\qedsymbol{\large\adfast{1}}
	\pushQED{\qed}%
	\begin{notation-inner}%
	}{%
		\popQED
	\end{notation-inner}%
}

\newtheorem{claim}{Claim}

\newtheorem{claim*}{Claim}

\newtheorem{cons}[thm]{Construction}

\newtheorem{remark-inner}[thm]{Remark}
\newenvironment{remark}{%
	\def\qedsymbol{\large\adfast{1}}
	\pushQED{\qed}%
	\begin{remark-inner}%
	}{%
		\popQED
	\end{remark-inner}%
}

\DeclareMathOperator{\dimension}{dim}
\DeclareMathOperator{\relevant}{Rel}
\DeclareMathOperator{\Fix}{Fix}
\DeclareMathOperator{\id}{id}

\DeclareMathOperator{\image}{im}
\DeclareMathOperator{\rank}{rk}

\DeclareMathOperator{\Aut}{Aut}
\DeclareMathOperator{\Out}{Out}

\DeclareMathOperator{\stabilizer}{Stab}
\DeclareMathOperator{\diam}{\textup{\textsf{diam}}}

\DeclareMathOperator{\hull}{hull}

\DeclareMathOperator{\Min}{Min}

\newcommand{\neb}{\mathcal N}
\def\MCG{\mathcal{MCG}}

\DeclareMathOperator{\Commensurator}{Comm}
\newcommand{\commensurator}[2]{\Commensurator_{#1}({#2})}

\newcommand{\factor}[2]{{\raise0.7ex\hbox{$#1$} \!\mathord{\left/ {\vphantom {#1 {#2}}}\right.\kern-\nulldelimiterspace}\!\lower0.7ex\hbox{${#2}$}}}

\newcommand{\field}[1]{\mathbb{#1}}
\newcommand{\integers}{\ensuremath{\field{Z}}}

\newcommand{\naturals}{\ensuremath{\field{N}}}
\newcommand{\reals}{\ensuremath{\field{R}}}
\newcommand{\Euclidean}{\ensuremath{\field{E}}}

\newcommand{\boundary}{{\ensuremath \partial}}

\makeatletter

\newcommand{\Rmnum}[1]{\mathbf{{\expandafter\@slowromancap\romannumeral #1@}}}

\makeatother
\DeclareMathOperator{\Isom}{Isom}

\let\oldmarginpar\marginpar
\renewcommand\marginpar[1]{\-\oldmarginpar[\raggedleft\footnotesize #1]{\raggedright\footnotesize #1}}

\newcommand{\tsh}[1]{\left\{\kern-.7ex\left\{#1\right\}\kern-.7ex\right\}}
\newcommand{\Tsh}[2]{\tsh{#2}_{#1}}
\newcommand{\ignore}[2]{\Tsh{#2}{#1}}

\newcounter{enumitemp}

\newcommand{\dist}{d}

\newcommand{\cuco}[1]{{\mathcal #1}}

\newcommand{\fontact}{{\mathcal C}}

\newcommand{\gate}{\mathfrak g}

\usepackage{mathabx}
\newcommand{\propnest}{\sqsubsetneq}

\newcommand{\nest}{\sqsubseteq}

\newcommand{\orth}{\bot}
\newcommand{\transverse}{\pitchfork}

\newcommand{\la}{\langle}
\newcommand{\ra}{\rangle}
\newcommand{\hhscomp}{\chi}
\newcommand{\bigset}{\mathrm{Big}}

\newcommand{\oldcone}[1]{\widehat{#1}}
\newcommand{\newcone}[1]{\widecheck{#1}}
\newcommand{\injhull}{\mathrm{Env}}

\newcommand{\qline}{\Sigma}
\newcommand{\BU}{C'}

\usepackage{mathtools}
\usepackage{cleveref}

\theoremstyle{definition}

\DeclareMathOperator{\support}{supp}
\newcommand{\pcollapse}{{10(EM + E)}}
\newcommand{\Morse}{\textup{\textsf{Morse}}}

\newcounter{tocsubsection}[section]
\newcommand{\newsection}[1]{\hypertarget{#1}{\section{#1}}}

\let\oldsubsection=\subsection
\renewcommand{\subsection}[1]{\stepcounter{tocsubsection} \hypertarget{\thesection.\thetocsubsection}{\oldsubsection{#1}}}

\let\oldtocsection=\tocsection
\let\oldtocsubsection=\tocsubsection
\renewcommand{\tocsection}[3]{\hyperlink{#3}{\large\oldtocsection{#1}{#2}{#3}}\vspace{2pt}}
\renewcommand{\tocsubsection}[2]{\hspace{1.9em}\hyperlink{#1#2}{\oldtocsubsection{#1}{#2}}\vspace{1pt}}

\newenvironment{proofofclaim}[1]{
	\begin{proof}[Proof of Claim~#1] \renewcommand{\qedsymbol}{$\blacksquare$}}
	{\end{proof}}

\title[Periodic quasiflats]{Periodic quasiflats in hierarchically \\ hyperbolic spaces}
\author{Pénélope Azuelos}
\address{School of Mathematics, University of Bristol, Bristol BS8 1UG, UK}
\email{penelope.azuelos@bristol.ac.uk}
\author{Mark Hagen}
\address{School of Mathematics, University of Bristol, Bristol BS8 1UG, UK}
\email{markfhagen@posteo.net}
\date{\today}

\begin{document}
	
	\maketitle 
	
	\begin{abstract}
        We prove a quasiflat closing theorem and a coarse flat torus theorem for hierarchically hyperbolic groups (HHGs). Namely, given an HHG $G$, we prove that $G$ is hyperbolic if and only if it contains no $\mathbb Z^2$ subgroups and, if $A\leq G$ is virtually $\integers^n$, then there is an $A$--invariant $n$--dimensional uniform quality quasiflat $F$ such that any two points in $F$ are joined by a uniform-quality hierarchy path lying in $F$.  The later is a consequence of a more detailed theorem describing a ``coarse minset'' for $A$ in $G$, which has various applications, including an ascending chain condition for virtually abelian subgroups, hierarchical quasiconvexity of highest abelian subgroups, and some geometric control over normalisers, centralisers, and commensurators of abelian subgroups.  We use this to rule out HHG structures for certain Coxeter groups on the basis of their affine subgroups, and to give a new proof that virtually solvable subgroups of HHGs are virtually abelian, which simplifies the original proof by avoiding Gromov's polynomial growth theorem.\end{abstract}

	\tableofcontents
	
	\newsection{Introduction}\label{sec:intro}
	A common feature among spaces with some form of (coarse) non-positive curvature is that virtually abelian subgroups of their isometry groups are often particularly well-behaved. Here are two classical illustrations of this phenomenon. First, assume $X$ is a Gromov-hyperbolic space. If $A$ is a virtually abelian group acting by semisimple isometries on $X$, then there is either an $A$--invariant, uniformly bounded subset of $X$ or an $A$-invariant uniform quality quasiline on which $A$ acts cocompactly. In particular, if the action of $A$ is proper, then $A$ is virtually cyclic with undistorted orbits. 
    Second is the Flat Torus Theorem for CAT(0) spaces, which states (in part): if $X$ is a CAT(0) space and $A$ is a virtually abelian group acting on $X$ by semisimple isometries then there is an $A$--invariant closed, convex flat $F \subseteq X$ on which $A$ acts cocompactly. At this level of generality, this was proven in \cite{BridsonHaefliger:metric}, generalising earlier results about Hadamard manifolds \cite{GromollWolf,LawsonYau}. 

    Among the CAT(0) spaces widely studied in group theory are CAT(0) cube complexes.  Chepoi famously characterised CAT(0) cube complexes as those cubical complexes whose $1$--skeletons are median graphs, and, in fact, if $X$ is a CAT(0) cube complex, then equipping each cube with the $\ell_1$ metric yields a path metric $\dist$ that is median, and extends the median metric on the $0$--skeleton given by Chepoi's theorem \cite{chepoiGraphsCATComplexes2000,mieschInjectiveMetricsCube2014}.  A subspace $Y$ of $X$ that is closed under taking medians is a \emph{median subalgebra} (the terminology reflects the correspondence between CAT(0) cube complexes and discrete median algebras \cite{Roller}).  If the median subalgebra $Y$ is a connected subcomplex, then $Y$ is itself a CAT(0) cube complex, and is \emph{path-isometric} in the sense that any two points in $Y$ are joined by a geodesic of $X$ lying in $Y$ \cite[Lem. 2.11]{HagenPetyt:BBF}.

    Haglund showed in \cite{Haglund:semisimple} that any $g\in\Isom(X,\dist)$ either fixes the barycentre of some cube, or has a $\dist$--geodesic axis $F$ --- i.e. a connected median subalgebra isometric to $\reals$ --- along which it acts as a translation\footnote{One must subdivide to arrange for there to be such an axis in the $1$--skeleton.}.  This can be viewed as a $1$--dimensional flat torus theorem for $(X,\dist)$, since the $\langle g\rangle$--action on $F$ is cocompact.
    The situation becomes more delicate for higher-rank (virtually) abelian subgroups $A\leq \Isom(X,d)$.  On one hand, in \cite{Woodhouse:axis}, Woodhouse generalised Haglund's result by showing that if $A$ is virtually $\integers^n$, and the $A$--action is proper, then (up to subdividing to remove inversions) $A$ stabilises a path-isometric subcomplex $Y\subseteq X$, where $Y$ is cubically isomorphic to $\prod_{i=1}^mC_i$ for $m\geq n$ and $C_1,\ldots,C_m$ a collection of CAT(0) cube complexes quasi-isometric to $\reals$.  Note that Woodhouse's result does not assert that $A$ acts on $Y$ cocompactly, and indeed it may be that no such $Y$ is cocompact.  It is implicit in Woodhouse's proof that $Y$ can be taken to be a median subalgebra.  In fact, Genevois elaborates on Woodhouse's result in \cite{Genevois:axis}, using the language of invariant median subalgebras.  However, any theorem providing an $A$--invariant median subalgebra must differ from the Flat Torus Theorem in one quite serious respect: one cannot in general expect $A$--cocompactness.  

    \begin{figure}
        \centering
       \scalebox{0.7}[0.8]{
				\centering
				\begin{tikzpicture}


					
					\draw[color=black!40,-stealth] (-3,-0.5) -- (3,0.5);
					\draw[color=black!40,-stealth] (0,-3) -- (0,3);
					\draw[color=black!40,-stealth] (-2.2,1.5) -- (2.2,-1.5);
					
					
					\draw[color=black!40] (-3,-0.5) -- (3,0.5);
					\draw[color=black!40] (-5.2,4) -- (0.8,5);
					\draw[color=black!40] (-5.2,1) -- (0.8,2);
					\draw[color=black!40] (-5.2,-2) -- (0.8,-1);
					\draw[color=black!40] (-3,2.5) -- (3,3.5);
					\draw[color=black!40] (-3,-3.5) -- (3,-2.5);
					\draw[color=black!40] (-0.8,1) -- (5.2,2);
					\draw[color=black!40] (-0.8,-5) -- (5.2,-4);
					\draw[color=black!40] (-0.8,-2) -- (5.2,-1);

					\draw[color=black!40] (-3,2.5) -- (-3,-3.5);
					\draw[color=black!40] (-5.2,4) -- (-5.2,-2);
					\draw[color=black!40] (-0.8,1) -- (-0.8,-5);
					\draw[color=black!40] (-2.2,4.5) -- (-2.2,-1.5);
					\draw[color=black!40] (2.2,1.5) -- (2.2,-4.5);
					\draw[color=black!40] (3,3.5) -- (3,-2.5);
					\draw[color=black!40] (0.8,5) -- (0.8,-1);
					\draw[color=black!40] (5.2,2) -- (5.2,-4);

					\draw[color=black!40] (-5.2,4) -- (-0.8,1);
					\draw[color=black!40] (-5.2,1) -- (-0.8,-2);
					\draw[color=black!40] (-5.2,-2) -- (-0.8,-5);
					\draw[color=black!40] (-2.2,4.5) -- (2.2,1.5);
					\draw[color=black!40] (-2.2,-1.5) -- (2.2,-4.5);
					\draw[color=black!40] (0.8,5) -- (5.2,2);
					\draw[color=black!40] (0.8,2) -- (5.2,-1);
					\draw[color=black!40] (0.8,-1) -- (5.2,-4);

					
					\draw[fill=MidnightBlue!70,draw=MidnightBlue!35] (-2.2,4.5) -- (-3,2.5) -- (-0.8,-2) -- (2.2,-4.5) -- (3,-2.5) -- (0.8,2) -- (-2.2,4.5);
					
					
					\draw[color=MidnightBlue!35] (-3+2.2/3,1) -- (-1.2,4.5-2.5/3);
					\draw[color=MidnightBlue!35] (-3+4.4/3,-0.5) -- (-0.2,4.5-5/3);
					\draw[color=MidnightBlue!35] (-0.8,-2) -- (0.8,2);
					\draw[color=MidnightBlue!35] (0.2,-2-2.5/3) -- (0.8+2.2/3,2-4.5/3);
					\draw[color=MidnightBlue!35] (1.2,-2-5/3) -- (0.8+4.4/3,-1);
					
					\draw[color=MidnightBlue!35] (-3+0.8/3,2.5+2/3) -- (-0.8+1,-2-2.5/3);
					\draw[color=MidnightBlue!35] (-3+1.6/3,2.5+4/3) -- (-0.8+2,-2-5/3);
					\draw[color=MidnightBlue!35] (-2.2,4.5) -- (2.2,-4.5);
					\draw[color=MidnightBlue!35] (-2.2+1,4.5-2.5/3) -- (2.2+0.8/3,-4.5+2/3);
					\draw[color=MidnightBlue!35] (-2.2+2,4.5-5/3) -- (2.2+1.6/3,-4.5+4/3);
					
					\draw[color=MidnightBlue!35] (-2.2-0.8/3,4.5-2/3) -- (0.8+2.2/3,2-4.5/3);
					\draw[color=MidnightBlue!35] (-2.2-1.6/3,4.5-4/3) -- (0.8+4.4/3,2-3);
					\draw[color=MidnightBlue!35] (-3,2.5) -- (3,-2.5);
					\draw[color=MidnightBlue!35] (-3+2.2/3,2.5-4.5/3) -- (3-0.8/3,-2.5-2/3);
					\draw[color=MidnightBlue!35] (-3+4.4/3,2.5-3) -- (3-1.6/3,-2.5-4/3);

					
					\draw[color=black!40] (0,0) -- (0,3);
					\draw[color=black!40] (0,0) -- (3,0.5);
					\draw[color=black!40,-stealth] (0,0) -- (2.2,-1.5);
					\draw[color=black!40] (2.2,1.5) -- (2.2,-4.5);
					\draw[color=black!40] (-0.8,-2) -- (5.2,-1);
					\draw[color=black!40] (-0.8,1) -- (-0.8,-5);
					\draw[color=black!40] (-3,2.5) -- (-0.8,1);
					\draw[color=black!40] (-0.8,1) -- (5.2,2);
					\draw[color=black!40] (-3,2.5) -- (0,3);
					
			\end{tikzpicture}}
	
        \caption{The $(3,3,3)$ Coxeter group acts on the standard tiling $X$ of $\Euclidean^3$ by unit cubes, preserving a $2$--dimensional flat $F$ that crosses every hyperplane.  The smallest median subalgebra of $X$ containing $F$ is the whole of $X$.}
        \label{fig:diagonal-flat}
    \end{figure}

    Indeed, Figure \ref{fig:diagonal-flat} illustrates the case where $A$ is the $(3,3,3)$ Coxeter group, and $X$ is the standard tiling of $\Euclidean^3$ by unit $3$--cubes; the action of $A$ on $X$ comes from applying Sageev's construction to the set of walls in $\Euclidean^2$ provided by the $A$--translates of the three reflection lines corresponding to the generators of $A$.  Applying the CAT(0) flat torus theorem, and using the general fact that CAT(0)-convex subsets of a CAT(0) cube complex are $\dist$--isometric (though not in general $\dist$--convex) yields an $A$--cocompact $\dist$--isometric subspace $F\subseteq X$ which, equipped with the CAT(0) metric, is isometric to $\Euclidean^2$.  Moreover, by perturbing $F$ equivariantly, one can make it into an $A$--cocompact $2$--dimensional $\dist$--isometric subcomplex $Y$, homeomorphic to $\reals^2$.  However, as illustrated in Figure \ref{fig:diagonal-flat}, which shows $F$, no median subalgebra of $X$ is Hausdorff-close to $F$. 

    Nonetheless, under reasonable conditions, one can find $A$--cocompact bilipschitz flats in the median setting by relaxing the median subalgebra requirement, using instead $\dist$--isometricity.  This is a consequence of the CAT(0) result, and the proof is short enough to present here, to motivate the subsequent discussions.

    \begin{prop}\label{prop:ample-invariant-flat}
		Let $(Q,\dist,\mu)$ be a complete, connected, finite-rank median metric space and let $A\leq \Isom(Q)$ be a finitely generated virtually abelian subgroup acting properly, and let $r$ be the rank of $A$.  Then there is a proper cocompact action of $A$ on $\Euclidean^r$ and an $A$--equivariant bilipschitz embedding $f:\Euclidean^r\to Q$ such that any two points in $f(\Euclidean^r)$ are joined by a $\dist$--geodesic lying in $f(\Euclidean^r)$.
	\end{prop}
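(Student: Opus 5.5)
The plan is to reduce to the CAT(0) Flat Torus Theorem, as the surrounding discussion suggests. A complete, connected, finite-rank median metric space $(Q,\dist,\mu)$ admits a canonical isometry-invariant CAT(0) metric, or a CAT(0) cube complex model. For instance, by the Bowditch/Fioravanti structure theory one can pass to a compatible finite-dimensional CAT(0) cubical structure on a subdivision, or use the $\ell_2$-ification of the median metric. This metric $\dist_2$ is bilipschitz to $\dist$, with constants depending only on the rank, and $\Isom(Q,\dist)$ acts on $(Q,\dist_2)$ by isometries. I would first record that the $A$--action on $(Q,\dist_2)$ is proper. It is by semisimple isometries, possibly after passing to a finite-index subgroup, and I will handle the finite index at the end.

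Applying the Flat Torus Theorem \cite{BridsonHaefliger:metric} to the free abelian finite-index subgroup $A_0\cong\integers^r$ gives an $A_0$--invariant closed convex flat $F\cong\Euclidean^r$ in $(Q,\dist_2)$ on which $A_0$ acts cocompactly by translations. To get an $A$--invariant flat, I would use that the union $\Min(A_0)$ splits as $Y\times\Euclidean^r$ and is $A$--invariant, since $A_0$ can be taken normal. The group $A$ then acts on the CAT(0) factor $Y$ through the finite group $A/A_0$, and so fixes a point of $Y$ by the Bruhat--Tits fixed-point theorem. The corresponding slice $F=\{y\}\times\Euclidean^r$ is $A$--invariant, and $A$ acts on it properly and cocompactly. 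Let $f$ be the inclusion. It is bilipschitz with respect to $\dist$ because $\dist_2$ and $\dist$ are bilipschitz and $F$ is $\dist_2$--convex.

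The remaining point is $\dist$--isometricity, and this is the step I expect to be the main obstacle. The needed fact is that $\dist_2$--geodesics in $Q$ are also $\dist$--geodesics. This is the median analogue of the cube-complex fact that CAT(0)-convex sets are $\ell_1$--isometric, which was quoted above. To prove it, I would show that a $\dist_2$--geodesic $\gamma$ is monotone with respect to every wall/halfspace of the median structure, meaning it crosses each wall at most once. Then for $x,y\in\gamma$ the $\dist$--length of $\gamma$ equals the measure of the walls separating $x$ and $y$, which is $\dist(x,y)$. Monotonicity would come from convexity of halfspaces in $\dist_2$: they are $\dist_2$--convex because the $\ell_2$-ification is built from the same wallspace, and a geodesic leaving and re-entering a convex set is impossible in a uniquely geodesic space. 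Given this, any two points of $F$ are joined by the $\dist_2$--geodesic inside the convex set $F$, which is a $\dist$--geodesic in $f(\Euclidean^r)$, as required.

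If the $\ell_2$-ification route is unavailable in the median (non-discrete) generality, I would fall back on approximating by CAT(0) cube complexes. Alternatively, I would use the Bowditch result that finite-rank median spaces are bilipschitz to CAT(0) spaces equivariantly, with halfspaces convex.
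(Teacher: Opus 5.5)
Your architecture is the paper's. You take Bowditch's $\Isom(Q,\dist)$--invariant CAT(0) metric $\sigma$, bilipschitz to $\dist$ \cite{BowditchProperties16}, and apply the Flat Torus Theorem; your fixed-point argument on the factor $Y$ of $\Min(A_0)$ is just the proof of \cite[Cor. II.7.2]{BridsonHaefliger:metric}, which the paper quotes directly. You then use that $\sigma$--geodesics are $\dist$--geodesics.

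The genuine gap is that you simply assert that $A$ acts on $(Q,\sigma)$ by semisimple isometries. Semisimplicity is a hypothesis of the Flat Torus Theorem, and it does not follow from properness plus CAT(0) geometry. For example, a parabolic isometry of $\mathbb H^2$ generates a metrically proper $\integers$--action with no invariant geodesic line, and passing to finite-index subgroups changes nothing. So you need an input that genuinely uses the median structure. The paper gets it from \cite[Cor. A]{Fioravanti:core}, which controls the stable translation lengths of infinite-order isometries of complete finite-rank median spaces. It then transports this to $\sigma$ through the bilipschitz identity map.

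Two smaller points. First, your initial candidates for $\dist_2$ are not available in this generality. A general complete finite-rank median space (e.g.\ an $\reals$--tree with dense branching) is not a subdivision of a CAT(0) cube complex, and outside the cubical setting there is no off-the-shelf ``$\ell_2$-ification''. Bowditch's theorem is therefore the route you must take.

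Second, once you take that route, the step you flagged as the main obstacle is already part of Bowditch's result. The paper cites \cite[Thm. 8.3]{BowditchProperties16}: $\sigma$--geodesics are reparametrised $\dist$--geodesics. Your monotonicity argument is a sensible way to see why this holds, but it has two unproven inputs:
\begin{itemize}
\item $\sigma$--convexity of halfspaces is not automatic from ``the same wallspace''. It would itself have to be extracted from Bowditch's construction, and it is essentially equivalent to the geodesic statement you are trying to prove.
\item You would also need the measured-wall description of $\dist$ to turn wall-monotonicity of the path into additivity of $\dist$ along it.
\end{itemize}
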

	
	\begin{proof}
		By \cite[Thm. 1.1, Thm. 8.3]{BowditchProperties16}, there is a complete CAT(0) metric $\sigma$ on $Q$ and a bilipschitz homeomorphism $h:(Q,\sigma)\to (Q,\dist)$ that sends geodesics to reparameterised geodesics (as a map of sets, $h$ is just the identity).  Moreover, $\Isom(Q,\dist)$ acts on $(Q,\sigma)$ by isometries and $h$ is $\Isom(Q,\dist)$--equivariant.  By, for instance, \cite[Cor. A]{Fioravanti:core}, infinite order elements of $A$ have positive stable translation length on $(Q,\dist)$, so since $h$ is a bilipschitz map, $A$ acts on $(Q,\sigma)$ semisimply.  Hence the Flat Torus Theorem \cite[Cor. II.7.2]{BridsonHaefliger:metric} provides a cocompact $A$--action on $\Euclidean^r$ and an $A$--equivariant isometric embedding $e:\Euclidean^r\to (Q,\sigma)$ with $\sigma$--convex image, and we conclude by taking $f=h\circ e$.  
	\end{proof} 

    \begin{remark}\label{rem:intro-proper}
    We emphasise that $Q$ need not be a locally compact space, and throughout this paper, we rarely make such an assumption.  Hence, wherever we talk about \emph{proper} maps or \emph{proper} actions, we are referring to metric properness.    
    \end{remark}

     When $Q$ is a CAT(0) cube complex, the map $f$ from Proposition \ref{prop:ample-invariant-flat} can be chosen so that $f(\Euclidean^r)$ is a $\dist$--isometric subcomplex of $Q$.  We view Proposition \ref{prop:ample-invariant-flat} as yet another reason to study such subcomplexes of CAT(0) cube complexes, which have recently received renewed attention in the very interesting paper \cite{BandeltChepoiDressKoolen} on so-called \emph{ample} subsets of $Q^{(0)}$, which correspond to $0$--skeletons of $\dist$--isometric subcomplexes.\footnote{A particularly attractive feature of the notion is that it can be characterised in non-metric terms, provided $Q$ is a finite-dimensional CAT(0) cube complex: the subcomplex $Y\subseteq Q$ is $\dist$--isometric if and only if $Y\cap h_1\cap\cdots\cap h_k$ is empty or connected whenever $h_1,\ldots,h_k$ are hyperplanes of $Q$.} In some sense, the present paper is about a ``coarse-ification'' of $\dist$--isometricity in the setting of hierarchically hyperbolic spaces. \\
	
	Hierarchically hyperbolic spaces (HHSs) were introduced in \cite{BehrstockHierarchically17,BehrstockHierarchically19} to capture the similarities in the large-scale structure of mapping class groups and compact special groups. These spaces are, in essence, highly organised examples of coarse median spaces, in the sense of Bowditch \cite{BowditchCoarse13}. The class of HHSs includes all hyperbolic spaces and is closed under taking products and under relative hyperbolicity (i.e. if $X$ is hyperbolic relative to HHS subspaces then $X$ is an HHS) \cite[Prop. 8.27, Thm. 9.3]{BehrstockHierarchically19}, but one can weaken the bounded penetration property of relative hyperbolicity to allow for the peripheral subspaces to interact in a coarse product structure, allowing the class to accommodate, for example, all right-angled Artin groups. See \cite{BehrstockHierarchically19} for the formal definition in its modern form. 
    
    The fact that HHSs are coarse median spaces is witnessed by the (strictly stronger) cubical approximation theorem \cite[Thm. 2.1]{BehrstockQuasiflats21}, which states: for any $n$ points $\{x_1, \dots, x_n\}$ in an HHS, there is a coarsely median-preserving quasi-isometric embedding, with constants depending only on the HHS structure and $n$, from a CAT(0) cube complex to the hull (see Subsection \ref{subsec: hulls}) of $\{x_1, \dots, x_n\}$, such that each $x_i$ is in the image of the 0-skeleton. In a large subclass of HHSs, this local property becomes global: Petyt showed that any colourable hierarchically hyperbolic group (HHG) admits a quasi-isometry to a CAT(0) cube complex \cite{Petyt:mapping}.
	
	Geodesics in HHSs are not in general closely related to the HHS structure.
	Instead, one usually works with hierarchy paths, which are  quasigeodesics that do respect the HHS structure (see Definition \ref{defn:hier-path}). 
	Hierarchical quasiconvexity is defined in terms of these paths: a subspace $Y$ of an HHS $X$ is hierarchically quasiconvex (HQC) if every hierarchy path between points in $Y$ is close to $Y$ (of course, this should be quantified appropriately; see Definition \ref{defn:HQC}). 
	Hierarchy paths are analogues of $\ell^1$-geodesics in CAT(0) cube complexes, so hierarchical quasiconvexity is the analogue of median convexity in CAT(0) cube complexes. In light of the preceding discussion, a flat torus theorem for HHSs will need to involve a notion of convexity which imitates CAT(0) convexity, or $\ell_1$--metric isometricity, instead. 
    
    For this purpose, we introduce the following notion: a subspace $Y$ of an HHS $X$ is \textit{existentially hierarchically quasiconvex (EHQC)} if, for all $x,y \in Y$ there is a hierarchy path in a uniform neighbourhood of $Y$ which joins $x$ to $y$ (again, this should be parametrised correctly; see Definition \ref{defn:EHQC}). 
	As in the results about CAT(0) and hyperbolic spaces, we also need a semisimplicity assumption, provided by Definition \ref{defn:hierarchically-semisimple}.

    A simplified version of our ``HHS flat torus theorem'' is the following, which answers the special cases of Questions 6.1 and 6.2 in \cite{abbottUniformUndistortionBarycentres2025} concerning HHGs:
	
	\begin{thm}[Theorem \ref{thm:HHS-semisimple-coarse-minset}.\eqref{item:main-one-cocompact-quasiflat-1}]\label{thm:intro-main}
        Let $n \in \mathbb N$ and let $A$ be a virtually $\mathbb Z^n$ group. Suppose $A$ is a subgroup of an HHG $(X, \mathfrak S)$
		--- or, more generally, that $A$ acts properly and hierarchically semisimply by HHS automorphisms on an HHS $(X,\mathfrak S)$. Then there is a proper cocompact action of $A$ on the Euclidean space $\mathbb E^n$, an $A$--invariant EHQC subspace $F \subseteq X$, and an $A$--equivariant quasi-isometry $F \rightarrow \mathbb E^n$. Moreover, the EHQC and quasi-isometry constants depend only on $X$.
	\end{thm}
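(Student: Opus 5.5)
The strategy is to build, inside $X$, a ``coarse minset'' for $A$ that is a median-like object to which Proposition~\ref{prop:ample-invariant-flat} applies, and then push the resulting flat back into $X$.

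\textbf{Step 1: relevant domains and their projections.} Since $A$ acts hierarchically semisimply and properly, each infinite-order element $a\in A$ is either elliptic or loxodromic on each $\fontact U$. Let $\relevant(A)$ be the set of domains $U\in\mathfrak S$ on which some element of $A$ (after passing to a finite-index subgroup $A'\cong\integers^n$) acts loxodromically. The first thing to establish is that $\relevant(A)$ consists of finitely many $A'$--orbits of pairwise orthogonal domains, and finitely many orbits suffice to control $A'$. This uses that commuting elements preserve each other's big sets and that the big set of an element is a pairwise-orthogonal family, so the number of orbits is bounded by the complexity. In each such $U$, the group $A'$ preserves a uniform quasiline $L_U\subseteq\fontact U$, by the hyperbolic case recalled in the introduction. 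On each non-relevant domain, orbits have bounded projection, uniformly by semisimplicity.

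\textbf{Step 2: the coarse minset.} Define $M\subseteq X$ as the set of points whose projection to every $U\in\relevant(A)$ lies uniformly close to $L_U$, and whose projection to every non-relevant domain is uniformly close to the (coarsely $A$--invariant) bounded orbit projection. I would realise $M$ via the realisation theorem applied to consistent tuples built from the quasilines. I would then show that $M$ is $A$--invariant, hierarchically quasiconvex, and that $A$ acts on it coboundedly. Coboundedness is where properness and the bounded-orbit condition enter, via the distance formula. Since the hierarchical structure induced on $M$ has only the quasilines $L_U$ as unbounded coordinates, $M$ is quasi-median quasi-isometric to a finite-rank median space. Concretely, I would use the cubical approximation theorem \cite[Thm. 2.1]{BehrstockQuasiflats21}, or Petyt-type cubulation of the orthogonal quasiline structure, to get an $A$--equivariant quasi-median quasi-isometry $M\to Q$ with $Q$ a finite-rank median space, up to quasi-action issues.

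\textbf{Step 3: flats.} Apply Proposition~\ref{prop:ample-invariant-flat} to $A$ acting properly on $Q$. This gives an equivariant bilipschitz flat $f(\Euclidean^n)\subseteq Q$ whose points are joined by $\dist$--geodesics inside it. Pull this flat back to $F\subseteq M$. Geodesics in the median space $Q$ are median-monotone, i.e.\ they cross walls monotonically, so their images are monotone in each $L_U$ and hence are uniform hierarchy paths. This yields EHQC, and the quasi-isometry $F\to\Euclidean^n$ follows from the construction.

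\textbf{Expected obstacle.} The main difficulty is Step 2 with genuinely equivariant and uniform control. Two issues arise. First, $A$ only coarsely preserves the data, so it quasi-acts rather than acts on a median model, and Proposition~\ref{prop:ample-invariant-flat} needs an honest isometric action. Second, all constants must depend only on $X$. To handle the first, I would try to make $Q$ a genuine product of $A$--equivariant trees or lines, for instance by replacing each quasiline orbit family by an $\reals$--tree built via Bestvina--Bromberg--Fujiwara style projection complexes on which $A$ acts isometrically. I would then take $Q$ to be the product over finitely many orbits, which is an honest finite-rank median space with an isometric $A$--action.
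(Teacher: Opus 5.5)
Your outline has the same skeleton as the paper's argument: Petyt--Spriano domains, invariant quasilines, an equivariant median model, Bowditch's CAT(0) metric with the flat torus theorem, and median geodesics pulled back as hierarchy paths. However, the step you flag as the obstacle is where the actual content lies, and as written the proposal does not give constants depending only on $X$.

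First, the claim that orbits on non-relevant domains have bounded projection ``uniformly by semisimplicity'' is false. Semisimplicity gives boundedness of $\pi_W(A\cdot x)$, but the bound depends on $A$ and on the basepoint $x$. For instance, if the finite part of $A$ rotates a factor orthogonal to the $U_i$, then $\pi_W(A\cdot x)$ is as large as you like for $x$ far from the rotation centre. Any $A$--invariant flat through such an $x$ then has quasi-isometry and EHQC constants that grow with that distance. The paper fixes this by coning off everything nested in the $U_i$, giving $\newcone P$, on which $A$ has bounded orbits. It then uses coarse injectivity of HHSs and Lang's fixed point theorem in the injective hull to find points whose $A$--orbit in $\newcone P$ has uniformly bounded diameter (Proposition \ref{prop:bounded-orbits}). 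The resulting set $X(A)$ is a uniform quasi-product of this coarse fixed set with the hull of $p^\pm$ (Proposition \ref{prop: almost invariant hull}). Nothing in your proposal plays this role.

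Second, the same non-uniformity appears in the nested directions. For $W\propnest U_i$, $\pi_W$ of the hull of $p^\pm$ is bounded but not uniformly so; think of subsurface projections of a pseudo-Anosov axis to proper subsurfaces. So $M$ is not uniformly quasi-isometric to $\prod_i L_{U_i}$, and a median model recording only the quasilines (a product of lines or trees, one per orbit) has constants depending on $A$. Likewise, monotonicity in the $L_U$ coordinates does not give uniform hierarchy paths, which need monotonicity in every $\mathcal CW$. Cubical approximation is for finite sets and is not equivariant, and the BBF suggestion is too vague to produce an isometric action with a uniform quasi-median quasi-isometry.

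The missing ingredient is Theorem \ref{thm: median hulls}. On the hull of two boundary points every $\mathcal CU$ is uniformly a quasi-interval. Lemma \ref{lem:quasi-line-action}, via homogenised quasimorphisms and virtual abelianness, turns the stabiliser actions into honest isometric actions on intervals $T_U$ for all $U$ of large diameter. Then $Q$ is the set of consistent tuples in $\prod_U\widehat T_U$ after collapsing clusters, which carries a genuine isometric $A$--action and a uniformly quasi-median equivariant quasi-isometry.

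Finally, your claim that $M$ is hierarchically quasiconvex with cobounded $A$--action is false in general. Take $A=\langle(1,1)\rangle$ in $\integers^2$ with the product structure: both factors are relevant, so $M$ is all of $\integers^2$. This does not break Proposition \ref{prop:ample-invariant-flat}, which needs no coboundedness, but it is why only EHQC, not HQC, is achievable.
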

	
	We prove a more detailed statement in Theorem \ref{thm:HHS-semisimple-coarse-minset}. In particular, we describe the coarse structure of the union of a natural family of $A$-invariant, uniform quality EHQC quasiflats, proving that this subspace is $A$-equivariantly quasi-isometric to a product $Y \times C \times \mathbb E^n$, where $Y$ is an injective space with a trivial $A$-action and $C$ is a CAT(0) space with a trivial $A$-action.  In the case where $X$ is the Cayley graph of an HHG $G$ and $A\leq G$, this subspace can be taken to be the normaliser of $A$. Without the hierarchical semisimplicity assumption, the theorem fails, as is already evident in the case where $X$ is hyperbolic and $A$ is generated by a single parabolic isometry. 
	
	In \cite[Thm. 3.15]{FournierMangioniSist:extensions}, Fournier-Facio, Mangioni and Sisto prove that, if $G$ is a hierarchically hyperbolic group (HHG), $K$ is finitely generated and $K \hookrightarrow G \twoheadrightarrow E$ is a central extension, then the corresponding element of $H^2(E,K)$ is represented by a bounded cocycle. We generalise this to the setting of hierarchically semisimple actions on HHSs (in particular, the following theorem applies to all subgroups of HHGs).

	\getkeytheorem{central extensions}

	Gersten showed \cite{GerstenBounded92} that if $K \hookrightarrow G \twoheadrightarrow E$ is a central extension represented by a bounded cocycle, then $G$ is naturally quasi-isometric to the direct product $K \times E$, but the converse to this false in general \cite{FrigerioSisto}, even for CAT(0) groups \cite[Thm. 1.1]{AscariMilizia}.
	
	\begin{remark}[Virtual direct products]
		The CAT(0) Flat Torus Theorem includes a stronger version of Theorem \ref{thm:B-central-extension} in the free abelian case: suppose $B$ is a finitely generated group acting by isometries on a CAT(0) space $X$ and suppose $A \unlhd B$ is a normal subgroup whose action on $X$ is proper and by semisimple isometries. Then there is a finite index subgroup $B' \leq B$ which contains $A$ as a direct factor \cite[Thm. II.7.1.(5)]{BridsonHaefliger:metric}.  

        The corresponding statement for actions on HHSs cannot hold, as we now illustrate using mapping class groups.  Let $\Sigma$ be a connected orientable surface of finite type, and suppose that $\Sigma$ is either closed with genus $\geq 3$, or has genus $\geq 2$ and at least one boundary component or two punctures.  Then the mapping class group $\MCG(\Sigma)$ contains finitely generated subgroups that split as central extensions but are not virtually direct products (see \cite[Thm. II.7.26]{BridsonHaefliger:metric}). Since mapping class groups are HHGs, and in particular act properly and hierarchically semisimply by HHS automorphisms on HHSs, this implies that Theorem \ref{thm:B-central-extension} is optimal in the sense that the conclusion cannot be improved to give virtually split extensions as in the CAT(0) case.
	\end{remark}
	
	As in the CAT(0) case, our version of the flat torus theorem has many applications. To begin with, HHGs (in fact, a more general class of groups) satisfy the ascending chain condition for virtually abelian subgroups:
	
	\getkeytheorem{cor:ascending chains}
	
	Given $n \in \mathbb N$, a virtually $\mathbb Z^n$ subgroup $A \leq G$ of a group $G$ is \textit{highest} if no finite index subgroup of $A$ is contained in a subgroup of $G$ isomorphic to $\mathbb Z^{n+1}$.

    In \cite{DurhamBoundaries17} it is stated that, if $G$ is a hierarchically hyperbolic group, then any $H\leq G$ either contains $F_2$ or is virtually abelian; the proof given in \cite{DurhamBoundaries17} had a gap which is corrected in \cite{DurhamCorrection20} assuming finite generation of $H$; Corollary \ref{cor: ascending chain} allows one to remove the finite generation hypothesis (apply the result from \cite{DurhamCorrection20} to finitely generated subgroups $H_n\leq H\leq G$ with $\bigcup_nH_n=H$).  Hence the version of the Tits alternative stated in \cite[Thm. 9.15]{DurhamBoundaries17} holds as stated. 

    Relatedly, we give a new proof that virtually solvable subgroups of HHGs are virtually abelian which, unlike that in \cite{DurhamBoundaries17,DurhamCorrection20}, avoids Gromov's polynomial growth theorem:
    
    \getkeytheorem{cor:solvable}

    Recall that, in the CAT(0) cubical setting, abelian subgroups of cubulated groups may not be convex-cocompact, but Wise--Woodhouse showed \cite{WoodhouseWise:cubical} that, if $G$ acts geometrically on a CAT(0) cube complex $X$, and $A\leq G$ is a highest virtually abelian subgroup, then there is a convex, $A$--cocompact subcomplex $Y\subseteq X$.  In exact analogy, we show:

    \getkeytheorem{cor:highest-abelian}

    In the important case where $G$ is the mapping class group of a finite-type hyperbolic surface, abelian subgroups were well-understood even before the introduction of curve graphs, subsurface projections, and the ``hierarchical'' viewpoint.  For example, Corollary \ref{cor:solvable}, and the upper bound on rank for free abelian subgroups, was established in \cite{BirmanLubotzkyMcCarthy} using Thurston's classification of mapping classes, systems of reducing curves, etc.  Although the notions of HQC and EHQC subspaces/subgroups cannot be defined without first introducing some ``hierarchical'' ideas --- enough to be able to talk about coarse medians, for instance --- the extra input of Thurston theory seems to simplify proofs like that of, say, Corollary \ref{cor:highest-abelian} in the mapping class group case.  However, as pointed out to us by Jason Behrstock, our results recover a useful information about virtually abelian subgroups in the mapping class group case without recourse to the powerful Thurston classification.\footnote{There is a limited analogue of the Thurston classification for general HHGs, which does play an important role in this paper, that goes back to the classification of HHS automorphisms/``coarse rank rigidity'' in \cite{DurhamBoundaries17} and has its most modern form in \cite[Thm. 5.1]{PetytUnbounded23}.  However, this is a statement about an action on a hierarchical structure, and doesn't rely on anything specific to surfaces, or even directly imply the Thurston classification in the MCG case.}

    Corollary \ref{cor:highest-abelian} relies on a stronger result, Corollary \ref{cor:normaliser}, showing that the normaliser $N_G(A)$ is EHQC in $G$ and that there is a finite-index subgroup $A_1\leq A$ whose centraliser $C_G(A_1)$ is hierarchically quasiconvex (which, recall, is stronger than EHQC).  Corollary \ref{cor:highest-abelian} combines with results in \cite{HaettelCoarse23,hodaCrystallographicHellyGroups2023} to limit the possible point groups of highest virtually abelian subgroups of HHGs, which we make precise in Corollary \ref{cor:abelian-subgroups-octahedral}.  In Corollary \ref{cor:coxeter-group}, we apply this to Coxeter groups; specifically, we rule out HHG structures on Coxeter groups containing certain ``poison'' affine subgroups.  We suspect that Coxeter groups without ``poison'' affine subgroups are HHGs, and that this is closely related to the question of cocompact cubulation.  It is not currently known exactly which Coxeter groups are cocompactly cubulated, although progress on a characterisation is provided by \cite{NibloReeves:coxeter,Williams:coxeter,FioravantiLevcovitzSageev,HaglundWise:coxeter}.  In view of Remark \ref{rem:crystallographic-obstruction}, Corollary \ref{cor:coxeter-group}, and existing results, we conjecture the following.

\begin{conj}\label{conj:coxeter}
	A Coxeter group is an HHG if and only if it is cocompactly cubulated.
\end{conj}

    Corollary \ref{cor:normaliser} and Lemma \ref{lem: HHS structure on HQC subspaces} combine to yield another interesting consequence:

    \begin{cor}\label{cor:intro-independence}
    Let $(G,\mathfrak S)$ be a hierarchically hyperbolic group.  Then there exists $N<\infty$, depending on the group $G$ but not the HHG structure, such that the following holds.  Let $A\leq G$ be a virtually abelian subgroup.  Then there is a free abelian subgroup $A_1\leq A$ such that $[A:A_1]\leq N$ and $C_G(A_1)$ is hierarchically quasiconvex with respect to any HHG structure on $G$, and in particular, $C_G(A_1)$ admits an HHS structure where the left multiplication action of $C_G(A_1)$ on itself is an action by HHS automorphisms.
    \end{cor}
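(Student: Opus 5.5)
The plan is to derive the statement from Corollary \ref{cor:normaliser} and Lemma \ref{lem: HHS structure on HQC subspaces}. The one genuinely new point is independence from the HHG structure, and I expect that to be the main obstacle. Corollary \ref{cor:normaliser}, applied to a fixed HHG structure $\mathfrak S$, gives a finite-index subgroup $A_1\leq A$ with $C_G(A_1)$ hierarchically quasiconvex. However, both $A_1$ and the bound on $[A:A_1]$ may a priori depend on $\mathfrak S$.

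To remove this dependence, I would use a structure-free choice of $A_1$. I expect the $A_1$ produced in the proof of Corollary \ref{cor:normaliser} to be a subgroup acting trivially on the (finitely many, uniformly bounded number of) relevant domains, i.e. one that fixes the finite set of "Big" domains of its elements. The idea is to replace it by something canonical.

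First I would bound the torsion and the finite quotients. In an HHG there is a uniform bound on the orders of finite subgroups, since finite subgroups have uniformly bounded coarse fixed sets and $G$ acts properly and cocompactly. The rank of free abelian subgroups is also bounded, by the rank of $\mathfrak S$ or the growth/asymptotic dimension. Both of these are quasi-isometry invariants of $G$, or at least depend only on $G$.

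Next I would take $A_1 := A^{(m)}$, the subgroup generated by $m$-th powers intersected with a torsion-free finite-index subgroup. Here $m=M!$, where $M$ bounds the orders of finite subgroups of $G$ and of the relevant finite permutation groups (the actions on orthogonal families of domains). Such an $A_1$ is contained in every subgroup $A_1^{\mathfrak S}$ produced by Corollary \ref{cor:normaliser} for every structure $\mathfrak S$. This is because each $A_1^{\mathfrak S}$ has index dividing a bounded factorial determined by quantities intrinsic to $G$: by \cite[Thm. 5.1]{PetytUnbounded23}, some uniform power of every element fixes its Big set, and the uniform power can be taken to depend only on the torsion bound of $G$. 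The key check is then that shrinking $A_1$ further, to a finite-index subgroup $A_2\leq A_1$, does not change the centraliser up to finite Hausdorff distance in the right way. In fact $C_G(A_2)\supseteq C_G(A_1)$. Here I would use the property furnished by Corollary \ref{cor:normaliser}: that $C_G(A_1)$ is (coarsely) the product region/coarse minset determined by the Big domains. Since $A_2$ has the same Big domains and the same axes as $A_1$, its centraliser is finite Hausdorff distance from the same HQC set, hence is HQC.

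Finally, free abelianness and the index bound $N$ follow. $A_1$ is torsion-free virtually $\mathbb Z^n$ inside powers, hence free abelian since $m$-th powers commute after passing to the abelian finite-index part. Its index in $A$ is bounded by $m^{n}$ times the bounded index of the abelian part, with $n$ bounded by the maximal rank of abelian subgroups of $G$. The last sentence of the statement is then Lemma \ref{lem: HHS structure on HQC subspaces} applied to the HQC subgroup $C_G(A_1)$, noting that $C_G(A_1)$ acts on itself by left multiplication, compatibly with the induced structure.
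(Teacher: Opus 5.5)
Your outline has the right shape: a structure-free deep finite-index subgroup, an index bound from invariants of $G$, then Lemma \ref{lem: HHS structure on HQC subspaces}. Note also that the final clause of Corollary \ref{cor:normaliser} already asserts the structure-independence, with $D$ depending only on $G$. The paper's derivation is exactly that clause plus Lemma \ref{lem: HHS structure on HQC subspaces}.

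\textbf{The gap.} Your own argument for the independence breaks at the step doing all the work. It is not true that $C_G(A_2)$, for finite-index $A_2\leq A_1$, stays at finite Hausdorff distance from the same set because the Big domains and axes are unchanged. The coarse minset $\mathcal M_A\subseteq X(A)$ is not determined by that data. It has a factor coming from the coarse fixed set $Z(A,\newcone P,k)$ of $A$ in the cone-off (the $Y$ factor in Theorem \ref{thm:HHS-semisimple-coarse-minset}), and this set can grow drastically when you pass to a subgroup.
\begin{itemize}
\item Take $G=\langle t\rangle\times H$, with $H$ hyperbolic and $s\in H$ of order $3$ with finite centraliser. Then $\langle ts\rangle$ and $\langle (ts)^3\rangle=\langle t^3\rangle$ have the same Big domain and the same axis. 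Yet $C_G(ts)$ is virtually $\langle t\rangle$ (and HQC), while $C_G(t^3)=G$.
\item The domains that matter are not the Big domains; Lemma \ref{lem: fixed boundary points} already gives a bounded-index subgroup fixing those. They are the domains $V$ on which the coarse fixed set in $\newcone P$ has large projection. The obstruction is an element sending such a $V$ to an orthogonal domain.
\item Take $G=(H\times H)\rtimes\langle u\rangle$, with $H$ torsion-free hyperbolic and $u$ swapping the factors. Then $G$ is torsion-free and $u$ fixes its unique Big domain. Yet $C_G(u)=\Delta H\times\langle u\rangle$ is not HQC, while $C_G(u^2)=G$ is. This also shows the torsion bound of $G$ is the wrong invariant for your exponent $m$.
\end{itemize}

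\textbf{What is needed.} You need a criterion that is inherited by finite-index subgroups and whose constant is an invariant of $G$. Then you re-run the argument for the chosen subgroup instead of comparing centralisers. The paper does this as follows.
\begin{itemize}
\item Let $\hhscomp_0$ be the rank of $(G,\mathfrak S)$, and let $A_1$ be any free abelian subgroup lying in every subgroup of $A$ of index at most $\hhscomp_0!$.
\item Apply Proposition \ref{prop:bounded-orbits}\eqref{it:finite index hqc fixed point set} in its final form, which is available because cofiniteness of the $G$--action makes every bounded $\mathcal CU$ uniformly bounded. Together with Proposition \ref{prop: almost invariant hull}, this produces an $A_1$--invariant HQC set near $X(A_1)$ on which $C_G(A_1)$ acts coboundedly.
\item Structure-independence comes from \cite[Thm. 1.15]{BehrstockQuasiflats21}: $\hhscomp_0$ equals the largest dimension of a quasiflat in $G$.
\item The index bound then follows from Lemma \ref{lem:isomorphism-types}.
\end{itemize}
In your write-up, the size of the ``relevant finite permutation groups'' you fold into $M$ is only independent of $\mathfrak S$ once it is bounded via this quasiflat characterisation of the rank. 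A bound via the complexity $\hhscomp$ would not suffice.
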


	  Motivation for Corollary \ref{cor:intro-independence} comes from the fact that a given finitely generated subgroup $H\leq G$ may by HQC or not, depending on how we choose the HHG structure, which, in general, can be done in many ways; see \cite{Mangioni:continuum} and also \cite[Thm. 5.15, Rem. 5.16]{abbottUniformUndistortionBarycentres2025}.

      Passing to the finite-index subgroup $A_1\leq A$ in Corollary \ref{cor:intro-independence} is needed in order to rule out the possibility that $aV\orth V$ for certain $V\in\mathfrak S$, where $a\in A$.  In particular, if $(G,\mathfrak S)$ has the global property that elements of $\mathfrak S$ are not orthogonal to their $G$--translates, then we get hierarchical quasiconvexity of centralisers.  An emblematic example of such a situation is when $G$ is the fundamental group of a compact special cube complex, with the HHG structure from \cite{BehrstockHierarchically17}.

      \begin{cor}\label{cor:intro-centraliser}
    Let $(G,\mathfrak S)$ be a hierarchically hyperbolic group and suppose that $gU\notorth U$ for all $g\in G$ and $U\in\mathfrak S$.  Let $A$ be a free abelian subgroup.  Then $C_G(A)$ is hierarchically quasiconvex.
     \end{cor}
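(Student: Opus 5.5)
Corollary \ref{cor:intro-centraliser} will be deduced from Corollary \ref{cor:normaliser}, which gives a finite-index subgroup $A_1\leq A$ whose centraliser $C_G(A_1)$ is hierarchically quasiconvex. By the remark after Corollary \ref{cor:intro-independence}, $A_1$ is needed only to exclude $aV\orth V$ for $a\in A$ and $V\in\mathfrak S$. The hypothesis $gU\notorth U$ rules this out globally, so the plan is to check that in the proof of Corollary \ref{cor:normaliser} one may take $A_1=A$ whenever $A$ is free abelian.

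\textbf{Step 1: locate where $A_1$ enters.} I expect $A_1$ in Corollary \ref{cor:normaliser} to be chosen so that $A_1$ fixes each domain of a finite $A$--invariant family. This family would be the eventual-support or big-set data of $A$ from the coarse classification of automorphisms (\cite[Thm. 5.1]{PetytUnbounded23}). These domains are pairwise orthogonal or nested, and $A$ permutes them. Then $C_G(A_1)$ is shown to be coarsely the product region (or a standard product-like HQC subspace) determined by the fixed domains and the axes of $A_1$ there.

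\textbf{Step 2: show $A$ already fixes each such domain.} Let $\mathcal U$ be the finite $A$--invariant family of maximal-support domains. Its members are pairwise orthogonal; with free abelian $A$ this should come from the product decomposition in Theorem \ref{thm:HHS-semisimple-coarse-minset}. Take $U\in\mathcal U$ and $a\in A$. Then $aU\in\mathcal U$, so either $aU=U$ or $aU\orth U$. The hypothesis excludes $aU\orth U$, so $aU=U$, and $A$ fixes $\mathcal U$ pointwise. If the construction also uses domains nested into these, for instance the domains where the axis projections are unbounded, the same dichotomy argument applies provided those domains are also pairwise orthogonal. If instead they are pairwise transverse, they are already fixed because $A$ acts with a finite orbit and transverse translates would contradict orthogonality of the big set.

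\textbf{Step 3: conclude.} With every domain fixed by $A$ itself, rerun the argument of Corollary \ref{cor:normaliser} with $A_1=A$. This gives that $C_G(A)$ is HQC with constants depending only on $(G,\mathfrak S)$.

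\textbf{Main obstacle.} The hard part is verifying that orthogonality among translates is the \emph{only} reason $A_1$ is needed. Passing to finite index might also be used to make the $A$--action on the relevant curve graphs orientation-preserving, that is, loxodromics fixing their endpoints rather than swapping them. Centralising an element that flips the endpoints of an axis would change the centraliser. For free abelian $A$ acting on a fixed domain $U$, I would argue that no element of infinite order flips endpoints. The only risk is finite-order elements, and there are none in $\integers^n$. A generator acting elliptically on $\fontact U$ while commuting with a loxodromic must fix its endpoints up to finite index. To handle this I would replace the centraliser of $A$ by the finite-index subgroup of it fixing those endpoints, note that finite-index subgroups of HQC subgroups are HQC, and show this subgroup has finite index in $C_G(A)$.
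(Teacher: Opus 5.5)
\textbf{There is a genuine gap.} Your overall strategy is the paper's: use the hypothesis to show that the finite-index passage in Corollary \ref{cor:normaliser} is unnecessary, so one may take $A_1=A$. The problem is that you apply the key dichotomy to the wrong family of domains, and you never address the place where it is actually needed.

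\textbf{Where the paper uses the dichotomy.} In the paper, hierarchical quasiconvexity (as opposed to mere EHQC) of the centraliser comes from item (6) of Proposition \ref{prop:bounded-orbits}. This is applied to the bounded-orbit action of $A$ on the cone-off $(\newcone P,\mathfrak S-\mathfrak U)$. It shows that the hierarchical hull $H_\theta(Z)$ of the coarse fixed set $Z$ is itself coarsely fixed, which yields an HQC set near $X(A)$. Recall that $C_G(A)$ is coarsely $X(A)\simeq Y\times Q$, not a product region.

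The domains that matter are those $V$ \emph{not} nested in $\support(p^\pm)$, typically orthogonal to it, with $\diam\pi_V(Z)$ large; $A$ has bounded orbits on them. For such $V$ and $a\in A$, the paper shows:
\begin{itemize}
\item $aV$ is not $\propnest$--related to $V$, by finite complexity;
\item $aV$ is not transverse to $V$, by a consistency argument using that $Z$ is EHQC and coarsely $A$--fixed.
\end{itemize}
Hence $aV=V$ or $aV\orth V$. The hypothesis excludes $aV\orth V$, so $aV=V$, and $G_0=A$ is allowed in item (6).

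\textbf{What your proposal does instead.} Your Step 1 guesses that the relevant family is the support or big set of $A$. Step 2 treats only the support and domains nested in it. The needed dichotomy is left conditional (``provided those domains are pairwise orthogonal''). The transverse case rests on an unjustified claim that transverse translates would contradict orthogonality of the big set.

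\textbf{Why this is a real gap.} Take $G=\integers^2\rtimes\langle t\rangle$, with $t$ swapping the coordinates. Give it the HHG structure with three pairwise orthogonal line domains $U_1,U_2,U_3$, where $t$ swaps $U_1,U_2$ and translates $\fontact U_3$. For $A=\langle t\rangle$ the support is $\{U_3\}$, which $A$ fixes, so every check in your proposal passes. Yet $C_G(A)=\langle (1,1),t\rangle$ projects to the diagonal of $\fontact U_1\times\fontact U_2$ and is not HQC. The corollary's hypothesis fails here exactly at $U_1,U_2$. These are orthogonal to the support, which is precisely where your argument never looks.

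\textbf{What is salvageable.} Your Step 2 is still useful. It handles the other finite-index passage, the subgroup $A_0$ of Theorem \ref{thm:HHS-semisimple-coarse-minset}(4), which is used to make $C_G(A)$ act coboundedly on $X(A)$. Under the hypothesis, $A$ fixes each $U_i\in\support(p^\pm)$, hence acts trivially on the factors $Q_i$, so $A_0=A$ works there.

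Your ``main obstacle'' is a non-issue: an abelian group containing a loxodromic on $\fontact U$ fixes both endpoints outright. Your proposed fix is also circular, since proving that a finite-index subgroup of $C_G(A)$ is HQC is the original problem.
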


      This is proved in Section \ref{sec:applications}, as part of the proof of Corollary \ref{cor:normaliser}.  The hypothesis that elements of $\mathfrak S$ are not orthogonal to their $G$--translates holds in a finite-index subgroup whenever $G$ is \emph{colourable} in the sense of \cite{DurhamMinskySisto:stable}; while there are non-colourable HHGs \cite{hagenNoncolorableHierarchicallyHyperbolic2023}, many of the natural examples of HHGs, like mapping class groups \cite{bestvinaConstructingGroupActions2015} and compact special groups \cite{behrstockCombinatorialTakeHierarchical2024b}, are colourable.

      \begin{remark}
      The preceding corollary is related to work of Genevois \cite[Thm. 5.1, Lem. 5.2]{Genevois:axis}, which shows that, whenever $G$ acts geometrically on a cube complex $X$, then, up to subdividing $X$ once, for each infinite order $g\in G$, there is an $\ell_1$--isometric subcomplex $Min(g)$ of $X$ --- a connected median subalgebra --- stabilised cocompactly by $C_G(\langle g\rangle)$.  Now, if $G$ also acts cospecially, then, as shown in \cite{Genevois:centralisers}, $Min(g)$ coincides with the \emph{stable} minset $SMin(g)$ which, by \cite[Rem. 3.7]{Genevois:centralisers}, is convex, giving cubical convexity (and hence hierarchical quasiconvexity) of centralisers.  See also \cite[Prop. 9.6]{HuangPrytula:commensurators}.
      \end{remark}
	
	We finally discuss the structure of commensurators of virtually abelian subgroups in an HHG $(G,\mathfrak S)$, inspired by the corresponding discussion of commensurators of abelian subgroups of CAT(0) groups from \cite{HuangPrytula:commensurators}. We prove:

    \getkeytheorem{cor:commensurators}

    An observation in \cite{HuangPrytula:commensurators} about complete square complexes (CSCs) shows that this result is sharp, in that one cannot in general replace the finitely generated subgroup $K$ with the entire commensurator $\commensurator{G}{A}$.  Now, in \cite[Cor. 5.10]{HuangPrytula:commensurators}, Huang--Prytu{\l}a show that, if $G$ is compact special (in particular, both a CAT(0) group and an HHG), $\commensurator{G}{A}$ is finitely generated and coincides with $N_G(A')$ for some finite-index $A'\leq A$.  
    
    When $G$ is a mapping class group, \cite[Thm. 1.1, Rem. 1.2]{RollandJimenezSaldana} shows that, if $A\leq G$ is a virtually abelian subgroup, then $\commensurator{G}{A}$ coincides with $N_G(A')$, where $A'$ is a finite-index with $A$.  This follows from Corollary \ref{cor:commensurators} once one knows that $\commensurator{G}{A}$ is finitely generated, which is shown for mapping class groups in Corollary 4.6 in loc. cit., but which does not hold for general HHGs.

    In various recent papers, some attention is given to the possibility of a theory of ``algebraically well-behaved'' HHGs, encompassing compact special groups and mapping class groups.  Roughly speaking, such a theory should involve imposing conditions ensuring that various HQC subgroups are separable, and that coarse products of subgroups must arise from a certain amount of commutation.  For instance, the idea of requiring separability of product region subgroups, motivated by mapping class groups \cite{LeiningerMcReynolds}, is explored in \cite[Prop. 3.2]{HagenPetyt:BBF}.  The correct commutation condition may turn out to vary according to the exact applications one wants, but the notions of ``weakly commutative'' \cite[Defn. 3.4]{OhPark:embedding}, ``decomposable'' \cite{DurhamMinskySisto:farrell-jones} or ``algebraic'' \cite{Casals-RuizReal24} HHGs, or the conditions from \cite{AbbottBehrstock:conj} are all candidate conditions aimed at excluding colourable HHGs whose algebraic properties do not reflect their geometry in straightforward ways, like irreducible CSC groups.  We wonder if imposing the condition that abelian subgroups have finitely generated commensurators is a useful move in this direction.

    \subsection{Quasiflat closing}
    The famous Flat Closing Problem for CAT(0) spaces asked whether a group $G$ acting geometrically on a CAT(0) space $X$ with an isometrically embedded $\Euclidean^2$ must contain a $\integers^2$ subgroup.  This was spectacularly resolved by the counterexample provided by Martelli in the recent paper \cite{MartelliFlatClosing2025}.

    In contrast, we prove the following positive result for hierarchically hyperbolic groups:

    \getkeytheorem{higher-rank}

    A special case of this, Theorem \ref{thm: flat closing}, says that any HHG is either hyperbolic or contains a $\integers^2$ subgroup.  In Section \ref{subsec:quasiflat-closing}, we first present a proof of the latter theorem, then generalise the argument to prove Theorem \ref{thm:higher-rank}.

   For CAT(0) cube complexes, Theorem \ref{thm: flat closing} and the results in \cite{BehrstockHierarchically17} immediately yield:

    \begin{cor}
        Flat closing holds for CAT(0) cube complexes which admit a factor system.
    \end{cor}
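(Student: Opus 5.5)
The plan is to reduce to Theorem~\ref{thm: flat closing}, the special case of Theorem~\ref{thm:higher-rank} stating that a hierarchically hyperbolic group is either hyperbolic or contains a $\integers^2$ subgroup. We interpret the statement in the usual sense of the Flat Closing Problem. Let $X$ be a CAT(0) cube complex admitting a factor system, and let $G$ act geometrically on $X$. Suppose $X$ contains an isometrically embedded copy of $\Euclidean^2$ for the CAT(0) metric. We must show that $G$ contains a subgroup isomorphic to $\integers^2$.

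First we show that $G$ is not hyperbolic. By the \v{S}varc--Milnor lemma, $G$ is quasi-isometric to $X$. A flat $\Euclidean^2\hookrightarrow X$ shows that $X$ is not Gromov-hyperbolic, since it contains arbitrarily fat geodesic triangles. Hence $G$ is not hyperbolic. This step is routine.

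Second, and this is the main step, we show that $G$ is a hierarchically hyperbolic group. By \cite{BehrstockHierarchically17}, a CAT(0) cube complex with a factor system $\mathfrak F$ is an HHS. The index set consists of parallelism classes of elements of $\mathfrak F$, and the associated hyperbolic spaces are factored contact graphs. Moreover, if a group acts geometrically on $X$ preserving $\mathfrak F$, it acts on this structure by HHS automorphisms, cofinitely on the index set, and is therefore an HHG.

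The obstacle is that the given factor system need not be $G$--invariant. We will replace $\mathfrak F$ by a canonical factor system $\mathfrak F_{\min}$. We take $\mathfrak F_{\min}$ to be the smallest collection of convex subcomplexes that contains $X$ and all combinatorial hyperplanes, and that is closed under gates (projections) onto one another, up to parallelism. Any factor system contains this closure, since factor systems are required to contain these subcomplexes and to be closed under projections. We then plan to check three things:
\begin{itemize}
\item[(i)] $\mathfrak F_{\min}$ inherits the uniform bound on the multiplicity of nested chains and the finite-multiplicity condition from $\mathfrak F\supseteq\mathfrak F_{\min}$. Both properties pass to subcollections, possibly after adjusting constants.
\item[(ii)] $\mathfrak F_{\min}$ is $\Aut(X)$--invariant, because it is defined canonically from the cubical structure.
\item[(iii)] $G$ acts cofinitely on $\mathfrak F_{\min}$ up to parallelism. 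This should follow from cocompactness together with the uniform bound on iterated projections.
\end{itemize}
This is essentially the remark in \cite{BehrstockHierarchically17} that the existence of a factor system implies the existence of an invariant one. If that remark is available, we cite it directly. Otherwise, we verify the axioms (i)--(iii) above.

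Finally, with $G$ an HHG that is not hyperbolic, Theorem~\ref{thm: flat closing} yields $\integers^2\leq G$, as required.
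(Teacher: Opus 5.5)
Your proposal is correct and takes the same approach as the paper, which obtains the corollary immediately from Theorem~\ref{thm: flat closing} together with the fact from \cite{BehrstockHierarchically17} that a group acting geometrically on a CAT(0) cube complex with a factor system is an HHG. Your invariance step needs two repairs. First, take $\mathfrak F_{\min}$ to be the closure of $X$ and the combinatorial hyperplanes only under gates of diameter larger than the factor-system constant $\xi$; a factor system need not contain small projections, so your claim that every factor system contains the closure under all gates is false as stated. Second, the cofiniteness in (iii) follows from $\Aut(X)$--invariance, the bound on the number of elements of $\mathfrak F$ containing a given vertex, and cocompactness, rather than from a bound on iterated projections.
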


    It is unknown whether flat closing holds for general cocompactly cubulated groups (not all of which admit factor systems \cite{shepherdCubulationNoFactor2023}), but there were some previously known positive results.  For example, it is well known that if $G$ is virtually compact special in the sense of \cite{HaglundWise:special}, then $G$ is hyperbolic or contains $\integers^2$.  Work of Sageev--Wise \cite{SageevWise:flat-closing} yields the same conclusion if $G$ acts geometrically on a CAT(0) cube complex satisfying a condition that limits intersections between certain sets of hyperplane stabilisers (and is therefore somewhat similar in spirit to the existence of a factor system).

    Theorem \ref{thm:higher-rank} follows from Corollary \ref{cor:highest-abelian} together with an argument that is elementary, in the sense that it uses only the core concepts of the theory of HHSes.  The key observation is that Corollary \ref{cor:highest-abelian} reduces the problem to that of finding some $g\in G$ such that there are at least two distinct $U\in\mathfrak S$ for which $\pi_U(\langle g\rangle)$ is unbounded.  In the case of a cube complex with a factor system, this can also be deduced from \cite{WoodhouseWise:cubical}.  The factor system case also follows by combining \cite[Cor. D]{CapraceSageev} and  \cite[Prop. 2.7]{HagenSusse}; however, it appears that this statement does not appear elsewhere in the literature.
	
    \subsection{Sketch of the proof}\label{subsec:intro-proof-discussion}
    Here is a summary of the proof of Theorem \ref{thm:intro-main}.

    As in the proof of \cite[Prop. 2.17]{HRSS:3-manifold}, we first find pairwise-orthogonal $U_1,\ldots,U_n\in\mathfrak S$ such that $\{U_1,\ldots,U_n\}$ is $A$--invariant, and any $W\in\mathfrak S$ with $\pi_W$ unbounded on $A$--orbits must satisfy $W\nest U_i$ for some $i$, using \cite[Thm. 5.1]{PetytUnbounded23}.   
    This allows us to use tools from \cite{BehrstockHierarchically19}, like the realisation theorem and distance formula, along with a modified version of the ``factored space'' construction from \cite{BehrstockAsymptotic17}, to reduce to the case where $X=H\times \newcone X$ (coarsely), where $H$ is an HHS whose index set contains the set $\mathfrak U$ of all $U$ nested in some $U_i$, and $\newcone X$ is an HHS where $A$ acts by HHS automorphisms with bounded orbits.  Moreover, letting $A'\leq A$ be a bounded-index subgroup fixing each $U_i$, we observe that $\mathcal CU_i$ contains an $A'$--invariant uniform quasiline $\gamma_{U_i}$.  In fact, the subgroup $A'$ stabilises two points $p^\pm$ in the HHS boundary from \cite{DurhamBoundaries17}, and $\gamma_{U_i}$ joins the two points $p^\pm_{U_i}\in\boundary\mathcal CU_i$ to which $p^\pm$ project.    
    
    For $U\in\mathfrak U$, either $\pi_U(X)$ is uniformly bounded, or $\gamma_{U_i}$ projects to a (bounded, but not uniformly) uniform quasigeodesic $\gamma_U$.  By passing to a suitable uniformly HQC subspace, we can assume that $\pi_U(H)$ is either uniformly bounded, or uniformly coarsely coincides with $\gamma_U$, for all $U$.  These arguments are collected in Proposition \ref{prop: almost invariant hull}.  

    In Proposition \ref{prop:bounded-orbits}, we use the fact that $\newcone X$ is coarsely dense in its injective hull, as shown in \cite{HaettelCoarse23}, together with Lang's fixed-point theorem for injective spaces \cite{LangInjective13} and the fact that $A$ has bounded orbits in $\newcone X$, to choose a point $\check x\in \newcone X$ whose $A$--orbit has uniformly bounded diameter.  This reduces the problem to finding the required $A$--invariant, uniformly EHQC, uniform quasiflat in $H$.

    This is done using Theorem \ref{thm: median hulls}, which is a variant of Durham's cubical model theorem, and in fact we borrow our strategy from \cite{DurhamCubulating23}.  The latter is itself a considerable generalisation and strengthening of the cubical approximation theorem from \cite{BehrstockQuasiflats21} (which was reproved by Bowditch by different means in \cite{Bowditchconvex18}), and strengthens Durham--Zalloum's cubical model theorem for HHS boundary points \cite{DurhamGeometry22}.  (See \cite{Zalloum:survey} for a survey of the various cubical modelling tools available in HHSs, and \cite{DurhamMinskySisto:stable} for another strengthened form of cubical approximation for colourable HHGs.)  

    Given the inputs to Theorem \ref{thm: median hulls}, Durham's theorem would provide (depending in principle on the choice $z_0\in H$ of basepoint) a uniform quasi-isometry from a CAT(0) cube complex $Q$ to the HHS $H$, preserving (coarse) medians up to uniformly bounded error.  Durham also obtains a form of equivariance, but it is not quite what we need.  We restrict ourselves to a more limited setting than in Durham's work, because we are concerned only with the case where $H$ is the hull of a pair of boundary points (Durham allows arbitrary finite sets), which lets us dispense with basepoints and obtain a complete, connected, finite-rank median space $Q$ and a uniformly quasi-median quasi-isometry $H\to Q$, as in Durham's theorem, but with the added feature that $A$ acts on $Q$ by isometries and the map to $Q$ is $A$--equivariant.  From there, a result of Bowditch \cite{BowditchProperties16} yields a compatible CAT(0) metric on $Q$ to which we can apply the classical flat torus theorem to obtain the desired EHQC quasiflat in $H$.

    To prove the more detailed version of our theorem (Theorem \ref{thm:HHS-semisimple-coarse-minset}), we consider the set of all $A$--invariant quasi-flats obtained in this way. The resulting quasi-product structure $Y \times C \times \mathbb E^n$ comes from the fact that the fixed point set of $A$ in the injective hull of $\newcone X$ is itself injective, and that the CAT(0) flat torus theorem provides an $A$--invariant subspace $C \times \mathbb E^n \subseteq Q$ such that $C$ is closed and convex (and thus CAT(0)).

    \subsection{Outline of the paper}\label{subsec:outline}
    Section \ref{sec:background} contains background on hierarchically hyperbolic spaces, as well as a few miscellaneous facts needed later in the paper.  Section \ref{sec:median-model} is devoted to the proof of Theorem \ref{thm: median hulls}, on median models of hulls.  Section \ref{sec:bounded-orbits} deals with bounded actions on HHSs, yielding Proposition \ref{prop:bounded-orbits}.  In Section \ref{subsec:initial-quasiflat}, we prove Proposition \ref{prop: almost invariant hull}, providing the necessary canonical $A$--invariant HQC and EHQC subspaces of $X$.  In Section \ref{sec:main-theorem}, we combine these ingredients to prove our main theorems, Theorem \ref{thm:HHS-semisimple-coarse-minset} and Theorem \ref{thm:B-central-extension}, and we apply these to obtain the various corollaries in Section \ref{sec:applications}.  
    
    There is also Appendix \ref{app:constants}, where we restate several existing results about hierarchical hyperbolicity, making explicit how constants produced by these statements depend on the inputs.  In no such case do we present a new proof; we just extract from the proofs in the literature the required information about constants.  Typically, we require some output constant to depend only on the parameters of the HHS, and obtaining this is a matter of tracing through an existing proof to get a quantitative version of the statement we need.

    \subsection{Acknowledgments}\label{subsec:ack}
    We thank Giorgio Mangioni and Ervin Hadziosmanovic for a helpful discussion. We also thank Jason Behrstock, Giorgio Mangioni, Harry Petyt, and Abdul Zalloum for several useful and interesting comments on a previous version.  MH is grateful to Carolyn Abbott, Harry Petyt, and Abdul Zalloum for discussions during the writing of \cite{abbottUniformUndistortionBarycentres2025} leading to some of the questions addressed in this paper.  Finally, we are particularly grateful to Jason Behrstock for some useful perspective on the mapping class group case, and for asking us a question that prompted us to work out the quasiflat closing theorem.
	
	\newsection{Background on hierarchically hyperbolic spaces}\label{sec:background}
	Fix a hierarchically hyperbolic space (HHS) $(X,\mathfrak S)$, as defined in \cite[Defn. 1.1]{BehrstockHierarchically19}.  We adopt the notation from there, some of which we recall here.
	
	For each $U\in\mathfrak S$, let $\pi_U:X\to\mathcal CU$ be the associated projection to the hyperbolic space $\mathcal CU$.  Whenever $U,V\in\mathcal CU$ satisfy $U\propnest V$ or $U\transverse V$, the uniformly bounded set in $\mathcal CV$ associated to $U$ is denoted $\rho^U_V$, and similarly, if $U\propnest V$, there is a map $\rho^V_U:\mathcal CV\to\mathcal CU$.  Let $\dist$ be the metric on $X$, which is assumed to be a quasigeodesic space.
	For each $V\in\mathfrak S$, let $\dist_V$ be the (hyperbolic geodesic) metric on $\mathcal CU$.  Following the usual convention, given $x,y\in X$, we write $\dist_V(x,y)$ to mean $\dist_V(\pi_V(x),\pi_V(y))$.
	
	\subsection{HHS boundary and boundary point projections}\label{subsec:HHS-boundary}
	We recall the definition of underlying set of the \emph{HHS boundary} of $(X,\mathfrak S)$ from \cite{DurhamBoundaries17}. We will not need the topology.
	
	\begin{defn} \label{defn: boundary points}
		The \emph{HHS boundary} of an HHS $(X, \mathfrak S)$ is a set $\partial X$ of points $\lambda \in \partial X$, each of which is determined by the following data:
		\begin{enumerate}
			\item a pairwise orthogonal set of domains $\support(\lambda) \subseteq \mathfrak S$, called the \textit{support} of $\lambda$;
			\item a point in the Gromov boundary $\lambda^U \in \partial \mathcal CU$ for each $U \in \support(\lambda)$;
			\item a collection of positive real numbers $\{a_U : U \in \support(\lambda)\}$ such that $\sum_{U \in \support(\lambda)} a_u = 1$.
		\end{enumerate}
		Set $\support^\perp(\lambda) \coloneqq \{V \in \mathfrak S : V \perp U$ for all $U \in \support(\lambda)\}$.
	\end{defn}
	
	\begin{notation}\label{notation:morse}
		Given $\delta>0$, denote by $\Morse_\delta$ the function provided by the Morse lemma for $\delta$--hyperbolic geodesic spaces, i.e. for any $\epsilon_1,\epsilon_2$, any $(\epsilon_1,\epsilon_2)$--quasigeodesic in such a space is at Hausdorff distance at most $\Morse_\delta(\epsilon_1,\epsilon_2)$ from the geodesic joining its endpoints.
	\end{notation}
	
	The projection maps $\pi_U$ can be extended to the boundary as follows. This process is canonical unless $U \in \support^\perp$, in which case it requires a choice of basepoint in $X$.
	
	Let $\delta \geq 0$ be the hyperbolicity constant (so $\mathcal CU$ is $\delta$-hyperbolic for all $U \in \mathfrak S$) and let $\mathcal E$ be the constant from the bounded geodesic image axiom (\cite[Defn. 1.1.(7)]{BehrstockHierarchically19}).  Let $\lambda \in \partial X$, let $U \in \support(\lambda)$, and let $V \in \mathfrak U$ be such that $V \propnest U$. Denote by $\lambda^V_U \subseteq \mathcal CU$ the union of all $(1,20\delta)$-quasi-geodesic rays $\gamma$ in $\mathcal CU$ in the equivalence class corresponding to the boundary point $\lambda^U$ such that $\gamma \cap \mathcal N^{\mathcal CU}_{\mathcal E + \Morse_\delta(1,20\delta)}(\rho^V_U) = \emptyset$.
	
	\begin{defn}[Projections of boundary points]\label{defn:boundary-projection}
		Let $\lambda \in \partial X$ and let $x_0 \in X$. For each $U \in \mathfrak S$,  define the \emph{projection based at $x_0$}, denoted $\pi_{U,x_0}(\lambda) \in 2^{\mathcal CU} \cup \partial \mathcal CU$, as follows:
		\begin{itemize}
			\item if $U \in \support^\perp(\lambda)$ then let $\pi_{U,x_0}(\lambda) \coloneqq \pi_U(x_0)$;
			\item if $U \in \support(\lambda)$ then let $\pi_{U,x_0}(\lambda) \coloneqq \lambda^U$;
			\item if $U \notin \support(\lambda)$ and $U \propnest V$ for some $V \in \support(\lambda)$ then let $\pi_{U,x_0}(\lambda) \coloneqq \rho^V_U(\lambda^U_V)$. Also set $\rho^V_U(\pi_V(\lambda, x_0)) \coloneqq \pi_U(\lambda, x_0)$.
			\item If none of the above hold, then the set $\mathcal V \coloneqq \{V \in \support(\lambda) : V \propnest U$ or $V \transverse U\}$ is non-empty and we define $\pi_{U,x_0}(\lambda) \coloneqq \bigcup_{V \in \mathcal V} \rho^V_U$.
		\end{itemize}
		If $V \in \mathfrak S - \support^\perp(\lambda)$, then $\pi_{V,x_0}(\lambda)$ is independent of $x_0$, so we can write $\pi_V(\lambda) \coloneqq \pi_{V,x_0}(\lambda)$.
	\end{defn}
	
	\subsection{HHS parameters}\label{subsec:parameters}
	Our main result asserts that certain ``output'' quantities depend only on the quantities in the definition of an HHS (\cite[Defn. 1.1]{BehrstockHierarchically19}), namely:
	\begin{enumerate}
		\item $q$ is the quasigeodesic constant for $X$, i.e. any two points are joined by a $(q,q)$--quasigeodesic.
		\item $\delta$ is the hyperbolicity constant, i.e. $\mathcal CU$ is $\delta$--hyperbolic for all $U\in\mathfrak S$.
		\item Constants $K$ and $\xi$: for all $U\in\mathfrak S$, the coarse map $\pi_U$ is $(K,K)$--coarsely lipschitz and has $K$--quasiconvex image, while $\pi_U(x)$ has diameter at most $\xi$ for all $x\in X$.  Also, $\diam(\rho^U_V)\leq \xi$ whenever $U\propnest V$ or $U\transverse V$.
		
		\item There is a constant $\kappa_0$ such that all of the inequalities in \cite[Defn. 1.1.(4)]{BehrstockHierarchically19} (the transversality/consistency axiom) use the upper bound $\kappa_0$.
		
		\item The \emph{complexity} $\hhscomp\in\naturals$ is the maximum length of a $\propnest$--chain in $\mathfrak S$.
		
		\item The large link axiom \cite[Defn. 1.1.(6)]{BehrstockHierarchically19}, bounded geodesic image axiom \cite[Defn. 1.1.(7)]{BehrstockHierarchically19} and partial realisation axiom (\cite[Defn. 1.1.(8)]{BehrstockHierarchically19}) each introduce one additional constant, denoted $\lambda, \mathcal E$ and $\alpha$ respectively.

		\item The uniqueness axiom (\cite[Defn. 1.1.(9)]{BehrstockHierarchically19}) provides a function $\theta_u:\reals_{\geq 0}\to\reals_{\geq 0}$ such that $d(x,y)\leq \theta_u(\sup_{V\in\mathfrak S} \dist_V(x,y))$ for all $x,y\in X$.
	\end{enumerate}
	
	Using the definition of boundary projections in Section \ref{subsec:HHS-boundary}, it is straightforward to prove:
	
	\begin{lem}\label{lem:boundary-projection-bound}
		If $\lambda \in \partial X$, $x_0 \in X$ and $U \in \mathfrak S - \support(\lambda)$, then $\diam(\pi_{U,x_0}(\lambda)) \leq \xi'$, where $\xi'$ is a function of $\xi$, $\alpha$ and $\mathcal E$. 
	\end{lem}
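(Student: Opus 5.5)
We argue by cases, following the four clauses of Definition \ref{defn:boundary-projection}. Since $U\notin\support(\lambda)$, only three cases occur: $U\in\support^\perp(\lambda)$; $U\propnest V$ for some $V\in\support(\lambda)$; or the remaining case. In each case we bound $\diam(\pi_{U,x_0}(\lambda))$ by a quantity depending only on $\xi$, $\alpha$, $\mathcal E$.

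\emph{Orthogonal case.} Here $\pi_{U,x_0}(\lambda)=\pi_U(x_0)$, which has diameter at most $\xi$ by the definition of $\xi$.

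\emph{Nested case.} Here $U\propnest V$ with $V\in\support(\lambda)$, and $\pi_{U,x_0}(\lambda)=\rho^V_U(\lambda^U_V)$. Each ray $\gamma$ making up $\lambda^U_V$ is a $(1,20\delta)$--quasigeodesic avoiding the $(\mathcal E+\Morse_\delta(1,20\delta))$--neighbourhood of $\rho^U_V$. Hence a geodesic uniformly fellow-travelling $\gamma$ avoids the $\mathcal E$--neighbourhood of $\rho^U_V$. By bounded geodesic image, $\rho^V_U$ of any such geodesic (or of any finite subsegment) has diameter at most $\mathcal E$. To compare two different rays $\gamma,\gamma'$ in the class of $\lambda^V$, we join a point of $\gamma$ to a point of $\gamma'$ far out along the rays. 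Since the rays are asymptotic, this joining geodesic also stays far from $\rho^U_V$, and bounded geodesic image applies to it as well. Combining the three bounds gives $\diam\le 3\mathcal E$ plus a contribution from the coarse nature of $\rho^V_U$ on points, which is controlled by $\xi$.

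\emph{Remaining case.} Here $\pi_{U,x_0}(\lambda)=\bigcup_{W\in\mathcal V}\rho^W_U$, where each $\rho^W_U$ has diameter at most $\xi$. This is the main step. One must show that the $\rho^W_U$ for distinct $W\in\mathcal V$, which are pairwise orthogonal elements of $\support(\lambda)$ each nested in or transverse to $U$, lie uniformly close together. I would invoke \cite[Lem. 1.5]{DurhamBoundaries17} / \cite[Lem. 1.20]{BehrstockHierarchically19}: if $W_1\orth W_2$ and both are nested in or transverse to $U$, then $\dist_U(\rho^{W_1}_U,\rho^{W_2}_U)$ is bounded in terms of the HHS constants. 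The proof of that lemma uses partial realisation, which is where $\alpha$ enters. Partial realisation applied to $\{W_1,W_2\}$ gives a point $x$ whose projections to $U$ are close to both $\rho^{W_i}_U$. The closeness follows from consistency when $W_i\transverse U$ and from the nesting clause of partial realisation when $W_i\propnest U$. Since $\pi_U(x)$ has diameter at most $\xi$, each $\rho^{W_i}_U$ lies within $\alpha+\xi$ of $\pi_U(x)$, so any two of them are within roughly $2(\alpha+\xi)$ of each other. The whole union therefore has diameter at most about $2\alpha+4\xi$, independent of $|\mathcal V|$.

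Taking the maximum over the three cases yields $\xi'$. We should be careful that the consistency constant $\kappa_0$ may appear in the transverse case; if so, $\xi'$ depends on $\kappa_0$ too. The paper's phrasing suggests partial realisation alone suffices, since its conclusion gives $\dist_U(x,\rho^{W_i}_U)\le\alpha$ directly in both the transverse and nested cases.
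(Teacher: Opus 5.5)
Your proof is correct and follows the paper's argument essentially exactly. It uses the same three cases, the same bounded geodesic image argument in the nested case (geodesics between points on a ray, and between far-out points on two asymptotic rays, giving a $3\mathcal E$-type bound), and the same orthogonality/partial-realisation bound \cite[Lem. 1.5]{DurhamBoundaries17} in the remaining case, giving a bound of the form $2\alpha+O(\xi)$. Your hedge about $\kappa_0$ is unnecessary, as you conclude yourself: the transverse clause of partial realisation gives $\dist_U(x,\rho^{W_i}_U)\le\alpha$ directly.
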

	
	\begin{proof}
		Let $U\in\mathfrak S-\support(\lambda)$.  There are three cases.
		
		First suppose that $U\orth V$ for all $V\in\support(\lambda)$.  Then $\diam(\pi_{U,x_0}(\lambda))=\diam(\pi_U(x_0))\leq\xi$.
		
		Second, suppose that $U\transverse V$ or $V\propnest U$ for some $V\in\support(\lambda)$, i.e. the set $\mathcal V$ from Definition \ref{defn:boundary-projection} is nonempty.  By definition, $\pi_{U,x_0}(\lambda)=\bigcup_{V\in\mathcal V}\rho^V_U$.  Since $V\orth V'$ for all distinct $V,V'\in \mathcal V$, we have $\dist_U(\rho^V_U,\rho^{V'}_U)\leq 2\alpha$, so $\diam(\pi_{U,x_0}(\lambda))\leq 2(\alpha+\xi)$.
		
		Third, suppose that $U\propnest V$ for some $V\in\support(\lambda)$.  Recall that $\lambda^U_V\subseteq \mathcal CV$ is the set of $(1,20\delta)$--quasigeodesic rays $\gamma$ representing the boundary point $\lambda^V$ and avoiding the neighbourhood of $\rho^U_V$ of radius $\mathcal E_0:=\mathcal E+\Morse_\delta(1,20\delta)$.  Recall that in this case, $\pi_{U,x_0}(\lambda)$ is defined to be $\rho^V_U(\lambda^U_V)$, whose diameter we need to bound.  Let $\gamma$ be a ray in $\mathcal CV$ as above.  Let $a,b\in\gamma$.  Let $\beta$ be a geodesic in $\mathcal CV$ from $a$ to $b$.  Since $\beta$ lies in the $\Morse_\delta(1,20\delta)$--neighbourhood of $\gamma$, it cannot come $\mathcal E$--close to $\rho^U_V$, so by the bounded geodesic image axiom (\cite[Defn. 1.1.(7)]{BehrstockHierarchically19}), $\dist_U(\rho^V_U(a),\rho^V_U(b))\leq \mathcal E$.  Since this works for arbitrary $a,b$, we have $\diam(\pi_U(\gamma))\leq \mathcal E$.  Now let $\gamma'$ be some other $(1,20E)$--quasigeodesic ray in $\mathcal CV$ avoiding $\neb_{\Morse_\delta(1,20\delta)+\mathcal E}(\rho^U_V)$ and representing $\lambda^V$.  Since $\rho^U_V$ is bounded and $\gamma,\gamma'$ are at finite Hausdorff distance, there exist $a\in\gamma,a'\in\gamma'$ such that every geodesic in $\mathcal CV$ from $a$ to $a'$ avoids $\neb_{\mathcal E}(\rho^U_V)$.  Hence $\dist(\rho^V_U(\gamma),\rho^V_U(\gamma'))\leq \mathcal E$, by another application of bounded geodesic image.  Thus $\diam(\rho^V_U(\lambda^U_V))\leq 3\mathcal E$.
		
		Combining the three cases gives $\diam(\pi_{U,x_0}(\lambda))\leq 2(\alpha+\xi)+3\mathcal E$.
	\end{proof}
	
	\begin{defn}[HHS constant]\label{defn:HHS-constant}
		Fix $E \geq \max\{q,K,\delta,\xi, \xi',\kappa_0,\alpha, \mathcal E\}$; $E$ is the \emph{HHS constant}.  
	\end{defn}
	
	\begin{remark}\label{rem:HHS-constant-accounts-for-boundary}
		Definition \ref{defn:HHS-constant} is almost identical to what is usually referred to as the HHS constant (see e.g \cite[Remark~1.6]{BehrstockHierarchically19}) except for the condition that $E \geq \xi'$ which accounts for the boundary projections.  As usual, since all of the statements in \cite[Defn. 1.1]{BehrstockHierarchically19} involving the quantities used to define $E$ remain true when the quantity in question is replaced by $E$, we henceforth work with $E$ instead of $\kappa_0,\alpha$, etc.
	\end{remark}

	\begin{defn}[Complexity]\label{defn:complexity}
		We denote the complexity of $(X,\mathfrak S)$ by $\hhscomp$. Note that it depends only on the poset $(\mathfrak S,\nest)$. 
	\end{defn}
	
	\begin{defn}[HHS parameters]\label{defn:HHS-parameters}
		The \emph{HHS parameters} of $(X,\mathfrak S)$ are: the HHS constant $E$, the complexity $\hhscomp$, and the uniqueness function $\theta_u$.
	\end{defn}
	
		When we say that some quantity $C$ depends only on the HHS parameters, we mean that $C$ is a function of $E,\hhscomp,$ and $\theta_u$.  For brevity, when working in a fixed HHS, we say that a quantity is \emph{uniform} to mean that it depends only on the HHS parameters.

	\begin{defn}[Adjusting the boundary projections]
		In the interest of only using the constant $E$ from now on, we make a small change to the definition of the boundary projections. If $\lambda \in \partial X$ and $V \propnest U$ for some $U \in \support(\lambda)$ then let ${\lambda'}^V_U \coloneqq \lambda^V_U - \mathcal N_{10E}(\rho^V_U)$. Define $\pi_U(\lambda)' \coloneqq \rho^U_V({\lambda'}^V_U)$. Then $\pi_U(\lambda)' \subseteq \pi_U(\lambda)$ so its diameter is still bounded by $E$. We replace $\lambda^V_U$ by ${\lambda'}^V_U$ and $\pi_U(\lambda)$ by $\pi_U(\lambda)'$.
	\end{defn}
	
	\subsection{HHS automorphisms and HHGs}\label{subsec:autos}
	We work with the following notion of \emph{HHS automorphisms} introduced in \cite[Sec. 2]{PetytUnbounded23}, which is simpler to work with than the original formulation (and no less general, by \cite[Sec. 2.1]{DurhamCorrection20}).
	
	\begin{defn}[HHS automorphism]\label{defn:HHS-automorphism}
		Let $(X,\mathfrak S)$ be an HHS.  A group $H$ acts on $(X,\mathfrak S)$ by \emph{HHS automorphisms} if there is an action $H\to Sym(X)$, an action $H\to Sym(\mathfrak S)$ preserving the relations $\nest,\orth,\transverse$, and, for each $U\in\mathfrak S$ and each $h\in H$, an isometry $h:\mathcal CU\to\mathcal C hU$ such that all of the following hold:
		\begin{enumerate}
			\item If $U\in\mathfrak S$ and $g,h\in H$, then the isometry $gh:\mathcal CU\to\mathcal C gh U$ is the composition of $g:\mathcal C hU\to\mathcal C ghU$ with $h:\mathcal CU\to\mathcal C hU$.
			\item $\pi_{gU}(gx)=g(\pi_U(x))$ for all $x\in X, U\in\mathfrak S, g\in H$.
			\item $\rho^{gU}_{gV}=g(\rho^U_V)$ whenever $U\propnest V$ or $U\transverse V$.
            \qedhere
		\end{enumerate}
	\end{defn}
	
	If $H$ acts by HHS automorphisms, the distance formula (Theorem \ref{thm:distance-formula}) implies that each $h\in H$ acts on $X$ as a quasi-isometry with constants depending on the HHS parameters only, but in this paper, it will always suffice to work with groups $H$ of HHS automorphisms with the additional property that the $H$--action on $(X,\dist)$ is isometric.  A special case is:
	
	\begin{defn}[HHG]\label{defn:HHG}
		Let $G$ be a group and let $\dist$ be a finitely generated word metric on $G$.  Suppose that $(G,\dist)$ admits an HHS structure $(G,\mathfrak S)$ on which $G$ acts by HHS automorphisms in such a way that the associated $G$--action on the space $(G,\dist)$ is the left-multiplication action, and the $G$--action on $\mathfrak S$ is cofinite.  Then $(G,\mathfrak S)$ is a \emph{hierarchically hyperbolic group (HHG) structure} for $G$, and $G$ is a \emph{hierarchically hyperbolic group (HHG)} if it has an HHG structure.  We say that the pair $(G,\mathfrak S)$ is \emph{an HHG} in this situation. 
	\end{defn}
	
	Any HHS automorphism $g$ extends to the HHS boundary: if $\lambda \in \partial X$ is as in Definition~\ref{defn: boundary points} then $g \lambda \in \partial X$ is determined by the support $g \support(\lambda)$, the boundary points $g \lambda^U \in \boundary \mathcal C gU$ for all $gU \in g\support(\lambda)$ and the constants $a_{gU} \coloneqq a_U$ for all $gU \in g\support(\lambda)$.

	\begin{lem}[Equivariant downward $\rho$ maps]\label{lem:invariant-down-rho}
		Up to uniformly modifying the HHS parameters, we can assume that
		\begin{itemize}
			\item $\pi_U(x) \subseteq \rho^V_U(\pi_V(x))$ whenever $x \in X$ and $U \propnest V$.
			\item if $g$ is an HHS automorphism of $X$ and $U,V \in \mathfrak S$ satisfy $U \propnest V$, then 
			\[
			\rho^{gV}_{gU}(gz) = g \rho^{V}_U(z)
			\]
			for all $z \in \mathcal CV - \mathcal N_{10E}(\rho^V_U)$.
		\end{itemize}
	\end{lem}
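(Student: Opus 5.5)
We will modify the downward maps $\rho^V_U:\mathcal CV\to 2^{\mathcal CU}$ by replacing them with maps defined directly in terms of the projections, and then check that the HHS axioms still hold with uniformly modified constants. For $U\propnest V$ and $z\in\mathcal CV$, we define
\[
\hat\rho^V_U(z):=\bigcup\{\pi_U(x): x\in X,\ \dist_V(\pi_V(x),z)\leq 10E\}
\]
when $z\in\mathcal CV-\mathcal N_{10E}(\rho^U_V)$ and $z$ lies within $10E$ of $\pi_V(X)$. Otherwise we set $\hat\rho^V_U(z):=\rho^V_U(z)\cup\pi_U(\{x: \dist_V(\pi_V(x),z)\le 10E\})$, or simply $\rho^V_U(z)$ if no such $x$ exists. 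Because this definition uses only $\pi$, $\rho^U_V$ and the metric, it is automatically equivariant under HHS automorphisms: by Definition \ref{defn:HHS-automorphism}(2),(3) we have $g\pi_U(x)=\pi_{gU}(gx)$ and $g\rho^U_V=\rho^{gU}_{gV}$. This gives the second bullet on $\mathcal CV-\mathcal N_{10E}(\rho^V_U)$. (We read $\rho^V_U$ in that set as $\rho^U_V$, the bounded set in $\mathcal CV$.)

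The main step is showing that $\hat\rho^V_U$ coarsely agrees with $\rho^V_U$ away from $\rho^U_V$, with a uniform bound. Take $z$ far from $\rho^U_V$ and $x$ with $\pi_V(x)$ within $10E$ of $z$. The consistency axiom (\cite[Defn. 1.1.(4)]{BehrstockHierarchically19}, nested case) says that if $\dist_V(\pi_V(x),\rho^U_V)>E$, then $\diam(\pi_U(x)\cup\rho^V_U(\pi_V(x)))\le E$. By bounded geodesic image, $\rho^V_U(\pi_V(x))$ and $\rho^V_U(z)$ are $E$--close, since a geodesic of length at most $10E+E$ joining them avoids $\mathcal N_E(\rho^U_V)$ once $z$ is more than about $12E$ from $\rho^U_V$. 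Hence $\hat\rho^V_U(z)$ lies within a uniform distance of $\rho^V_U(z)$, and the BGI and consistency axioms hold for $\hat\rho$ with a constant that is a uniform multiple of $E$. Near $\rho^U_V$, and off the image of $\pi_V$, the original maps are essentially kept, so nothing changes there.

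To get the exact containment in the first bullet, we also redefine $\pi_U(x)$. We set $\hat\pi_U(x):=\pi_U(x)\cup\bigcup_{V\propsupset U}\hat\rho^V_U(\pi_V(x))$. This is equivariant, and by the consistency estimate above its diameter is uniformly bounded whenever $\pi_V(x)$ is far from $\rho^U_V$. We then need $\pi_U(x)\subseteq\hat\rho^V_U(\pi_V(x))$. This holds by construction in the first case of the definition, because $x$ itself lies in the union. When $\pi_V(x)$ is near $\rho^U_V$, the second case of the definition includes $\pi_U(x)$ directly. The expected obstacle is that $\hat\rho^V_U(\pi_V(x))$ might be unbounded in that near case. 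To handle this, we restrict the union to $x$ with $\dist_V(\pi_V(x),z)\le10E$ and keep the first case only for $z$ outside $\mathcal N_{10E}(\rho^U_V)$, where diameters are controlled by consistency. In the near case we simply union $\pi_U(x)$ into the value on the specific point $\pi_V(x)$; this makes the maps depend on $x$, which we must accept and justify.

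Finally, replacing $\pi_U$ by $\hat\pi_U$ alters projections only by uniformly bounded Hausdorff distance, using consistency and the $E$--bounds. Therefore all HHS axioms persist with a new constant depending only on $E$ and $\hhscomp$, and the uniqueness function changes only by a uniform additive shift.
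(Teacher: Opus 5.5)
Your core construction is the paper's: for $z$ away from $\rho^U_V$, replace $\rho^V_U(z)$ by the union of the sets $\pi_U(x)$ over those $x$ with $\pi_V(x)$ near $z$. Equivariance then follows from Definition~\ref{defn:HHS-automorphism}, and consistency plus bounded geodesic image show the new value is uniformly close to the old one. As written, however, several steps fail.

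\textbf{The redefinition of $\pi_U$.} The most serious problem is setting $\hat\pi_U(x)=\pi_U(x)\cup\bigcup_{V:\,U\propnest V}\hat\rho^V_U(\pi_V(x))$. If $\pi_V(x)$ lies within $E$ of $\rho^U_V$, consistency says nothing about $\rho^V_U(\pi_V(x))$. Worse, your near-case definition makes $\hat\rho^V_U(\pi_V(x))$ contain $\pi_U(y)$ for every point $y$ supplied by partial realisation in $U$. Such $y$ have $\pi_V(y)$ within $E$ of $\rho^U_V$, hence within $10E$ of every point of $\pi_V(x)$, so this set is $E$--dense in $\mathcal CU$.
\begin{itemize}
\item Consequently $\hat\pi_U(x)$ is unbounded whenever $\mathcal CU$ is, which violates the diameter bound on projections.
\item Your closing claim that $\hat\pi_U$ is Hausdorff-close to $\pi_U$ ``using consistency'' fails exactly in this case.
\item Letting the maps ``depend on $x$'' is not available, since $\rho^V_U$ must be a function on $\mathcal CV$.
\end{itemize}
The step is also unnecessary. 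For each $z\in\pi_V(x)$, the point $x$ itself is among the points indexing the union that defines $\hat\rho^V_U(z)$, in either case of your definition. So $\pi_U(x)\subseteq\hat\rho^V_U(\pi_V(x))$ already holds with the original $\pi_U$. In fact, unioning in the near case is a sensible way to get the first bullet for \emph{all} $x$. The paper's construction only yields it when some point of $\pi_V(x)$ is at distance at least $10E$ from $\rho^U_V$.

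\textbf{The constants.} With radius $10E$ and threshold $10E$, nothing stops the union defining $\hat\rho^V_U(z)$, for $z$ just outside $\mathcal N_{10E}(\rho^U_V)$, from containing points $x$ with $\pi_V(x)$ within $E$ of $\rho^U_V$. For such $x$, $\pi_U(x)$ is controlled by neither consistency nor bounded geodesic image; your own estimate needs $z$ to be roughly $12E$ away.

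The near case is worse. For $E<\dist_V(z,\rho^U_V)\le 8E$, your $\hat\rho^V_U(z)$ contains $\pi_U(y)$ for all the partial-realisation points $y$ above, so it is unbounded. Bounded geodesic image with constant $E$ then already fails for the degenerate geodesic $\{z\}$, so ``nothing changes there'' is false.

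The paper avoids this in two ways. It takes radius $E$, so every contributing $x$ has $\pi_V(x)$ at distance at least $8E$ from $\rho^U_V$. It also leaves $\rho^V_U$ unchanged on $\mathcal N_{10E}(\rho^U_V)$. If you keep your definitions, you must explicitly enlarge the consistency and bounded geodesic image constants to a suitable multiple of $E$, and read the $10E$ in the statement relative to the new constant.

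\textbf{Equivariance of the fallback.} Your fallback $\hat\rho^V_U(z)=\rho^V_U(z)$ for $z$ far from $\pi_V(X)$ is not equivariant, so the second bullet fails there. You should first normalise so that every $\pi_V$ is $E$--coarsely surjective, which makes that case vacuous. The paper does this tacitly, invoking coarse surjectivity of $\pi_V$ to make its index set nonempty.
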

	\begin{proof}
		Let $x \in X$ and let $U, V \in \mathfrak S$ satisfy $U \propnest V$. Let $z\in \mathcal CV$, and suppose that $\dist_{V}(z,\rho^U_{V})\geq10E$. Let $x_z \coloneqq \{y\in X:\dist_{V}(y,z)\leq E\}$, which is non-empty by the coarse surjectivity of $\pi_{V}$. Let $\varrho^{V}_U(z)=\bigcup_{x\in x_z}\pi_U(x)$. The consistency and bounded geodesic image axioms (\cite[Defn. 1.1.(4),(6)]{BehrstockHierarchically19}) imply that $\varrho^{V}_U(z)$ has diameter bounded in terms of $E$, and lies uniformly close (in terms of $E$) to $\rho^{V}_U(z)$.  In particular, we can (and do) replace $\rho^{V}_U$ by $\varrho^{V}_U$ in the HHS structure $(X,\mathfrak S)$ and still have an HHS whose parameters are bounded in terms of the original ones.  (For $z\in \mathcal CV$ with $\dist_{V}(z,\rho^U_{V}) < 10E$, let $\varrho^{V}_U(z)=\rho^{V}_U(z)$.)
		
		Let $g$ be an HHS automorphism and let $z \in \mathcal CV$ satisfy $d_V(z, \rho^U_V) \geq 10E$.  Definition~\ref{defn:HHS-automorphism} gives $d_{gV}(gz, \rho^{gU}_{gV}) \geq 10E$ and $gx_z = x_{gz}$, so $\rho^{gV}_{gU}(gz) = \pi_{gU}(gx_z) = g \pi_U(x_z) = g \rho^V_U(z)$.
	\end{proof}
	
	We will henceforth assume that the downwards $\rho$ maps are defined as in the proof of Lemma~\ref{lem:invariant-down-rho} and thus satisfy its conclusions. As a result, the boundary projections are also equivariant with respect to HHS automorphisms:
	
	\begin{lem} \label{lem: equivariant boundary projections}
		Let $\lambda \in \partial X$ be a point in the HHS boundary of $X$, let $g$ be an HHS automorphism and let $U \in \mathfrak S$. Then $g \pi_{U,x_0}(\lambda) = \pi_{gU, gx_0}(g\lambda)$ for all $x_0 \in X$.
	\end{lem}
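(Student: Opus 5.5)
The plan is a case-by-case verification following Definition \ref{defn:boundary-projection}. First, record how $g$ transports the combinatorial data. Since $g$ preserves $\nest,\orth,\transverse$, and $g\lambda$ has support $g\support(\lambda)$ with coordinates $g\lambda^V$, we get $\support^\perp(g\lambda)=g\support^\perp(\lambda)$. Also, $U\propnest V$ with $V\in\support(\lambda)$ if and only if $gU\propnest gV$ with $gV\in\support(g\lambda)$. Likewise, the set $\mathcal V$ attached to $(U,\lambda)$ is sent by $g$ to the set attached to $(gU,g\lambda)$. So $U$ and $gU$ fall into the same one of the four cases of the definition, and it is enough to check the equality case by case.

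The easy cases go as follows.
\begin{itemize}
\item If $U\in\support^\perp(\lambda)$, then $\pi_{gU,gx_0}(g\lambda)=\pi_{gU}(gx_0)=g\pi_U(x_0)$ by Definition \ref{defn:HHS-automorphism}(2).
\item If $U\in\support(\lambda)$, the claim is $g\lambda^U=(g\lambda)^{gU}$, which holds by definition of the extended action; the isometry $g:\mathcal CU\to\mathcal CgU$ extends to Gromov boundaries.
\item In the fourth case, $\pi_{gU}(g\lambda)=\bigcup_{V\in\mathcal V}\rho^{gV}_{gU}=\bigcup_{V\in\mathcal V} g\rho^V_U$ by Definition \ref{defn:HHS-automorphism}(3).
\end{itemize}

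The main case is $U\propnest V$ with $V\in\support(\lambda)$. Here the projection is $\rho^V_U$ applied to the (adjusted) set $\lambda'^U_V$: the union of $(1,20\delta)$--quasigeodesic rays representing $\lambda^V$ that avoid the prescribed neighbourhood of $\rho^U_V$, with the $10E$--neighbourhood removed. I will show $g\lambda'^U_V=(g\lambda)'^{gU}_{gV}$. The map $g:\mathcal CV\to\mathcal CgV$ is an isometry, so it sends $(1,20\delta)$--quasigeodesic rays to such rays. It sends the class $\lambda^V$ to $g\lambda^V$, and it sends neighbourhoods of $\rho^U_V$ to the same-radius neighbourhoods of $g\rho^U_V=\rho^{gU}_{gV}$, again by Definition \ref{defn:HHS-automorphism}(3). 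Hence the two sets of rays correspond bijectively, and the same holds after removing the $10E$--neighbourhood.

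Finally, every point $z$ of $\lambda'^U_V$ satisfies $\dist_V(z,\rho^U_V)\geq 10E$, so Lemma \ref{lem:invariant-down-rho} applies pointwise: $\rho^{gV}_{gU}(gz)=g\rho^V_U(z)$. Taking unions gives $\pi_{gU}(g\lambda)=g\pi_U(\lambda)$.

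The only delicate point is this last step. The removal of the $10E$--neighbourhood in the adjusted definition exists exactly so that Lemma \ref{lem:invariant-down-rho} applies; the remaining cases are bookkeeping.
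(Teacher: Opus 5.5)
Your proposal is correct and follows the paper's proof. The non-nested cases follow directly from Definition \ref{defn:HHS-automorphism}. In the case $U\propnest V$ with $V\in\support(\lambda)$, the adjusted ray set lies outside the $10E$--neighbourhood of $\rho^U_V$, so Lemma \ref{lem:invariant-down-rho} applies pointwise. You also spell out two points the paper leaves implicit: that $g$ carries $\lambda'^U_V$ onto $(g\lambda)'^{gU}_{gV}$, and that the four-way case distinction is $g$--invariant.
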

	\begin{proof}
		The only case which is not covered by Definition~\ref{defn:HHS-automorphism} is if $U \propnest V$ for some (necessarily unique) $V \in \support(U)$. In this case $\pi_U(\lambda) = \rho^V_U(\lambda^U_V)$, $\pi_{gU} = \rho^{gV}_{gU}(\lambda^{gU}_{gV})$ and $\lambda^U_V \subseteq \mathcal CV - \mathcal N_{10E}(\rho^U_V)$, $\lambda^{gU}_{gV} \subseteq \mathcal CgV - \mathcal N_{10E}(\rho^{gU}_{gV})$. By Lemma~\ref{lem:invariant-down-rho} it follows that $g\pi_U(\lambda) = \pi_{gU}(g\lambda)$.
	\end{proof}
	
	\subsection{Realisation}\label{subsec:realisation}
	We recall from \cite{BehrstockHierarchically19} the definition of a \emph{consistent tuple}, and the \emph{realisation theorem} that, roughly speaking, characterises the image of $\prod_U\pi_U:X\to \prod_U\mathcal CU$ as the ``solution set'' of the consistency inequalities.  The following is \cite[Defn. 1.17]{BehrstockHierarchically19}:
	
	\begin{defn}[Consistent tuple]\label{defn:consistent-tuple}
		Let $\kappa\geq 0$.  Let $\vec b=(b_U)_{U\in\mathfrak S}\in\prod_{U\in\mathfrak S}2^{\mathcal CU}$.  Then $\vec b$ is \emph{$\kappa$--admissible} if $\diam(b_U)\leq \kappa$ and $\dist_U(b_U,\pi_U(X))\leq\kappa$ for all $U\in\mathfrak S$.  If $\vec b$ is $\kappa$--admissible, then it is \emph{$\kappa$--consistent} if 
		\begin{itemize}
			\item $\min\{\dist_U(\rho^V_U,b_U),\dist_V(\rho^U_V,b_V)\}\leq \kappa$ whenever $U\transverse V$, and
			\item $\min\{\dist_V(\rho^U_V,b_V),\diam(\rho^V_U(b_V)\cup b_U)\}\leq \kappa$ whenever $U\propnest V$.\qedhere
		\end{itemize}
	\end{defn}
	
	Next is the realisation theorem, which is Theorem 3.1 in \cite{BehrstockHierarchically19}.  Following \cite[Thm. 12.5]{Casals-RuizReal24}, we have restated it to make explicit what the quantities involved depend on; as noted in \cite{Casals-RuizReal24}, the constants in question are easy to extract from the proof in \cite{BehrstockHierarchically19}.
	
	\begin{thm}[Realisation]\label{thm:hhs_realisation}
		Let $(\cuco X,\mathfrak S)$ be a hierarchically hyperbolic space.  Then there exists $r_0$, depending only on the HHS parameters, such that the following holds.  Let $\kappa\ge1$ and let $(b_V)_{V\in\mathfrak S}\in\prod_{V\in\mathfrak S}\fontact V$ be a  $\kappa$--consistent, $\kappa$--admissible tuple.  Then there exists $x\in\cuco X$ such that $\dist_V(x,b_V)\leq r_0\kappa$ for all $V\in\mathfrak S$.
	\end{thm}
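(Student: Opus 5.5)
The plan is not to give a new proof but to trace the proof of \cite[Thm. 3.1]{BehrstockHierarchically19}, as in \cite[Thm. 12.5]{Casals-RuizReal24}, and record at each step what the constants depend on. There are two features to check: the output constant depends only on $E,\hhscomp,\theta_u$, and the dependence on $\kappa$ is linear. For linearity, the plan is to run the argument with every auxiliary constant of the form $c\kappa$, with $c$ depending only on the HHS parameters. Since $\kappa\geq 1$, any additive error of the form $f(E)$ can be absorbed into such a term.

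The argument is an induction on the level of domains, where a domain is minimal if it has level $1$ and the level is bounded by $\hhscomp$. The inductive statement is: for each $U\in\mathfrak S$ of level $\ell$, a $\kappa$--consistent tuple restricted to $\mathfrak S_U$ is realised, up to error $c_\ell\kappa$, by a point of the standard product region/subspace $F_U$. The steps are as follows.
\begin{enumerate}
\item For the base case, and more generally for pairwise-orthogonal families, apply the partial realisation axiom. It realises coordinates $b_{U_1},\ldots,b_{U_k}$ with $k\leq\hhscomp$ up to error $E$. The $\rho$--compatibility it provides for domains containing the $U_i$ or transverse to them gives a point realising those coordinates.
\item For the inductive step at $U$, take the coordinate $b_U$ and use the large link axiom. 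A geodesic in $\mathcal CU$ from a partially realised point to $b_U$ meets $\rho^V_U$ only for those $V\propnest U$ along which the tuple is forced to follow. For each $V\propnest U$ with $\dist_U(\rho^V_U,b_U)>\kappa$, consistency gives $\diam(\rho^U_V(b_U)\cup b_V)\leq\kappa$, so $b_V$ is determined up to $\kappa$ by $b_U$. For the remaining $V$, which are the relevant ones near $b_U$, large links bounds their number by a function of $E$ and $\kappa$ in the relevant sense. Those $V$ are handled by the inductive hypothesis inside $F_V$. Bounded geodesic image controls every $W$ whose $\rho$--set is avoided by the chosen geodesics.
\item Assemble the pieces using the consistency inequalities for transverse pairs. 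Of $U\transverse V$, one coordinate is $\kappa$--close to the $\rho$--set. This lets us order the relevant domains, as in the ``partial order on relevant domains'' step of \cite{BehrstockHierarchically19}. Then perform the assembly finitely many times, at most a function of $\hhscomp$, each time losing a multiplicative constant depending only on $E$.
\end{enumerate}

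The main obstacle is step (2): keeping the error linear in $\kappa$ while handling the possibly infinitely many domains nested in $U$. Linearity forces every threshold, such as the $\kappa$ in the bounded geodesic image and large link applications, to scale with $\kappa$. The check is that bounded geodesic image and large links are applied with thresholds $10E\kappa$ and not with thresholds that grow faster, so that the number of inductive levels ($\leq\hhscomp$) multiplies the constant only boundedly. Taking $r_0=\prod_{\ell\leq\hhscomp}c_\ell$ then gives the statement. Since every step uses only the axioms with constant $E$ and the complexity, together with $\theta_u$ when passing from projections to points in $X$, $r_0$ depends only on the HHS parameters.
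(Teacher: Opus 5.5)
Your overall plan, which is to cite \cite[Thm. 3.1]{BehrstockHierarchically19} in the form of \cite[Thm. 12.5]{Casals-RuizReal24} and read off the constants, is exactly what the paper does; it gives no further argument. However, the reconstruction you propose to trace breaks at step (2).

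\textbf{Step (2) fails.} The set of $V\propnest U$ with $\dist_U(\rho^V_U,b_U)\le\kappa$ is in general infinite. For example, in the mapping class group take $U$ to be the whole surface and $b_U$ a curve $\alpha$; then the set contains the annulus about every curve disjoint from $\alpha$. So nothing bounds ``their number''. The large link axiom cannot supply such a bound. Its input is a pair of points $x,x'\in X$, and it bounds, in terms of $\dist_U(x,x')$, how many domains $T_i\propnest U$ are needed so that every $T\propnest U$ with $\dist_T(x,x')\geq E$ is nested in some $T_i$. Here the only natural second point would be a realisation of $\vec b$, which is what you are trying to construct, so the appeal is circular. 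Consequently step (3) has no control on how many inductively realised pieces must be glued, and ``assembly at most a function of $\hhscomp$ times'' is unsupported. Two related problems:
\begin{itemize}
\item Partial realisation only controls the coordinates of the $U_i$ themselves and the domains above or transverse to them, not the domains nested in them.
\item The subspaces $F_U$ you induct in are themselves constructed using the realisation theorem (as in the paper's description of $F_U\times\{q\}$). So your inductive hypothesis has to be phrased purely in terms of points of $X$, and the gluing has to be done by hand.
\end{itemize}

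\textbf{The linearity argument is not sound as written.} Even a genuine large-link count has the form $\lambda\cdot(\text{distance})+\lambda$, which grows with $\kappa$ once your thresholds are of size $\sim\kappa$. If the corresponding pieces are realised separately and their errors combine additively (as happens when one successively modifies a point), each level contributes a factor that grows with $\kappa$, and the final bound is superlinear. Bounding the \emph{depth} of the induction by $\hhscomp$ does not control this \emph{width}. To obtain $\dist_V(x,b_V)\le r_0\kappa$ you must check, in the actual argument of \cite{BehrstockHierarchically19}, two things:
\begin{itemize}
\item at each stage, only a number of coordinates bounded in terms of $\hhscomp$ alone is realised independently (e.g.\ a pairwise-orthogonal family handled by partial realisation);
\item every other coordinate is forced, by the nested and transverse consistency inequalities, to lie within $O(\kappa)$ of a $\rho$--set or of $\rho^U_V(b_U)$, where consistency and bounded geodesic image for the constructed point then match it automatically.
\end{itemize}
Your sketch neither identifies this mechanism nor verifies it, so it does not establish the claimed dependence of the constant.
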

	
	\subsection{Coarse median}\label{sec:coarse-median}
	We use the notion of a \emph{coarse median space} from \cite[Sec. 8]{BowditchCoarse13}, although we almost always work with special cases --- median spaces or HHSes --- rather than the general definition.  See \cite{NibloWrightZhang:intrinsic} for more background on \emph{coarse median algebras/spaces}.
	
	\begin{defn}[Coarse median space]\label{defn:coarse-median}
		A metric space $(\Omega,d)$ is \emph{coarse median} with \emph{parameter} $\kappa:\integers_{\ge0}\to [0,\infty)$ if there is a map $\mu:\Omega^3\to\Omega$ (the \emph{coarse median}) such that:
		\begin{enumerate}
			\item [(C1)] We have $d(\mu(x,y,z),\mu(x',y',z'))\leq \kappa(0)\left(d(x,x')+d(y,y')+d(z,z')\right)+\kappa(0)$ for all $x,y,z,x',y',z'\in \Omega$.
			
			\item [(C2)] For all $n\in\naturals$ and all $A\subseteq \Omega$ with $|A|\leq n$, there is a finite median algebra $(M,\eta)$ and maps $f:M\to \Omega$ and $\bar f:A\to M$ such that $d(\mu(f(x),f(y),f(z)),f(\eta(x,y,z)))\leq \kappa(n)$ for all $x,y,z\in M$, and $d(a,f\bar f(a))\leq \kappa(n)$ for all $a\in A$.
            \qedhere
		\end{enumerate}
	\end{defn}
	
	Next is the coarse median on an HHS $X$, from \cite[Sec. 7]{BehrstockHierarchically19} (or \cite[Sec. 14]{Casals-RuizReal24}).
	
	\begin{cons}[HHS coarse median]\label{cons:coarse-median}
		Let $x,y,z\in X$.  For each $U\in\mathfrak S$, let $m_U$ be the set of $p\in\mathcal CU$ such that $\dist_U(p,\gamma)\leq 10E$ for any geodesic $\gamma$ of $\mathcal CU$ with endpoints in $\pi_U(x)\cup\pi_U(y)\cup\pi_U(z)$.  Let $(b_U)_{U\in\mathfrak S}$ be a tuple with $b_U\in m_U$ for all $U$.  By \cite[Lem. 2.6]{BehrstockHierarchically19}, there exists $\kappa$ such that any such $(b_U)_{U\in\mathfrak S}$ is $\kappa$--consistent, and the proof of the same lemma shows we can take $\kappa=100E$.  Hence, the realisation theorem (Theorem \ref{thm:hhs_realisation}) provides a point $m\in X$ such that $\dist_U(m,b_U)\leq 100Er_0$, where $r_0$ is the constant, depending only on the HHS parameters, from Theorem \ref{thm:hhs_realisation}.
		
		Let $\mu(x,y,z)$ be the set of all $m\in X$ such that $\dist(m,m_U)\leq 100Er_0$ for all $U\in\mathfrak S$.  The above discussion shows that $\mu(x,y,z)\neq\emptyset$.  Now, if $m,m'\in\mu(x,y,z)$, then for all $U$, we have $\dist_U(m,m')\leq 200E(r_0+1)$, so $d(m,m')\leq \theta_u(200E(r_0+1))$, where $\theta_u$ is the uniqueness function (an HHS parameter).  Thus $\diam(\mu(x,y,z))$ is bounded, independently of $x,y,z$, in terms of the HHS parameters only.  Hence $\mu$ is a well-defined map sending points in $X^3$ to uniformly bounded sets in $X$. 
	\end{cons}
	
	In summary, Construction \ref{cons:coarse-median} provides constants $C_0,C_1$, depending only on the HHS parameters, and a map $\mu:X^3\to 2^X$ satisfying both of the following conditions:
	\begin{itemize}
		\item $\diam(\mu(x,y,z))\leq C_0$ for all $x,y,z\in X$.
		\item For all $U\in\mathfrak S$ and $x,y,z\in X$, the following holds.  Let $a,b\in\{x,y,z\}$ be distinct.  Then $\dist_U(\mu(x,y,z),\gamma)\leq C_1$, where $C_1$ is any $\mathcal CU$--geodesic joining a point in $\pi_U(a)$ to a point in $\pi_U(b)$.
	\end{itemize}
	If a group $G$ acts on $(X,\mathfrak S)$ by HHS automorphisms, then the coarse map $\mu$ is $G$--equivariant (for the diagonal action on $X^3$ and the action on $2^X$ induced by the $G$--action on $X$).  
	
	In \cite[Sec. 14]{Casals-RuizReal24}, it is observed that if $(G,\mathfrak S)$ is an HHG then we can, moreover, assume that $\mu$ is a map from $X^3\to X$ (rather than a coarse map) by equivariantly choosing points in the bounded sets $\mu(x,y,z)$ defined above.  This can be done, more generally, as long as $G$ acts on $(X,\mathfrak S)$ by HHS automorphisms in such a way that the $G$--action on $X$ is free.  Later, we will typically reduce to the case of free actions, and in such situations we will assume that $\mu$ is an equivariant \emph{map}, not just an equivariant coarse map.  
	
	\begin{remark}\label{rem:coarse-median-is-coarse-median}
		That $\mu$ satisfies Definition \ref{defn:coarse-median} follows from the cubical approximation theorem explained in Section \ref{appsubsec:cubical-approx}; see Corollary \ref{cor:coarse-median}.
	\end{remark}

	\begin{defn}[Quasimedian map]\label{defn:quasi-median}
		Let $\Lambda,\Omega$ be coarse median spaces with coarse medians $\lambda,\omega$ respectively; let $d$ be the metric on $\Omega$. Let $\kappa\geq 0$.  A map $f:\Lambda\to \Omega$ is \emph{$\kappa$--quasimedian} if $d(\omega(f(a),f(b),f(c)),f(\lambda(a,b,c)))\leq \kappa$ for all $a,b,c\in\Lambda$.
	\end{defn}
	
	The next two definitions help to interpret HHS notions in terms of the coarse median.
	
	\begin{defn}[Median quasiconvexity]\label{defn:coarse-median-quasiconvex}
		Let $\Omega$ be a coarse median space with coarse median $\omega$ and metric $\dist$.  Given $\kappa\geq 0$, a subset $Y\subseteq \Omega$ is \emph{$\kappa$--median-quasiconvex} if $\dist(\omega(y,y',x),Y)\leq \kappa$ for all $y,y'\in Y$ and $x\in \Omega$.
	\end{defn}
	
	\begin{defn}[Coarse subalgebra]\label{defn:coarse-median-subalgebra}
		Let $\Omega$ be a coarse median space with coarse median $\omega$ and metric $\dist$. Given $\kappa\geq 0$, a \emph{$\kappa$--coarse median subalgebra} of $\Omega$ is a subset $Y\subseteq \Omega$ such that $\dist(\omega(y,y',y''),Y)\leq \kappa$ for all $y,y',y''\in Y$.
	\end{defn}

	\subsection{Hierarchical quasiconvexity and related notions}\label{subsec: (E)HQC}
	We recall the notion of hierarchical quasiconvexity from \cite[Sec. 5]{BehrstockHierarchically19}:
	
	\begin{defn}\label{defn:HQC}
		Given a function $\kappa:[0,\infty)\to[0,\infty)$, a subset $Y\subseteq X$ is \emph{$\kappa$--hierarchically quasiconvex (HQC)} if both of the following hold:
		\begin{itemize}
			\item $\pi_U(Y)$ is $\kappa(0)$--quasiconvex in the $E$--hyperbolic space $\mathcal CU$ for all $U\in\mathfrak S$;
			\item for all $r\geq 0$, if $x\in X$ satisfies $\dist_U(x,Y)\leq r$ for all $U\in\mathfrak S$, then $\dist(x,Y)\leq \kappa(r)$.\qedhere
		\end{itemize}
	\end{defn}
	
	We next have \cite[Prop. 5.11]{RussellConvexity23}, relating hierarchical and coarse median quasiconvexity:
	
	\begin{prop}\label{prop:RST}
		Let $(X,\mathfrak S)$ be an HHS and let $Y\subseteq X$.  Then both of the following hold:
		\begin{enumerate}
			\item For all $\kappa:[0,\infty)\to[0,\infty)$, there exists $C\geq 0$, depending only on $\kappa$ and the HHS parameters, such that, if $Y$ is $\kappa$--hierarchically quasiconvex, then $Y$ is $C$--median quasiconvex.
			\item For all $C\geq 0$, there exists $\kappa:[0,\infty)\to[0,\infty)$, depending only on $C$ and the HHS parameters, such that, if $Y$ is $C$--median quasiconvex, then it is $\kappa$--hierarchically quasiconvex.
		\end{enumerate}
	\end{prop}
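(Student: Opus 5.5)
The plan is to prove the two implications separately, using only the projection property of the HHS coarse median from Construction \ref{cons:coarse-median}: for all $U$, $\pi_U(\mu(a,b,c))$ lies within $C_1$ of any $\mathcal CU$--geodesic joining two of $\pi_U(a),\pi_U(b),\pi_U(c)$. We also use that the $\pi_U$ are uniformly coarsely lipschitz, together with the uniqueness axiom and the realisation theorem (Theorem \ref{thm:hhs_realisation}).

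\emph{Proof of (1).} Suppose $Y$ is $\kappa$--HQC, and let $y,y'\in Y$, $x\in X$ and $m=\mu(y,y',x)$. Fix $U\in\mathfrak S$. Then $\pi_U(m)$ lies within $C_1$ of a geodesic from $\pi_U(y)$ to $\pi_U(y')$. Since $\pi_U(Y)$ is $\kappa(0)$--quasiconvex, that geodesic lies in the $\kappa(0)$--neighbourhood of $\pi_U(Y)$. Hence $\dist_U(m,Y)\leq r:=C_1+\kappa(0)+E$ for every $U$. The second clause of Definition \ref{defn:HQC} then gives $\dist(m,Y)\leq\kappa(r)$, so we may take $C=\kappa(r)$.

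\emph{Proof of (2): quasiconvexity of projections.} Suppose $Y$ is $C$--median quasiconvex. Let $y,y'\in Y$ and let $p$ lie on a geodesic $[\pi_U(y),\pi_U(y')]$. Since $\pi_U(X)$ is $E$--quasiconvex, there is $x\in X$ with $\pi_U(x)$ within $2E$ of $p$. Set $m=\mu(y,y',x)$. In $\mathcal CU$, $\pi_U(m)$ is $C_1$--close to all three sides of the (thin) triangle with vertices $\pi_U(y),\pi_U(y'),\pi_U(x)$. Because $\pi_U(x)$ is close to the side $[\pi_U(y),\pi_U(y')]$, the centre of that triangle is uniformly close to $p$. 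Since $m$ is $C$--close to $Y$ and $\pi_U$ is $(E,E)$--coarsely lipschitz, $p$ is uniformly close to $\pi_U(Y)$. This gives $\kappa(0)$.

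\emph{Proof of (2): the realisation clause.} Suppose $\dist_U(x,Y)\leq r$ for all $U$. The aim is to find $y^\ast\in Y$ with $\dist_U(x,y^\ast)\leq r'(r)$ for all $U$; uniqueness then gives $\dist(x,Y)\leq\theta_u(r')+$const. Start from $y\in Y$ with $\dist(x,y)\leq\dist(x,Y)+1$, and suppose $\dist_U(x,y)$ is huge for some $U$. Pick $y_U\in Y$ with $\dist_U(x,y_U)\leq r$ and form $m=\mu(y,y_U,x)$, which is $C$--close to a point $y_1\in Y$. Then:
\begin{itemize}
\item $\dist_U(x,m)\leq r+$const;
\item for every $V$, $\pi_V(m)$ lies near a geodesic $[\pi_V(y),\pi_V(x)]$, so $\dist_V(m,x)\leq\dist_V(y,x)+$const.
\end{itemize}
I expect the main obstacle to be converting this into a contradiction with the near-minimality of $y$. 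The distance formula only compares distances up to multiplicative error, so a single large drop in $\dist_U$ need not decrease $\dist(x,\cdot)$. There are two options. The first is to take the threshold $s$ in the distance formula large relative to $r$, so that the term $\dist_U(x,y)$ disappears while no other term grows past threshold; then choose $y$ to minimise the distance-formula sum $\sum_V\ignore{\dist_V(x,y)}{s}$ rather than $\dist$, and derive a contradiction. The second, if that fails, is to build $y^\ast$ directly by realisation: the tuple of coarse closest-point projections of $\pi_U(x)$ to $\pi_U(Y)$ is uniformly consistent, and medians with points of $Y$ are used to show the realised point lies near $Y$. In either case, one may need to induct on complexity, handling $\nest$--maximal domains with large projection first.
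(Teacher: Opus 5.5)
Your argument for (1), and for the quasiconvexity of the sets $\pi_U(Y)$ in (2), is correct. The gap is the second clause of hierarchical quasiconvexity in (2). That clause is the only substantial part of the proposition, and neither of your options proves it.

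In Option 1, replacing $y$ by a point $y_1\in Y$ near $\mu(y,y_U,x)$ makes the $U$--term small. But in every other domain $V$ it can increase $\dist_V(x,\cdot)$ by a uniform additive constant, since you only know that $\pi_V(y_1)$ is near $[\pi_V(y),\pi_V(x)]$. The number of $V$ with $\dist_V(x,y)$ at or just below the threshold is finite but not bounded independently of $x,y$. So the thresholded sum need not decrease, whatever threshold you take.

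Option 2 is circular. The point $g$ realising the closest-point projections of $\pi_U(x)$ to $\pi_U(Y)$ satisfies $\dist_U(g,Y)\leq c$ for all $U$, with $c$ uniform. Showing that such a $g$ is near $Y$ is the very statement being proved, with $r=c$.

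What is missing is a way to reach a point of $Y$ close to $x$ in every domain at once, using a number of median operations bounded in terms of $\hhscomp$ alone, so that the additive errors accumulate only boundedly often.

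Here is one way to supply it.
\begin{enumerate}
\item Fix $y\in Y$ and apply Proposition \ref{prop:cubical_approximation} to the two points $\{y,x\}$, so all constants are uniform. The model $\Box$ is the interval between $\hat y$ and $\hat x$. Hence every hyperplane separates $\hat y$ from $\hat x$, and any two hyperplanes either cross or are nested. An antichain in the order induced from $\hat y$ therefore consists of pairwise crossing hyperplanes, so it has at most $\hhscomp$ elements.
\item For every $U$, pick $y_U\in Y$ with $\dist_U(y_U,x)\leq r$, and pick a vertex $\hat z_U$ modelling $\mu(y,y_U,x)$; this point lies uniformly close to $H_\theta(\{y,x\})$.
\item Let $v$ be the vertex whose set of hyperplanes separating it from $\hat y$ is the union of those sets for the $\hat z_U$. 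This union is the down-closure of its maximal elements, of which there are at most $\hhscomp$. So $v$ is an iterated median $\mu(\cdots\mu(\hat z_{U_1},\hat z_{U_2},\hat x)\cdots,\hat z_{U_k},\hat x)$ with $k\leq\hhscomp$.
\item Quasimedianity of $f$, condition (C1), and $C$--median quasiconvexity, each applied $k$ times, put $f(v)$ uniformly close to $Y$.
\item Moreover $v=\mu(v,\hat z_U,\hat x)$ for every $U$. Hence $\pi_U(f(v))$ is uniformly close to a geodesic from $\pi_U(\mu(y,y_U,x))$ to $\pi_U(x)$, which has length at most $r$ plus a constant. The uniqueness axiom then bounds $\dist(f(v),x)$ in terms of $r$, which gives the realisation clause.
\end{enumerate}
The hierarchical shadow of this bounded-width phenomenon is that pairwise transverse domains in $\relevant_s(y,x)$ are linearly ordered. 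For the domain $U$ nearest $x$, any $y_U$ with $\pi_U(y_U)$ near $\pi_U(x)$ is then automatically close to $x$ in all the others, by consistency.

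For comparison, the paper does not reprove this proposition; it cites \cite[Prop.~5.11]{RussellConvexity23}, whose proof goes through hierarchy paths.
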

	
	Proposition \ref{prop:RST} is proved using \emph{hierarchy paths}, as defined in \cite{BehrstockHierarchically19}, which generalise the mapping class group notion from \cite{MasurMinsky:II}.  We will also use hierarchy paths, so we recall the definition (\cite[Defn. 4.2]{BehrstockHierarchically19}):
	
	\begin{defn}[Hierarchy path]\label{defn:hier-path}
		Given $D\geq 0$, a $D$--\emph{hierarchy path} in $X$ is a $(D,D)$--quasi-isometric embedding $\gamma:[0,L]\to X$, for some $L\geq 0$, with the property that $\pi_W\circ\gamma$ is an unparametrised $(D,D)$--quasigeodesic in $\mathcal CW$ for all $W\in\mathfrak S$.
	\end{defn}
	
	Combining \cite[Lem. 1.37]{BehrstockQuasiflats21} with \cite[Thm. 4.4]{BehrstockHierarchically19} yields:
	
	\begin{prop}\label{prop:hierarchy-path-char}
		There exists $D$, depending only on the HHS parameters, such that any two points in $X$ are joined by a $D$--hierarchy path.  
		
		Moreover, for any $D'\geq 0$, there exists $C\geq 0$, depending only on $D'$ and the HHS parameters, such that a $(D',D')$--quasigeodesic $\gamma:[0,L]\to X$ is a $D'$-hierarchy path provided $\gamma$ is $C$--quasimedian.  Conversely, for all $C$, there exists $D''$ depending only on $C$ and the HHS parameters such that any $C$--quasimedian $(C,C)$--quasiisometric embedding $\gamma:[0,L]\to X$ is a $D''$--hierarchy path.
	\end{prop}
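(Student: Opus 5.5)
The plan is to assemble the statement from two existing results, \cite[Thm.~4.4]{BehrstockHierarchically19} and \cite[Lem.~1.37]{BehrstockQuasiflats21}, checking only that the constants depend on the stated data. Existence of $D$--hierarchy paths with $D$ uniform is exactly \cite[Thm.~4.4]{BehrstockHierarchically19}. For the two characterisation statements, I use one elementary fact: for $s\le t\le u$ in $[0,L]$, the median in the interval is $t$. So $\gamma$ is $C$--quasimedian if and only if $\dist(\mu(\gamma(s),\gamma(t),\gamma(u)),\gamma(t))$ is uniformly bounded over all ordered triples $s\le t\le u$. Both directions reduce to comparing this quantity with the behaviour of $\pi_U\circ\gamma$ in each $\mathcal CU$.

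\emph{Quasimedian quasigeodesics are hierarchy paths.} Fix $U\in\mathfrak S$ and $s\le t\le u$.
\begin{enumerate}
\item By the summary after Construction \ref{cons:coarse-median}, $\pi_U(\mu(\gamma(s),\gamma(t),\gamma(u)))$ lies $C_1$--close to any geodesic from $\pi_U\gamma(s)$ to $\pi_U\gamma(u)$.
\item Since $\gamma$ is $C$--quasimedian and $\pi_U$ is $(E,E)$--coarsely lipschitz, the point $\pi_U\gamma(t)$ lies within $EC+E+C_1$ of that geodesic.
\item The path $\pi_U\circ\gamma$ is therefore coarsely lipschitz, and every intermediate point lies uniformly close to a geodesic between the images of the endpoints.
\end{enumerate}
I then apply a standard hyperbolic-geometry lemma: a coarsely lipschitz path with this ``coarse betweenness'' property is an unparametrised quasigeodesic with constants depending only on $E$ and the betweenness constant. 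One proves it by discretising the path and showing the resulting sequence coarsely advances monotonically along the geodesic. Together with the $(D',D')$--quasigeodesic hypothesis, this gives the hierarchy path conclusion.

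\emph{Hierarchy paths are quasimedian.} Let $\gamma$ be a $C$--quasimedian $(C,C)$--quasi-isometric embedding, as in the final sentence of the statement; the plan below shows it is a hierarchy path, with $D''$ depending on $C$ and the HHS parameters. It also runs verbatim from the assumption that $\pi_U\circ\gamma$ is an unparametrised $(D,D)$--quasigeodesic for every $U$, which gives the reverse implication as a bonus. Fix $s\le t\le u$ and $U\in\mathfrak S$.
\begin{enumerate}
\item The three points $\pi_U\gamma(s),\pi_U\gamma(t),\pi_U\gamma(u)$ lie in order along an unparametrised quasigeodesic.
\item By the Morse lemma (Notation \ref{notation:morse}), the middle point is uniformly close to a geodesic between the outer two, so it is uniformly close to the coarse centre of the triangle they span.
\item By Construction \ref{cons:coarse-median}, $\pi_U(\mu(\gamma(s),\gamma(t),\gamma(u)))$ is also uniformly close to that centre.
\item Hence $\dist_U(\mu(\gamma(s),\gamma(t),\gamma(u)),\gamma(t))$ is bounded uniformly in $U$. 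The uniqueness function $\theta_u$ then bounds $\dist(\mu(\gamma(s),\gamma(t),\gamma(u)),\gamma(t))$.
\end{enumerate}

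The main obstacle is the hyperbolic-geometry step in the first direction: upgrading coarse betweenness to an unparametrised quasigeodesic with uniform constants. Backtracking must be controlled, using the coarse lipschitz property to prevent large jumps. This is precisely what \cite[Lem.~1.37]{BehrstockQuasiflats21} packages, so in practice I would cite it, recording only that its constants depend on $D'$ and the HHS parameters.
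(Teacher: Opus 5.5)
Your route is the paper's: the proposition is just \cite[Thm.~4.4]{BehrstockHierarchically19} for existence plus \cite[Lem.~1.37]{BehrstockQuasiflats21} for the characterisation. Both of your sketches are sound arguments for the two implications behind the latter. The problem is that you have attached them to the wrong clauses.

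Your second argument opens (step 1) by assuming that $\pi_U\gamma(s),\pi_U\gamma(t),\pi_U\gamma(u)$ lie in order along an unparametrised quasigeodesic, i.e.\ that $\gamma$ is already a hierarchy path. It then concludes that $\gamma$ is quasimedian. So it proves ``hierarchy path $\Rightarrow$ quasimedian''. Offered as a proof of the final sentence of the statement, it is circular, and your ``bonus'' remark has the implication backwards.

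Nothing is actually missing, though. The final sentence (a $C$--quasimedian $(C,C)$--quasi-isometric embedding is a $D''$--hierarchy path) is exactly what your first argument proves, applied with $D'=C$. There the betweenness constant is $EC+E+C_1$ and the coarse lipschitz constants of $\pi_U\circ\gamma$ depend only on $C$ and $E$, so $D''$ depends only on $C$ and the HHS parameters.

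As for the ``Moreover'' sentence: read literally, it asks for the output hierarchy-path constant to be the same $D'$. Your first argument does not give that, since its output constant also depends on $E$, and it is not the form in which the proposition is applied later in the paper. The coherent reading, signalled by the word ``Conversely'' that follows, is ``every $D'$--hierarchy path is $C$--quasimedian, with $C$ depending only on $D'$ and the HHS parameters''. That is precisely your second argument, so once the two arguments are relabelled, your proposal proves the intended statement.
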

	
	Hierarchical quasiconvexity (equivalently, coarse median quasiconvexity) generalises quasiconvexity in hyperbolic spaces and ``coarsifies'' the notion of median convexity in median metric spaces.  Accordingly, the coarse closest point projection to a quasiconvex subset of a hyperbolic space generalises to HQC subsets of HHSes in a way that mimics gate maps in median spaces, using the following construction from \cite[Sec. 5]{BehrstockHierarchically19}.
	
	\begin{cons}[HQC gate maps]\label{cons:gate}
		Let $(X,\mathfrak S)$ be an HHS and let $Y\subseteq X$ be a $\kappa$--HQC subset.  Define  $\gate_Y:X\to 2^Y$ as follows.  For each $U\in\mathfrak S$, let $p^Y_U:\mathcal CU\to 2^{\pi_U(Y)}$ be the coarse closest-point projection, which takes points to sets of diameter bounded in terms of $E$ and $\kappa(0)$.  Fix $x\in X$.  Lemma 5.3 of \cite{BehrstockHierarchically19} provides a constant $\kappa'$, depending on the HHS parameters and the function $\kappa$ but not on $x$, such that any tuple $(q_U)_{U\in\mathfrak S}$ with $q_U\in p^Y_U(\pi_U(x))$ for all $U\in\mathfrak S$ is $\kappa'$--consistent.  Hence there exists $\kappa''$, depending on the HHS parameters and $\kappa$ only, such that there exists at least one $\bar x\in Y$ such that $\dist_U(\bar x,p^Y_U(\pi_U(x)))\leq \kappa''$ for all $U$.  Let $\gate_Y(x)$ be the set of all such $\bar x$.  As noted in \cite[Defn. 5.4, Lem. 5.5]{BehrstockHierarchically19}, up to uniformly enlarging $\kappa''$, we have $\diam(\gate_Y(x))\leq \kappa''$ for all $x\in X$, and the coarse map $\gate_Y$ is $(\kappa'',\kappa'')$--coarsely lipschitz, $\kappa''$-quasimedian, and $\dist_Y(\gate_Y(y),y)\leq \kappa''$ for all $y\in Y$.
	\end{cons}

    One can uniformly perturb a gate map $\mathfrak g_Y$ so that it is a coarsely lipschitz retraction onto $Y$.  Thus HQC subsets are \emph{coarse lipschitz retracts}.
	
	\begin{lem}[Equivariant gate maps]\label{lem:equivariant-gate}
		Let $(X,\mathfrak S)$ be an HHS.  Let $G$ be a group equipped with an action on $(X,\mathfrak S)$ by HHS automorphisms.  Suppose that $Y\subseteq X$ is $\kappa$--HQC and $G$--invariant.  Then the gate map $\gate_Y:X\to 2^Y$ from Construction \ref{cons:gate} is $G$--equivariant.  Moreover, if the $G$--action on $X$ is free, then there is a $G$--equivariant map $\gate'_Y:X\to Y$ such that $\sup_{x\in X}\diam(\gate_Y(x)\cup \gate'_Y(x))$ is bounded above in terms of $\kappa$ and the HHS parameters.
	\end{lem}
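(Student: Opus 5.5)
Both assertions follow directly from the way Construction \ref{cons:gate} defines $\gate_Y$: it is built from data that $G$ permutes, so equivariance comes for free. The second assertion is then a standard choice of orbit representatives.

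\emph{Equivariance of $\gate_Y$.} Fix $g\in G$, $x\in X$ and $U\in\mathfrak S$. Since $Y$ is $G$--invariant, Definition \ref{defn:HHS-automorphism} gives $g\pi_U(Y)=\pi_{gU}(gY)=\pi_{gU}(Y)$. The map $g:\mathcal CU\to\mathcal CgU$ is an isometry, so it carries the set of points of $\pi_U(Y)$ nearly closest to $\pi_U(x)$ onto the set of points of $\pi_{gU}(Y)$ nearly closest to $\pi_{gU}(gx)=g\pi_U(x)$. This holds provided $p^Y_U$ is defined canonically, for instance as the set of all points of $\pi_U(Y)$ within $1$ of realising the distance. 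Hence
\[
g\,p^Y_U(\pi_U(x))=p^Y_{gU}(\pi_{gU}(gx)).
\]
Now let $\bar x\in\gate_Y(x)$. Then $g\bar x\in Y$, and for every $V\in\mathfrak S$, writing $V=gU$, we get
\[
\dist_{V}(g\bar x,p^Y_V(\pi_V(gx)))=\dist_U(\bar x,p^Y_U(\pi_U(x)))\leq\kappa''.
\]
This shows $g\bar x\in\gate_Y(gx)$. Applying the same argument to $g^{-1}$ gives the reverse inclusion, so $g\gate_Y(x)=\gate_Y(gx)$. The only subtlety is that $\kappa''$ must be independent of $x$ and $g$; this holds because it depends only on $\kappa$ and the HHS parameters.

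\emph{The equivariant single-valued map $\gate'_Y$.} Since $G$ acts freely on $X$, choose a set $R\subseteq X$ of orbit representatives. For each $r\in R$, pick any $y_r\in\gate_Y(r)$, which is nonempty by Construction \ref{cons:gate}. Every $x\in X$ can be written uniquely as $x=gr$ with $g\in G$ and $r\in R$, and uniqueness of $g$ is exactly freeness. Set $\gate'_Y(x)=gy_r$. This map is well defined and $G$--equivariant by construction.

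It remains to bound the error. By the first part, $gy_r\in g\gate_Y(r)=\gate_Y(x)$, so $\gate'_Y(x)\in\gate_Y(x)$. Therefore
\[
\diam(\gate_Y(x)\cup\gate'_Y(x))\leq\diam\gate_Y(x)\leq\kappa'',
\]
which depends only on $\kappa$ and the HHS parameters. Neither step is a genuine obstacle. The main care needed is to make the choices in Construction \ref{cons:gate} ($p^Y_U$, and the constants) canonical and not point-dependent, so that the isometries $g:\mathcal CU\to\mathcal CgU$ transport them exactly.
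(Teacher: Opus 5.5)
Your proof is correct and follows the paper's argument: equivariance of the closest-point projections $p^Y_U$ under the isometries $g\colon\mathcal CU\to\mathcal C gU$ gives $g\gate_Y(x)=\gate_Y(gx)$, and then you choose values on orbit representatives and extend equivariantly using freeness. You also make explicit two points the paper leaves implicit: $p^Y_U$ must be defined canonically (without point-dependent choices), and $\gate'_Y(x)$ actually lies in $\gate_Y(x)$, which is what gives the diameter bound.
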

	
	By Lemma \ref{lem:equivariant-gate}, in any situation where the $G$--action on $X$ is free and preserves $Y$, we will assume that $\gate_Y$ is a $G$--equivariant map $X\to Y$ (not just a coarse map) with the properties from Construction \ref{cons:gate}, with $\kappa''$ depending on the HHS parameters and $\kappa$ only.
	
	\begin{proof}[Proof of Lemma \ref{lem:equivariant-gate}]
		From Definition \ref{defn:HHS-automorphism} and the assumption that $GY=Y$, for all $U\in\mathfrak S$ and $g\in G$, we have $\pi_{gU}(Y)=g(\pi_U(Y))$. From the definition of coarse closest-point projection in a hyperbolic space, it follows that $p^Y_{gU}(\pi_{gU}(gx))=p^Y_{gU}(g(\pi_U(x))=g(p^Y_U(x))$ for all $x\in X$, so Construction \ref{cons:gate} implies that $\gate_Y(gx)=g\gate_Y(x)$ for all $g\in G,x\in X$. Now suppose that the $G$--action on $X$ is free.  Let $\{x_i\}_{i\in I}\subseteq X$ contain exactly one point of $X$ in each $G$--orbit, and for each $i\in I$, choose $\gate'_Y(x_i)\in \gate_Y(x)$ arbitrarily.  Given $x\in X$, there is a unique $g\in G$ and $i\in I$ with $x=gx_i$, so set $\gate'_Y(x)=g\gate_Y'(x_i)$.  This gives an equivariant map $\gate'_Y:X\to Y$, and $\diam(\gate_Y(x)\cup \gate'_Y(x))=\diam(\gate_Y(x))\leq \kappa''$ for all $x\in X$, as claimed.
	\end{proof}
	
	An important feature of hierarchically quasiconvex subsets is that they can be equipped with an induced HHS structure. The following holds by \cite[Lem.~5.6, Rem.~5.7]{BehrstockHierarchically19}.  The fact that the action of $G$ on $Y$ is by HHS automorphisms follows from the same considerations as in the equivariance part of the proof of Lemma \ref{lem:equivariant-gate}.
	
	\begin{lem}[HHS structure inherited by HQC set]\label{lem: HHS structure on HQC subspaces}
		Let $(X, \mathfrak S)$ be an HHS, let $\kappa: [0,\infty) \rightarrow [0,\infty)$ be a function, and let $Y \subseteq X$ be a $\kappa$-hierarchically quasiconvex subspace. Let $G$ be a group acting on $X$ by HHS automorphisms such that $GY = Y$. Then there is an HHS structure $\mathfrak S_Y$ on $Y$ such that the action of $G$ on $Y$ is by HHS automorphisms and the $\pi_U$--projection maps are coarsely surjective. The HHS parameters of $(Y, \mathfrak S_Y)$ depend only on the HHS parameters of $(X, \mathfrak S)$ and $\kappa$.  
	\end{lem}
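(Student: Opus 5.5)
The plan is to take the induced structure from \cite[Lem.~5.6, Rem.~5.7]{BehrstockHierarchically19} and check that it is compatible with the $G$--action; that check is the only genuinely new content here. Recall the construction. The index set is $\mathfrak S_Y=\mathfrak S$ with the same relations $\nest,\orth,\transverse$. For each $U\in\mathfrak S$, the hyperbolic space is replaced by $\mathcal C_YU$, which is either $\pi_U(Y)$ with the induced path metric of a uniform neighbourhood, or the hull of $\pi_U(Y)$ in $\mathcal CU$. Since $\pi_U(Y)$ is $\kappa(0)$--quasiconvex, this space is uniformly hyperbolic and quasi-isometrically embedded in $\mathcal CU$. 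The projections are $\pi^Y_U:=p^Y_U\circ\pi_U|_Y$, which is coarsely $\pi_U|_Y$ and coarsely surjective by construction. The relative projections are $\rho^U_V$ and $\rho^V_U$ post-composed with the closest-point projections $p^Y_U, p^Y_V$.

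The HHS axioms for $(Y,\mathfrak S_Y)$, with parameters depending only on $E,\hhscomp,\theta_u,\kappa$, come from the cited lemma. Consistency and bounded geodesic image follow from the original axioms together with the fact that closest-point projection to a quasiconvex set is coarsely lipschitz. Uniqueness follows from the second clause of Definition~\ref{defn:HQC} combined with the uniqueness axiom in $X$. Partial realisation uses the gate map of Construction~\ref{cons:gate}: realise in $X$, then gate to $Y$.

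For equivariance, I will argue exactly as in the proof of Lemma~\ref{lem:equivariant-gate}. Because $GY=Y$ and $G$ acts by HHS automorphisms, we have $\pi_{gU}(Y)=g\pi_U(Y)$. Hence each isometry $g:\mathcal CU\to\mathcal CgU$ restricts to an isometry $\mathcal C_YU\to\mathcal C_YgU$, whether $\mathcal C_YU$ is a neighbourhood or a hull, since both are defined metrically from $\pi_U(Y)$. These restrictions still compose as in condition (1) of Definition~\ref{defn:HHS-automorphism}. The coarse closest-point projections satisfy $p^Y_{gU}\circ g=g\circ p^Y_U$. This gives $\pi^Y_{gU}(gy)=g\pi^Y_U(y)$, and likewise $g$ intertwines the modified $\rho$--maps, which are built from $G$--equivariant $\rho$'s (using Lemma~\ref{lem:invariant-down-rho} for downward maps). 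The action on $\mathfrak S_Y=\mathfrak S$ is unchanged, so it preserves the relations.

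The main subtlety is that coarse closest-point projection is only a coarse map, so equivariance must hold on the nose. I will handle this by defining $p^Y_U(z)$ as the full set of points of $\pi_U(Y)$ within $1$ of the infimal distance to $z$. This canonical set-valued choice is automatically preserved by isometries. Any choices made inside the cited construction, for example choosing a set $\pi_U(Y)$ versus a neighbourhood, are made canonically in the same way. With these choices, all three conditions of Definition~\ref{defn:HHS-automorphism} hold, and the parameter bounds are those of the cited lemma.
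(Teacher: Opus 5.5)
Your proposal is correct and follows the paper's route: the structure and its parameters come from \cite[Lem.~5.6, Rem.~5.7]{BehrstockHierarchically19}, normalised so that $\mathcal C_YU$ is built from $\pi_U(Y)$ and the $\rho$'s are post-composed with closest-point projection (compare Remark~\ref{rem:HQC-HHS}). Equivariance then comes from $\pi_{gU}(Y)=g\pi_U(Y)$ together with isometry-invariance of canonically defined closest-point projections, exactly as in Lemma~\ref{lem:equivariant-gate}. Two harmless slips: Definition~\ref{defn:HHS-automorphism} only requires equivariance of the bounded sets $\rho^U_V$, not of the downward maps, so you do not need Lemma~\ref{lem:invariant-down-rho}, which in any case only gives equivariance away from a neighbourhood of $\rho^U_V$; and uniqueness in $Y$ is inherited directly from $X$, since the second clause of hierarchical quasiconvexity is what partial realisation uses, via the gate map.
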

	
	\begin{remark}\label{rem:HQC-HHS}
		The structure $(Y,\mathfrak S_Y)$ from Lemma \ref{lem: HHS structure on HQC subspaces} can be constructed in various ways.  If we do not require coarse surjectivity of the projections, then it has the following particularly simple description.  Let $\mathfrak S_Y=\mathfrak S$ (as a set with relations $\nest,\orth,\transverse$), for each $U\in\mathfrak S_Y$, let $\mathcal C_YU=\mathcal CU$, and let $\pi_U^Y:Y\to\mathcal CU$ be $\pi^Y_U=\pi_U|_{Y}$.  Recalling that the definition of an HHS does not require coarse surjectivity of projections, but only quasiconvexity of their images, $(Y,\mathfrak S_Y)$ is an HHS and in particular the distance formula (Theorem \ref{thm:distance-formula}) applies.  This description is convenient for some distance estimates in the proof of Proposition \ref{prop: almost invariant hull}, and we do not need it elsewhere.
		
		However, if one wishes to stay within the subclass of HHSes with uniformly coarsely surjective projections, then one can simply normalise, as in \cite[Rem. 1.4]{BehrstockHierarchically19}.  In this procedure, if $V\propnest U$ or $V\transverse U$, then replace $\rho^V_U$ with its image under the coarse closest-point projection $\mathcal CU\to 2^{\pi_U(Y)}$ (using that $\pi_U(Y)$ is uniformly quasiconvex in $\mathcal CU$).
	\end{remark}
	
	\subsection{Existential hierarchical quasiconvexity}\label{subsec:EHQC}
	Here is the weaker version of hierarchical quasiconvexity used in our main result:
	
	\begin{defn}[EHQC]\label{defn:EHQC}
		Let $k \geq 1, r \geq 0$. A subspace $Y \subseteq X$ is $(k,r)$\textit{--existentially hierarchically quasiconvex (EHQC)} if, for all $y_1, y_2 \in Y$, there exists a $k$-hierarchy path $\gamma$ from $y_1$ to $y_2$ with $\gamma \subseteq \mathcal{N}_r(Y)$.
	\end{defn}

    \begin{remark}\label{rem:0-EHQC}
    Observe that for any $k\geq 1,r\geq 0$, there exists $k'$ such that $Y$ is $(k,r)$--EHQC if and only if it is $(k',0)$--EHQC, since one can perturb a hierarchy path to lie in $Y$ at the expense of a controlled change in the hierarchy path constant.
    \end{remark}
	
	The following is straightforward to check using Proposition \ref{prop:hierarchy-path-char} and Proposition \ref{prop:RST}; it motivates Definition \ref{defn:EHQC}, but we will not use it.  Compare \cite[Lem. 2.11, Prop. 2.8]{HagenPetyt:BBF}.
	
	\begin{cor}\label{cor:subalgebra-EQHC}
		For all $C$, there exist $k,r$, depending on $C$ and the HHS parameters, such that any $C$--coarsely connected $C$--coarse median subalgebra of $X$ is $(k,r)$--EHQC.
	\end{cor}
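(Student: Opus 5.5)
Given $y,y'\in Y$, the plan is to build a hierarchy path between them that stays close to $Y$ by repeatedly inserting medians. Let $Y$ be a $C$--coarsely connected $C$--coarse median subalgebra.

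Since $Y$ is $C$--coarsely connected, there is a chain $y=z_0,z_1,\ldots,z_m=y'$ in $Y$ with $\dist(z_i,z_{i+1})\leq C$. This chain is not yet efficient, so we replace it by a median-tightened one. Fix a $D$--hierarchy path $\gamma$ from $y$ to $y'$, provided by Proposition \ref{prop:hierarchy-path-char}. For each point $p$ of $\gamma$ we want a point of $Y$ near it.

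The natural choice is to use medians $\mu(y,y',z_i)$ for $z_i$ in the chain. These are $C$--close to $Y$ because $Y$ is a coarse subalgebra. Consecutive medians $\mu(y,y',z_i)$ and $\mu(y,y',z_{i+1})$ are within $\kappa(0)(C)+\kappa(0)$ of each other, by (C1). Moreover, each $\mu(y,y',z_i)$ lies in the (median-quasiconvex) hull of $\{y,y'\}$, and by the HHS characterisation its projection to each $\mathcal CU$ is uniformly close to a geodesic from $\pi_U(y)$ to $\pi_U(y')$. Thus the sequence $w_i=\mu(y,y',z_i)$ runs from (near) $y$ to (near) $y'$ with uniformly bounded jumps, inside a uniform neighbourhood of $Y$ and inside the hull of $y,y'$.

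It remains to upgrade $(w_i)$ to a hierarchy path. I would invoke the cubical approximation theorem (Section \ref{appsubsec:cubical-approx}) for the two points $y,y'$. The hull of $\{y,y'\}$ is uniformly quasi-median quasi-isometric to a CAT(0) cube complex that is, for two points, an interval. Alternatively, I would argue directly in each $\mathcal CU$, using the facts that the $w_i$ project near the geodesic $[\pi_U(y),\pi_U(y')]$ with bounded jumps. To extract a monotone subsequence, I would repeatedly replace $w_j$ by $\mu(w_i,y',w_j)$ for $j>i$, a monotonisation step that keeps points near $Y$ since each such median of points near $Y$ is near $Y$. This step is only iterated boundedly often in a controlled way. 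The cleaner route is to define the path greedily: having chosen $v_k\in Y$, let $v_{k+1}$ be a point near $\mu(v_k,y',w_i)$ for suitable $i$. Each $\mu(v_k,y',\cdot)$ is again near $Y$ with constant degrading; to avoid degradation over many steps, I would always take medians of points of $Y$ itself (replacing each median by a $C$--close point of $Y$), so constants stay fixed at $C$ each time.

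The resulting sequence is coarsely monotone in every $\mathcal CU$: projections move toward $\pi_U(y')$, up to uniform error. Connecting consecutive points by short hierarchy paths then gives a path whose projections are unparametrised quasigeodesics. The distance formula (Theorem \ref{thm:distance-formula}) then gives the quasi-isometric embedding property, and by Proposition \ref{prop:hierarchy-path-char} we obtain a $k$--hierarchy path in $\neb_r(Y)$.

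The main obstacle is this monotonicity/no-backtracking step: ensuring uniformly, in all $U$ simultaneously, that the median-tightened sequence does not backtrack. I would first try to handle it via the quasimedian interval model from cubical approximation, where monotone sequences in an interval of a cube complex correspond to nested halfspace sets.
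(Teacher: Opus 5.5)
Your outline has the right ingredients: push a $C$--chain $z_0=y,\ldots,z_m=y'$ in $Y$ into the hull of $\{y,y'\}$ via $w_i=\mu(y,y',z_i)$, then monotonise. But the step that carries the proof is missing, and the greedy fix you propose does not work.

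\textbf{Why the greedy step fails.} Replacing each median $\mu(v_k,y',\cdot)$ by a $C$--close point of $Y$ keeps every $v_k$ in $Y$. However, each replacement can move $\pi_U(v_k)$ backwards along $[\pi_U(y),\pi_U(y')]$ by a uniform amount. The number of steps is governed by the length $m$ of the chain, which is not bounded in terms of $C$ and the HHS parameters. So nothing prevents unbounded backtracking. Your claim that the sequence is coarsely monotone ``up to uniform error'' is therefore unjustified, as is the assertion that the monotonisation is iterated only boundedly often.

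Conversely, exact iterated medians $\mu(\cdots\mu(\mu(w_0,y',w_1),y',w_2)\cdots,y',w_i)$ are monotone, but they have depth $i$, so their distance to $Y$ degrades with $i$. You name this tension as the main obstacle, but the proposal does not resolve it. Also, even granting coarse monotonicity in every $\mathcal CU$ and bounded jumps, the distance formula does not by itself give a quasi-isometric embedding. You would still have to rule out long stretches with no net progress.

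\textbf{The missing idea: joins in a finite-rank interval have bounded depth.}
\begin{itemize}
\item Apply Proposition \ref{prop:cubical_approximation} to $A=\{y,y'\}$, so all constants are uniform. Then $\Box=[\hat y,\hat y']$ is an interval of dimension at most $\hhscomp$, and $f:\Box\to X$ is a quasimedian quasi-isometric embedding, Hausdorff close to $H_\theta(y,y')$. This hull contains each $w_i$ up to uniform error.
\item Choose $\hat w_i\in\Box$ with $f(\hat w_i)$ uniformly close to $w_i$. Set $u_i=\hat w_0\vee\cdots\vee\hat w_i$, where $x\vee x'=\mu_\Box(x,\hat y',x')$.
\item Let $\sigma(x)$ be the set of hyperplanes separating $\hat y$ from $x$. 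Then $\sigma(u_i)=\bigcup_{j\le i}\sigma(\hat w_j)$. Hence the $u_i$ are monotone, with $d_\Box(u_{i-1},u_i)\le d_\Box(\hat w_{i-1},\hat w_i)$, which is uniformly bounded. They lie on a combinatorial geodesic from $\hat y$ to $\hat y'$.
\item Order the hyperplanes of $\Box$ by separation from $\hat y$. Incomparable hyperplanes cross, so antichains have size at most $\hhscomp$. By Dilworth's theorem they split into at most $\hhscomp$ chains.
\item Each $\sigma(\hat w_j)$ meets each chain in an initial segment. Hence $u_i$ is the join of at most $\hhscomp$ of the $\hat w_j$.
\item It follows that $f(u_i)$ is uniformly close to a median expression of depth at most $\hhscomp$ in $y'$ and points $C$--close to $Y$. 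Applying the coarse-subalgebra property and (C1) at most $\hhscomp$ times, $f(u_i)$ is uniformly close to $Y$.
\item By the last clause of Proposition \ref{prop:cubical_approximation}, $f$ composed with that geodesic is a uniform hierarchy path lying uniformly near $Y$. This also removes the parametrisation issue, since each nontrivial step crosses a new hyperplane.
\end{itemize}
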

	
	The converse to the preceding corollary is not true (consider codimension--$1$ affine subspaces of $\reals^3$, and give $\reals^3$ the $\ell_1$ metric), so we have to work with the more general notion of EHQC, and the language of paths, instead of coarse medians and coarse subalgebras.
	
	\subsection{Hulls in $X$ and $\boundary X$} \label{subsec: hulls}
	We recall \emph{hierarchically quasiconvex hulls} from \cite[Sec. 6]{BehrstockHierarchically19}:
	
	\begin{defn}[Hulls of interior points]
		Let $A \subseteq X$. For all $U\in\mathfrak S$, let $\hull_U(A)$ be the union of all $(1,20E)$-quasi-geodesics in $\mathcal CU$ with endpoints in $\pi_U(A)$.  Observe that $\hull_U(A)$ is a $10(E+\Morse_E(1,20E))$--quasiconvex subset of $\mathcal CU$.  For any $M \geq 0$, let $H_{M}(A)$ be the set of all $x\in X$ such that $\dist_U(\pi_U(x),\hull_U(A))\leq M$ for all $U\in\mathfrak S$.
	\end{defn}
	
	Hulls can be defined for sets that include points in the HHS boundary in a number of ways (see e.g. \cite{DurhamCubulating23,DurhamGeometry22}). We only need hulls of pairs of points in  $X\cup \boundary X$:
	
	\begin{defn}[Hulls of pairs]\label{defn:pair-hull}
		Fix $z_0\in X$ and let $x,y \in X \cup \partial X$. Define the \emph{hull} $\hull_{U,z_0}(x,y)$ of $\{x,y\}$ as follows, for each $U\in\mathfrak S$:
		\begin{itemize}
			\item If $x,y \in X$ then $\hull_{U,z_0}(x,y) \coloneqq \hull_U(\{x,y\})$.
			\item If $x \in X$ and $y \in \partial X$ then $\hull_{U,z_0}(\{x,y\}) = \hull_U(x,y)$ is the union of all $(1,20E)$-quasi-geodesics from $\pi_U(x)$ to $\pi_{U,x}(y)$.
			\item If $x,y \in \partial X$ then $\hull_{U,z_0}(\{x,y\})$ is the union of all $(1,20E)$-quasi-geodesics from $\pi_{U,z_0}(x)$ to $\pi_{U,z_0}(y)$.
		\end{itemize}
		Given $M \geq 0$, let $H_{M,z_0}(x,y)$ be the set of points $p \in X$ such that $\pi_U(p)$ is in the $M$-neighbourhood of $\hull_{U,z_0}(x,y)$ for all $U \in \mathfrak S$.
	\end{defn}
	
	\begin{remark}\label{rem:hull-is-hull}
		If $x,y \in X$ then $H_{M,z_0}(x,y) = H_M(\{x,y\})$.
	\end{remark}
	
	Lemma 6.2 in \cite{BehrstockHierarchically19} provides a constant $\theta'$ such that, for all $M \geq \theta'$ there is a function $h_M$ such that $H_M(A)$ is $h_M$--hierarchically quasiconvex for any $A\subseteq X$, and, as noted in \cite[Sec. 15]{Casals-RuizReal24}, examining the proof of \cite[Lem. 6.2]{BehrstockHierarchically19} shows that $\theta'$ and $h_M$ depend only on the HHS parameters (using the fact that the number $r_0$ from the realisation theorem depends only on the HHS parameters). The proof of the same lemma also reveals:
	
	\begin{lem} \label{lem: hulls are quasiconvex}
		There exists a constant $\theta \geq 0$, depending only on the HHS parameters, such that the following holds. For all $M \geq \theta$ and $z_0 \in X$, there exists a function $h_M: [0,\infty) \rightarrow [0,\infty)$, depending only on $M$ and the HHS parameters, such that, if $H_{M,z_0}(x,y) \neq \emptyset$ then it is $h_M$-hierarchically quasiconvex. 
	\end{lem}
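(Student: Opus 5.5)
The plan is to reproduce the proof of \cite[Lem. 6.2]{BehrstockHierarchically19}, replacing the sets $\hull_U(A)$ by the sets $\hull_{U,z_0}(x,y)$ of Definition \ref{defn:pair-hull}. Throughout, we track that every constant depends only on $E$, $\hhscomp$, $\theta_u$ and $M$. We verify the two conditions of Definition \ref{defn:HQC} for $Y:=H_{M,z_0}(x,y)$, assumed nonempty.

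\emph{First condition.} For each $U$, the set $\hull_{U,z_0}(x,y)$ is a union of $(1,20E)$--quasigeodesics between two sets. Each of these sets is either a bounded subset of $\mathcal CU$ (of diameter at most $E$, using Lemma \ref{lem:boundary-projection-bound} and the choice $E\ge\xi'$) or a single point of $\partial\mathcal CU$. By the Morse lemma and thin (ideal) triangles, this union is $c$--quasiconvex with $c=c(E)$. The projection $\pi_U(Y)$ lies in the $M$--neighbourhood of this quasiconvex set. To conclude that $\pi_U(Y)$ is itself uniformly quasiconvex, we show that $\pi_U(Y)$ is coarsely dense in $\hull_{U,z_0}(x,y)$, up to an error depending on $M$ and $E$. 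For this we use realisation. Given a point $p$ on a quasigeodesic in $\hull_{U,z_0}(x,y)$, we build a tuple $(b_V)$ as follows:
\begin{itemize}
\item $b_U=p$;
\item for $V\propnest U$ with $\rho^V_U$ far from $p$, set $b_V=\rho^U_V(p)$;
\item for all other $V$, take $b_V$ to be the appropriate projection of $x$, $y$ or $z_0$, equivalently a coarse median of the data in $\mathcal CV$.
\end{itemize}
This mimics Construction \ref{cons:coarse-median} with one input replaced by the point $p$. Consistency with constant $100E$-ish follows as in \cite[Lem. 2.6]{BehrstockHierarchically19}, using Lemma \ref{lem:invariant-down-rho} and bounded geodesic image. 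Theorem \ref{thm:hhs_realisation} then gives a point of $Y$ projecting near $p$, provided $M\ge\theta$ with $\theta$ a multiple of $r_0E$. This is exactly where $\theta$ comes from.

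\emph{Second condition.} Suppose $w\in X$ satisfies $\dist_U(w,Y)\le r$ for all $U$. Then $\dist_U(w,\hull_{U,z_0}(x,y))\le r+M$. Let $q_U$ be the coarse closest-point projection of $\pi_U(w)$ to $\hull_{U,z_0}(x,y)$, so $\dist_U(w,q_U)\le r+M+c$. Next we show $(q_U)_U$ is $\kappa'$--consistent with $\kappa'$ uniform; this is the analogue of \cite[Lem. 5.3]{BehrstockHierarchically19}, whose proof only uses quasiconvexity and the consistency of the boundary data. Realisation then produces $\bar w$ with $\dist_U(\bar w,q_U)\le r_0\kappa'$ for all $U$. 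If $M\ge\theta\ge r_0\kappa'$, this gives $\bar w\in Y$. The uniqueness axiom then yields $\dist(w,\bar w)\le\theta_u(r+M+c+r_0\kappa')=:h_M(r)$.

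\emph{Main obstacle.} The hard step is the consistency of the tuples in the boundary case. For $V\propnest U$ with $U\in\support(x)$, one must check that the projections $\rho^U_V(\lambda^V_U)$ and the basepoint-dependent choices $\pi_{V,z_0}$ on $\support^\perp$ interact consistently with points on the quasigeodesics of $\hull_U$. We plan to handle this by approximating each boundary point $x$ by interior points $x_n$ along a ray realising $\lambda^U$ for $U\in\support(x)$, with all other coordinates set to $\pi_{V,z_0}(x)$. This reduces consistency to the interior-point case of \cite[Lem. 6.2]{BehrstockHierarchically19}, applied to the bounded hulls $H_M(\{x_n,y_n\})$, with constants independent of $n$. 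The adjusted $\lambda'$ (removing a $10E$--neighbourhood of $\rho$) together with Lemma \ref{lem:invariant-down-rho} is what keeps these errors uniform.
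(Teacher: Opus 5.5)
Your plan matches the paper's approach. The paper proves this lemma only by asserting that the proof of \cite[Lem.~6.2]{BehrstockHierarchically19} goes through for the hulls $H_{M,z_0}(x,y)$, with constants depending only on $M$ and the HHS parameters. Your sketch is that argument written out in more detail: realisation gives coarse density of $\pi_U(Y)$ in $\hull_{U,z_0}(x,y)$, and the second condition follows from consistency of the closest-point-projection tuple, realisation and uniqueness.

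One repair is needed in your approximation step. The points $x_n$ cannot have every remaining coordinate equal to $\pi_{V,z_0}(x)$, because that tuple is inconsistent for $V\propnest U\in\support(x)$ whose $\rho^V_U$ lies beyond the $n$--th point of the ray. For those $V$, use $\rho^U_V(\pi_U(x_n))$ instead, for instance by taking $x_n$ on a hierarchy ray from $z_0$ to $x$. After that, since consistency is a pairwise condition, it suffices to take $n$ large separately for each pair of domains.
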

	
	\subsection{Standard product regions}\label{subsec:product-regions}
	Recall from \cite[Sec. 17]{Casals-RuizReal24} and \cite[Sec. 5]{BehrstockHierarchically19} the \emph{standard product regions} in $X$: for each $U\in\mathfrak S$, the product region $P_U$ is the set of $x\in X$ such that $\dist_V(x,\rho^U_V)\leq E$ for all $V\in\mathfrak S$ for which $U\propnest V$ or $U\transverse V$.  By the definition of an HHS automorphism, we have $P_{gU}=gP_U$ for all $g\in G$, whenever $G$ is a group acting on $(X,\mathfrak S)$ by HHS automorphisms.
	
	Next, recall the coarse product structure of $P_U$.  Fix $q\in P_U$, and for each $V\in\mathfrak S$ let $q_V\in\pi_V(p)$.  Then $F_U\times \{q\}$ is the following subset of $P_U$.  Consider all $(p_V)_{V\nest U}\in\prod_{V\nest U}\mathcal CU$ that satisfy all of the $E$--consistency inequalities for pairs of nested or transverse pairs $V,V'\nest U$.  For each such $(p_V)_{V\nest U}$, extend to a tuple in $\prod_{V\in\mathfrak S}\mathcal CV$ by setting $p_V=q_V$ for $V\orth U$, and $p_V\in\rho^U_V$ for $U\propnest V$ or $U\transverse V$.  Then $(p_V)_{V\in\mathfrak S}$ is $E$--consistent, and the realisation theorem provides a point $p\in P_U$ such that $\dist_V(p,p_V)\leq r_0E$ for all $V\in\mathfrak S$.  By the distance formula (see Theorem \ref{thm:distance-formula} below), $p$ is coarsely unique (constants depending only on the HHS parameters) but not necessarily unique.  Let $F_U\times\{q\}$ be the set of all such $p$, as the tuple $(p_V)_{V\nest U}$ varies.  When $q$ is not important, we will denote $F_U\times\{q\}$ by $F_U$ and refer to it as a \emph{coarse parallel copy of $F_U$ in $P_U$}.
	
	It is shown in, for instance, \cite[Sec. 17]{Casals-RuizReal24}, that the subspaces $P_U$ and $F_U\times\{q\}$ and $\{p\}\times E_U$ are hierarchically quasiconvex, with hierarchical quasiconvexity functions depending only on the HHS parameters.  In particular, there is no loss  of generality in assuming that each of these subspaces is $E$--quasimedian quasiconvex, and we will do this to save notation.
	
	\subsection{Canonical HHS cone-off}\label{subsec:cone-off}
	The procedure for coning off certain hierarchically quasiconvex subsets of $(X,\mathfrak S)$ to obtain new HHS structures was introduced in \cite[Section~2]{BehrstockHierarchically19} and reproved in a slightly stronger form in \cite[Section~19]{Casals-RuizReal24}.  Here we adapt the cone-off construction to suit our purposes and use the statement from \cite{Casals-RuizReal24} to show that the adapted construction has the properties we need.
	
	\begin{defn}[Cone-off data]\label{defn:cone-off-data}
		Let $(X,\mathfrak S)$ be an HHS, let $\dist$ be the metric on $X$, let $A\to\Isom(X)$ be a group action by HHS automorphisms on $(X,\mathfrak S)$, and let $\mathfrak U\subseteq \mathfrak S$ be an $A$--invariant subset that is downward-closed in the partial order $\nest$.  The tuple $((X,\mathfrak S),A,\mathfrak U)$ is called \emph{cone-off data}.
	\end{defn}
	
	In both cone-off constructions, one creates a new HHS, with an $A$--action by HHS automorphisms, where the hierarchically quasiconvex subsets $F_U\subset X$ for $U\in\mathfrak U$ have become bounded, and the new HHS index set is $\mathfrak S-\mathfrak U$.  The new HHS, as a set, coincides with $X$, and the cone-off construction amounts to replacing $\dist$ with a new metric.  The difference between the construction from \cite{BehrstockAsymptotic17,Casals-RuizReal24} and our adapted version is a technical difference in how the new metric is defined.  With the original definition, as noted in \cite{Casals-RuizReal24}, the group $A$ does act on $X$, but the action is by uniform quasi-isometries, not isometries.  The purpose of the adapted construction is to keep the $A$--action isometric.
	
	We first recall the cone-off metric from \cite{BehrstockAsymptotic17,Casals-RuizReal24}:
	
	\begin{defn}[Non-canonical $\mathfrak U$--cone-off]\label{defn:old-cone-off}
		Given $x,y\in X$, let $D(x,y) \coloneqq \dist(x,y)$ unless there exists $U\in\mathfrak U$ and $e\in E_U$ such that $\{x,y\}\subset F_U\times \{e\}$, in which case $D(x,y)\coloneqq\min\{1,\dist(x,y)\}$.  The \emph{non-canonical $\mathfrak U$--cone-off} is the metric space $\oldcone X=(X,\widehat{\dist})$, where $\widehat{\dist}$ is the length metric on $X$ induced by $D$.
	\end{defn}
	
	\begin{remark}[Non-canonical]\label{rem:non-canonical}
		The phrase \emph{non-canonical} refers to the fact that the subspaces $F_U$ of $X$ are generally only coarsely well-defined.  Because of the arbitrary choices involved in constructing the subspaces $F_U\times\{q\}$, the condition on pairs $x,y$ ensuring that $D(x,y)\leq 1$ is not always $A$--invariant, so the $A$--action on $\oldcone X$ may not be isometric.    
	\end{remark}
	
	\begin{defn}[Canonical $\mathfrak U$--cone-off]\label{defn:modern-cone-off}
		Given $x,y\in X$, let $\widecheck{D}(x,y) \coloneqq \dist(x,y)$ unless both of the following hold:
		\begin{itemize}
			\item There exists $U\in\mathfrak U$ such that $x,y\in P_U$, and 
			\item $\dist_V(x,y)\leq 10E$ for all $V\in\mathfrak S$ such that $V\orth U$.
		\end{itemize}
		If $x,y$ satisfy both conditions, let $\widecheck{D}(x,y)\coloneqq\min\{1,\dist(x,y)\}$.  Now let $\widecheck{\dist}(x,y)$ be the length metric on $X$ induced by $\widecheck{D}$, and let $\newcone X=(X,\widecheck{\dist})$, which is called \emph{canonical $\mathfrak U$--cone-off}.
	\end{defn}
	
	\begin{lem}\label{lem:cone-off-action}
		The $A$--action on $X$ induces an action of $A$ on $\newcone X$ by isometries.
	\end{lem}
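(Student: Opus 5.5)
The plan is to show that the weight function $\widecheck D$ is $A$--invariant, i.e. $\widecheck D(ax,ay)=\widecheck D(x,y)$ for all $a\in A$ and $x,y\in X$. Since $\widecheck\dist$ is the length metric induced by $\widecheck D$, it is an infimum over finite chains $x=x_0,x_1,\ldots,x_k=y$ of $\sum_i\widecheck D(x_{i-1},x_i)$. The map $a$ is a bijection of $X$ and therefore sends chains from $x$ to $y$ bijectively to chains from $ax$ to $ay$. Invariance of $\widecheck D$ then gives $\widecheck\dist(ax,ay)=\widecheck\dist(x,y)$. The composition law for the action on $\newcone X$ is inherited from that on $X$, because the underlying set is the same.

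To prove invariance of $\widecheck D$, first note that $A$ acts on $(X,\dist)$ by isometries, so $\dist(ax,ay)=\dist(x,y)$. It therefore suffices to check that the pair $(x,y)$ satisfies the two conditions of Definition \ref{defn:modern-cone-off} if and only if $(ax,ay)$ does. Applying the same argument to $a^{-1}$, it is enough to prove one direction.

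\textbf{First condition.} Suppose $x,y\in P_U$ with $U\in\mathfrak U$. Then $aU\in\mathfrak U$, since $\mathfrak U$ is $A$--invariant by the cone-off data. Moreover $aP_U=P_{aU}$, which follows from the definition of $P_U$: the $\rho$'s and the relations $\propnest,\transverse$ are equivariant by Definition \ref{defn:HHS-automorphism}, and each $a:\mathcal CV\to\mathcal C aV$ is an isometry. Concretely, $\dist_{aV}(ax,\rho^{aU}_{aV})=\dist_{aV}(a\pi_V(x),a\rho^U_V)=\dist_V(x,\rho^U_V)\leq E$. Hence $ax,ay\in P_{aU}$.

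\textbf{Second condition.} The relation $\orth$ is $A$--invariant, so $V\orth U$ if and only if $aV\orth aU$, and the map $V\mapsto aV$ is a bijection between the two orthogonal sets. For such $V$ we have $\dist_{aV}(ax,ay)=\dist_{aV}(a\pi_V(x),a\pi_V(y))=\dist_V(x,y)\leq 10E$, again using equivariance of projections and that $a:\mathcal CV\to\mathcal CaV$ is an isometry.

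There is no real obstacle here. The only point to be careful about is that the definition of $\widecheck D$ involves only canonical, equivariant data — $P_U$, the relation $\orth$, and the projections — in contrast to the non-canonical slices $F_U\times\{q\}$ of Remark \ref{rem:non-canonical}. This is exactly what makes the verification go through.
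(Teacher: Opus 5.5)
Your proposal is correct and is the paper's argument: the paper also shows that $\widecheck{D}$ is $A$--invariant, using $aP_U=P_{aU}$ and $\dist_{aV}(ax,ay)=\dist_V(x,y)$, and then deduces that the induced length metric $\widecheck{\dist}$ is invariant. You simply make explicit some details the paper leaves implicit, such as the $A$--invariance of $\mathfrak U$ and of the relation $\orth$.
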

	
	\begin{proof}
		This follows from the fact that the $A$--action on $(X,\mathfrak S)$ is by HHS automorphisms and by isometries on $(X,\dist)$.  In particular, $gP_U=P_{gU}$, and $\dist_{gV}(gx,gy)=\dist_V(x,y)$, so $\widecheck{D}$ is $A$--invariant, and hence $\widecheck{\dist}$ is also. 
	\end{proof}
	
	\begin{lem}\label{lem:cone-offs-are-QI}
		There exists $C$, depending only on the HHS parameters of $(X,\mathfrak S)$, such that the set-theoretic identity map $\newcone X\to\oldcone X$ is a $(C,C)$--quasi-isometry.
	\end{lem}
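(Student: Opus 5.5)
Both metrics $\widehat{\dist}$ and $\widecheck{\dist}$ are length metrics induced by the premetrics $D$ and $\widecheck D$, which agree with $\dist$ except on certain designated pairs, where they are capped at $1$. So I would compare the two sets of designated pairs directly. It suffices to find $C_1$, depending only on the HHS parameters, such that:
\begin{enumerate}[(a)]
\item if $\widecheck D(x,y)\leq 1<\dist(x,y)$, then $\widehat{\dist}(x,y)\leq C_1$;
\item if $D(x,y)\leq 1<\dist(x,y)$, then $\widecheck{\dist}(x,y)\leq C_1$.
\end{enumerate}
Indeed, any chain $x=x_0,\dots,x_k=y$ computing $\widecheck{\dist}$ up to $\epsilon$ can be modified step by step. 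Steps with $\widecheck D=\dist$ contribute at least as much to $\widehat{\dist}$ as to $\widecheck{\dist}$. Steps with $\dist\leq 1$ also cost the same in both. Each coned step is replaced by a $\widehat{\dist}$--path of length at most $C_1$. This gives $\widehat{\dist}\leq C_1\widecheck{\dist}+C_1$, and the symmetric argument gives the reverse inequality. Since the identity is surjective, this yields the $(C,C)$--quasi-isometry.

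For (b), suppose $x,y\in F_U\times\{e\}$ with $U\in\mathfrak U$. By construction, $F_U\times\{e\}\subseteq P_U$. For $V\orth U$, both $\pi_V(x)$ and $\pi_V(y)$ are within $r_0E$ of the same point $q_V$, so $\dist_V(x,y)\leq 2r_0E+E$. If $2r_0E+E\leq 10E$, we are done immediately. Otherwise I would subdivide: take a hierarchy path in the HQC set $F_U\times\{e\}$ (via Proposition \ref{prop:hierarchy-path-char} and the gate of Construction \ref{cons:gate}) between the points. Its projections to $\mathcal CV$ for $V\orth U$ stay in a uniformly bounded set, but that set may exceed $10E$. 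To handle this, I would replace the threshold comparison with the following observation. The conditions in Definition \ref{defn:modern-cone-off} are coarse, so any two points of $P_U$ whose projections agree up to $10E$ on $U^\orth$ are coned. Therefore I instead realise (Theorem \ref{thm:hhs_realisation}) an intermediate point $z$ with $z_V=\pi_V(x)$ for $V\orth U$ and $z_W=\pi_W(y)$ for $W\nest U$. Then $(x,z)$ differ only in projections bounded by $2r_0E+E$, so $\dist(x,z)$ is uniformly bounded by uniqueness. Meanwhile $(z,y)$ is a coned pair as long as $\dist_V(z,y)\leq 10E$, and this is arranged by choosing $z_V$ within $E$ of $\pi_V(y)$ after realisation error. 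Alternatively, one can simply enlarge $10E$ to accommodate $r_0$, which is harmless up to constants. This gives (b).

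For (a), suppose $x,y\in P_U$ with $\dist_V(x,y)\leq 10E$ for all $V\orth U$. Take $e$ to be the $E_U$--coordinate of $x$, that is, a point of $\{x\}\times E_U$. Let $y'$ be the gate of $y$ to the HQC set $F_U\times\{e\}$, which contains a point uniformly close to $x$. Then $\widehat{\dist}(x',y')\leq 1$ for suitable $x'$ near $x$. For $\dist(y,y')$ I would use the distance formula and uniqueness. The projections of $y$ and $y'$ agree coarsely on $W\nest U$ (gate property), on $V\orth U$ (both are within $10E+$const of $\pi_V(x)$), and on the remaining domains (both lie near $\rho^U_V$ since $y,y'\in P_U$). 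Hence $\dist(y,y')$ is uniformly bounded, and (a) follows. The main obstacle is bookkeeping the realisation and uniqueness constants so that everything depends only on the HHS parameters; the key subtle point is the $V\orth U$ comparison in (b).
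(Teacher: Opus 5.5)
Your reduction to (a) and (b) via chains, and your argument for (a), are essentially the paper's. For (a) you gate $y$ into the parallel copy of $F_U$ through $x$, compare projections on the three kinds of domains, and apply uniqueness. One wording slip: on steps with $\widecheck D=\dist$ you have $D\le \dist=\widecheck D$, so such steps cost \emph{at most} as much in $\widehat{\dist}$, which is the direction you need.

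The gap is in (b); your intermediate point does not do what you say.
\begin{enumerate}
\item Suppose $z$ realises the $\orth U$--coordinates of $x$ and the $\nest U$--coordinates of $y$. Then $\dist_W(x,z)\approx\dist_W(x,y)$ for $W\nest U$, which is unbounded on $F_U\times\{e\}$. So uniqueness gives no bound on $\dist(x,z)$.
\item The pair with uniformly bounded projections is $(z,y)$, so $(x,z)$ would have to be the coned pair. But for $V\orth U$ realisation only gives $\dist_V(x,z)\le r_0\kappa$, which need not be $\le 10E$. Realisation never lets you place $z_V$ ``within $E$'' of a prescribed point. The same objection applies to the requirement $z\in P_U$, which is also an $E$--scale condition.
\end{enumerate}
Any intermediate point produced by realisation has orthogonal coordinates known only up to an error of order $r_0\kappa$. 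So the subdivision reproduces exactly the difficulty it was meant to remove. Your fallback of enlarging $10E$ changes the metric $\widecheck{\dist}$. It therefore proves the lemma for a different space, unless you also show that the two thresholds give quasi-isometric cone-offs, which is precisely the point at issue.

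For comparison, the paper's proof of (b) is one line. It asserts $\dist_V(x,q_V)\le E$ for $V\orth U$ and $x\in F_U\times\{q\}$, hence $\dist_V(x,y)\le 2E$, so every $D$--coned pair is already $\widecheck D$--coned. You are right that the construction of $F_U\times\{q\}$ in the subsection on standard product regions, as written, only yields $r_0E$, so that line is loose as well.

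The honest repair is a convention rather than an argument: take the orthogonal threshold in the definition of $\widecheck D$ to be a uniform constant at least $2r_0E+2E$ instead of $10E$. Lemma~\ref{lem:cone-off-action} and your argument for (a) work verbatim for any uniform threshold. With this choice (b) is immediate and your proof is complete. You should make that change explicitly in the definition, rather than present it as a step in proving the lemma for the threshold $10E$.
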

	
	\begin{proof}
		Fix $x,y\in X$ and suppose that $D(x,y)\leq 1$.  If $\dist(x,y)\leq 1$, then $\widecheck{D}(x,y)\leq 1$.  The other possibility is that there exists $U\in\mathfrak U$ and $q\in P_U$ such that $x,y\in F_U\times \{q\}$.  By definition, this means that $x,y\in P_U$, and for all $V$ such that $V\orth U$, we have $\dist_V(x,q_V)\leq E$ and $\dist_V(y,q_V)\leq E$, so $\dist_V(x,y)\leq 2E$, and hence $\widecheck{D}(x,y)\leq 1$.  
		
		Conversely, suppose that $\widecheck{D}(x,y)\leq 1$.  If $\dist(x,y)\leq 1$, then $D(x,y)\leq 1$.  Otherwise, there exists $U\in\mathfrak U$ such that $x,y\in P_U$ and $\dist_V(x,y)\leq 10E$ whenever $V\orth U$.  Consider the subspace $F_U\times \{x\}$.  As noted just after Lemma 17.1 in \cite{Casals-RuizReal24} (or see \cite[Sec. 5]{BehrstockHierarchically19}), $F_U\times\{x\}$ is uniformly hierarchically quasiconvex in $(X,\mathfrak S)$, and admits a gate map $\gate:X\to F_U\times \{x\}$ that is uniformly coarsely lipschitz and has the following properties, for all $z\in X$:
		\begin{itemize}
			\item If $V\in\mathfrak S$ and $V\nest U$, then $\dist_V(z,\gate(z))\leq r_0E$.
			\item If $V\in\mathfrak S$ and $V\orth U$, then $\dist_V(\gate(z),x)\leq r_0E$.
			\item If $V\in\mathfrak S$ and $U\propnest V$ or $U\transverse V$, then $\dist_V(\gate(z),\rho^U_V)\leq r_0E$.
		\end{itemize}
		In particular, $\dist_V(x,\gate(x))\leq r_0E$ for all $V\in\mathfrak S$, so $\dist(x,\gate(x))\leq \theta_u(r_0E)$.  Similarly, $\dist_V(y,\gate(y))\leq 10E+r_0E$ for all $V$, so $\dist(y,\gate(y))\leq\theta_u(10E+r_0E)$.  Now, $D(\gate(x),\gate(y))\leq 1$ by the definition of $D$, so $D(x,y)\leq \theta_u(r_0E)+\theta_u(10E+r_0E)+1$, which depends only on the HHS parameters.  
		
		We have shown that the identity map $(X,\widehat{\dist})\to (X,\widecheck{\dist})$ and its inverse are both uniformly coarsely lipschitz, so they are quasi-isometries, as required.
	\end{proof}
	
	\begin{cor}\label{cor:uniform-cone-off}
		Let $(X,\mathfrak S)$, $A$, and $\mathfrak U$ be as in Definition \ref{defn:cone-off-data}.  Then there is an HHS $(\newcone X,\mathfrak S-\mathfrak U)$, with HHS parameters depending only on the HHS parameters of $(X,\mathfrak S)$, such that $A$ acts on $(\newcone X,\mathfrak S-\mathfrak U)$ by HHS automorphisms and on $\newcone X$ is by isometries, and there is an $A$--equivariant quasi-isometry $\newcone X\to \oldcone X$ with constants depending only on the HHS parameters.
		
		Finally, the identity map $c:X\to\newcone X$ is coarsely lipschitz and quasimedian, with the constants for both properties depending only on the HHS parameters of $(X,\mathfrak S)$.
	\end{cor}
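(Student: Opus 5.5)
The plan is to reduce Corollary \ref{cor:uniform-cone-off} to the cone-off theorem of \cite[Sec. 19]{Casals-RuizReal24}, applied to the non-canonical cone-off $\oldcone X$, and then transport the resulting HHS structure to $\newcone X$ along the identity map using Lemma \ref{lem:cone-offs-are-QI}.

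First I will invoke \cite[Sec. 19]{Casals-RuizReal24}, or \cite[Section~2]{BehrstockHierarchically19}. The input is a downward-closed, $A$--invariant $\mathfrak U$. The output is an HHS structure $(\oldcone X,\mathfrak S-\mathfrak U)$ with the same hyperbolic spaces $\mathcal CV$, projections $\pi_V$ and maps $\rho$ for $V\in\mathfrak S-\mathfrak U$. Its HHS parameters depend only on those of $(X,\mathfrak S)$. Downward closure guarantees that $\mathfrak S-\mathfrak U$ is closed under the relations needed for the axioms, for example that nesting containers are not coned off. I must check that the constants produced there are uniform; I expect this to be a matter of reading the proof, as in Appendix \ref{app:constants}.

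Second, I will transfer the structure to $\newcone X$. By Lemma \ref{lem:cone-offs-are-QI}, the identity $\newcone X\to\oldcone X$ is a uniform $(C,C)$--quasi-isometry. The HHS axioms are stable under precomposing projections with a quasi-isometry, with controlled change of constants. Concretely, $\pi_V$ stays coarsely lipschitz; the uniqueness function changes by the quasi-isometry constants; and the quasigeodesic constant changes likewise. So $(\newcone X,\mathfrak S-\mathfrak U)$ with the same $\pi_V$ and $\rho$ is an HHS with uniform parameters. Since the identity map is $A$--equivariant as a map of sets, it is the required $A$--equivariant quasi-isometry to $\oldcone X$. The $A$--action on $\newcone X$ is isometric by Lemma \ref{lem:cone-off-action}. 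It is by HHS automorphisms because the data for $V\in\mathfrak S-\mathfrak U$ is literally that of $(X,\mathfrak S)$, restricted to the $A$--invariant set $\mathfrak S-\mathfrak U$, so the conditions of Definition \ref{defn:HHS-automorphism} are inherited.

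Finally, I will establish the two properties of $c$. It is $1$--lipschitz, since $\widecheck D\le \dist$ and $\widecheck\dist$ is the induced length metric; the coarse version only needs $X$ to be quasigeodesic, using $q$. For the quasimedian property, the coarse median of $(\newcone X,\mathfrak S-\mathfrak U)$ is characterised, up to uniform error, by its projections to $\mathcal CV$ for $V\in\mathfrak S-\mathfrak U$, via Construction \ref{cons:coarse-median} and uniqueness in $\newcone X$. The median $\mu_X(x,y,z)$ already projects uniformly close to the coarse centres $m_V$ for every $V\in\mathfrak S$, in particular for those in $\mathfrak S-\mathfrak U$. Uniqueness in $\newcone X$ then bounds $\widecheck\dist(c(\mu_X(x,y,z)),\mu_{\newcone X}(x,y,z))$ uniformly.

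I expect the main obstacle to be the first step: extracting uniformity of constants from \cite{Casals-RuizReal24}, and verifying that the equivariance statement there survives. That statement gives an action by uniform quasi-isometries, and I bypass this by using the canonical metric and Lemma \ref{lem:cone-off-action}.
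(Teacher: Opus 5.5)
Your proposal is correct and follows essentially the same route as the paper. Both arguments obtain the uniform HHS structure on $\oldcone X$ from the cone-off of \cite[Sec. 19]{Casals-RuizReal24}; the paper records the uniformity of the constants, including the uniqueness function, as Proposition~\ref{prop:cone-off-QI} in the appendix. Both then transfer that structure to $\newcone X$ along the identity via Lemma~\ref{lem:cone-offs-are-QI} and \cite[Prop. 1.10]{BehrstockHierarchically19}, take isometricity from Lemma~\ref{lem:cone-off-action}, and derive the quasimedian property of $c$ from Construction~\ref{cons:coarse-median} together with uniqueness, using that the projections to $\mathcal CV$ for $V\in\mathfrak S-\mathfrak U$ are unchanged.
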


	\begin{proof}
		By Proposition \ref{prop:cone-off-QI}, we have a constant $C$, depending only on the HHS parameters of $(X,\mathfrak S)$, such that $A$ acts by $(C,C)$--quasi-isometries on the non-canonical cone-off $\oldcone X$, and this action is by HHS automorphisms on $(\oldcone X,\mathfrak S-\mathfrak U)$.

		Lemma \ref{lem:cone-offs-are-QI} implies that the canonical cone-off $(\newcone X,\mathfrak S-\mathfrak U)$ is an HHS with uniform HHS parameters, defining projections $\pi'_W:\newcone{X}\to \mathcal CW,\ W\in\mathfrak S-\mathfrak U$ by composing the projections $\pi_W:\oldcone X\to \mathcal CW$ with the identity $\newcone X\to\oldcone X$ (see also \cite[Prop. 1.10]{BehrstockHierarchically19}).  The $A$--action on $\newcone X$ is isometric by Lemma \ref{lem:cone-off-action}. The action is by HHS automorphisms since that holds for $(\oldcone X,\mathfrak S-\mathfrak U)$, and passing to $\newcone X$ did not change the index set or the hyperbolic spaces, and did not change the projection maps at the set-theoretic level. 
		
		Finally, the map identity map $c$ is coarsely lipschitz with uniform constants as an immediate consequence of Definition \ref{defn:modern-cone-off} and the fact that $X$ is a uniform quasigeodesic space.  The fact that $c$ is quasimedian follows from the characterisation of the coarse median in an HHS (Construction \ref{cons:coarse-median}) and the fact that, for $U\in\mathfrak S-\mathfrak U$, the projections to $\mathcal CU$ coincide for the HHS structures on $X$ and $\newcone X$.  This completes the proof.
	\end{proof}
	
	\begin{remark}\label{rem:assume-graph}
		In our applications, the input HHS $X$ will be the vertex set of a graph, and $\dist$ will be the usual graph metric obtained by assigning edges length $1$.  In this setting, $\newcone X$ is obtained by adding unit length edges joining any two vertices $x,y$ with $\widecheck{D}(x,y)\leq 1$, and taking the resulting graph metric on the vertex set.
	\end{remark}
	
	\subsection{Coarse injectivity and consequences}\label{subsec:coarse-injectivity}
	Fix an HHS $(X,\mathfrak S)$ and let $\sigma$ be the metric on $X$ given by Theorem \ref{thm:coarse-injective}, which is $M_1$--coarsely injective; coarse injectivity originates in \cite{ChepoiPacking07} and we refer to \cite[Sec. 1.1]{HaettelCoarse23} for an explicit definition, which we do not need here.  The same theorem says that $\sigma$ makes the identity $(X,\dist)\to (X,\sigma)$ an $M_2$--quasi-isometry, where $M_1,M_2$ depend only on the HHS parameters, and that any subgroup of $\Isom(X,\dist)$ acting on $(X,\mathfrak S)$ by HHS automorphisms also acts by isometries on $(X,\sigma)$.
	
	Let $(\injhull(X),d_\infty)$ be the injective hull of $(X,\sigma)$ and recall that the canonical isometric embedding $i: (X,\sigma) \hookrightarrow (\injhull(X),d_\infty)$ is $\Isom(X,\sigma)$-equivariant (see \cite{LangInjective13}).  Hence $G$ acts by isometries on $\injhull(X)$ and $i$ is $G$--equivariant.  The next corollary follows from the proof of Proposition~3.12 in \cite{ChalopinHelly25}.
	
	\begin{cor} \label{cor: coarsely dense in injective hull}
		The image $i(X)$ is $M_1$-coarsely dense in $\injhull(X)$.
	\end{cor}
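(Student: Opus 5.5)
The claim is that $i(X)$ is $M_1$--coarsely dense in $\injhull(X)$. The plan is to use the characterisation of the injective hull as a space of extremal functions, and to show that every extremal function lies within $M_1$ of a distance function $d_x=\sigma(x,\cdot)$, following the argument in the proof of \cite[Prop. 3.12]{ChalopinHelly25}. Recall from \cite{LangInjective13} that $\injhull(X)$ is the set of functions $f\colon X\to\reals$ with $f(x)+f(y)\geq\sigma(x,y)$ for all $x,y$ that are pointwise minimal with this property. Such $f$ satisfy $f(x)=\sup_y(\sigma(x,y)-f(y))$, the metric is $d_\infty(f,g)=\sup_x|f(x)-g(x)|$, and $i(x)=d_x$. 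So for each $f\in\injhull(X)$ we need a point $x\in X$ with $\|f-d_x\|_\infty\leq M_1$.

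We will use coarse injectivity of $(X,\sigma)$ in its ball-intersection (coarse Helly) form: whenever a family of balls $\{B(x_j,r_j)\}$ in $(X,\sigma)$ pairwise intersect, or satisfy $r_j+r_k\geq\sigma(x_j,x_k)$, the enlarged balls $\{B(x_j,r_j+M_1)\}$ have a common point. This form is equivalent, up to constants, to the definition in \cite[Sec. 1.1]{HaettelCoarse23}. We will arrange the normalisation of $M_1$ in Theorem \ref{thm:coarse-injective} to match this form, and we will not use any other property of $\sigma$.

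Given $f\in\injhull(X)$, consider the family of balls $B(y,f(y))$ for $y\in X$. Extremality gives $f(y)+f(y')\geq\sigma(y,y')$, which is exactly the admissibility condition, so coarse injectivity yields $x\in X$ with $\sigma(x,y)\leq f(y)+M_1$ for all $y$. That is, $d_x\leq f+M_1$. For the reverse inequality we use the sup formula. For every $y$ we have
\[
f(y)=\sup_z\bigl(\sigma(y,z)-f(z)\bigr)\leq\sup_z\bigl(\sigma(y,x)+\sigma(x,z)-f(z)\bigr)\leq \sigma(y,x)+M_1 .
\]
Hence $\|f-d_x\|_\infty\leq M_1$.

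The main obstacle is the bookkeeping between the definition of coarse injectivity used in \cite{HaettelCoarse23}, which is phrased via $1$--Lipschitz extensions or coarse retractions $\injhull(X)\to X$, and the Helly form for arbitrary, possibly infinite, families that the argument above requires. If the available definition only gives a retraction $r\colon\injhull(X)\to X$ with $d_\infty(i\circ r(f),f)\leq M_1$, then density is immediate with $x=r(f)$. That retraction form is what we would check first, and it is what the proof in \cite{ChalopinHelly25} extracts.
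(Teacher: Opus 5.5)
Your argument is correct and is essentially the paper's route: the paper simply cites the proof of \cite[Prop. 3.12]{ChalopinHelly25}, which applies the coarse Helly property to the family of balls $B(y,f(y))$, $y\in X$, and then uses $f(y)=\sup_z(\sigma(y,z)-f(z))$ to conclude $\|f-d_x\|_\infty\le M_1$. The normalisation you worried about needs no adjustment: the appendix proof of Theorem \ref{thm:coarse-injective} verifies coarse injectivity with constant $M_1$ for arbitrary families of balls satisfying $\sigma(x_i,x_j)\le r_i+r_j$, which is exactly the form your argument uses.
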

	
	The main consequence of coarse injectivity that we shall apply later is:
	
	\begin{cor}[Uniformly bounded orbits] \label{cor: uniformly bounded orbits}
		Let $(X, \mathfrak{S})$ be an HHS and assume $X$ is a graph. There exists a constant $B$ depending only on the HHS parameters such that, if $H \leq \Isom(X,\dist)$ is a group of HHS automorphisms of $(X,\mathfrak S)$ acting on $X$ with bounded orbits, then there exists $x_0 \in {X}$ such that $\diam(H \cdot x_0) \leq B$.
	\end{cor}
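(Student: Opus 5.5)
The plan is to move to the injective hull, use Lang's fixed point theorem there, and then pull the fixed point back into $X$ via coarse density. First I pass from $\dist$ to the coarsely injective metric $\sigma$ of Theorem \ref{thm:coarse-injective}. Since $H$ acts on $(X,\mathfrak S)$ by HHS automorphisms and isometrically on $(X,\dist)$, it also acts by isometries on $(X,\sigma)$. The identity $(X,\dist)\to(X,\sigma)$ is an $M_2$--quasi-isometry, so $H$--orbits in $(X,\sigma)$ are still bounded. Via the equivariant isometric embedding $i:(X,\sigma)\to(\injhull(X),d_\infty)$, the group $H$ acts by isometries on $\injhull(X)$. Every $H$--orbit in $\injhull(X)$ is bounded: each point of $\injhull(X)$ is within $M_1$ of $i(X)$ by Corollary \ref{cor: coarsely dense in injective hull}, and orbits of points of $i(X)$ are bounded.

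Next I invoke Lang's fixed point theorem for injective metric spaces \cite{LangInjective13}. An isometric action with bounded orbits on an injective space has a fixed point; more precisely, the set of centres of a bounded invariant set is nonempty and invariant, and Lang shows it contains a fixed point. This yields $p\in\injhull(X)$ with $Hp=p$. By Corollary \ref{cor: coarsely dense in injective hull}, there is $x_0\in X$ with $d_\infty(i(x_0),p)\le M_1$. For every $h\in H$,
\[
\sigma(hx_0,x_0)=d_\infty(i(hx_0),i(x_0))=d_\infty(h\,i(x_0),i(x_0))\le d_\infty(h\,i(x_0),hp)+d_\infty(p,i(x_0))\le 2M_1 .
\]
Hence the $\sigma$--diameter of $Hx_0$ is at most $2M_1$. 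Converting back through the $M_2$--quasi-isometry gives $\diam_{\dist}(Hx_0)\le 2M_1M_2+M_2$, which I set equal to $B$. This $B$ depends only on the HHS parameters because $M_1$ and $M_2$ do. The graph hypothesis ensures $X$ meets the hypotheses of Theorem \ref{thm:coarse-injective} and Corollary \ref{cor: coarsely dense in injective hull}; if some point-selection step needs vertices, I perturb to a vertex at cost $1$.

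The main obstacle is the fixed point step, since $\injhull(X)$ need not be proper and $H$ need not be finitely generated. Lang's theorem handles this: it needs only that the space be injective and that some orbit be bounded, and its proof uses a canonical barycentre, a fixed point of the injective hull of the orbit, not compactness. The only thing I must verify is that bounded $\dist$--orbits give bounded $d_\infty$--orbits, which follows from the quasi-isometry and the fact that $i$ is an isometric embedding.
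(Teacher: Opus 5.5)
Your proposal is correct and follows the paper's proof essentially verbatim. Both use Lang's fixed point theorem \cite[Prop.~1.2]{LangInjective13} in $\injhull(X)$, take a point of $i(X)$ within $M_1$ of the fixed point via Corollary \ref{cor: coarsely dense in injective hull}, and obtain $B=2M_1M_2+M_2$. You additionally justify that $H$--orbits in $\injhull(X)$ are bounded, which the paper leaves implicit.
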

	
	\begin{proof}
		By \cite[Proposition~1.2]{LangInjective13}, there is a point $f \in \injhull(X)$ which is fixed by $G$. Let $x \in X$ be such that $d_\infty(i(x), f) \leq M_1$. Then 
		\[
		\diam_d(G \cdot x) \leq M_2\diam_\sigma(G \cdot x) + M_2 \leq 2 M_2 M_1 + M_2 \eqcolon B,
		\]
		which depends only on the HHS parameters since the same is true of $M_1,M_2$. 
	\end{proof}
	
	\subsection{Simplifying actions}\label{subsec:simplifying-actions}
	Here are two more or less standard lemmas that will be convenient in places where we want to perturb maps to make them equivariant.
	
	\begin{lem}\label{lem:assume-graph}
		For every $q\geq 1$, there exists $r\geq 1$, depending only on $q$, such that the following holds.  Let $(X,\dist)$ be a $q$--quasigeodesic space on which the group $G$ acts by isometries.  Then there is a graph $\Omega$ such that $G$ acts on $\Omega$ by graph automorphisms and there is a $G$--equivariant map $f:X\to \Omega$ that is an $r$--coarsely surjective $(r,r)$--quasi-isometry, where $\Omega$ is equipped with the usual path-metric in which all edges have unit length.  Moreover, $\Omega$ has vertex set $X$ and we may take $f$ to be the inclusion $X\to \Omega$.
	\end{lem}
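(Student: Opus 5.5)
The plan is to build $\Omega$ directly on the vertex set $X$ by joining points that are close in $\dist$, and to check that the inclusion is an equivariant quasi-isometry. Concretely, let $\Omega$ be the graph with vertex set $X$ in which $x,y$ span an edge if and only if $0<\dist(x,y)\leq 2q$ (or some threshold $s=s(q)$, fixed below), equipped with the path metric $\dist_\Omega$ in which edges have unit length. Since $G$ acts on $X$ by isometries, it preserves the edge relation, so $G$ acts on $\Omega$ by graph automorphisms, and the inclusion $f:X\to\Omega$ is tautologically $G$--equivariant. Because every point of $\Omega$ lies within $1/2$ of a vertex, $f$ is $1$--coarsely surjective. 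It remains to compare $\dist$ and $\dist_\Omega$ on $X$.

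\textbf{Lower bound.} If $\dist_\Omega(x,y)=n$, there is an edge path $x=x_0,\ldots,x_n=y$ with $\dist(x_{i-1},x_i)\leq s$. The triangle inequality then gives $\dist(x,y)\leq s\,\dist_\Omega(x,y)$. In particular, $\Omega$ is connected once we establish the upper bound.

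\textbf{Upper bound.} This is the step that uses the quasigeodesic hypothesis, and it is the only real content. Let $\gamma:[0,L]\to X$ be a $(q,q)$--quasigeodesic from $x$ to $y$, so that $L\leq q\,\dist(x,y)+q^2$. Subdivide $[0,L]$ into $\lceil L\rceil$ intervals of length at most $1$ and take the consecutive points $\gamma(t_i)$. Then
\[
\dist(\gamma(t_{i-1}),\gamma(t_i))\leq q\cdot 1+q\leq 2q .
\]
So with $s=2q$, consecutive points are equal or adjacent in $\Omega$. This gives
\[
\dist_\Omega(x,y)\leq \lceil L\rceil\leq q\,\dist(x,y)+q^2+1 .
\]
The only subtlety is the convention for quasigeodesics: if they are not required to be continuous, the points $\gamma(t_i)$ still lie in $X$, and the coarse-Lipschitz upper bound is all that is used, so the argument is unaffected.

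Combining the two bounds, $f$ is an $(r,r)$--quasi-isometry with $r=\max\{2q,\,q^2+1\}$, which depends only on $q$, and it is $r$--coarsely surjective. This proves the lemma, including the statement that $\Omega$ has vertex set $X$ and that $f$ is the inclusion.
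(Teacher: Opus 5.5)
Your argument is correct and is essentially the paper's proof: the paper just cites Lemma 3.B.6 of Cornulier--de la Harpe, which concerns exactly this graph on vertex set $X$ with edges joining distinct points at distance at most a fixed constant (here $2q$), and the fact that for a large-scale geodesic space the identity map to this graph is a quasi-isometry. You have written out that verification explicitly, with the explicit constant $r=\max\{2q,q^2+1\}$.
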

	
	\begin{proof}
		This follows from \cite[Lem. 3.B.6]{Cornulier:metric}.
	\end{proof}
	
	\begin{remark}\label{rem:HHS-are-graphs}
		Let $(X,\mathfrak S)$ be an HHS on which $G$ acts isometrically by HHS automorphisms.  Then Lemma \ref{lem:assume-graph} allows us to remetrise $X$ as the vertex set of a graph, without changing the projections $\pi_U:X\to \mathcal CU$; by \cite[Prop. 1.10]{BehrstockHierarchically19}, $(X,\mathfrak S)$ is still an HHS, it is immediate from Definition \ref{defn:HHS-automorphism} that the action is still by isometric HHS automorphisms, and the new HHS parameters just depend on the old ones, via the quasi-isometry constant $r$ from Lemma \ref{lem:assume-graph}, which just depends on the old HHS parameter $q$ from Subsection \ref{subsec:parameters}.   In a few places, it will be convenient to assume that $X$ is a graph, and this observation enables it.
	\end{remark}
	
	\begin{lem}[Freeing actions]\label{lem:graph-free}
		Let $\Omega$ be a simplicial graph and let $G$ act by graph automorphisms on $\Omega$ without inverting edges.  Then there is a graph $\widehat\Omega$, a free $G$--action $G\to\Aut(\widehat\Omega)$, and a $G$--equivariant surjective map $f:\widehat\Omega\to\Omega$ that is a $(1,10)$--quasi-isometry taking vertices to vertices and edges to edges or vertices.
	\end{lem}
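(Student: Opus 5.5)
We construct $\widehat\Omega$ as the product of $\Omega$ with the Cayley graph of $G$, in the style of a Borel-type construction. Concretely, take the vertex set $\widehat\Omega^{(0)} = \Omega^{(0)}\times G$, with $G$ acting diagonally by $h\cdot(v,g)=(hv,hg)$. This action is free, because the action of $G$ on itself by left multiplication is free. Note that $G$ need not be finitely generated, so a Cayley graph with respect to a finite generating set is not available. Instead we make each fibre $\{v\}\times G$ a complete graph: join $(v,g)$ to $(v,g')$ for all $g\neq g'$. For each edge $\{u,v\}$ of $\Omega$ and each $g\in G$, join $(u,g)$ to $(v,g)$. Both families of edges are preserved by the diagonal action. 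Define $f(v,g)=v$, extended to edges: a fibre edge is sent to the vertex $v$, and an edge $(u,g)$--$(v,g)$ is sent to the edge $\{u,v\}$. This map is equivariant by construction, and it is surjective on vertices and edges. The hypothesis that $G$ does not invert edges means that $G$ acts on the geometric realisation compatibly with the combinatorial structure, so $f$ is a genuine equivariant map of graphs. (To obtain a simplicial graph we just avoid creating multiple edges or loops; the definition above already does this.)

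Next we check the quasi-isometry estimate. Since $f$ sends edges to edges or vertices, it is $1$--lipschitz on vertices: $d(f(x),f(y))\le \widehat d(x,y)$. For the reverse inequality, take $(u,g),(v,g')$ and a geodesic $u=v_0,\dots,v_n=v$ in $\Omega$. This path lifts to the path $(v_0,g),\dots,(v_n,g)$ of length $n$ in $\widehat\Omega$, and then one fibre edge joins $(v,g)$ to $(v,g')$. Hence $\widehat d\le d(f(\cdot),f(\cdot))+1$, and $f$ is a $(1,1)$-quasi-isometry, in particular a $(1,10)$--quasi-isometry. It is surjective, so coarse surjectivity is automatic. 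Connectedness of $\widehat\Omega$ follows from this path-lifting argument whenever $\Omega$ is connected; if $\Omega$ is disconnected, distances between components are infinite in both graphs and remain compatible.

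The only step needing care is confirming that $\widehat\Omega$ is genuinely a graph on which $G$ acts freely by automorphisms without the free action forcing inversions. Freeness on vertices is immediate. If some $h\neq1$ inverted an edge, it would swap its endpoints, so $h^2$ would fix a vertex; then $h^2=1$. For a horizontal edge $(u,g)$--$(v,g)$, swapping would require $hg=g$, which is impossible. For a fibre edge $(v,g)$--$(v,g')$, an inversion with $hg=g'$ and $hg'=g$ is possible when $h$ has order $2$. If one insists on no inversions, replace each fibre complete graph by the barycentric subdivision. This adds midpoint vertices indexed by unordered pairs, which are not freely acted on. To avoid that, we instead connect the fibre through a fixed "base" direction: join $(v,g)$ and $(v,g')$ by a path of length $2$ through a new vertex $(v,g,g')$ with ordered pair, using two such paths (one for each order). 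The action on ordered pairs is free, and the fibre diameter becomes $2$, which remains well within the allowed additive constant $10$. Since the statement does not require freeness on edges, this modification is optional.
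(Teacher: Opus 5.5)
Your proof is correct, but it takes a different route from the paper's. You use the full Cartesian product: vertex set $\Omega^{(0)}\times G$, complete graphs on the fibres $\{v\}\times G$, and a copy of each edge of $\Omega$ at every level $g$. Freeness then comes from the second coordinate. Lifting a geodesic at a fixed level and adding one fibre edge gives $\widehat d\le d+1$.

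The paper starts from the same ambient object $\Delta(G)\times\Omega$, where $\Delta(G)=\mathrm{Cay}(G,G)$ is your complete graph. It keeps only the vertices $(g,v)$ with $gv_i=v$, where $v_i$ is a chosen representative of the orbit of $v$. Thus $\widehat V\cong G\times I$, and the fibre over $v$ is a coset of $\stabilizer_G(v_i)$. Since the $G$--coordinates over adjacent vertices need not match there, level-preserving edges are unavailable. The paper instead joins every vertex over $v$ to every vertex over $w$ whenever $v=w$ or $\{v,w\}$ is an edge. It then gets the distance bound from a $1$--lipschitz section $hv_i\mapsto (h,hv_i)$.

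What each buys:
\begin{itemize}
\item Your construction is canonical, with no choice of orbit or coset representatives, and the distance estimate takes one line.
\item The paper's construction is economical. Its fibres have cardinality $|\stabilizer_G(v)|$, so it changes nothing when the action is already free on vertices. It also preserves local finiteness for actions with finite vertex stabilisers on locally finite graphs. Your fibres have size $|G|$.
\end{itemize}
The paper's applications only use freeness on vertices and the equivariant quasi-isometry, so either construction suffices.

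Two small corrections:
\begin{itemize}
\item Your sentence attributing the equivariance of $f$ to the no-inversion hypothesis is off. A graph automorphism acts on the geometric realisation whether or not it inverts edges, and your construction never uses that hypothesis.
\item The paper's fibre edges can also be inverted by involutions in vertex stabilisers, exactly as in your construction. So ``free'' in the lemma means free on vertices, and your ordered-pair modification is indeed optional.
\end{itemize}
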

	
	\begin{proof}
		Let $V$ be the vertex set of $\Omega$ and let $E$ be the set of edges.  Let $\Delta(G)=\mathrm{Cay}(G,G)$ be a complete graph with a free, vertex-transitive $G$--action.  The $G$--action on $G$ by left multiplication extends to an action on $\Delta(G)$, and we are given a $G$--action on $\Omega$.  These two actions yield a diagonal action on the complex $\Delta(G)\times \Omega$ by combinatorial automorphisms.  
       		
		Let $\{v_i\}_{i\in I}\subseteq V$ contain exactly one element of $V$ in each $G$--orbit.  Define $\widehat V\subseteq G\times V$ as follows.  Given $g\in G$ and $v\in V$, let $i\in I$ be the unique element such that $v=hv_i$ for some $h\in G$.  Then $(g,v)\in\widehat V$ if and only if $h^{-1}g\in \stabilizer_G(v_i)$.  Note that if $(g,hv_i)\in\widehat V$ and $a\in G$, then $(ag,ahv_i)\in\widehat V$, so the $G$--action preserves $\widehat V$.  The set $\widehat V$, with this $G$--action, will be the vertex set of $\widehat \Omega$.  The $G$--action on $\widehat V$ is free because the action on the first coordinate is free.
		
		Declare $(g,v),(h,w)\in \widehat V$ to be adjacent in  $\widehat\Omega$ if and only if $v=w$ or $\{v,w\}\in E$.  The $G$--action on $\widehat V$ preserves these relations and thus extends to a free $G$--action on $\widehat\Omega$.  
		
		The natural projection $\Delta(G)\times \Omega\to \Omega$ restricts to a $G$--equivariant map $f:\widehat\Omega\to\Omega$ taking vertices to vertices.  Each edge of $\widehat \Omega$ joining vertices with the same second coordinate gets collapsed to a vertex, and the remaining edges are sent isometrically to edges.  Hence $f$ is $1$--lipschitz.  On the other hand, the map $\bar f:V\to\widehat V$ given by $\bar f(hv_i)=(h,hv_i)$ (the map is determined by choices of coset representatives for the $\stabilizer_G(v_i)$) defines a section of $f$ that is 1--lipschitz and satisfies $\dist_{\widehat\Omega}(x,\bar ff(x))\leq 1$ for all $x$.  This completes the proof. 
	\end{proof}

    \subsection{Virtually abelian and crystallographic groups}\label{subsec:virtually-abelian-crystallographic}
    We recall some standard facts about virtually abelian and crystallographic groups. Fix $n\ge0$ and let $A$ be a virtually $\integers^n$ group, so that there is a short exact sequence
    $A_0\hookrightarrow A\twoheadrightarrow F$, where $F$ is finite and $A_0\cong\integers^n$.

    \begin{defn}[Point group of $A$]\label{defn:point-group}
     The conjugation action of $A$ on $A_0$ gives a homomorphism $A\to GL_n(\integers)$ whose image we denote by $\bar F$ and call the \emph{point group} of $A$.
    \end{defn}
    
     Note that $A\to GL_n(\integers)$ factors through the quotient $A\to F$, so $\bar F$ is finite.  Moreover, if $A_0'\leq A_0$ is another finite-index free abelian subgroup that is normal in $A$, then the conjugation actions of $A$ on $A_0$ and $A_0'$ make the map $\iota:A_0'\hookrightarrow A_0$ an $A$--equivariant map, where $A$ acts on $A_0'$ via the point group $\bar F'=\image(A\to Aut(A_0'))$ and on $A_0$ via $\bar F$.  But $\iota$ extends to an isomorphism $\reals^n\to\reals^n$, which therefore conjugates $\bar F'$ to $\bar F$.  This implies that the point group $\bar F\leq GL_n(\reals)$ is, up to conjugacy, independent of the choice of $A_0$.

     Virtually $\integers^n$ groups are related to $n$--dimensional crystallographic groups by the following classical fact \cite{Zassenhaus}:

     \begin{lem}\label{lem:virtually-abelian-crystallographic}
     If $A$ is a virtually $\integers^n$ group, then there is a finite normal subgroup $K\leq A$ and a monomorphism $\phi:A/K\to \Isom(\Euclidean^n)$ whose image is $n$--dimensional crystallographic, i.e. $\phi(A/K)$ acts faithfully on $\Euclidean^n$ and this action is proper and cocompact.
     \end{lem}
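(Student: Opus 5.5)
The plan is to realise $A$ as an extension of a crystallographic group by a finite group, using the standard Bieberbach-style argument. Fix the short exact sequence $A_0\hookrightarrow A\twoheadrightarrow F$ with $A_0\cong\integers^n$ and $F$ finite. Replacing $A_0$ by the intersection of its finitely many conjugates, we may assume $A_0\unlhd A$. Let $C=C_A(A_0)$ be the kernel of the conjugation action $A\to GL_n(\integers)$ from Definition \ref{defn:point-group}. Then $C$ contains $A_0$ centrally and with finite index, so Schur's theorem makes the commutator subgroup $[C,C]$ finite. Since $C/Z(C)$ is finite, the torsion elements of $C$ form a finite characteristic subgroup $K$; one gets this via the transfer $C\to A_0$, or by noting that $C$ is virtually $\integers^n$ with finite derived subgroup. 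Being characteristic in $C\unlhd A$, the subgroup $K$ is normal in $A$.

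Next I would pass to $\bar A=A/K$. Here $A_0$ injects, since $A_0\cap K=1$, and its image is a finite-index normal free abelian subgroup of $\bar A$. I would then check that $\bar A$ has no nontrivial finite normal subgroup; at minimum its translation part $\bar C=C/K$ should be torsion-free abelian and should equal the centraliser of (the image of) $A_0$. The torsion-freeness of $\bar C$ uses exactly the maximality of $K$. Commutativity uses the fact that $\bar C$ is torsion-free with finite-index central $\integers^n$: for $x,y\in\bar C$ we have $[x,y]^m=[x^m,y]=1$ for suitable $m$, by centrality and Schur's bound, so $[x,y]$ is torsion and hence trivial. Hence $\bar C\cong\integers^n$ and $\bar A/\bar C$ embeds in $GL_n(\integers)$.

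Now $\bar A$ is an extension $1\to L\to\bar A\to P\to1$ with $L=\bar C\cong\integers^n$, $P$ finite, and $P$ acting faithfully on $L$. The next step is to build the monomorphism into $\Isom(\Euclidean^n)$. Choose a $P$-invariant inner product on $L\otimes\reals=\reals^n$ by averaging, so that $P\leq O(n)$. The extension class lies in $H^2(P;L)$, which is killed by $|P|$, so its image in $H^2(P;\reals^n)=0$ vanishes. Concretely, one can write down the standard induced action: $\bar A$ acts on the affine space of $L$-equivariant "sections", or equivalently we take $\bar A\to (\reals^n\rtimes P)$ via a cocycle $c$ that becomes a coboundary after dividing by $|P|$. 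This yields a homomorphism $\phi:\bar A\to\reals^n\rtimes O(n)$ that restricts to $L$ as translations by a lattice and whose linear part is the faithful map $P\to O(n)$.

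Finally I would verify the required properties. The kernel of $\phi$ meets $L$ trivially and maps injectively to $P$, but its linear part is trivial, so it is trivial and $\phi$ is injective. The action is proper and cocompact because $L$ acts as a lattice of translations with finite index in $\phi(\bar A)$. The main obstacle is the cohomological step of extending the lattice action equivariantly, together with confirming that the kernel of $\phi$ is trivial. That last point is exactly where the faithfulness of $P$ on $L$, and hence the careful choice of $K$, is used.
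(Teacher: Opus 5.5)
Your proof is correct. The paper itself gives no argument here: the lemma is quoted as a classical fact with a reference to \cite{Zassenhaus}. What you have written is essentially the standard proof behind that citation. You take $K$ to be the torsion subgroup of $C=C_A(A_0)$. This is in fact the unique maximal finite normal subgroup of $A$, since any finite $N\unlhd A$ satisfies $[N,A_0]\subseteq N\cap A_0=1$ and therefore lies in $C$. You then observe that $\bar C=C/K\cong\integers^n$ and that $P=\bar A/\bar C$ acts faithfully on it. Finally, you use $H^2(P;\reals^n)=0$ to push the extension $1\to\bar C\to\bar A\to P\to 1$ injectively into the split extension $\reals^n\rtimes P\leq\Isom(\Euclidean^n)$, for a $P$--invariant inner product. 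The citation is more economical; your version makes the choice of $K$ and the source of the embedding explicit.

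There are two small things to tighten.

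First, you assert but do not check that $\bar C=C_{\bar A}(\bar A_0)$. The check is one line: if $aK$ centralises $\bar A_0$, then $[a,A_0]\subseteq K\cap A_0=1$, using $A_0\unlhd A$. This equality is exactly what yields $C_{\bar A}(\bar C)=\bar C$, i.e.\ the faithfulness of $P$ on $L$, on which your injectivity argument for $\phi$ rests.

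Second, the identity $[x,y]^m=[x^m,y]$ does not hold in general; it requires $[x,y]$ to commute with $x$. It is also not needed. The subgroup $[\bar C,\bar C]$ is the image of the finite group $[C,C]$, so it is a finite subgroup of the torsion-free group $\bar C$, and hence trivial.
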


    In Section \ref{sec:applications}, we will use these facts via the following lemma:

    \begin{lem}\label{lem:finitely-many-crystallographic}
    Let $n\geq 0$ and let $K$ be a finite group.  Then there are finitely many abstract isomorphism classes of groups $A$ such that $K\triangleleft A$ and $A/K$ is isomorphic to an $n$--dimensional crystallographic group.
    \end{lem}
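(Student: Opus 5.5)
The plan is to reduce to two classical finiteness results: Bieberbach's theorem, that there are only finitely many $n$--dimensional crystallographic groups up to isomorphism, and the finiteness of extensions of a fixed group by a fixed finite kernel under suitable finiteness hypotheses. Fix one of the finitely many crystallographic groups $Q$. It then suffices to show that there are only finitely many isomorphism classes of groups $A$ fitting into a short exact sequence
\[
1\to K\to A\to Q\to 1.
\]

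Any such extension is determined, up to equivalence, by two pieces of data:
\begin{itemize}
\item an outer action $\psi:Q\to\Out(K)$;
\item a class in a set that, once $\psi$ is realisable, is a torsor for $H^2(Q;Z(K))$, where $Z(K)$ carries the $Q$--module structure induced by $\psi$.
\end{itemize}
Equivalent extensions give isomorphic groups, so it is enough to bound both pieces of data.

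For the outer actions: $\Out(K)$ is finite and $Q$ is finitely generated, since it contains $\integers^n$ with finite index. Hence there are only finitely many homomorphisms $Q\to\Out(K)$.

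For the cohomology: fix $\psi$. The group $Q$ has a finite-index subgroup $\integers^n$, and $\integers^n$ has a finite classifying space, so $Q$ is of type $\mathrm{FP}_\infty$ over $\mathbb Q$. Rather than rely on that, I would argue directly.
\begin{itemize}
\item $Z(K)$ is finite, so it has finite exponent $e$, and therefore $H^2(Q;Z(K))$ is an abelian group annihilated by $e$.
\item $Q$ is finitely presented: it is virtually $\integers^n$, hence has a finite-index subgroup with a finite presentation. Taking a free resolution whose first three terms are finitely generated, $H^2(Q;M)$ is a subquotient of $\mathrm{Hom}_{\integers Q}(F_2,M)\cong M^{r}$ for some finite $r$. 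With $M=Z(K)$ finite, this bound is finite.
\end{itemize}
So for each $\psi$ there are only finitely many extension classes.

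Summing over the finitely many $\psi$ and the finitely many $Q$ gives finitely many isomorphism classes of $A$.

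The only delicate point is the non-abelian extension theory: describing extensions with non-abelian kernel via outer actions and $H^2(Q;Z(K))$, including the possibility that some $\psi$ admit no extension at all. This is standard, in the form of Eilenberg--MacLane obstruction theory.

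A more elementary alternative avoids cohomology entirely. Given $A$, the centraliser $C_A(K)$ has finite index in $A$, because $A$ acts on the finite group $K$ by conjugation. Since $A$ is finitely generated, one can find a finite-index $\integers^n\cong L\le C_A(K)$ with $L\cap K=1$, and then $LK\cong L\times K$. This bounds $A$ as an extension of the finite-index subgroup $L\times K$, but it requires controlling the index of $L$ uniformly, which is why I would prefer the cohomological route.
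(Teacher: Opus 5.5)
Your proposal is correct and follows the same route as the paper: Bieberbach finiteness for $A/K$, finitely many homomorphisms $Q\to\Out(K)$, and finiteness of $H^2(Q;Z(K))$ for each induced module structure. You add detail the paper's terse proof omits, namely that finite presentability of $Q$ makes $H^2(Q;M)$ a subquotient of the finite group $M^r$, and the Eilenberg--MacLane torsor description of extensions with non-abelian kernel.
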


    \begin{proof}
    Fix $n$.  Let $A$ be as in the statement and let $C=A/K$.  By Bieberbach's theorems, $C$ belongs to one of finitely many isomorphism classes.  Since $K$ is finite and $A$ is finitely generated, there are finitely many homomorphisms $C\to \Out(K)$.  The centre $Z(K)$ of $K$ is finite, so for each way of regarding $Z(K)$ as a $C$--module, $H^2(C,Z(K))$ is finite, and hence yields at most finitely many possibilities for $A$.
    \end{proof}

 In \cite{PetytUnbounded23}, Petyt and Spriano characterised crystallographic HHGs, using the following notion, which was also used in \cite{Hagen:crystallographic} and \cite{hodaCrystallographicHellyGroups2023}, respectively, to characterise crystallographic groups that are cocompactly cubulated and act geometrically on injective spaces.
    
   \begin{defn}[Hyperoctahedral group]\label{defn:hyperoctahedral}
	Let $A$ be a virtually $\integers^n$ group and let $\bar F$ be the point group of $A$.  Then $A$ is \emph{hyperoctahedral} if $\bar F$ is conjugate in $GL_n(\reals)$ into $O_n(\integers)$.
\end{defn}

We will use this notion in Section \ref{sec:applications}, when we study crystallographic \emph{subgroups} of HHGs.
	
	\newsection{An equivariant median model}\label{sec:median-model}
	Our aim in this section is to prove the following equivariant version of the cubical approximation theorem.  By \cite[Rem. 1.4]{BehrstockHierarchically19}, in this section, we can and shall assume that the maps $\pi_U:X\to \mathcal CU,\ U\in\mathfrak S$ are $E$--coarsely surjective. 
	
	\begin{thm}[store=median approx., note=Equivariant median model]\label{thm: median hulls}
		Let $(X, \mathfrak S)$ be an HHS and suppose there exists $M \geq 0$ and $p^-,p^+ \in X \cup \partial X$ such that $X = H_{M,z_0}(p^-,p^+)$ for any $z_0 \in X$. Let $G$ be a group acting on $X$ by HHS automorphisms and, if $p^-, p^+ \in \partial X$, suppose $G$ is virtually abelian. Then there is a complete, connected, finite rank, proper median space $Q$, equipped with an action of $G$, and a $G$-equivariant $L'$-quasimedian $(L',L')$-quasi-isometry $\Psi: X \rightarrow Q$, where the constant $L' \geq 1$ depends only on the HHS parameters and $M$.
	\end{thm}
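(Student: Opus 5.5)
The plan is to follow Durham's cubical model strategy \cite{DurhamCubulating23}, but to build the model canonically from the two ``endpoints'' $p^\pm$ rather than from a basepoint. That way the group $G$, which preserves the data, acts on the model.

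\textbf{Step 1: stabilising the endpoints.} Replace $G$ by a bounded-index subgroup $G_0$ that preserves $\{p^-,p^+\}$ pointwise, together with the relevant coarse structure. If $p^\pm\in X$, then $G$ already permutes $\{p^-,p^+\}$, since $X=H_{M,z_0}(p^-,p^+)$ is canonical; we pass to the index-$\leq 2$ subgroup. If $p^\pm\in\partial X$, virtual abelianness is used as follows. $G$ coarsely preserves the hull, hence preserves the pair $\{p^-,p^+\}$ up to the action on supports. In each domain $U$ with $\pi_U(X)$ unbounded, $\pi_U(X)$ lies uniformly close to a $(1,20E)$--quasigeodesic between $\pi_U(p^-)$ and $\pi_U(p^+)$ (Lemma \ref{lem: equivariant boundary projections} gives equivariance). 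The relevant set of domains is $\mathfrak U=\{U:\diam\pi_U(X)\geq 100E\}$, which is $G$--invariant. At the end we recover a $G$--action by inducing from $G_0$: take $Q'=\prod_{G/G_0}Q$ with the $\ell^1$ metric, and use the diagonal map composed with translates. This is still a finite-rank median space with uniformly quasimedian quasi-isometric constants, because $[G:G_0]\leq 2$ or, more generally, is uniformly bounded.

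\textbf{Step 2: the median model.} For each $U\in\mathfrak U$, the $G_0$--invariant quasiline $\gamma_U$ joining $\pi_U(p^-)$ to $\pi_U(p^+)$ is coarsely an interval or line, with a canonical order from $p^-$ to $p^+$. Following Durham, I will equivariantly collapse these quasilines to genuine $\reals$--trees, here to intervals $T_U\subseteq\reals$ or to $\reals$ itself. The collapse is done at the thresholds where $\rho^V_U$ lies along $\gamma_U$ for $V$ nested in or transverse to $U$. All choices are made as orbit-invariant data, for instance by taking cluster centres of the finitely many nearby $\rho$--points, so they are $G_0$--equivariant. This avoids the basepoint-dependent choices, since every projection is measured relative to the ordered pair $(p^-,p^+)$. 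Let $Q\subseteq\prod_{U\in\mathfrak U}T_U$ be the set of ``consistent'' tuples, meaning those satisfying strict consistency inequalities between collapsed pieces. Metrise $Q$ by the $\ell^1$ sum, so it is a median subalgebra of an $\ell^1$ product of trees. The rank of $Q$ is bounded by the maximal number of pairwise orthogonal domains, which is bounded by complexity. Completeness comes from taking closures, and connectedness from interpolation along each coordinate.

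\textbf{Step 3: the map $\Psi$.} Define $\Psi(x)=(\text{collapse of }\pi_U(x))_U$, with equivariance inherited from Step 2. That $\Psi$ is a quasi-isometry follows from the distance formula (Theorem \ref{thm:distance-formula}): the thresholds discard only a uniformly bounded amount per relevant domain. Coarse surjectivity follows from realisation (Theorem \ref{thm:hhs_realisation}). That $\Psi$ is quasimedian follows because medians are computed coordinatewise in both $X$ (Construction \ref{cons:coarse-median}) and the product of intervals. Properness of $Q$ follows because bounded sets have bounded projections, hence lie in finitely many bounded boxes in the collapsed coordinates.

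\textbf{Main obstacle.} I expect the main difficulty to be the equivariant collapsing in Step 2. The collapse must be $G_0$--invariant and must make the consistency conditions exact, so that $Q$ is a genuine median algebra, while keeping all constants uniform. I would first try Durham's thickening-of-shadows argument with thresholds determined solely by the orbit-invariant sets $\{\rho^V_U\}$ projected to $\gamma_U$.
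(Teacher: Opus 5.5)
There is a genuine gap at the core of Step~2. You never say how a quasiline $\mathcal CU$ becomes an honest interval or line $T_U$ carrying an isometric $\stabilizer_G(U)$--action together with a $\stabilizer_G(U)$--equivariant quasi-isometry $\mathcal CU\to T_U$, and this is exactly where the virtual abelian hypothesis is needed. Collapsing clusters, however canonically chosen, only tells you which subintervals to crush to points; the stretches in between remain coarse objects. In the simplest case (a single domain, with $\mathcal CU$ a quasiline on which $G$ acts coboundedly) there is nothing to collapse, yet an equivariant straightening is still required. The paper's Lemma~\ref{lem:quasi-line-action} supplies it:
\begin{itemize}
\item take a rough isometry $f_1:\mathcal CU\to\reals$;
\item observe that $\ell(a)=\lim_n\frac1n f_1(a^nx_0)$ is a homogeneous quasimorphism, and use amenability \cite[Prop.~2.65]{Calegari:scl} to conclude that $\ell$ is a homomorphism;
\item define $f(ax_i)=\ell(a)+f_1(x_i)$ on orbit representatives $x_i$, passing to an induced $\reals\rtimes\integers/2$--action when ends are swapped.
\end{itemize}
For non-amenable groups the Busemann quasimorphism of an action on a quasiline need not be a homomorphism, and then no equivariant straightening exists at all. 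So orbit-invariant bookkeeping cannot substitute for this step. Your Step~1 instead spends virtual abelianness on stabilising $p^\pm$, which needs no hypothesis: the domains with $\mathcal CU$ unbounded form a $G$--invariant pairwise orthogonal set, and each such $\boundary\mathcal CU$ has at most two points.

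The second concrete failure is the co-induction at the end of Step~1. The map $x\mapsto(\Psi(h_1x),\ldots,\Psi(h_mx))\in\prod_{G/G_0}Q$ is an equivariant quasimedian quasi-isometric \emph{embedding}. But its image is coarsely the graph of a quasi-isometry $Q\to Q^{m-1}$, which is not coarsely dense in $Q^m$ once $m\geq 2$ and $Q$ is unbounded. Coarse surjectivity is part of the statement, and it is used later to pull back $\sigma$--geodesics and flats from $Q$ to $X$. The paper never passes to a subgroup. Proposition~\ref{prop:quasi-linear-domains} gives isometries $g:T_U\to T_{gU}$ for every $g\in G$, possibly orientation-reversing. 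The conditions defining $Q$ are phrased through the equivariant projections $\widehat\delta^U_V$, and the basic clusters include both $\{p_U^-\}$ and $\{p_U^+\}$. So no orientation is used, and $G$ itself acts on $Q$ (Lemma~\ref{lem: Q is G-invariant}).

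Your claim that $G$ permutes $\{p^-,p^+\}$ when $p^\pm\in X$ is also false. Take $X=[0,a]\times[0,b]$ with the $\ell^1$ metric and two orthogonal domains. Then $X$ is the hull of $(0,0)$ and $(a,b)$, but the HHS automorphism $(s,t)\mapsto(a-s,t)$ moves both corners out of this pair.

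Finally, Step~3 understates what remains. $d_Q$ is an unthresholded $\ell^1$--sum over unboundedly many domains, so per-domain estimates from the distance formula or from realisation do not control it. The paper needs:
\begin{itemize}
\item the cluster collapse together with the passing-up counting results \cite[Prop.~4.3, Prop.~4.14]{DurhamCubulating23}, to bound the number of domains contributing small nonzero amounts;
\item a careful lift of points of $Q$ to $\alpha_1$--consistent tuples (Lemma~\ref{lem: coarsely surjective part 2}) before realisation can be applied;
\item a Dilworth argument embedding $Q$ isometrically in a finite $\ell^1$--product of lines, to get properness (Lemma~\ref{lem:Q-proper}).
\end{itemize}
Your claim that bounded sets lie in finitely many bounded boxes does not follow from bounded projections.
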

	
	To support the proof of Theorem \ref{thm:HHS-semisimple-coarse-minset}.\eqref{it: M is coarsely X(A)} we also prove the following in Section \ref{subsec:cocompact}.
	
	\begin{prop}\label{prop:short-mountains}
		Let $(X,\mathfrak S),M,p^\pm,$ and $G$ be as in Theorem \ref{thm: median hulls}. Suppose that $p^\pm\in\boundary X$, and $\support(p^-)=\support(p^+)$, and $\pi_U(G\cdot z_0)$ is unbounded for all $U\in\support(p^\pm)$ and some (equivalently any) $z_0 \in X$.  Let $Q$ be the $G$--median space provided by Theorem \ref{thm: median hulls}.  Then there exist median spaces $Q_1,\ldots,Q_r$ such that:
		\begin{itemize}
			\item each $Q_i$ is quasi-isometric to $\reals$, and
			\item $G$ acts by isometries on $\prod_{i=1}^rQ_i$, preserving the product structure, and there is a $G$--equivariant isometry $Q\to\prod_{i=1}^rQ_i$,
			\item if $G' \leq G$ is the finite index subgroup which acts trivially on the set of factors, then the $G'$-action on each $Q_i$ is cocompact.
		\end{itemize}
		Moreover, there exists $B<\infty$ such that $\diam(\mathcal CV)<B$ for all $V\in\mathfrak S-\support(p^\pm)$.
	\end{prop}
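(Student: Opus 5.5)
We prove the final assertion (the uniform bound on $\diam(\mathcal CV)$) first, since the product decomposition rests on it. Write $\support(p^\pm)=\{U_1,\ldots,U_r\}$; these are pairwise orthogonal, so $r\le\hhscomp$. Since $X=H_{M,z_0}(p^-,p^+)$, for every $V$ the coarsely surjective projection $\pi_V$ has image within $M+E$ of $\hull_{V,z_0}(p^-,p^+)$. We split into cases as in Definition \ref{defn:boundary-projection}.

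\emph{Case 1: $V\notin\support(p^\pm)$ and $V$ is not orthogonal to all the $U_i$.} Both projections $\pi_V(p^\pm)$ are sets of diameter at most $E$ by Lemma \ref{lem:boundary-projection-bound}. The hull is then the set of $(1,20E)$--quasigeodesics between two bounded sets. This would still be unbounded if the sets were far apart, so more is needed. Here we use that $G$ (after passing to a finite-index $G'$ fixing each $U_i$ and $p^\pm$) has unbounded orbits in each $\mathcal CU_i$. Then $\pi_V(p^\pm)$ are determined by $\rho$'s and by the projections of the same rays. I would argue that $d_V(\pi_V(p^-),\pi_V(p^+))$ is uniformly bounded: if $V\propnest U_i$, bounded geodesic image applied to a bi-infinite quasigeodesic in $\mathcal CU_i$ from $p^-_{U_i}$ to $p^+_{U_i}$ shows the distance is large only if this line passes near $\rho^V_{U_i}$. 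Translating by the loxodromic action of $G'$ on that line yields infinitely many domains $gV$ with the same large projection distance, nested in $U_i$ with $\rho$'s spread along the line. This is not immediately contradictory, so I would instead use the realisation/distance formula within $X=H_M$: points of $X$ realise these large projections simultaneously only compatibly with consistency, and large link bounds how many of the $\rho$'s can carry large projections along a segment. The cleanest route seems to be the following: $G'$--invariance and cocompactness of $G'$ on the line in $\mathcal CU_i$ mean that a large $d_V$ recurs periodically, and then the product of translated projections contradicts the bounded number of large-link domains per unit length only if sizes are unbounded. This step is the main obstacle. For transverse $V$ or $V$ containing some $U_i$, the projections of $p^\pm$ both lie in $\bigcup\rho^{U_j}_V$, and these are $2E$--close, giving a uniform bound directly.

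\emph{Case 2: $V\orth U_i$ for all $i$.} Both projections are $\pi_V(z_0)$ for every choice of $z_0$. Since $X=H_{M,z_0}$ holds for every $z_0$, we get $\pi_V(X)\subseteq\neb_M(\pi_V(z_0))$ for all $z_0$, hence $\diam(\mathcal CV)\le 2M+2E$ by coarse surjectivity.

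With the bound $B$ in hand, apply the distance formula: only the $U_i$ and domains nested in them with large projections contribute. Using that $X=H_M$ makes each $\pi_{U_i}$ coarsely a line, I would show $X$ is quasi-isometric to $\prod_i F_{U_i}$ with each $F_{U_i}$ quasi-isometric to $\reals$. Then I would revisit the construction of $Q$ in Theorem \ref{thm: median hulls}: its walls/halfspaces are indexed by domain-wise data, and those from domains nested in distinct $U_i$ cross pairwise (orthogonality). So the median space $Q$ splits as the product of the median spaces $Q_i$ spanned by walls coming from $\{W\nest U_i\}$, via the standard median decomposition of transverse wall families. The group $G$ permutes the $U_i$, hence permutes the factors, and $G'$ preserves each factor. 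Finally, $Q_i$ is equivariantly quasi-isometric to $F_{U_i}$, which is a quasiline, and $G'$ acts on it via loxodromic translation along the $p^\pm_{U_i}$ line. This action is cocompact on a quasiline, hence cocompact on $Q_i$.
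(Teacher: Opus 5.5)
There is a genuine gap, and it is the step you flagged: the bound on $\diam(\mathcal CV)$ for $V\propnest U_i$, on which your cocompactness argument depends. Your stated target there is also wrong. You aim to show that $\dist_V(\pi_V(p^-),\pi_V(p^+))$ is \emph{uniformly} bounded, but no bound depending only on the HHS parameters and $M$ holds in general. The constant $B$, and likewise the quasi-isometry constants, genuinely depend on $G$. This is already visible for cyclic groups generated by pseudo-Anosovs whose axes have large subsurface projections.

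Your remark that the $G'$--translates $gV$ carrying the same large projection are ``not immediately contradictory'' is exactly right, and it is the key. The function $V\mapsto\diam(\mathcal CV)$ is $G'$--invariant. Moreover, each single $\mathcal CV$ with $V\propnest U_i$ is already bounded: both $\pi_V(p^\pm)$ are bounded sets, as you note, and $\mathcal CV$ lies near the hull between them (cf.\ Proposition \ref{prop:quasi-linear-domains}). So the missing statement is that the $V\propnest U_i$ with $\diam(\mathcal CV)$ above the threshold fall into \emph{finitely many $G'$--orbits}. Then $B$ is a maximum over finitely many orbit representatives together with the uniform bounds from your easy cases.

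That finiteness is proved as follows.
\begin{enumerate}
\item Since $G'$ fixes both ends of the quasiline $\mathcal CU_i$ and has unbounded orbits, it acts coboundedly. Pick a bounded $\Omega\subseteq\mathcal CU_i$ with $G'\cdot\Omega=\mathcal CU_i$.
\item Pick points $z^\pm\in X$ whose projections lie deep in the two unbounded components of $\mathcal CU_i-\Omega$.
\item If $\rho^V_{U_i}$ meets $\Omega$, bounded geodesic image and consistency put $\pi_V(z^\pm)$ uniformly close to $\pi_V(p^\pm)$. Hence $\dist_V(z^-,z^+)$ agrees with $\diam(\mathcal CV)$ up to an additive constant depending on $E$ and $M$.
\item Only finitely many domains carry a large projection of the fixed pair $z^-,z^+$ (a large-link consequence, \cite[Lem.~13.5]{Casals-RuizReal24}). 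So only finitely many $V$ with $\rho^V_{U_i}$ meeting $\Omega$ have $\diam(\mathcal CV)$ above the threshold.
\item Since $G'$ fixes $U_i$, every $V\propnest U_i$ has a translate $gV$, $g\in G'$, with $\rho^{gV}_{U_i}=g\rho^V_{U_i}$ meeting $\Omega$. This gives finitely many orbits.
\end{enumerate}
Your sentence about ``large-link domains per unit length'' gestures at this. Without the fundamental domain $\Omega$ and the two fixed test points, however, it is not an argument.

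A smaller correction to your framing: the product splitting of $Q$ does not rest on this hard case. The coordinates of $Q$ are indexed by domains with diameter above a uniform threshold, and your easy cases already show these are all nested in exactly one $U_i$. The consistency conditions defining $Q$ only couple non-orthogonal pairs, so $Q$ is literally the $\ell^1$--product of the $Q_i$, and no wall-crossing splitting theorem is needed. The hard case is needed only to make $\pi_{U_i}$ a quasi-isometry on the $i$th factor. That gives coboundedness, and with properness cocompactness, of $G'$ on $Q_i$.
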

	
	\begin{remark}
		The quasi-isometry constants and the constant $B$ in Proposition \ref{prop:short-mountains} are not uniform; they depend on more than the HHS constants and $M$.
	\end{remark}

	\subsection{Quasi-linear domains}\label{subsec:quasilinear}
	
	We need to fix quasi-isometries from the domains of $\mathfrak S$ to lines, rays and intervals in an equivariant way. We start with some lemmas.
	
	\begin{lem} \label{lem: orthogonal supports are bounded}
		Let $(X,\mathfrak S)$ be an HHS and let $p^\pm,z_0,G$ and $M$ be as in Theorem \ref{thm: median hulls}. 
		If $p^+ \in \partial X$ (resp. $p^- \in \partial X$) and $U \in \support^\perp(p^+)$ (resp. $U \in \support^\perp(p^-)$) then $\diam(\mathcal CU) \leq \ddot M$, where $\ddot M\geq M$ depends on $M$ and the HHS parameters only.
	\end{lem}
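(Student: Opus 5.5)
Fix $U\in\support^\perp(p^+)$ and bound $\diam(\pi_U(X))$; since $\pi_U$ is $E$--coarsely surjective in this section, this bounds $\diam(\mathcal CU)$. The idea is that $\pi_U(X)$ is pinned near a single point of $\mathcal CU$ determined by $p^-$ and the basepoint. We use that $X=H_{M,z_0}(p^-,p^+)$ holds for \emph{every} choice of $z_0\in X$, so we may vary the basepoint freely.

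\textbf{Step 1: the hull in $\mathcal CU$ is a bounded neighbourhood of $\pi_U(p^-)$ together with $\pi_U(z_0)$.} By Definition \ref{defn:boundary-projection}, since $U\in\support^\perp(p^+)$, we have $\pi_{U,z_0}(p^+)=\pi_U(z_0)$, a set of diameter at most $E$. We split into cases according to $p^-$.
\begin{itemize}
\item If $p^-\in X$, the hull $\hull_{U,z_0}(p^-,p^+)$ is the union of $(1,20E)$--quasigeodesics from $\pi_U(p^-)$ to $\pi_U(z_0)$ (or, by the convention for a pair consisting of an interior point and a boundary point, to $\pi_{U,p^-}(p^+)=\pi_U(p^-)$, which is even better).
\item If $p^-\in\boundary X$ and $U\notin\support(p^-)$, then $\pi_{U,z_0}(p^-)$ has diameter at most $E$ by Lemma \ref{lem:boundary-projection-bound} and the choice of $E$. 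Its dependence on $z_0$ arises only when $U\in\support^\perp(p^-)$, in which case it equals $\pi_U(z_0)$.
\item If $U\in\support(p^-)$, the hull consists of quasigeodesic rays from $\pi_U(z_0)$ to $(p^-)^U\in\boundary\mathcal CU$.
\end{itemize}

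\textbf{Step 2: vary the basepoint.} Take $x,y\in X$ and apply the hull condition with $z_0:=x$. Then $\pi_U(y)$ lies within $M$ of $\hull_{U,x}(p^-,p^+)$. In the first two cases of Step 1 this hull is contained in a uniform neighbourhood of a geodesic between two sets. One endpoint set is $\pi_U(x)$. The other is either the fixed set $\pi_U(p^-)$ or $\pi_{U,x}(p^-)$; when $U\in\support^\perp(p^-)$, the latter also equals $\pi_U(x)$.

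In the sub-case where both endpoints are $\pi_U(x)$, we get $\dist_U(x,y)\le M+2E+\Morse_E(1,20E)$ immediately, which gives the bound. Otherwise there is a fixed bounded set $b=\pi_U(p^-)$, independent of the basepoint, and $\pi_U(y)$ is near a geodesic $[\pi_U(x),b]$ for all $x,y$. Swapping the roles of $x$ and $y$, $\pi_U(x)$ is near $[\pi_U(y),b]$. In a hyperbolic space these two facts force $\dist_U(x,b)$ and $\dist_U(y,b)$ to agree up to a uniform error, and then the thinness of the tripod forces $\pi_U(x)$ and $\pi_U(y)$ to be uniformly close. This yields $\dist_U(x,y)\le C(M,E)$.

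\textbf{Step 3: the case $U\in\support(p^-)$, which is the main obstacle.} A ray towards $(p^-)^U$ seems to allow unbounded projections. However, the same swap argument applies with the boundary point in place of $b$. For all $x,y$, $\pi_U(y)$ lies near a ray from $\pi_U(x)$ to $\xi=(p^-)^U$, and vice versa. Using Gromov products based at $\xi$ (Busemann functions), each inclusion says $\pi_U(y)$ is ``further toward $\xi$'' than $\pi_U(x)$ up to an error, and also the reverse. Hence the Busemann values agree up to a uniform error, and since both points lie near a common ray, they are uniformly close.

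I would first try to make this precise via the Morse lemma and $\delta$--thin ideal triangles. One cleanly usable fact is that if $q$ is within $M$ of a quasi-ray $\gamma$ from $p$ to $\xi$ and $p$ is within $M$ of a quasi-ray from $q$ to $\xi$, then $d(p,q)$ is bounded in terms of $M$ and $E$.

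\textbf{Conclusion.} Combining the cases gives $\diam(\pi_U(X))\le C(M,E)$, and coarse surjectivity gives $\diam(\mathcal CU)\le C+2E=:\ddot M$, which depends only on $M$ and the HHS parameters. The statement for $p^-$ follows by symmetry.
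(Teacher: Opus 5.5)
Your proposal is correct and follows essentially the same route as the paper. Both arguments use that $X=H_{M,z_0}(p^-,p^+)$ for every basepoint $z_0$. The hull in $\mathcal CU$ then either collapses to a neighbourhood of $\pi_U(z_0)$ (when $U$ is orthogonal to both supports) or has a second endpoint independent of $z_0$, and varying the basepoint, together with coarse surjectivity of $\pi_U$, pins down $\pi_U(X)$. Your swap/Busemann argument makes explicit what the paper compresses into ``bounded in terms of $M$ and $E$''. This includes the case $U\in\support(p^-)$, which, since $\mathcal CU$ is then unbounded, your argument actually shows cannot occur.
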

	
	\begin{proof}
		Since $\pi_U(X)=\pi_U(H_{M,z_0}(p^-,p^+))$ for all $z_0\in X$ and $\pi_U$ is $E$--coarsely surjective, $\mathcal CU$ is $(M+E)$--Hausdorff close to $\hull_{U,z_0}(p^-,p^+)$ for all $U\in\mathfrak S$, by Definition \ref{defn:pair-hull}.  
		
		Suppose $p^+\in\partial X$ and $U\in \support^\orth(p^+)$.  Fix $z_0\in X$.  By Definition \ref{defn:boundary-projection}, $\pi_{U,z_0}(p^+)=\pi_U(z_0)$.  By Definition \ref{defn:pair-hull}, $\hull_{U,z_0}(p^-,p^+)$ is therefore the union of all $(1,20E)$--quasigeodesics from $\pi_U(z_0)$ to $q:=\pi_{U,z_0}(p^-)$.  If $p^-\in\boundary X$ and $U\in\support^\orth(p^-)$, then $q=\pi_U(z_0)$, so $\hull_{U,z_0}(p^-,p^+)\subseteq \mathcal N_{100E}(\pi_U(z_0))$, which has uniformly bounded diameter.
		
		Otherwise, $q$ is independent of the choice of $z_0$ (see Definition \ref{defn:boundary-projection}).  In this case, since $X=H_{M,z_0}(p^-,p^+)$ for every choice of $z_0$, the diameter of $\hull_{U,z_0}(p^-,p^+)$, and hence $\mathcal CU$, is bounded in terms of $M$ and $E$.   
	\end{proof}
	
	\begin{lem}\label{lem:quasi-line-action}
		For each $k_1 \geq 1$, there exists $k \geq 1$ such that the following holds. Let $A$  be a group and let $\qline$ be a $k_1$-quasi-line with an isometric $A$--action.  Suppose that either the action has bounded orbits or $A$ is virtually abelian. Then $A$ acts isometrically on $\mathbb{R}$ and there is an $A$-equivariant $k$-quasi-isometry $f: \qline \rightarrow \mathbb{R}$.
	\end{lem}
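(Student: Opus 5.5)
We will build $f$ from a two-sided Busemann function on $\qline$ and then make it exactly equivariant, using an invariant mean in the virtually abelian case and a $\sup$/$\inf$ symmetrisation in the bounded orbits case. Since $\qline$ is a $k_1$--quasi-line, it is $\delta_1$--hyperbolic with $\delta_1$ depending only on $k_1$, and it has exactly two ends $\xi^-,\xi^+$.

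The isometric $A$--action permutes $\{\xi^-,\xi^+\}$. This gives an orientation character $\epsilon:A\to\{\pm1\}$, whose kernel $A_+$ has index at most $2$.

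Fix a bi-infinite $k_1$--quasigeodesic $\gamma$ from $\xi^-$ to $\xi^+$. Let $b_\pm$ be the associated Busemann functions, for instance $b_+(y)=\limsup_{t\to\infty}(\dist(y,\gamma(t))-t)$ and similarly for $b_-$. Set $\beta=\tfrac12(b_--b_+)$. Then $\beta$ is a $(k',k')$--quasi-isometry $\qline\to\reals$ with $k'$ depending only on $k_1$. By uniqueness of Busemann functions up to additive constants and uniformly bounded error in $\delta_1$--hyperbolic spaces, for each $a\in A$ there is $c(a)\in\reals$ with
\[
\sup_{y\in\qline}\bigl|\epsilon(a)\beta(ay)-\beta(y)-c(a)\bigr|\leq C,
\]
where $C=C(k_1)$. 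Swapping the ends turns $\beta$ into $-\beta$ up to a constant, which is the reason for using the antisymmetrised function $\beta$ and the sign $\epsilon(a)$.

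Next we fix an affine action of $A$ on $\reals$, of the form $a\cdot t=\epsilon(a)t+\tau(a)$, with $\tau$ a genuine cocycle such that $|\tau(a)-c(a)|$ is uniformly bounded. There are two cases.

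In the bounded orbits case, we first move the base quasigeodesic, or equivalently normalise $\beta$, so that $\beta$ vanishes on a quasi-centre of an orbit. Quasi-centres of bounded sets in $\delta_1$--hyperbolic spaces are coarsely unique, so the orbit of this quasi-centre has diameter bounded in terms of $k_1$ alone. After this normalisation $|c(a)|$ is uniformly bounded, and we take $\tau=0$. We then define
\[
f(y)=\tfrac12\Bigl(\sup_{a\in A}\epsilon(a)\beta(ay)+\inf_{a\in A}\epsilon(a)\beta(ay)\Bigr).
\]
Reindexing $a\mapsto ac$ shows that the $\sup$ and the $\inf$ are each multiplied by $\epsilon(c)$ under $y\mapsto cy$. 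Hence $f(cy)=\epsilon(c)f(y)$ exactly. By the displayed bound, $|f-\beta|$ is uniformly bounded.

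In the virtually abelian case, $A$ is amenable. The function $a\mapsto c(a)$, restricted to $A_+$, is a quasimorphism, so its homogenisation is a homomorphism $A_+\to\reals$; on $A_+$ this homomorphism is the signed translation length. We extend it to a cocycle $\tau$ on $A$. When $A_+\neq A$, we choose $\tau(s)$ to be the value $c(s)$ for one fixed orientation-reversing element $s$, and then check the cocycle identity using the fact that $\epsilon(a)\beta(a\cdot)-\beta$ is uniformly close to $c(a)$. We then average against a left-invariant mean $m$ on $A$:
\[
f(y)=m_a\bigl[\epsilon(a)^{-1}\bigl(\epsilon(a)\beta(ay)-\tau(a)\bigr)\bigr],
\]
suitably twisted so that the change of variables $a\mapsto ac$ produces exactly $f(cy)=\epsilon(c)f(y)+\tau(c)$. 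The integrand differs from $\beta(y)$ by at most $C+\sup_a|c(a)-\tau(a)|$, so again $|f-\beta|$ is bounded. In both cases, therefore, $f$ is an equivariant $k$--quasi-isometry with $k$ depending only on $k_1$.

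The main obstacle is uniformity of constants. We need $\sup_a|c(a)-\tau(a)|$ to be bounded by a function of $k_1$ alone, not merely finite. For the homogenisation this should follow from the standard fact that the defect of a homogenised quasimorphism is controlled by the defect of the original one, which here is $O(C)$. For orientation-reversing elements, one has to check that $\tau(s)$ can be chosen consistently; I expect this to work because the relation $s^2\in A_+$ forces $\tau(s^2)=0$ up to bounded error. If that check fails, the fallback is to build $f$ on $A_+$ only and then symmetrise over the two cosets of $A_+$ in $A$.
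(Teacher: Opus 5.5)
Your argument is correct in substance, but it takes a different route from the paper. The paper first treats end-fixing actions. It takes a $(1,k_2)$--quasi-isometry $f_1:\Sigma\to\reals$ from Kerr's result and sets $\ell(a)=\lim_n\frac{1}{n}f_1(a^nx_0)$. This is a homomorphism by Calegari's theorem on homogeneous quasimorphisms of amenable groups, and it is trivially $0$ when orbits are bounded. The paper then proves the uniform estimate $|f_1(ax)-f_1(x)-\ell(a)|\le 2k_2+k_3$ and defines $f$ orbit by orbit, $f(ax_i)=\ell(a)+f_1(x_i)$. End-swapping is handled by a brief induced-representation remark.

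You instead replace $f_1$ by the antisymmetrised Busemann function $\beta$ together with the orientation character $\epsilon$. You also replace the orbitwise definition by averaging: a mean in the amenable case, and the $\sup/\inf$ symmetrisation in the bounded case, which is the right choice there since $A$ need not be amenable. The paper's route is shorter and uses amenability only once. Yours treats orientation-reversing elements on the same footing as the rest, and it makes the uniformity you flag as the main obstacle essentially automatic. Since $\epsilon(a)\beta(ay)-\beta(y)-c(a)$ is $C$--small uniformly in $a$ and $y$, the averaged $f$ equals $\beta$ plus a constant, up to an error of at most $C$. So you only need $\sup_a|\tau(a)-\epsilon(a)c(a)|<\infty$, not a bound in terms of $k_1$. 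Likewise, in the bounded case you only need $c$ to be bounded, so the quasi-centre normalisation is unnecessary.

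Some bookkeeping needs fixing.
\begin{itemize}
\item Define $b_+(y)=\limsup_t\bigl(\dist(y,\gamma(t))-\dist(\gamma(0),\gamma(t))\bigr)$. With $-t$ in place of $-\dist(\gamma(0),\gamma(t))$, the limit can be $-\infty$ along a quasigeodesic.
\item With your convention for $c$, the relation $f(ay)=\epsilon(a)f(y)+\tau(a)$ together with $f\approx\beta$ forces $\tau(a)\approx\epsilon(a)c(a)$. So take $\tau(s)=-c(s)$, although by the remark above any finite choice still yields a uniform quasi-isometry.
\item The integrand should be $a^{-1}\cdot\beta(ay)=\epsilon(a)\bigl(\beta(ay)-\tau(a)\bigr)$, and the mean must be invariant under $a\mapsto ac$.
\item Extending the homogenisation $\bar c$ from $A_+$ to $A$ works exactly, not just up to bounded error. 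Your estimates give $c(sas^{-1})=-c(a)+O(C)$ for $a\in A_+$, hence $\bar c(sas^{-1})=-\bar c(a)$ by homogeneity. Taking $a=s^2$ then gives $\bar c(s^2)=0$. These are exactly the conditions for the translations by $\bar c$ on $A_+$, together with any reflection assigned to $s$, to define a homomorphism $A\to\Isom(\reals)$.
\item The uniform bound you wanted on $A_+$ is the standard estimate $|\bar c-c|\le D(c)\le 3C$.
\end{itemize}
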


	\begin{proof}
		Fix $k_1 \geq 1$ and let $A, \qline$ be as in the statement. Assume momentarily that $A$ fixes the ends of $\qline$. By \cite[Cor. 4.12]{KerrTree23}, there exists $k_2 \geq 1$, depending only on $k_1$, and a $(1,k_2)$-quasi-isometry $f_1: \qline \rightarrow \mathbb{R}$. Without loss of generality, $0 = f_1(x_0)$ for some $x_0 \in \qline$. For all $a \in A$, define $\ell(a) \coloneqq \lim_{n \in \mathbb{N}} \frac{1}{n} f_1(a^n x_0)$. 
		Since $f_1$ is a rough isometry, $|\ell(a)| = \lim_{n \in \mathbb{N}} \frac{1}{n} \dist_\qline(x_0, a^n x_0) < \infty$ for all $a \in A$.
		
		\begin{claim}\label{claim:homomorphism}
			The map $\ell: A \rightarrow \mathbb{R}$ is a homomorphism.
		\end{claim}
		\begin{proof}
			\renewcommand{\qedsymbol}{$\blacksquare$}
			Let $a,b \in A$. Then, for all $n \in \mathbb{N}$,
			\[
			f_1(a^n x_0) + f_1(b^n x_0) - 3k_2 \leq f_1(a^n b^n x_0) \leq f_1(a^n x_0) + f_1(b^n x_0) + 3k_2,
			\]
			so the claim follows from \cite[Prop. 2.65]{Calegari:scl} in the virtually abelian case.  Otherwise, our assumption that $A$ has bounded orbits and fixes the endpoints implies that $\ell$ is the trivial map, which is a homomorphism.
			
		\end{proof}

		There exists a constant $k_3$ depending only on $k_1$ such that, for any ball $B_\qline(x,k_3) \subseteq \qline$, the complement $\qline - B_\qline(x,k_3)$ has precisely two unbounded components.
		
		\begin{claim}\label{claim:quasi-R}
			For all $a \in A$ and $x \in \qline$ we have $f_1(ax) - f_1(x) - 2k_2 - k_3 \leq \ell(a) \leq f_1(ax) - f_1(x) + 2k_2$.
		\end{claim}
		\begin{proof}
			\renewcommand{\qedsymbol}{$\blacksquare$}
			Firstly, 
			\[
			|\ell(a)| = \lim_{n \rightarrow \infty} \frac{d_\qline(x_0, a^nx_0)}{n} \leq \lim_{n \rightarrow \infty} \frac{2d_\qline(x_0, x) + \dist_\qline(x, a^nx)}{n} = \lim_{n \rightarrow \infty} \frac{d_\qline(x, a^nx)}{n}.
			\]
			A symmetric argument shows that $\lim_{n \rightarrow \infty} \frac{1}{n} \dist_\qline(x, a^nx) \leq |\ell(a)|$. It follows that $\ell(a) = \lim_{n \rightarrow \infty} \frac{1}{n} (f_1(a^n x) - f_1(x))$. Now, using the triangle inequality and the fact that $f_1$ is a rough isometry,
			\[
			f_1(a^n x) - f_1(x) \leq \sum_{i=0}^{n-1} f_1(a^{i+1}x) - f_1(a^ix) \leq nf_1(ax_0) + 2nk_2
			\]
			for all $n \in \mathbb{N}$, which proves the second inequality. 
			
			To prove the first inequality, observe that, if $d_\qline(x, ax) > k_3$ then either $f_1(a^nx) < f_1(a^{n+1}x)$ for all $n \in \mathbb{Z}$ or $f_1(a^nx) > f_1(a^{n+1}x)$ for all $n \in \mathbb{Z}$. It follows that 
			\[
			f_1(a^n x) - f_1(x) = \sum_{i=0}^{n-1} f_1(a^{i+1}x) - f_1(a^ix) \geq nf(ax_0) - 2nk_2. \qedhere
			\]
		\end{proof}
		
		Let $\{x_i\}_{i\in I}\subseteq \qline$ contain exactly one point in each $A$--orbit.  Define $f:\qline\to \reals$ by $f(ax_i)=\ell(a)+f_1(x_i)$ for $i\in I$ and $a\in A$.  Observe that if $ax_i=x_i$, then $a^nx_i=x_i$ for all $n$, and hence $f_1(a^nx_0)$ is bounded, so $\ell(a)=0$.  Thus, if $a,b\in A$ satisfy $ax_i=bx_i$, then $\ell(a)=\ell(b)$, and therefore $f$ is a well-defined map.  Claim \ref{claim:quasi-R} implies that $f$ is a quasi-isometry with constants depending only on $k_1$.  Finally, the map $A\times\reals \ni (a,t)\mapsto \ell(a)+t$ defines an isometric action of $A$ on $\reals$, by Claim \ref{claim:homomorphism}, and $f$ is equivariant with respect to this action, by construction.  
		
		If $A$ permutes the ends of $\qline$, let $A'\leq A$ be the index--$2$ subgroup fixing the ends and let $f:\qline\to \reals$ and $\ell:A'\to\reals$ be the homomorphism obtained by applying the preceding construction to $A'$.  Then the standard induced representation construction gives the desired action $A\to \reals\rtimes\integers/2$ making the map $\qline\to \reals$ equivariant.
	\end{proof}
	
	\begin{prop}\label{prop:quasi-linear-domains}
		Let $G$ and $(X, \mathfrak S)$ be as in Theorem~\ref{thm: median hulls}. There exists a constant $L \geq 1$ depending only on the HHS parameters and $M$ such that the following holds. There exists an interval, line or ray $T_U$ for each $U \in \mathfrak S$ and an $L$--coarsely surjective $(L,L)$--quasi-isometry $f_U: \mathcal CU \rightarrow T_U$ such that:
		\begin{itemize}
			\item for all $g \in G$ and $U \in \mathfrak S$ there is an isometry $g: T_U \rightarrow T_{gU}$;
			\item for all $g,h \in G$ and $U \in \mathfrak S$ the isometry $gh:T_U \rightarrow T_{ghU}$ is the composition $g \circ h$. 
			\item for all $U \in \mathfrak S$ the map $f_U$ is $\stabilizer_G(U)$--equivariant. 
            \item $T_U$ is unbounded if and only if $U\in\support(q)$ for some $q\in\{p^-,p^+\}\cap\boundary X$.
		\end{itemize}
		Moreover, letting $p^\pm\in X\cup\boundary X$ be as in Theorem \ref{thm: median hulls}, $G$ has a finite index subgroup fixing  $\{p^-,p^+\}\cap\boundary X$ pointwise.
	\end{prop}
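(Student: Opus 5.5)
First I will establish the "moreover" statement, since it is needed to make the remaining constructions equivariant. Suppose $p^+\in\boundary X$. For $g\in G$, both $X$ and $\mathfrak S$ are $G$--invariant, so $gX=X=H_{M,gz_0}(gp^-,gp^+)$ by Lemma~\ref{lem: equivariant boundary projections}. Comparing hulls in each $\mathcal CU$ for $U\in\support(p^+)$, where $\pi_U(X)$ is unbounded and accumulates only on $\{\pi_U(p^-),\pi_U(p^+)\}\cap\boundary\mathcal CU$, shows that $g$ permutes $\{p^-,p^+\}\cap\boundary X$. Here I use Lemma~\ref{lem: orthogonal supports are bounded} to see that domains outside the supports are bounded. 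The kernel of the resulting action on a set of size at most $2$ is the required finite-index subgroup $G_0$. In particular, the set $\mathfrak S_\infty$ of domains lying in $\support(p^-)\cup\support(p^+)$ is $G$--invariant.

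Next I construct $T_U$ and $f_U$. Fix a set of representatives $\{U_j\}$ of $G$--orbits in $\mathfrak S$, and for each $j$ let $S_j=\stabilizer_G(U_j)$. I will build an $S_j$--equivariant map $f_{U_j}:\mathcal CU_j\to T_{U_j}$ with uniform constants and transport it along cosets. That is, for $U=gU_j$ I set $T_U:=T_{U_j}$ with the isometries $g:T_{U_j}\to T_U$ defined coset-wise, and $f_U:=g\circ f_{U_j}\circ g^{-1}$. This is well defined because $f_{U_j}$ is $S_j$--equivariant. The cocycle conditions in the first two bullets follow exactly as in the standard induced-action bookkeeping, which also mirrors the orbit-representative arguments in Lemma~\ref{lem:equivariant-gate}.

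For a single $U$ there are three cases. \textbf{Case 1:} $U\notin\mathfrak S_\infty$. Then $\mathcal CU$ is $(M+E)$--Hausdorff close to a hull of a bounded set, or of a point together with one boundary point. In the bounded sub-case, the diameter is bounded by $\ddot M$ or by the argument of Lemma~\ref{lem: orthogonal supports are bounded}. I take $T_U$ to be a point, or an interval of length $\diam\mathcal CU$ on which $S_U$ acts trivially, so that $f_U$ is constant. This requires the bound on $\diam\mathcal CU$ to be uniform, which the same lemma provides once I also handle $U$ nested in or transverse to a support domain (where projections are uniformly bounded sets). Here I must check that ``$T_U$ unbounded iff $U\in\mathfrak S_\infty$'' holds.

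\textbf{Case 2:} $U\in\support(q)$ and exactly one of $p^\pm$ lies in $\boundary X$. Then $\mathcal CU$ is uniformly close to a union of $(1,20E)$--quasigeodesic rays from $\pi_U(p^-)$ to $\pi_U(p^+)$, so it is a uniform quasi-ray. Since $S_U$ fixes the boundary point and coarsely fixes the bounded endpoint $\pi_U(x)$, the orbits of $S_U$ are bounded. I then use a Busemann-type function of the fixed endpoint, symmetrised over a bounded orbit, or more simply argue as in Lemma~\ref{lem:quasi-line-action} with a trivial homomorphism. This gives an $S_U$--invariant quasi-isometry to $[0,\infty)$. \textbf{Case 3:} $p^\pm\in\boundary X$ and $U\in\support(p^+)\cap\support(p^-)$. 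Then $\mathcal CU$ is a uniform quasi-line, and $G$, and hence $S_U$, is virtually abelian by hypothesis. Lemma~\ref{lem:quasi-line-action} applies directly, with $k_1$ depending on $M$ and $E$, and yields $T_U=\reals$ with an $S_U$--equivariant $k$--quasi-isometry.

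The main obstacle I expect is Case 1 in the remaining sub-case, namely $U\in\support(p^+)$ but $U\notin\support(p^-)$ with $p^-\in\boundary X$. In this situation $\pi_U(p^-)$ is a bounded set, so $\mathcal CU$ is still a quasi-ray, and it must be treated as in Case 2. This sub-case has to be folded into Case 2, with care to show that its constants are uniform and that the $S_U$--action coarsely fixes the finite end. That in turn uses the $G$--invariance of $\pi_U(p^-)$ under $\stabilizer_G(U)\cap G_0$, together with an index-$2$ induction if $S_U$ does not lie in $G_0$.
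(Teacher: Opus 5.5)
Your proposal has two genuine gaps, one in Case~1 and one in the argument for the final clause.

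\textbf{Case~1.} You assert that $\diam(\mathcal CU)$ is uniformly bounded whenever $U\notin\support(p^-)\cup\support(p^+)$. You then take $f_U$ constant, with $\stabilizer_G(U)$ acting trivially on $T_U$. That bound is false.

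If $U\propnest V$ with $V\in\support(p^+)\cap\support(p^-)$, then $\pi_U(p^+)$ and $\pi_U(p^-)$ each have diameter at most $E$. However, the distance between them is not controlled by the HHS parameters and $M$. This is why the constant $B$ in Proposition~\ref{prop:short-mountains} is not uniform, and why Definition~\ref{defn:support-nested} later has to single out the long bounded domains. Likewise, when $p^\pm\in X$ every $\mathcal CU$ is bounded, but with diameter roughly $\dist_U(p^-,p^+)$. For such $U$, a constant $f_U$ is not an $(L,L)$--quasi-isometry with $L$ uniform.

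Keeping the trivial action and choosing a non-constant map does not help either, because $\stabilizer_G(U)$ may exchange the two far-apart ends of $\mathcal CU$. For instance, take $X=[-n,n]$ with the one-domain structure $\mathfrak S=\{S\}$, $\mathcal CS=X$, $p^\pm=\pm n$, and $G=\integers/2$ acting by $t\mapsto -t$. Then any $G$--invariant $f_S$ identifies $n$ with $-n$.

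The missing idea is to treat bounded domains like one-ended ones, and to let $\stabilizer_G(U)$ act on $T_U$ by a reflection when necessary. Concretely:
\begin{itemize}
\item attach rays at the two coarse ends of $\mathcal CU$ (the paper uses $\pi_U(p^\pm)$), obtaining a uniform quasi-line $\mathcal C'U$ that carries the $\stabilizer_G(U)$--action;
\item apply Lemma~\ref{lem:quasi-line-action} in its bounded-orbit case, whose index-$2$ step produces the flip;
\item restrict the resulting map to $\mathcal CU$.
\end{itemize}
This is the paper's route, and it gives uniform constants for bounded, one-ended and two-ended $\mathcal CU$ at once.

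\textbf{The final clause.} Your claim that $G$ permutes $\{p^-,p^+\}\cap\boundary X$ is false. Comparing hulls domain by domain only shows that $g$ sends endpoints of each $\mathcal CU$ to endpoints of $\mathcal C gU$. These choices need not be coherent across different support domains, and $g$ may also permute support domains carrying different coefficients $a_U$.

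For example, $\integers^2\rtimes\integers/2$, with the involution acting by $(x,y)\mapsto(-x,y)$, acts by HHS automorphisms on $\reals^2$ with two orthogonal line domains $U_1,U_2$. It sends $p^+$ (endpoint $+\infty$ in both lines) to a boundary point that is neither $p^+$ nor $p^-$.

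The correct argument is the paper's:
\begin{itemize}
\item First prove the fourth bullet, so that $\support(p^-)\cup\support(p^+)$ is exactly the set of $U$ with $\mathcal CU$ unbounded. It is therefore a finite $G$--invariant set.
\item Pass to the finite-index subgroup fixing each such $U$.
\item Then pass to the kernel of its action on the sets $\boundary\mathcal CU$, each of which has at most two points.
\end{itemize}
This subgroup fixes $p^\pm$, coefficients included, because it fixes every support domain.

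Finally, your appeal to $G_0$ in the one-ended case is unnecessary. Any isometry of a uniform quasi-ray coarsely fixes its origin, so $\stabilizer_G(U)$ automatically has uniformly bounded orbits there.
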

	
	\begin{proof}
        In this proof, \emph{uniform} quantities are those bounded in terms of the HHS parameters and $M$. Recall that $X=H_{M,z_0}(p^-,p^+)$ for all $z_0\in X$, by hypothesis.  For each $U\in\mathfrak S$, our standing assumption ensures that $\mathcal CU=\neb_{E}(\pi_U(H_{M,z_0}(p^-,p^+)))$.  By Definition \ref{defn:pair-hull}, it follows that $\mathcal CU=\neb_{E+M}(\hull_{U,z_0}(p^-,p^+))$. 
        
         We first deduce from this that each $\mathcal CU$ is uniformly quasi-isometric to an interval, line, or ray.  By the preceding observation, it suffices to show that each $\hull_{U,z_0}(p^-,p^+)$ has this property.  
		If $p^-,p^+\in X$, then by Remark \ref{rem:hull-is-hull}, $\mathcal CU$ is at uniformly bounded Hausdorff distance from some geodesic joining $\pi_U(p^-)$ to $\pi_U(p^+)$, as required.  
		
		Therefore, suppose that $p^+\in\boundary X$. If $p^+\in\boundary X$ but $p^-\in X$, then for all $U$, $\mathcal CU$ is, by the second point in Definition \ref{defn:pair-hull}, at uniformly bounded Hausdorff distance from a geodesic interval or ray, as required.  If $p^-,p^+\in\boundary X$, then the third point in Definition \ref{defn:pair-hull} yields the same conclusion.  (If $U\in\support(p^-)\cap\support(p^+)$, we implicitly used that $\pi_U(p^-) \neq \pi_U(p^+)$, but this follows from Definition \ref{defn:pair-hull} since $X\neq\emptyset$ and $X=H_{M,z_0}(p^-,p^+)$.)
		
		We now construct $T_U$ and $f_U$ for each $U\in \mathfrak S$.  First, we have just shown that there is a uniform constant $k_1\ge1$ such that for all $U\in\mathfrak S$, $\mathcal CU$ is $(k_1,k_1)$--quasi-isometric to an interval, ray, or line.  Second, we will use the following observation: if $p^-,p^+\in X$, then, as noted above, $\mathcal CU$ is bounded for all $U$.  Hence, if any $\mathcal CU$ is unbounded, then one of $p^\pm\in\boundary X$ and therefore $G$ is virtually abelian, by the hypothesis in Theorem \ref{thm: median hulls}.
		
		Fix $U\in\mathfrak S$.  We define $T_U$ and $f_U$ as follows.  

        If $\mathcal CU$ is two-ended, then Lemma \ref{lem:quasi-line-action} provides a homomorphism $\phi_U:\stabilizer_G(U)\to \Isom(\reals)$ and a quasi-isometry $f_U:\mathcal CU\to \reals=:T_U$ such that $f_U(gc)=\phi_U(g)f_U(c)$ for all $g\in\stabilizer_G(U)$ and $c\in \reals$.

        If $\mathcal CU$ is either one-ended or bounded, then at least one of $\pi_U(p^\pm)$ is a uniformly bounded subset of $\mathcal CU$, and the set $\pi_U(p^-)\cup \pi_U(p^+)$ is $\stabilizer_G(U)$--invariant and each $g\in \stabilizer_G(U)$ either stabilises $\pi_U(p^\pm)$ or sends it bijectively to $\pi_U(p^\mp)$.  Form a new space $\mathcal C'U$ as follows: if $\pi_U(p^+)$ (respectively, $\pi_U(p^-)$) is contained in $X$, then add a vertex $v^+$ (respectively $v^-$) joined by an edge to each element of $\pi_U(p^+)$ (respectively, $\pi_U(p^-)$), and then attach a copy of $[0,\infty)$ by identifying $0$ with $v^+$; if $v^-$ has been defined, also attach a differente copy of $[0,\infty)$ there.  Then $\mathcal C'U$ is a uniform quasiline in which $\mathcal CU$ is isometrically embedded as a $G$--invariant subset, and the $\stabilizer_G(U)$ action extends naturally over $\mathcal C'U$.  Apply Lemma \ref{lem:quasi-line-action} to get a $\stabilizer_G(U)$--action on $\reals$ and an equivariant map $\mathcal C'U\to\reals$, which we then restrict to $\mathcal CU$.

		Applying the above construction to $G$--orbit representatives in $\mathfrak S$ and extending equivariantly in the usual way yields isometries $g:T_U\to T_{gU}$ with the equivariance properties required by the statement. 
        
         Finally, we characterise the $U \in \mathfrak S$ for which $\mathcal CU$ (equivalently $T_U$) is unbounded.  If $p^\pm\in\boundary X$ and $U\in\support(p^\pm)$, then $\mathcal CU$ is unbounded since $\pi_{U,z_0}(p^\pm)\in\boundary \mathcal CU$ in this case.  Conversely, suppose that $\mathcal CU$ is unbounded.  As noted above, this implies that we can assume $p^+\in\boundary X$.  Lemma \ref{lem: orthogonal supports are bounded} implies that $U\not\in\support^\orth(p^\pm)$.  Hence it suffices to consider the case where $U\not \in\support(p^\pm)\cup\support^\orth(p^\pm)$, but in this case, Definition \ref{defn:boundary-projection} and Definition \ref{defn:pair-hull} imply that $\mathcal CU$ is bounded.
		
		Since $G$ acts by HHS automorphisms, the set of $U\in\mathfrak S$ for which $\mathcal CU$ is unbounded is $G$--invariant, which implies that $\support(p^-)\cup\support(p^+)$ is $G$--invariant (here we take $\support(p^\pm)=\emptyset$ for $p^\pm\in X$).  Since $\support(p^\pm)$ consists of pairwise-orthogonal elements, $|\support(p^-)\cup\support(p^+)|$ is finite (in fact bounded in terms of the complexity of $(X,\mathfrak S)$), and hence $G'=\bigcap_{U\in\support(p^-)\cup\support(p^+)}\stabilizer_G(U)$ has finite (in fact, uniformly bounded) index in $G$.  For each $U\in\support(p^-)\cup\support(p^+)$, the $G'$--action on $\mathcal CU$ extends, as usual, to an action on $\boundary\mathcal CU$, which consists of at most two points.  Hence passing to the intersections of the kernels of the $G'$--actions on these $\boundary\mathcal CU$ yields the desired finite-index subgroup. 
	\end{proof}
	
	\subsection{The product of trees $\mathcal Y$}\label{subsec:product-of-trees}
	Let $L$ and $\{f_U:\mathcal CU\to T_U:U\in\mathfrak S\}$ be as in Proposition \ref{prop:quasi-linear-domains}. Let $U\in\mathfrak S$.  If $U\not\in\support^\orth(p^\pm)$, then by Definition \ref{defn:boundary-projection}, $\pi_U(p^\pm)$ is defined independently of $z_0$, and $\pi_{U,z_0}(p^\pm)=\pi_U(z_0)$ otherwise.  Up to enlarging $L$ by a uniformly bounded amount, we can (and do) assume that, if $\pi_U(p^\pm) \subseteq \mathcal CU$, then $f_U(\mathcal N_{\pcollapse}(\pi_U(p^\pm)))$ is an endpoint of $T_U$. Denote this point by $p_U^\pm$.
	
    If $\pi_U(p^\pm) \in \partial \mathcal CU$ then let $(x_n)_{n \in \mathbb N} \subseteq \mathcal CU$ be a sequence which converges to $\pi_U(p_U^\pm)$ and let $p_U^\pm \in \partial T_U$ be the limit of $(f_U(x_n))_{n \in \mathbb N}$ in $\boundary T_U$, which is independent of the choice of $(x_n)_{n \in \mathbb N}$. 
	
	Enlarging $L$ be a uniform amount again, we can fix, for each $U \in \mathfrak S$, a quasi-inverse $f_U^{-1}: T_U \rightarrow \mathcal CU$ for $f_U$ and assume it is an $(L,L)$-quasi-isometry. In general it will not be possible to make $f_U^{-1}$ $\stabilizer_G(U)$--equivariant.
	
	\begin{defn}\label{defn:support-nested}
		Let $C_0,C_1$ be as in Section \ref{sec:coarse-median}. Fix constants $E' \coloneqq L(5E + L + 1)$ and $K \coloneqq 100L(\pcollapse + E) + 100E' + L^2  + L(C_0 + C_1)+\ddot M$. Let $\mathfrak U \coloneqq \{U \in \mathfrak S : \diam(T_U) \geq K\}$. We say that $U \in \mathfrak U$ is $\nest_{\mathfrak U}$\emph{-minimal} if it is $\nest$-minimal in $\mathfrak U$.
	\end{defn}
	
	\begin{remark}\label{rem:support-nested-prooperties}
		\begin{itemize}
			\item[]
			\item The set $\mathfrak U$ is $G$--invariant, by Proposition \ref{prop:quasi-linear-domains}.
			\item If $U \in \mathfrak U$ and $p^\pm \in \partial X$ then it follows from Lemma~\ref{lem: orthogonal supports are bounded} that $U \notin \support^\perp(p^\pm)$. Therefore $\pi_U(p^\pm)$ is well-defined (independently of a choice of basepoint) for any $p^\pm\in X\cup\boundary X$ and $U\in\mathfrak U$.\qedhere
		\end{itemize}
	\end{remark}
	
	Let $\mathcal Y \coloneqq \prod_{U \in \mathfrak U} T_U$ and define an action of $G$ on $\mathcal Y$ as follows. Given $(x_U)_{U \in \mathfrak U} \in \mathcal Y$ and $g \in G$, let $y_U \coloneqq g x_{g^{-1} U}$ for each $U \in \mathfrak U$ and let $g (x_U)_{U \in \mathfrak U} \coloneqq (y_U)_{U \in \mathfrak U}$, where $g:T_{g^{-1}U}\to T_U$ is the isometry from Proposition \ref{prop:quasi-linear-domains}.
	
	\begin{defn}[Interval projections]
		Given distinct $U, V \in \mathfrak S$ that are not orthogonal, define the \textit{interval projections} $\delta^U_V, \delta^V_U$ as follows:
		\begin{itemize}
			\item If $U \transverse V$ or $U \propnest V$ then define $\delta^U_V \coloneqq \hull(f_V(\rho^U_V))$.
			\item If $U \propnest V$ then define $\delta^V_U: T_V \rightarrow 2^{T_U}$ by $\delta^V_U \coloneqq f_U \circ \rho^V_U \circ f_V^{-1}$.
            \qedhere
		\end{itemize}
	\end{defn}

	\begin{lem}[Consistent projections]
		\label{lem: pseudo HHS}
		For all $U,V, W \in \mathfrak U$ we have:
		\begin{enumerate}
			\item \label{it: transverse consistency in Y} If $U \transverse V$ then either $\delta^U_V = \{p_V^-\}$ and $\delta^V_U = \{p_U^+\}$, or $\delta^U_V = \{p_V^+\}$ and $\delta^V_U = \{p_U^-\}$.
			\item \label{it: bounded projections in Y} If $U \propnest V$ then $\diam(\delta^U_V) \leq E'$.
			\item \label{it: non-transverse nested domains in Y} If $V,W \propnest U$ are not transverse then $\diam_{T_U}(\delta^V_U \cup \delta^W_U) \leq E'$.
			\item \label{it: consistency in Y, transverse + nested} If $U \transverse V$ and $W \propnest V$ and $U \not\orth W$ then $\diam_{T_U}(\delta^V_U \cup \delta^W_U) \leq E'$.
            \item \label{it: consistency in Y: transverse + orthogonal} If $U \orth V$ and $U,V \not\orth W$ and $W \not\propnest U,V$, then $\diam_{T_W}(\delta^U_W \cup \delta^V_W) \leq E'$.
			\item \label{it: BGI in Y} If $U \propnest V$ and $C$ is a connected component of $T_V - \mathcal N_{E'}(\delta^U_V)$, then $C$ contains $p_V^+$ or $p_V^-$. In the former case $\delta^V_U(C) = \{p_U^+\}$ and in the latter $\delta^V_U(C) = \{p_U^-\}$. 
			\item \label{it: BGI special case in Y} If $V,W \propnest U$ and $V \transverse W$ and $d_{T_U}(\delta^V_U, \delta^W_U) > 10E'$ then $\delta^U_V(\delta^W_U) = \delta^W_V$ and $\delta^U_V(\delta^W_U) = \{p_V^+\}$ or $\{p_V^-\}$.
		\end{enumerate}
	\end{lem}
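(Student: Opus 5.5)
The plan is to verify each item by translating it back into the HHS axioms in the spaces $\mathcal CU$, using that every $U\in\mathfrak U$ has $\diam(T_U)\geq K$. Since $f_U$ is an $(L,L)$--quasi-isometry, $\mathcal CU$ then has diameter much larger than all HHS constants. By Remark \ref{rem:support-nested-prooperties}, $\pi_U(p^\pm)$ is defined without a basepoint, and $\mathcal CU$ is $(M+E)$--Hausdorff close to a $(1,20E)$--quasigeodesic $\gamma_U$ from $\pi_U(p^-)$ to $\pi_U(p^+)$. Its image under $f_U$ is coarsely $T_U$, with ends (or endpoints) $p_U^\pm$. The basic dictionary is: if $\pi_U(z)$ lies within $\pcollapse$ of $\pi_U(p^\pm)$, or deep in the corresponding end, then $f_U(\pi_U(z))$ is within $L$ of $p_U^\pm$. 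The constant $K$ is large enough that ``far from both ends'' is a nonempty, meaningful condition.

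\textbf{Items (2), (3), (4), (5).} These are direct quasi-isometric images of HHS facts, with $E'=L(5E+L+1)$ absorbing the distortion.
\begin{itemize}
\item Item \eqref{it: bounded projections in Y}: $\diam\rho^U_V\le E$, so its $f_V$--image has diameter at most $LE+L$.
\item Item \eqref{it: non-transverse nested domains in Y}: if $V\nest W$ or $V\orth W$ with both nested in $U$, the HHS consistency axiom gives that $\rho^V_U$ and $\rho^W_U$ are $E$--close.
\item Item \eqref{it: consistency in Y: transverse + orthogonal}: the same argument via the orthogonal-consistency axiom, as in Lemma \ref{lem:boundary-projection-bound}, where orthogonal $\rho$'s are $2E$--close.
\item Item \eqref{it: consistency in Y, transverse + nested}: the consistency axiom for $W\propnest V$, $U\transverse V$ with $U\not\orth W$, namely $\dist_U(\rho^V_U,\rho^W_U)\le E$.
\end{itemize}
One correction to the literal statement: in items (3) and (4) the $\delta$'s are the upward projections $\delta^V_U=\hull(f_U(\rho^V_U))$. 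This is the intended reading, since the downward $\delta^V_U$ is a map, not a set.

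\textbf{Item (1), transverse pairs.} Use the realisation/consistency axiom at points of $X$. Take $x$ with $\pi_U(x)$ near $\pi_U(p^+)$ and $y$ with $\pi_U(y)$ near $\pi_U(p^-)$. Such points exist because $X=H_{M,z_0}(p^-,p^+)$, and for boundary points we take points far along the ray. For each such point, transverse consistency forces either $\pi_U$ to be near $\rho^V_U$ or $\pi_V$ to be near $\rho^U_V$.

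I would first show that $\rho^V_U$ lies within $\pcollapse$ of $\pi_U(p^+)$ or of $\pi_U(p^-)$. Suppose instead $\rho^V_U$ sits in the middle of $\gamma_U$. Then pick $x\in X$ with $\pi_U(x)$ far from $\rho^V_U$ on the $p^+$ side. Consistency forces $\pi_V(x)$ to lie near $\rho^U_V$, and similarly for a point on the $p^-$ side. Now use hull points approaching $p^\pm$, where all coordinates move monotonically along hierarchy paths. Since $T_U$ and $T_V$ are both long, and realisation lets us choose points whose $U$ and $V$ coordinates are independently extreme (within consistency), we get a contradiction.

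Next, take $\rho^V_U$ near $\pi_U(p^+)$. Applying consistency to points near $p^+$ in the hull shows that their $V$--coordinate is near $\rho^U_V$, while it must also be near $\pi_V(p^+)$ or $\pi_V(p^-)$. Using the orientation coming from a hierarchy path from $p^-$ to $p^+$ (the Behrstock inequality ordering along the path), $\rho^U_V$ must be near $\pi_V(p^-)$. After applying $f$ and using the normalisation from the paragraph defining $p^\pm_U$ (neighbourhoods of size $\pcollapse$ collapse to endpoints), this gives $\delta$ equal to single points $p^\mp$.

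I expect this orientation argument to be the main obstacle. It is essentially the statement that along a hierarchy path between $p^-$ and $p^+$, a transverse pair is ordered. I would invoke the standard ordering of relevant domains along hierarchy paths, in the form of the Behrstock inequality applied to points of the path.

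\textbf{Items (6) and (7), bounded geodesic image.} Item \eqref{it: BGI in Y}: a component $C$ of $T_V$ minus the $E'$--neighbourhood of $\delta^U_V$ pulls back under $f_V^{-1}$ to a region of $\mathcal CV$ lying in $\gamma_V$ and avoiding the $E$--neighbourhood of $\rho^U_V$. That region is connected to one end $\pi_V(p^\pm)$ by geodesics missing $\rho^U_V$. Bounded geodesic image then makes $\rho^V_U$ constant, up to $E$, on it and equal to $\pi_U(p^\pm)$, using the definition of boundary projections in Definition \ref{defn:boundary-projection} together with Lemma \ref{lem:invariant-down-rho}. Applying $f_U$ and the $\pcollapse$--collapse gives exactly $p_U^\pm$.

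Item \eqref{it: BGI special case in Y} follows from (6), taking $C$ to be the component containing $\delta^W_U$, together with the same argument that identified $\delta^W_V$ in item (1).
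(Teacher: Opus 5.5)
Your treatment of items (2)--(6) matches the paper's. You push the consistency axiom, \cite[Lem.~1.5]{DurhamBoundaries17} and bounded geodesic image through the quasi-isometries $f_U$, and use the collapse of the $\pcollapse$--neighbourhoods of $\pi_U(p^\pm)$. Note that \cite[Lem.~1.5]{DurhamBoundaries17} comes from partial realisation, not consistency, and it covers the orthogonal case of (3) as well as (5). No ``correction'' of the statement is needed: when $V\propnest U$ or $V\transverse U$, $\delta^V_U$ is by definition the set $\hull(f_U(\rho^V_U))$, and the map is $\delta^U_V$. Two steps do need repair.

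\textbf{Item (1).} In your second step you apply transverse consistency at points near $p^+$. But once $\rho^V_U$ is near $\pi_U(p^+)$, those points already have $U$--coordinate near $\rho^V_U$, so consistency says nothing about their $V$--coordinate. If it did, since their $V$--coordinate is near $\pi_V(p^+)$, you would get the wrong orientation.

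Apply consistency at points near $p^-$ instead. Their $U$--coordinate is far from $\rho^V_U$ because $\diam(T_U)\ge K$, so their $V$--coordinate, which is near $\pi_V(p^-)$, lies near $\rho^U_V$. So there is no orientation obstacle and no need for orderings along hierarchy paths; that ordering is itself proved by exactly this argument.

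The paper does both of your steps at once. It takes $x_n^\pm\in X$ whose $U$-- and $V$--projections converge to $\pi_U(p^\pm)$ and $\pi_V(p^\pm)$, and applies transverse consistency at $x_n^+$ and at $x_n^-$. It then uses that $T_U$ and $T_V$ are long to exclude the outcomes where both points satisfy the inequality in the same domain.

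The input is the existence of these points of $X$, not realisation. Whether a tuple with both coordinates near $p^+$ is consistent is exactly what is at issue. So ``realisation lets us choose independently extreme coordinates'' is circular.

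\textbf{Item (7).} Item (6) gives $\delta^U_V(\delta^W_U)\in\{\{p_V^+\},\{p_V^-\}\}$, and item (1) gives the same for $\delta^W_V$. But ``the argument of item (1)'' cannot decide whether they agree. Test points near $p^\pm$ are compatible with both alternatives of (1), since neither alternative involves $U$. You need a test point linking the position of $\rho^W_U$ in $\mathcal CU$ to the position of $\rho^W_V$ in $\mathcal CV$.

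The paper gets this from partial realisation for $W$. It yields $x$ with $\dist_U(x,\rho^W_U)\le E$ and $\dist_V(x,\rho^W_V)\le E$, because $W\propnest U$ and $W\transverse V$. Since $\rho^W_U$ is far from $\rho^V_U$, nested consistency for $V\propnest U$ at $x$, together with bounded geodesic image, gives $\rho^W_V\approx\pi_V(x)\approx\rho^U_V(\rho^W_U)$. So the two candidate endpoints are close in $T_V$, and hence equal because $\diam(T_V)\ge K$.

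An alternative also works: apply transverse consistency for $V\transverse W$ at a point whose $U$--projection lies between $\rho^V_U$ and $\rho^W_U$, and combine it with (6) for both $V$ and $W$ and the dichotomy in (1).
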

	\begin{proof}
		Suppose $U \transverse V$.  By definition, there are sequences $(x_n^\pm)_n$ in $X$ such that $\pi_U(x_n^\pm)$ converges to $\pi_U(p^\pm)$ (or, if $\pi_U(p^\pm)\subseteq \mathcal CU$, to a uniformly bounded set that uniformly coarsely coincides with $\pi_U(p^\pm)$.  For all sufficiently large $n$,  the consistency axiom implies that either
		\[ 
		\dist_U(\rho^V_U, x_n^+), \dist_V(\rho^U_V, x_n^-) < E \quad \text{or} \quad \dist_U(\rho^V_U, x_n^-), \dist_V(\rho^U_V, x_n^+) < E,
		\]
		so 
		\[ 
		\dist_U(\rho^V_U, \pi_U(p^+)), \dist_V(\rho^U_V, \pi_V(p^-)) < E \quad \text{or} \quad \dist_U(\rho^V_U, \pi_U(p^-)), \dist_V(\rho^U_V, \pi_V(p^+)) < E.
		\]
		We assumed that $f_W(\mathcal N_\pcollapse(\pi_W(p^\pm))) = p_W^\pm$ for $W \in \{U,V\}$, so this proves Item \eqref{it: transverse consistency in Y}.
		Item \eqref{it: bounded projections in Y} holds because $f_V$ is an $(L,L)$-quasi-isometry and $\diam_U(\rho^U_V) \leq E$. Item \eqref{it: non-transverse nested domains in Y} holds similarly because $\diam_U(\rho^V_U \cup \rho^W_U) \leq 2E$ by \cite[Lemma~1.5]{DurhamBoundaries17}. Item \eqref{it: consistency in Y, transverse + nested} is an immediate consequence of \cite[Defn. 1.1.(4)]{BehrstockHierarchically19} and Item \eqref{it: consistency in Y: transverse + orthogonal} follows from \cite[Lem. 1.5]{DurhamBoundaries17}. 
		Item \eqref{it: BGI in Y} follows from the bounded geodesic image axiom and the fact that $f_U$ collapses the $\pcollapse$-neighbourhoods of $\pi_U(p^\pm)$. Item \eqref{it: BGI special case in Y} follows from Item \eqref{it: BGI in Y} and the partial realisation axiom \cite[Defn. 1.1.(8)]{BehrstockHierarchically19}.
	\end{proof}
	
	\subsection{The product of reduced trees $\widehat{\mathcal Y}$}
	
	We now fix some constants, all depending only on the HHS parameters, to be used throughout the remainder of this section:
	\begin{itemize}
		\item $R \coloneqq 50E'$;
		\item $\alpha_1 \coloneqq 10R$;
		\item $\kappa \coloneqq r_0(L \alpha_1 + LK + 2L)$, where $r_0$ is the constant from Theorem~\ref{thm:hhs_realisation};
		\item $r \coloneqq L\kappa + L + 1$.
	\end{itemize}
	
	\begin{defn}[Clusters, edges and collapsed intervals]
		Let $U \in \mathfrak U$. If $U$ is $\nest_{\mathfrak U}$-minimal then let $T_U^c \coloneqq \emptyset$. Otherwise, we say a \emph{basic cluster} in $T_U$ is one of the following:
		\begin{itemize}
			\item the closed $R$--neighbourhood of some $\delta^V_U$ for which $V \propnest U$;
			\item one of the sets $\{p_U^-\}$ or $\{p_U^+\}$.
		\end{itemize}
		
		Declare two points $t,t'\in T_U$ to be equivalent if there are basic clusters $c,c'$ such that $\dist_{T_U}(c,c')<r$ and $\{t,t'\}$ is contained in the convex hull in $T_U$ of $c\cup c'$.  Extend this to an equivalence relation by taking the transitive closure.  A \emph{cluster} in $T_U$ is an equivalence class, and we define $T_U^c$ to be the union of the clusters.

		The components of $T_U^e \coloneqq T_U - T_U^c$ are called \textit{edge components}. Let $q_U: T_U \rightarrow \widehat T_U$ be the quotient map which collapses each cluster to a point. Let $\widehat T_U^c \coloneqq q_U(T_U^c)$ and $\widehat T_U^e \coloneqq q_U(T_U^e)$.
	\end{defn}
	
	\begin{remark}\label{rem:clusters-are-connected}
		\begin{itemize}
			\item[]
			\item By construction, clusters and edge components are subintervals of $T_U$.
			\item The quotient space $\widehat T_U$ is an interval, a line or a ray, and $q_U$ restricts to an isometry on the edge components.
			\qedhere
		\end{itemize}
	\end{remark}
	
	Given $U \in \mathfrak U$, let $\widehat p_U^\pm \coloneqq \lim_{n \rightarrow \infty} q_U(t_n)$ for some sequence $(t_n)_{n \in \mathbb N} \subseteq T_U$ which converges to $p_U^\pm$. Observe that this is independent of the choice of sequence, and if $p_U^\pm \in T_U$ then $\widehat p_U^\pm = q_U(p_U^\pm)$.
	
	\begin{remark}
		\label{rem: collapsed trees}
		\begin{enumerate}
			\item[]
			\item \label{it: nest minimal isoms} If $U \in \mathfrak U$ is $\nest_{\mathfrak U}$-minimal then $q_U$ is the identity map, and $\widehat T_U=T_U=T_U^e=\widehat T_U^e$.
			\item \label{it: equivariance of shadows} If $U \transverse V$ or $U \propnest V$ and $g \in G$, then $g\delta^U_V = \delta^{gU}_{gV}$.
            \item \label{it: action on collapsed trees}	The above point implies that, for all $g \in G$ and $U \in \mathfrak U$, the isometry $g: T_U \rightarrow T_{gU}$ induces an isometry $g: \widehat T_U \rightarrow \widehat T_{gU}$ and we have $g \circ q_U 
            = q_{gU} \circ g$.\qedhere
		\end{enumerate}
	\end{remark}
	
	Let $\widehat{\mathcal Y} \coloneqq \prod_{U \in \mathfrak U} \widehat T_U$ and define:
	\begin{itemize}
		\item the canonical projections $\widehat \pi_U: \widehat{\mathcal Y} \rightarrow \widehat T_U$ for all $U \in \mathfrak U$;
		\item if $U \transverse V$ or $U \propnest V$, then $\widehat \delta^U_V \coloneqq q_V(\delta^U_V)$;
		\item if $U \propnest V$ then $\widehat \delta^V_U \coloneqq q_U \circ \delta^V_U$.
	\end{itemize}
    By Remark \ref{rem: collapsed trees}.\eqref{it: action on collapsed trees}, the action of $G$ on $\mathcal Y$ induces an action of $G$ on $\widehat{\mathcal Y}$.
	
	\begin{notation}
		If $U \transverse V$ or $U \propnest V$ then $\widehat \delta^U_V$ is a singleton $\{x\} \subseteq \widehat T_V$ (by definition, in the nested case, and by the definition plus Lemma \ref{lem: pseudo HHS}.\eqref{it: transverse consistency in Y} in the transverse case).
		We will abuse notation and refer to $x$ itself as $\widehat \delta^U_V$.
	\end{notation}
	
	The following is a consequence of Lemma~\ref{lem: pseudo HHS} and the choice of constant $R$. 
	
	\begin{lem}
		\label{lem: collapsed projections}
		For all $U,V,W \in \mathfrak U$ the following hold:
		\begin{enumerate}
			\item \label{it: transverse consistency in hatY} If $U \transverse V$ then either $\widehat \delta^U_V = \widehat p_V^+$ and $\widehat \delta^V_U = \widehat p_U^-$, or $\widehat \delta^U_V = \widehat p_V^-$ and $\widehat \delta^V_U = \widehat p_U^+$.
			\item \label{it: non-transverse nested domains in hatY} If $V,W \propnest U$ are not transverse then $\widehat \delta^V_U = \widehat \delta^W_U$. 
			\item \label{it: consistency in hatY: transverse + nested} If $U \transverse V$ and $W \propnest V$ and $U \notorth W$ then $\widehat \delta^V_U = \widehat \delta^W_U$.
			\item \label{it: BGI in hatY} If $V \propnest U$ and $C^-$ and $C^+$ are the connected components of $\widehat T_U - \widehat \delta^V_U$ containing $\widehat p_U^-$ and $p_U^+$ respectively, then $\widehat \delta^U_V(C^\pm) = \widehat p_V^\pm$.
		\end{enumerate}
	\end{lem}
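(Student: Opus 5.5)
Each item will be deduced from the corresponding item of Lemma \ref{lem: pseudo HHS}. We push it through the collapse maps $q_U$, using that $R=50E'$ is much larger than $E'$ and that every $R$--neighbourhood of an interval projection $\delta^V_U$ is contained in a single cluster. The basic observation is this. If $V\propnest U$, then the closed $R$--neighbourhood of $\delta^V_U$ is a basic cluster, so $q_U$ sends all of $\mathcal N_R(\delta^V_U)$ to one point. Similarly, $p^\pm_U$ are basic clusters, and any subset of $T_U$ of diameter at most $E'$ that meets a cluster $c$ lies in the $E'$--neighbourhood of $c$. When $U$ is not $\nest_{\mathfrak U}$--minimal, that neighbourhood is collapsed together with $c$ whenever it is in the $R$--neighbourhood of a basic cluster, since $E'<r$. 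This explains why $\widehat\delta^U_V$ is a singleton.

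For item \eqref{it: transverse consistency in hatY}, apply $q_V,q_U$ to Lemma \ref{lem: pseudo HHS}.\eqref{it: transverse consistency in Y}; the image of $p^\pm_V$ under $q_V$ is $\widehat p^\pm_V$ by definition, including when $p^\pm_V\in\boundary T_V$, by continuity along sequences. For items \eqref{it: non-transverse nested domains in hatY} and \eqref{it: consistency in hatY: transverse + nested}, the relevant union $\delta^V_U\cup\delta^W_U$ has diameter at most $E'$ by Lemma \ref{lem: pseudo HHS}.\eqref{it: non-transverse nested domains in Y} and \eqref{it: consistency in Y, transverse + nested}. In item \eqref{it: non-transverse nested domains in hatY}, both sets lie inside the basic cluster $\mathcal N_R(\delta^V_U)$, which $q_U$ collapses to a point. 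In item \eqref{it: consistency in hatY: transverse + nested}, $\delta^V_U$ is the endpoint $p^\pm_U$ by Lemma \ref{lem: pseudo HHS}.\eqref{it: transverse consistency in Y}. Then $\delta^W_U$ lies within $E'$ of the basic cluster $\{p_U^\pm\}$. If $U$ is not minimal, two points within distance $<r$ of a basic cluster and in the hull of it are equivalent, so both sets have the same image. A small point needs checking: a point within $E'$ of a basic cluster must be in the convex hull of that cluster and another one. We handle this by noting that $\delta^W_U$ is either itself a basic cluster (if $W\propnest U$) or is contained in $\mathcal N_{E'}$ of one. Otherwise $W$ is transverse to $U$, and then $\delta^W_U$ is an endpoint, so the statement is trivial. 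If $U$ is minimal, the hypothesis $W\propnest V$ with $W\in\mathfrak U$ and $U\not\orth W$ forces $W\transverse U$ or $W\propnest U$; the nested case contradicts minimality, and the transverse case is the endpoint case again.

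Item \eqref{it: BGI in hatY} is where the real work is, and I expect it to be the main obstacle. Let $\widehat c=\widehat\delta^V_U$ and let $c\subseteq T_U$ be its preimage cluster, which contains $\mathcal N_R(\delta^V_U)\supseteq\mathcal N_{E'}(\delta^V_U)$. A component $C^\pm$ of $\widehat T_U-\widehat c$ pulls back under $q_U$ to a subset of a single component of $T_U-\mathcal N_{E'}(\delta^V_U)$. Here we use that $q_U$ is monotone on the interval $T_U$ and that clusters are intervals (Remark \ref{rem:clusters-are-connected}). By Lemma \ref{lem: pseudo HHS}.\eqref{it: BGI in Y}, this component contains $p_U^\pm$ and is sent by $\delta^U_V$ to $\{p^\pm_V\}$. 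Applying $q_V$ then gives $\widehat p^\pm_V$. We must check that the component of $T_U-\mathcal N_{E'}(\delta^V_U)$ containing $q_U^{-1}(C^+)$ is the one containing $p_U^+$ rather than $p_U^-$; this follows from monotonicity of $q_U$ and $\widehat p_U^+\in C^+$. The notation $\widehat\delta^U_V$ in item \eqref{it: BGI in hatY} should be read as $q_V\circ\delta^U_V\circ q_U^{-1}$, which is well defined on $C^\pm$ by this argument.
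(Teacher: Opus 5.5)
Your proposal is correct and is essentially the paper's own argument: the paper only records this lemma as a consequence of Lemma~\ref{lem: pseudo HHS} and the choice $R=50E'$. You carry out exactly that push-forward through the collapse maps $q_U$, including the correct reading of $\widehat\delta^U_V$ on $C^\pm$ in item~\eqref{it: BGI in hatY} as $q_V\circ\delta^U_V\circ q_U^{-1}$.

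One cleanup: the claim in your first paragraph that the $E'$--neighbourhood of a cluster is collapsed together with it is false as stated (e.g.\ near an isolated endpoint cluster $\{p_U^\pm\}$). You never actually use it. In items~\eqref{it: non-transverse nested domains in hatY}--\eqref{it: consistency in hatY: transverse + nested} both sets lie inside the single basic cluster $\mathcal N_R(\delta^W_U)$ because $E'<R$. In the transverse subcase of item~\eqref{it: consistency in hatY: transverse + nested}, the two endpoint projections coincide because distinct endpoints of $T_U$ are at distance $\diam(T_U)\geq K>E'$.
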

	
	\subsection{The median space $Q$}\label{subsec:construction-of-Q}
	We now construct the median space $Q$ whose existence is asserted in Theorem \ref{thm: median hulls}.
	
	\begin{defn}[Consistency, the space $Q$]\label{defn:Q}
		A tuple $(x_U)_{U \in \mathfrak U} \in \widehat{\mathcal Y}$ is \textit{consistent} if, for all $U,V \in \mathfrak U$,
		\begin{itemize}
			\item if $U \transverse V$, then $x_U = \widehat \delta^V_U$ or $x_V = \widehat \delta^U_V$ and
			\item if $U \propnest V$ then either $x_V = \widehat \delta^U_V$ or $x_U \in \widehat \delta^V_U(x_V)$.
		\end{itemize}
		Let $Q \subseteq \widehat{\mathcal Y}$ be the set of consistent tuples and define $d_Q(x,y) \coloneqq \sum_{U \in \mathfrak U} \dist_{\widehat T_U}(\widehat \pi_U(x), \widehat \pi_U(y))$ for all $x,y \in Q$.
	\end{defn}

    Remark \ref{rem: collapsed trees}.\eqref{it: equivariance of shadows} and Lemma \ref{lem: collapsed projections}.\eqref{it: BGI in hatY} imply:

    \begin{lem} \label{lem: Q is G-invariant}
        The subspace $Q \subseteq \widehat Y$ is $G$--invariant.
    \end{lem}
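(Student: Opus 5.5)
To show $Q$ is $G$--invariant, I will fix a consistent tuple $x=(x_U)_{U\in\mathfrak U}\in Q$ and $g\in G$, and check directly that $gx=(y_U)_{U\in\mathfrak U}$ is consistent. Here $y_U=g\,x_{g^{-1}U}$, with $g:\widehat T_{g^{-1}U}\to\widehat T_U$ the isometry from Remark \ref{rem: collapsed trees}.\eqref{it: action on collapsed trees}.

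The first tool is the equivariance of the collapsed interval projections. Take $U,V\in\mathfrak U$ with $U\transverse V$ or $U\propnest V$. Remark \ref{rem: collapsed trees}.\eqref{it: equivariance of shadows} gives $g\delta^U_V=\delta^{gU}_{gV}$. Applying $q_{gV}\circ g=g\circ q_V$ then yields $g\widehat\delta^U_V=\widehat\delta^{gU}_{gV}$. Since $G$ preserves $\transverse$ and $\propnest$, the consistency conditions for $gx$ at a pair $(U,V)$ correspond to the conditions for $x$ at the pair $(g^{-1}U,g^{-1}V)$.

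The transverse case is then immediate. Suppose $U\transverse V$. Consistency of $x$ at $(g^{-1}U,g^{-1}V)$ gives $x_{g^{-1}U}=\widehat\delta^{g^{-1}V}_{g^{-1}U}$ or $x_{g^{-1}V}=\widehat\delta^{g^{-1}U}_{g^{-1}V}$. Applying $g$ gives $y_U=\widehat\delta^V_U$ or $y_V=\widehat\delta^U_V$, as required. The first alternative of the nested condition, $y_V=\widehat\delta^U_V$, transfers in exactly the same way.

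The one delicate point is the downward projections. Suppose $U\propnest V$ and $x_{g^{-1}V}\neq\widehat\delta^{g^{-1}U}_{g^{-1}V}$. I need $g\,\widehat\delta^{g^{-1}V}_{g^{-1}U}(x_{g^{-1}V})=\widehat\delta^V_U(g\,x_{g^{-1}V})$. Equivariance of $\widehat\delta^V_U=q_U\circ f_U\circ\rho^V_U\circ f_V^{-1}$ is not automatic, because $f_V^{-1}$ was not chosen equivariantly. I will bypass this using Lemma \ref{lem: collapsed projections}.\eqref{it: BGI in hatY}. The point $t=x_{g^{-1}V}$ lies in one of the components $C^\pm$ of $\widehat T_{g^{-1}V}-\widehat\delta^{g^{-1}U}_{g^{-1}V}$, and on $C^\pm$ the map $\widehat\delta^{g^{-1}V}_{g^{-1}U}$ is constantly $\widehat p^\pm_{g^{-1}U}$.

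It therefore suffices to know that $g$ carries the component containing $\widehat p^\pm_{g^{-1}V}$ to the component of $\widehat T_V-\widehat\delta^U_V$ containing $\widehat p^{\epsilon}_V$, and carries $\widehat p^\pm_{g^{-1}U}$ to $\widehat p^{\epsilon}_U$, with the same sign $\epsilon$ in both. Since $g$ maps $\pi_{g^{-1}W}(\{p^-,p^+\})$ to $\pi_W(g\{p^-,p^+\})$, this reduces to whether $g$ fixes or swaps $p^\pm$. In the situation where this is needed, $g$ permutes $\{p^-,p^+\}$, consistently across all domains, via Lemma \ref{lem: equivariant boundary projections}. If $g$ swaps them, the sign flips identically in $U$ and $V$, and the condition is preserved.

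This sign bookkeeping is the main obstacle. If $G$ were not assumed to preserve $\{p^-,p^+\}$, I would first pass to the convention that the labels $\widehat p^\pm_W$ are assigned $G$--equivariantly, as ends, in the construction of Proposition \ref{prop:quasi-linear-domains}. I would then argue that the statement of Lemma \ref{lem: collapsed projections}.\eqref{it: BGI in hatY} is equivariant because it is phrased purely in terms of ends and the points $\widehat\delta^U_V$.
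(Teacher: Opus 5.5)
Your reduction is the same as the paper's, whose proof is just the citation of Remark~\ref{rem: collapsed trees}.\eqref{it: equivariance of shadows} and Lemma~\ref{lem: collapsed projections}.\eqref{it: BGI in hatY}. You also correctly isolate the one non-formal point: for $U\propnest V$ in $\mathfrak U$, if $g$ carries the end $\widehat p^{\epsilon}_{g^{-1}V}$ to $\widehat p^{\epsilon'}_{V}$, then it must carry $\widehat p^{\epsilon}_{g^{-1}U}$ to $\widehat p^{\epsilon'}_{U}$. But your justification of this step fails.

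Theorem~\ref{thm: median hulls} does not assume that $G$ preserves $\{p^-,p^+\}$, and $G$ does not preserve it where the theorem is applied. In Proposition~\ref{prop:bounded-orbits}, $p^\pm\in Z$ are interior points which $G$ need not fix. In Proposition~\ref{prop: almost invariant hull}, the theorem is applied to all of $A$, of which only a finite-index subgroup fixes $p^\pm$. An element may even reverse the ends of one $\mathcal CU_i$, $U_i\in\support(p^\pm)$, while preserving those of another (e.g.\ the reflection in $\integers\times D_\infty$ acting on itself). Then $gp^+\notin\{p^-,p^+\}$, and no single sign works across all domains.

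Your fallback does not repair this. If some element of $\stabilizer_G(W)$ reverses $T_W$, there is no $G$--equivariant labelling of ends at all. Moreover, saying that Lemma~\ref{lem: collapsed projections}.\eqref{it: BGI in hatY} ``is equivariant'' is exactly the claim to be proved. That lemma only gives, pair by pair, a matching between the components of $\widehat T_V-\widehat\delta^U_V$ and the ends of $\widehat T_U$. It says nothing about $g$ intertwining the matchings for $(g^{-1}U,g^{-1}V)$ and $(U,V)$.

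The missing input is geometric: equivariance of downward projections away from $\rho^U_V$ (Lemma~\ref{lem:invariant-down-rho}) together with bounded geodesic image.
\begin{itemize}
\item Choose $z\in\mathcal C(g^{-1}V)$ far from $\rho^{g^{-1}U}_{g^{-1}V}$, with $f_{g^{-1}V}(z)$ on the side of $p^{\epsilon}_{g^{-1}V}$. Then $\rho^{g^{-1}V}_{g^{-1}U}(z)$ lies near $\pi_{g^{-1}U}(p^{\epsilon})$; this is the fact behind Lemma~\ref{lem: pseudo HHS}.\eqref{it: BGI in Y}.
\item Since $f_V\circ g=g\circ f_{g^{-1}V}$ and $g\rho^{g^{-1}U}_{g^{-1}V}=\rho^U_V$, the point $gz$ is far from $\rho^U_V$ on the side of $p^{\epsilon'}_V$. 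Hence $\rho^V_U(gz)$ lies near $\pi_U(p^{\epsilon'})$.
\item On the other hand, Lemma~\ref{lem:invariant-down-rho} gives $\rho^V_U(gz)=g\rho^{g^{-1}V}_{g^{-1}U}(z)$, which lies near $g\pi_{g^{-1}U}(p^{\epsilon})$.
\item Applying $f_U$, the endpoints $g\,p^{\epsilon}_{g^{-1}U}$ and $p^{\epsilon'}_U$ of $T_U$ are uniformly close. They are therefore equal, since $\diam(T_U)\ge K$. Finally apply $q_U$.
\end{itemize}
This holds for every $g\in G$, and it is needed only for nested pairs. Signs may genuinely differ across orthogonal domains, which is harmless because orthogonal pairs impose no consistency condition. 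With this step supplied, your argument is complete; the paper's one-line proof leaves the same step implicit.
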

	
	\begin{defn}[Relevant domains]\label{defn:relevant-Q}
		Given $x,y \in Q$, let $$\relevant_Q(x,y) \coloneqq \{U \in \mathfrak U : \widehat \pi_U(x) \neq \widehat \pi_U(y)\}.\qedhere$$
	\end{defn}
	
	We now establish that $d_Q$ defines a median metric on $Q$. Recall that $\hhscomp$ denotes the complexity of $(X, \mathfrak S)$.
	
	\begin{lem}
		\label{lem: Q is a metric space}
		\begin{enumerate}
			\item[]
			\item There exists $\hhscomp'$, depending only on $\hhscomp$, such that, if $x \in Q$ then for all but at most $\hhscomp'$ domains $U \in \mathfrak U$ we have $x_U \in \{\widehat p_U^-,  \widehat p_U^+\}$.\label{item:domains-with-x-interior}
			\item  For all $\widehat x, \widehat y \in Q$ the set $\relevant_Q(\widehat x, \widehat y)$ is finite.\label{item:rel-finite}
			\item \label{it: median metric and rank of Q} $(Q, \dist_Q)$ is a median metric space with rank at most $\hhscomp$.
		\end{enumerate}
	\end{lem}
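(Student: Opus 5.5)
For item (1), I would fix $x\in Q$ and let $\mathfrak I(x)$ be the set of $U\in\mathfrak U$ with $x_U\notin\{\widehat p_U^-,\widehat p_U^+\}$. The aim is to show that $\mathfrak I(x)$ contains no large pairwise-orthogonal subset and no long $\propnest$--chain, and that it contains no transverse pairs; a bound on $|\mathfrak I(x)|$ then follows. The transverse case is immediate from consistency together with Lemma \ref{lem: collapsed projections}.\eqref{it: transverse consistency in hatY}. If $U\transverse V$, then $x_U=\widehat\delta^V_U$ or $x_V=\widehat\delta^U_V$, and both of these are endpoints $\widehat p^\pm$. So at most one of $U,V$ lies in $\mathfrak I(x)$, and $\mathfrak I(x)$ is pairwise $\nest$-- or $\orth$--comparable.

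Pairwise-orthogonal families in $\mathfrak S$ have size bounded in terms of $\hhscomp$, and chains have length at most $\hhscomp$. A set in which any two elements are nested or orthogonal is therefore a forest-like poset of bounded height. Its size could still be unbounded, so the width at each level has to be controlled. For $V\propnest U$ with both in $\mathfrak I(x)$, consistency gives $x_U=\widehat\delta^V_U$ or $x_V\in\widehat\delta^U_V(x_U)$. I would use Lemma \ref{lem: collapsed projections}.\eqref{it: BGI in hatY}: if $x_U\neq\widehat\delta^V_U$, then $x_V$ is an endpoint, which is a contradiction. Hence $x_U=\widehat\delta^V_U$ for every $V\propnest U$ in $\mathfrak I(x)$.

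Two $\nest$--incomparable elements of $\mathfrak I(x)$ are orthogonal. Since pairwise-orthogonal sets have size at most some $c(\hhscomp)$, the antichains of $\mathfrak I(x)$ are bounded. A poset whose chains have length at most $\hhscomp$ and whose antichains have size at most $c$ has at most $c\hhscomp$ elements, by Dilworth's theorem. This gives $\hhscomp'=c(\hhscomp)\cdot\hhscomp$. I expect the main subtlety to be checking that the orthogonality bound is stated in terms of complexity alone; this is standard for HHS.

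For item (2), I would take $\widehat x,\widehat y\in Q$. Any $U\in\relevant_Q(\widehat x,\widehat y)$ lies either in $\mathfrak I(\widehat x)\cup\mathfrak I(\widehat y)$, which is finite by item (1), or has $\widehat x_U,\widehat y_U$ equal to opposite endpoints. So it suffices to bound the number of $U$ where $\widehat x_U=\widehat p_U^{\mp}$ and $\widehat y_U=\widehat p_U^{\pm}$. Such $U$ are unbounded in $\widehat T_U$, and only when $\widehat T_U$ is bounded can the endpoints be opposite in the finite sense. I would use that $\mathfrak U$ consists of domains with $\diam T_U\geq K$. For points in $X$, the distance formula bounds the number of such domains between realisation points. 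I would transfer this to $Q$ by approximating $\widehat x,\widehat y$ by tuples arising from points of $X$ via the realisation theorem (Theorem \ref{thm:hhs_realisation}), using consistency and the constant $\kappa$. This comparison is the main obstacle, and I would attack it first through the finiteness of $\{U:\dist_U(a,b)\geq K\}$ for $a,b\in X$.

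For item (3), $Q$ embeds in $\widehat{\mathcal Y}$, an $\ell^1$--product of intervals, which is a median space whose median is taken coordinatewise. By (2), $d_Q$ is finite, so it is a metric. I would show that $Q$ is closed under the coordinatewise median by checking each consistency condition: the transverse condition is a 2-of-3 argument, and the nested condition uses Lemma \ref{lem: collapsed projections}.\eqref{it: BGI in hatY}, since $\widehat\delta^V_U$ is locally constant off the point $\widehat\delta^V_U$. A median subalgebra of an $\ell^1$ product is median. For the rank bound, a cube of dimension $k$ corresponds to $k$ domains varying independently. Independence forces pairwise orthogonality: for nested pairs, one coordinate is constant unless the other sits at $\widehat\delta$, and transverse pairs are forced similarly. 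This gives rank at most $\hhscomp$.
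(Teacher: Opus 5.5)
Your arguments for (1) and (3) are sound and are essentially the paper's. You exclude transverse pairs from $\mathfrak I(x)$ using Lemma \ref{lem: collapsed projections}, then bound a poset of height and width at most $\hhscomp$; using Dilworth instead of the paper's Ramsey argument even gives $\hhscomp'=\hhscomp^2$. The two-out-of-three argument and the ``which side of $\widehat\delta^V_U$'' argument give median closure and orthogonality of the directions of a cube.

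\textbf{The gap is in (2).} Your reduction to domains where $\widehat x_U,\widehat y_U$ are opposite endpoints is correct. The step you flag as the main obstacle, approximating $\widehat x$ by a point of $X$ via realisation, cannot be carried out.

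Realisation needs a consistent tuple in $\prod_U\mathcal CU$, i.e.\ a lift of $\widehat x$ back through the cluster collapse $\Phi$. In the paper that lift is Lemma \ref{lem: coarsely surjective part 2}. Its proof, in the case of a cluster $C\subseteq T_U$ with $\sup C=p^+_U$ at infinity, invokes item (2) itself, so using it here is circular.

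No other argument can work either, because (2) fails when a cluster is unbounded. Here is a counterexample.
\begin{itemize}
\item Take $X=\mathbb R$, a top domain $S$ with $\mathcal CS=\mathbb Z$ and $\pi_S(t)=\lfloor t/N\rfloor$.
\item Take pairwise transverse domains $V_n\propnest S$ ($n\in\mathbb Z$) with $\mathcal CV_n=[nN,(n+1)N]$ and $\pi_{V_n}$ the nearest-point map.
\item One checks the HHS axioms directly with constants independent of $N$, so $L,K,R,r$ do not depend on $N$.
\item Let $G=N\mathbb Z$ act by translations and $p^\pm=\pm\infty$; then $X=H_{M,z_0}(p^-,p^+)$.
\end{itemize}
For $N$ large, every $V_n$ lies in $\mathfrak U$. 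Consecutive sets $\delta^{V_n}_S$ are at most $2L<R$ apart, so $T_S$ is a single cluster and $\widehat T_S$ is a point. Since $\widehat\delta^{V_m}_{V_n}=\widehat p^+_{V_n}$ for $n<m$, the tuple with $\widehat x_{V_n}=\widehat p^+_{V_n}$ for all $n$ (and $\widehat x_S$ the unique point of $\widehat T_S$) satisfies Definition \ref{defn:Q}. But every $z\in X$ has $\widehat\psi_{V_n}(z)=\widehat p^-_{V_n}$ for all large $n$, so $\relevant_Q(\widehat x,\widehat\Psi(z))$ is infinite.

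The paper's own proof of (2) has the same hole. It chooses $x_V\in\mathcal CV$ with $f_V(x_V)$ close to $\widehat x_V\in\widehat T_V$, which presupposes such a lift. In the example it then fails for $V=S$, since infinitely many $\rho^{V_n}_S$ would have to lie near a bounded geodesic.

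\textbf{What is true, and what it suffices for.} Statement (2) holds for consistent tuples that differ from $\widehat\Psi(z_0)$ in only finitely many coordinates. Redefining $Q$ to be that set keeps it closed under coordinatewise medians, and it is what the rest of the section actually uses. For this set your idea works, with no realisation needed:
\begin{itemize}
\item By finiteness of relevant sets, (2) reduces to $\widehat x=\widehat\Psi(a)$, $\widehat y=\widehat\Psi(b)$ with $a,b\in X$, which also gives $G$--invariance.
\item By (1), all but $2\hhscomp'$ domains of $\relevant_Q(\widehat\Psi(a),\widehat\Psi(b))$ have $\widehat\psi_U(a),\widehat\psi_U(b)$ at distinct collapsed endpoints.
\item Distinct clusters are separated by gaps of length at least $r$, and for $\nest_{\mathfrak U}$--minimal $U$ the two endpoints are $K$ apart. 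Hence $\dist_U(a,b)\geq \min\{K,r\}/L-L$.
\item Finiteness then follows from finiteness of $\relevant_s(a,b)$ for such thresholds $s$ \cite[Lem.~13.5]{Casals-RuizReal24}.
\end{itemize}
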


	\begin{proof}
		\begin{enumerate}
			\item[]
			\item Let $x \in Q$, and let $\mathfrak U_x \coloneqq \{V \in \mathfrak U : x_V \notin \{p_V^\pm\}\}$. If $U,V \in \mathfrak U_x$ then $U \orth V$ or $U$ and $V$ are $\nest$-related, by consistency of $x$ and Lemma \ref{lem: collapsed projections}. Let $\Gamma$ be the complete graph with vertex set $\mathfrak U_x$. Given $U,V \in \mathfrak U_x$, colour the edge between them red if $U \orth V$ and blue if they are $\nest$-related. Blue cliques have at most $\hhscomp$ vertices by the definition of $\hhscomp$ and red cliques have at most $\hhscomp$ vertices by \cite[Lemma~2.1]{BehrstockHierarchically19}. Thus Ramsey's theorem provides the required $\hhscomp'$.
			\item If $p^+, p^- \in X$ then $\relevant_Q(\widehat x, \widehat y) \subseteq \mathfrak U \subseteq \relevant_{K/L - L}(p^+, p^-)$, which is finite by \cite[Lemma~13.5]{Casals-RuizReal24} and the choice of $K$.
			
			Otherwise, suppose, without loss of generality, that $p^+ \in \partial X$. For each $U \in \mathfrak U$, let $x_U \in \mathcal CU$ be such that $d_{T_U}(f_U(x_U), \widehat x_U) \leq L$, and define $y_U$ analogously. For each $V \in \support(p^+)$, let $W_V \coloneqq \{U \in \relevant_Q(\widehat x, \widehat y) : U \propnest V\}$. Then there is a uniformly bounded neighbourhood of $\hull_{\mathcal CV}(x_V, y_V)$ which contains $\rho^U_V$ for all $U \in W_V$. Therefore there exists a point $z_V^+ \in \mathcal CV$ such that $z_V^+$ is in the same connected component of $\mathcal CV - \mathcal N_{2E + \Morse_E(1,20E)}(\rho^U_V)$ as $\pi_V(p^+)$ for all $U \in W_V$. Using partial realisation (Definition \cite[Defn. 1.1.(8)]{BehrstockHierarchically19}), let $z^+ \in X$ be a point such that $d_V(z^+, z_V^+) \leq E$ for all $V \in \support(p^+)$. If $p^- \in X$ then $\relevant_Q(\widehat x, \widehat y) \subseteq \relevant_{K/L + L}(z^+, p^-)$. If not, then we can define a point $z^- \in X$ analogously to $z^+$ so that $\relevant_Q(\widehat x, \widehat y) \subseteq \relevant_{K/L + L}(z^+, z^-)$. In either case, \cite[Lemma~13.5]{Casals-RuizReal24} implies that $\relevant_Q(\widehat x, \widehat y)$ is finite.
			\item The fact that $d_Q$ is a metric follows easily from Item (2). For each $U \in \mathfrak U$, let $m_U$ denote the median map on $\widehat T_U$.  
			It is straightforward to check that $m = (m_U)_{U \in \mathfrak U}$ is consistent and that it is the metric median map for $(Q,d_Q)$. If $C = \{\sigma(\varepsilon_1, \dots, \varepsilon_k) : \varepsilon_i \in \{0,1\}\} \subseteq Q$ is a median embedded $k$-cube, then consistency implies the following: let $x,y,z\in C$ be such that $x$ differs from each of $y$ and $z$ in exactly one coordinate, and $y,z$ differ in two coordinates.  Then $U\orth V$ whenever $U\in\relevant_Q(x,y)$ and $V\in\relevant_Q(x,z)$.  Hence $\mathfrak U$ contains a set of $k$ pairwise orthogonal elements, so $k\leq \hhscomp$. (The consistency argument for the rank bound is virtually identical to the proof of \cite[Lem. 6.13]{Casals-RuizReal24}, for instance, so we omit the details.)
			\qedhere
		\end{enumerate}
	\end{proof}
	
	\begin{lem}
		\label{lem: Q is complete}
		$Q$ is complete.
	\end{lem}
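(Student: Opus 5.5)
The plan is to show that a Cauchy sequence in $(Q,\dist_Q)$ converges coordinatewise in $\widehat{\mathcal Y}$, and that the limit is consistent and is a $\dist_Q$--limit. Let $(x^n)_n$ be Cauchy in $Q$. Since $\dist_Q$ is the $\ell^1$--sum of the coordinate distances, for each $U\in\mathfrak U$ the sequence $(x^n_U)_n$ is Cauchy in $\widehat T_U$. Each $\widehat T_U$ is a closed interval, ray or line (Remark \ref{rem:clusters-are-connected}), hence complete, so $x^n_U\to x_U$ for some $x_U\in\widehat T_U$. Put $x=(x_U)_{U\in\mathfrak U}\in\widehat{\mathcal Y}$.

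Next I check that $x$ is consistent, i.e. $x\in Q$. The consistency conditions of Definition \ref{defn:Q} are closed conditions in each pair of coordinates, so they pass to the limit. In the transverse case, the singletons $\widehat\delta^V_U$ and $\widehat\delta^U_V$ are fixed points. For each $n$, either $x^n_U=\widehat\delta^V_U$ or $x^n_V=\widehat\delta^U_V$, and one of these alternatives occurs for infinitely many $n$; passing to the limit along those $n$ gives the corresponding equality for $x$. The nested case $U\propnest V$ is the delicate one, since it involves the map $\widehat\delta^V_U$, which need not be continuous. Here I use Lemma \ref{lem: collapsed projections}.\eqref{it: BGI in hatY}: $\widehat\delta^V_U$ is constant, equal to $\widehat p_U^\pm$, on each component $C^\pm$ of $\widehat T_V-\widehat\delta^U_V$. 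If $x_V=\widehat\delta^U_V$ there is nothing to prove. Otherwise $x_V$ lies in one open component, say $C^+$, so $x^n_V\in C^+$ for large $n$. Consistency then forces $x^n_U=\widehat p_U^+$ for large $n$, and therefore $x_U=\widehat p_U^+\in\widehat\delta^V_U(x_V)$. This step is the place where the collapsing construction is genuinely used, and it is what replaces any continuity of $\widehat\delta^V_U$; I view it as the main point of the argument.

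Finally I show $\dist_Q(x^n,x)\to0$. Given $\epsilon>0$, choose $N$ with $\dist_Q(x^n,x^m)<\epsilon$ for $n,m\ge N$. For any finite $\mathfrak F\subseteq\mathfrak U$ and $n\ge N$,
\[\sum_{U\in\mathfrak F}\dist_{\widehat T_U}(x^n_U,x_U)=\lim_{m\to\infty}\sum_{U\in\mathfrak F}\dist_{\widehat T_U}(x^n_U,x^m_U)\le\epsilon.\]
Taking the supremum over $\mathfrak F$ gives $\dist_Q(x^n,x)\le\epsilon$. In particular $\relevant_Q(x^n,x)$ is countable with finite sum, consistent with Lemma \ref{lem: Q is a metric space}.\eqref{item:rel-finite}, although finiteness is not needed here. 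Hence $x^n\to x$ in $Q$, and $Q$ is complete.
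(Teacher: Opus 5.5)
Your proof is correct, but it reaches $x^n\to x$ by a different route from the paper's.

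\textbf{The paper's route.} The paper also takes coordinatewise limits, and it simply asserts that the limit is consistent. It then reduces the convergence $x^n\to x$ to showing that $\bigcup_n\relevant_Q(x^1,x^n)$ is finite. It proves this by contradiction: Ramsey's theorem, Lemma \ref{lem: Q is a metric space}.\eqref{item:domains-with-x-interior} and a reduction to $\nest_{\mathfrak U}$--minimal domains give arbitrarily large pairwise transverse families on which $x^1$ and $x^n$ sit at opposite ends of intervals of length at least $K$. This contradicts the Cauchy property.

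\textbf{Your route.} You use the standard $\ell^1$ (Fatou-type) estimate: uniform bounds on finite partial sums survive the limit in $m$ and then the supremum over finite $\mathfrak F$. This is shorter and uses nothing about the hierarchical structure beyond consistency of the limit. In effect it is the usual proof that a closed subset of an $\ell^1$--product of complete spaces is complete, so it is the more robust argument.

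\textbf{What each buys.} The paper's route gives the stronger structural fact that a Cauchy sequence in $Q$ moves in only finitely many coordinates, which the lemma does not need. You also supply the step the paper omits: that the limit satisfies the nested consistency condition even though $\widehat\delta^V_U$ is not continuous. Your use of Lemma \ref{lem: collapsed projections}.\eqref{it: BGI in hatY}, together with openness of the components of $\widehat T_V-\widehat\delta^U_V$, is the right way to do this.
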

	\begin{proof}
		Let $(x^n)_{n\in\mathbb N}$ be a Cauchy sequence in $Q$.  For each $U \in \mathfrak U$, the sequence $(x^n_U)_{n \in \mathbb N} \subseteq \widehat T_U$ is Cauchy. Let $x_U \coloneqq \lim_{n \rightarrow \infty} x^n_U$ and let $x \coloneqq (x_U)_{U \in \mathfrak U}$. It follows from the consistency of the $x^n$'s that $x$ is consistent.  To conclude that $\lim_{n\to\infty}x^n=x$, it then suffices to show that $|\cup_{n \in \mathbb N} \relevant_Q(x^1, x^n)| < \infty$. If not, another application of Ramsey's theorem will produce, for all $N \in \mathbb N$, a set $\mathfrak V$ of pairwise transverse domains such that $|\mathfrak V| \geq N$ and $V \subseteq \relevant_Q(x^1,x^n)$ for all sufficiently large $n$. By Lemma \ref{lem: Q is a metric space}.\eqref{item:domains-with-x-interior} we can remove $\leq \chi'$ elements from $\mathfrak V$ and assume that $x^1_U = p_U^-$ and $x^n_U = p_U^+$ for all $U \in \mathfrak V$ (or vice versa). 
        By Lemma \ref{lem: pseudo HHS}.\eqref{it: BGI in Y}, we can also assume that each $U \in \mathfrak V$ is $\nest_\mathfrak U$--minimal, which implies that $d_{\widehat T_U}(p^-_U,p^+_U) \geq K$ for all $U \in \mathfrak U$. But then $d_Q(x,x^n) \rightarrow \infty$. 
	\end{proof}
	
	\begin{lem}
		\label{lem: Q is connected}
		$Q$ is path-connected.
	\end{lem}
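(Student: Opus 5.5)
The plan is to join two points $x,y\in Q$ by a path built from finitely many moves, each changing a single coordinate monotonically, and to keep every intermediate tuple consistent. By Lemma \ref{lem: Q is a metric space}.\eqref{item:rel-finite}, $\relevant_Q(x,y)$ is finite, so we argue by induction on $|\relevant_Q(x,y)|$. In each step we will produce $x'\in Q$ with $\relevant_Q(x',y)\subsetneq\relevant_Q(x,y)$, together with a path in $Q$ from $x$ to $x'$. Concatenating these paths gives the required path, and we take the trivial path when $\relevant_Q(x,y)=\emptyset$.

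For the inductive step, choose $U\in\relevant_Q(x,y)$ that is $\nest$--minimal in $\relevant_Q(x,y)$. Consider the straight path $t\mapsto x^t$ which agrees with $x$ off $U$ and whose $U$--coordinate moves along the geodesic in the interval (or ray or line) $\widehat T_U$ from $x_U$ to $y_U$. This path is continuous into $(Q,\dist_Q)$, since only one coordinate varies. The work is to show that each $x^t$ is consistent. For $V$ transverse to $U$, the relevant condition is $x_U=\widehat\delta^V_U$ or $x_V=\widehat\delta^U_V$. For $V$ nested with $U$ in either direction, the condition involves $\widehat\delta$ and the collapsed bounded geodesic image of Lemma \ref{lem: collapsed projections}.\eqref{it: BGI in hatY}.

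The obstacle is that moving $x_U$ may break these conditions for domains $V\propnest U$ with $x_V\neq y_V$ (excluded by minimality), or for $V\supsetneq U$ and transverse $V$. Lemma \ref{lem: collapsed projections} lets us reduce every constraint on $x_U$ to the statement that $x_U$ lies on a prescribed side of a single point $\widehat\delta^V_U$, or equals it. If the straight move is blocked, we instead move first to an intermediate consistent tuple. In that tuple, for every $V$ which forces a constraint and whose coordinate we are allowed to change, we push $x_V$ to the point $\widehat\delta^U_V$, or to the appropriate endpoint $\widehat p^\pm_V$. By Lemma \ref{lem: collapsed projections}.\eqref{it: transverse consistency in hatY}, this is the value that $y_V$ must take whenever $y$ releases the constraint. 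To make this rigorous, we order the finitely many relevant domains compatibly with nesting, as in the construction of hierarchy paths. We process $\nest$--maximal domains first, and inside a chain of nested domains we move downward, using Lemma \ref{lem: collapsed projections}.\eqref{it: BGI in hatY}. Passing through the clusters collapsed by $q_U$ is what makes each single-coordinate move jump cleanly across a constraint point rather than sit near it. Consistency of $y$ ensures that the final target coordinates are compatible.

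In practice, we expect the cleanest route to be this. Show that for each pair of consistent tuples, the coordinatewise geodesic interpolation, performed one relevant domain at a time in a suitable order, stays in $Q$. Checking this amounts to the routine case analysis above. The one genuinely delicate case, which we expect to be the main obstacle, is a transverse pair $U\transverse V$ with both coordinates relevant. There Lemma \ref{lem: collapsed projections}.\eqref{it: transverse consistency in hatY} shows that $x$ and $y$ sit at opposite ends, and we must move the correct one first: the domain whose coordinate in $x$ is not at the constraint point.
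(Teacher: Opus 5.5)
Your overall frame (strong induction on $|\relevant_Q(x,y)|$, with straight single-coordinate moves as the basic step) matches the paper. However, the step that carries the content is missing, and the route you say you expect to be cleanest is false: moving each relevant coordinate once, straight from $x_W$ to $y_W$, fails in every order. Take $V\propnest U$ in $\relevant_Q(x,y)$ with $V$ $\nest_{\mathfrak U}$--minimal, so $\widehat p_V^-\neq\widehat p_V^+$. Suppose $x_U$ and $y_U$ lie in the two components $C^-,C^+$ of $\widehat T_U-\widehat\delta^V_U$. This happens, e.g., for $x=\widehat\Psi(a)$, $y=\widehat\Psi(b)$ with $\pi_U(a)$, $\pi_U(b)$ near $\pi_U(p^-)$, $\pi_U(p^+)$, whenever the cluster of $\delta^V_U$ contains neither $p^\pm_U$. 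Consistency and Lemma \ref{lem: collapsed projections}.\eqref{it: BGI in hatY} force $x_V=\widehat p_V^-$ and $y_V=\widehat p_V^+$. Moving $V$ first breaks consistency at once. Moving $U$ first breaks it as soon as the $U$--coordinate passes $\widehat\delta^V_U$. So the outer coordinate must be parked at $\widehat\delta^V_U$ while $V$ crosses; you need genuinely new intermediate tuples, and the substance of the lemma is showing they lie in $Q$. For the same reason your diagnosis of the hard case is backwards. When all relevant domains are pairwise transverse or orthogonal, the paper just picks the one domain whose coordinate may be changed first and changes it; that is routine. The nested case is the real one.

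Your ``push the blockers'' idea points the right way but is never made precise or checked. The paper takes relevant $V\propnest U$ with $U$ maximal and sets $z^1_W=z^2_W=\widehat\delta^V_W$ whenever $W\transverse V$ or $V\propnest W$, and $z^1_W=x_W$, $z^2_W=y_W$ whenever $W\nest V$ or $W\orth V$. It then verifies $z^1,z^2\in Q$. The nontrivial sub-case is a transverse pair $W_1\transverse W_2$ with neither nested in nor orthogonal to $V$; this needs the consistency and bounded geodesic image axioms of the original HHS, not just Lemma \ref{lem: collapsed projections}. Finally it applies the induction hypothesis to the pairs $(x,z^1)$, $(z^1,z^2)$, $(z^2,y)$, each with fewer relevant domains.

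Your version can be repaired the same way. With $U$ $\nest$--minimal in $\relevant_Q(x,y)$, pushing every blocker gives exactly the tuple $x'$ with $x'_W=\widehat\delta^U_W$ for $W\transverse U$ or $U\propnest W$, and $x'_W=x_W$ otherwise. Non-relevant transverse or containing domains already sit at $\widehat\delta^U_W$, so $\relevant_Q(x,x')\subseteq\relevant_Q(x,y)-\{U\}$. Once $x'\in Q$ is proved and reached by the strong induction hypothesis, the straight move of the $U$--coordinate is unobstructed, since domains below $U$ are not relevant, and induction finishes.

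As written, though, you do not specify this tuple, do not prove it is consistent, and do not say how to reach it. A straight move to it can itself be blocked, so you need the hypothesis for pairs other than $(\cdot,y)$. Your ordering instructions also contradict each other: $\nest$--minimal first versus $\nest$--maximal first.
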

	\begin{proof}
		Let $x,y \in Q$ be distinct elements. We will argue by induction on the cardinality of $\relevant_Q(x,y)$ that there is a path in $Q$ connecting $x$ and $y$.
		
		Suppose $\relevant_Q(x,y) = \{U\}$ and let $\gamma_U:[0,1] \rightarrow \widehat T_U$ be the unique geodesic from $\widehat \pi_U(x)$ to $\widehat \pi_U(y)$. For each $t \in [0,1]$, let $z_t = (z_V)_{V \in \mathfrak U}$, where $z_V \coloneqq x_V = y_V$ for all $V \neq U$ and $z_U \coloneqq \gamma_U(t)$. It follows from the fact that both $x$ and $y$ are consistent that $z_t$ is consistent. Thus we can define $\gamma: [0,1] \rightarrow Q$ by $\gamma(t) \coloneqq z_t$ for all $t \in [0,1]$. Continuity of $\gamma$ follows immediately from continuity of $\gamma_U$ and the definition of $d_Q$, so $\gamma$ is a path.
		
		Let $k \in \mathbb N$, suppose that $|\relevant_Q(x,y)| = k+1$ and suppose $x'$ and $y'$ are connected by a path for all $x',y' \in Q$ such that $|\relevant_Q(x',y')| \leq k$. We consider two cases: \\
		
		\noindent \textbf{Case 1:} Suppose there exist $U, V \in \relevant_Q(x,y)$ such that $V \propnest U$. We can assume without loss of generality that $U$ is $\nest$-maximal in $\relevant_Q(x,y)$. Define $z^1 = (z_W^1)_ {W \in \mathfrak U}, z_W^2 = (z^2_W)_{W \in \mathfrak U}$ as follows. For each $W \in \mathfrak U$ such that $W \nest V$ or $W \orth V$, let $z^1_W \coloneqq x_W$ and let $z^2_W \coloneqq y_W$. For each $W \in \mathfrak U$ such that $W \transverse V$ or $V \propnest W$, let $z^1_W \coloneqq z^2_W \coloneqq \widehat \delta^V_W$.
		
		Let us check that $z^1$ and $z^2$ are consistent. Let $W_1, W_2 \in \mathfrak U$ be transverse. If $W_1, W_2$ are either nested in or orthogonal to $V$, then the required condition follows from the consistency of $x$ and $y$. If $W_1 \nest V$ or $W_1 \orth V$, and $W_2 \not\nest V$ and $W_2 \not\orth V$, then $\widehat \delta^{W_1}_{W_2} = \widehat \delta^V_{W_2} = z^1_{W_2} = z^2_{W_2}$. Suppose $W_1$ and $W_2$ are neither nested in nor orthogonal to $V$ and $\widehat \delta^V_{W_1} \neq \widehat \delta^{W_2}_{W_1}$. Then $d_{W_1}(\rho^V_{W_1}, \rho^{W_2}_{W_1}) > r/L - L > K_2 + 10E$. Since $\diam(\mathcal CV) > 100E$ and $\pi_V$ is $E$--coarsely surjective, there exists $p \in X$ such that $d_V(p, \pi_V(p^\pm)) > 10E$, which implies by the consistency and bounded geodesic image axioms that $d_{W_1}(p, \rho^V_{W_1}) < E + K_2$. Then $d_{W_1}(p, \rho^{W_2}_{W_1}) > E$ so $d_{W_2}(p, \rho^{W_1}_{W_2}), \dist_{W_2}(p, \rho^{V}_{W_2}) \leq E$. Thus $d_{W_2}(\rho^{W_1}_{W_2}, \rho^V_{W_2}) \leq 2E$, which implies that $\widehat \delta^{W_1}_{W_2} = \widehat \delta^V_{W_2}$.
		Next suppose that $W_1 \propnest W_2$. Again, if $W_2 \propnest V$ or $W_2 \orth V$, then the required condition follows from the consistency of $x$ and $y$. Suppose $V \propnest W_2$. If $\widehat \delta^{W_1}_{W_2} \neq \widehat \delta^V_{W_2}$ then Lemma~\ref{lem: collapsed projections}.(\ref{it: non-transverse nested domains in hatY},\ref{it: BGI in hatY}), gives $W_1 \transverse V$ and $\widehat \delta^V_{W_1} = \widehat \delta^{W_2}_{W_1} (\widehat \delta^V_{W_2})$. Lastly, suppose $V \transverse W_2$ and without loss of generality, suppose that $\widehat \delta^V_{W_2} = \widehat p_{W_2}^+$. Then either $\widehat \delta^{W_1}_{W_2} = \widehat \delta^V_{W_2}$ or $V \transverse W_1$ and $\widehat \delta^{W_2}_{W_1}(\widehat p_{W_2}^+) = \widehat p_{W_1}^+ = \widehat \delta^V_{W_1}$ by Lemma~\ref{lem: collapsed projections}.\eqref{it: BGI in hatY}. 
		
		Observe that $|\relevant_Q(x, z^1)|, |\relevant_Q(z^1, z^2)|, |\relevant_Q(z^2, y)| \leq k$ so there exist paths in $Q$ connecting $x$ to $z^1$, $z^1$ to $z^2$ and $z^2$ to $y$. Their concatenation is a path in $Q$ from $x$ to $y$. \\
		
		\noindent \textbf{Case 2:} If Case 1 fails then for all $U,V$ in $\relevant_Q(x,y)$, either $U \transverse V$ or $U \orth V$. Then, up to swapping $x$ and $y$, we can write $\relevant_Q(x,y) = \{U_0, \dots, U_k\}$, where the labels are chosen so that, if $0 < j$, then $U_0 \orth U_j$ or $\widehat \delta^{U_j}_{U_0} = \widehat p_{U_0}^+ = y_{U_0}$ and $\widehat \delta^{U_0}_{U_j} = \widehat p_{U_j}^- = x_{U_j}$; this is a consequence of consistency (compare \cite[Prop. 2.8]{BehrstockHierarchically19}). Define $z = (z_W)_{W \in \mathfrak U}$ where $z_W \coloneqq x_W$ for all $W \neq U_0$ and $z_{U_0} \coloneqq y_{U_0}$. It is straightforward to check that $z$ is consistent. Moreover $|\relevant_Q(x,z)| = 1$ and $|\relevant_Q(z,y)| = k$ so the induction hypothesis implies that there are paths in $Q$ from $x$ and $z$ and from $z$ to $y$, whose concatenation is the required path in $Q$.
	\end{proof}
	
	\begin{lem}\label{lem:Q-proper}
		The space $(Q,\dist_Q)$ isometrically embeds in $(\reals^k,\ell_1)$ for some $k$, and hence the space $(Q,\dist_Q)$ is proper.
	\end{lem}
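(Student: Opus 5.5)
The natural approach is to show that only finitely many coordinates of $Q$ can ever vary. Set $\mathfrak U_\infty \coloneqq \{U\in\mathfrak U : \widehat T_U \text{ is not a single point}\}$. I will show that $\mathfrak U_\infty$ is finite, say with $k$ elements, and that every $\widehat T_U$ for $U\in\mathfrak U_\infty$ is a (possibly unbounded) interval. Fix any base point $x^0\in Q$. The map
$$\iota: Q \to \prod_{U\in\mathfrak U_\infty}\widehat T_U \subseteq (\reals^k,\ell_1)$$
is then an isometric embedding. Indeed, coordinates outside $\mathfrak U_\infty$ are constant on $Q$, so $d_Q$ is exactly the $\ell_1$ sum over $\mathfrak U_\infty$, and each $\widehat T_U$ embeds isometrically in $\reals$ by Remark \ref{rem:clusters-are-connected}. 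Properness then follows because $\iota(Q)$ is a closed subset of $\reals^k$. It is closed since $Q$ is complete by Lemma \ref{lem: Q is complete}, and closed bounded subsets of $\reals^k$ are compact.

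The main obstacle is finiteness of $\mathfrak U_\infty$; the case split below is where I would start.

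\textbf{Case $p^\pm\in X$.} Here $\mathfrak U\subseteq \relevant_{K/L-L}(p^+,p^-)$, which is finite by the same reference used in Lemma \ref{lem: Q is a metric space}.\eqref{item:rel-finite}. So $\mathfrak U$ itself is finite.

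\textbf{Case $p^+\in\boundary X$.} I would argue as follows.
\begin{enumerate}
\item By Proposition \ref{prop:quasi-linear-domains}, every $U\in\mathfrak U$ with $T_U$ unbounded lies in $\support(p^-)\cup\support(p^+)$, a set of at most $2\hhscomp$ domains.
\item For bounded $T_U$, I need a different bound. A domain $U\in\mathfrak U$ has $\diam(T_U)\ge K$, so $\mathcal CU$ is large. By the description of $\hull_{U,z_0}(p^-,p^+)$ and Definition \ref{defn:boundary-projection}, such $U$ is not orthogonal to the supports. Also, $\pi_U(p^\pm)$ are bounded sets that are $K$--far apart.
\item Every such $U$ is nested in some $V\in\support(p^\pm)$, or transverse to / containing some element of the supports.
\item If $U\propnest V$ with $V\in\support(p^+)$, then bounded geodesic image forces $\rho^U_V$ to lie near a geodesic in $\mathcal CV$ between $\pi_V(p^-)$ and $\pi_V(p^+)$.
\item There can still be infinitely many such $U$ along a bi-infinite line, so $\mathfrak U$ itself may well be infinite.
\end{enumerate}

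Since $\mathfrak U$ may be infinite, I would instead prove that the relevant set over all pairs of $Q$ is controlled. The key observation I would aim for concerns a non-minimal $U\in\mathfrak U$ whose $\mathcal CU$ is bounded. I expect the reduced tree $\widehat T_U$ to be small or collapsed, because the basic clusters at $\widehat p^\pm_U$ together with the $\delta^V_U$ cover $T_U$ up to gaps of size $<r$. However, I am not confident this holds in general.

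If it fails, I would retreat to the following weaker but sufficient plan.
\begin{enumerate}
\item By Lemma \ref{lem: Q is a metric space}, $Q$ is median of rank $\le\hhscomp$.
\item $Q$ is also locally finite-dimensional as a subspace of a product of intervals.
\item I would then embed a neighbourhood of any bounded set using the finite set $\bigcup_{y\in B}\relevant_Q(x^0,y)$. This set is finite for bounded $B$ by the Ramsey argument of Lemma \ref{lem: Q is complete}, where a large pairwise-transverse family forces distance at least $K$ per domain.
\item This gives an isometric embedding of each ball into some $(\reals^{k_B},\ell_1)$, hence compactness of closed balls, hence properness.
\end{enumerate}
The global embedding into a single $\reals^k$ would then require the uniform bound above, which I would try to extract from Proposition \ref{prop:quasi-linear-domains} and the cluster collapsing.
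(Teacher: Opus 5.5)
There is a genuine gap. Your primary strategy needs $\mathfrak U_\infty$ to be finite, and that is false in general. Every $\nest_{\mathfrak U}$--minimal $V\in\mathfrak U$ has $\widehat T_V=T_V$ of diameter at least $K$ (Remark \ref{rem: collapsed trees}), and $\mathfrak U$ can contain infinitely many such $V$. For instance, suppose $V\propnest U\in\support(p^+)$ is minimal in $\mathfrak U$ and some $g\in G$ fixes $U$ and acts loxodromically on $\mathcal CU$; this is the situation of Proposition \ref{prop:short-mountains}. Then the translates $g^nV$ are pairwise distinct minimal elements of $\mathfrak U$, because $\rho^{g^nV}_U=g^n\rho^V_U$ leaves every bounded set. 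By bounded geodesic image, points of $X$ whose $\mathcal CU$--projection lies far on the $p^-$ side (resp.\ $p^+$ side) of $\rho^V_U$ are sent by $\widehat\Psi$ to $V$--coordinate $\widehat p^-_V$ (resp.\ $\widehat p^+_V$). So all of these coordinates genuinely vary on $Q$. Your hoped-for collapse of non-minimal $\widehat T_U$ does not touch them, so no argument of the form ``only finitely many coordinates vary'' can succeed.

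The fallback does not rescue the statement either. Embedding each ball via $\bigcup_{y\in B}\relevant_Q(x^0,y)$ can at best give properness, not an isometric embedding into one fixed $(\reals^k,\ell_1)$, which is the first assertion of the lemma. Even its finiteness step is not justified by the Ramsey argument of Lemma \ref{lem: Q is complete} as you cite it. That argument yields a transverse family whose members are each relevant for \emph{some} $y\in B$, and you would still have to turn this into many domains relevant for a \emph{single} $y$.

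The missing idea is to bundle the infinitely many coordinates into finitely many lines, rather than discard them. Order transverse pairs by setting $U<V$ if $\widehat\delta^U_V=\widehat p^-_V$, equivalently $\widehat\delta^V_U=\widehat p^+_U$ (Lemma \ref{lem: collapsed projections}.\eqref{it: transverse consistency in hatY}). Consistency then says that along a $<$--chain $U^1<U^2<\cdots$, each $x\in Q$ has coordinates of the form $\widehat p^+,\ldots,\widehat p^+,\ast,\widehat p^-,\widehat p^-,\ldots$, with at most one coordinate not at an endpoint. Hence the intervals $\widehat T_{U^n}$ can be glued end to end, identifying $\widehat p^+_{U^n}$ with $\widehat p^-_{U^{n+1}}$, into a single line, ray or interval $L$. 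The position of the ``$\ast$'' defines a map $Q\to L$ that records the total $\ell_1$--contribution of the whole chain.

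Antichains for $<$ consist of pairwise orthogonal or $\propnest$--related domains, so Ramsey bounds their size in terms of $\hhscomp$. Dilworth's theorem then partitions $\mathfrak U$ into finitely many chains $\mathfrak U^1,\ldots,\mathfrak U^k$. Each chain has finite intervals, by Lemma \ref{lem: Q is a metric space}.\eqref{item:rel-finite} applied to points with interior coordinates at the two ends, so it is indexed by consecutive integers. The product of the $k$ maps $Q\to L^i$ is the required isometric embedding into $(\reals^k,\ell_1)$, and properness follows from completeness as you say. The same ordering is also exactly what your ball-finiteness step would need.
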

	
	\begin{proof}
		Given $U,V\in\mathfrak U$, write $U<V$ if $U\transverse V$ and $\widehat\delta^U_V=\widehat p_V^-$ (and hence $\widehat\delta^V_U=\widehat p_U^+$, by Lemma \ref{lem: collapsed projections}).  Note that $<$ is a partial order on $\mathfrak U$, and $U,V\in\mathfrak U$ are incomparable in this order if and only if they are either orthogonal or $\propnest$--related.  Hence, by Ramsey's theorem, there is a bound (depending only on the complexity of $\mathfrak S$) on the cardinalities of $<$--antichains in $\mathfrak U$.  Therefore, by Dilworth's theorem \cite{Dilworth:poset}, there exists a finite partition $\mathfrak U=\bigsqcup_{i=1}^k\mathfrak U^i$ where each $\mathfrak U^i$ is a $<$--chain.  For each $U\in\mathfrak U^i$, choose $x(U)\in Q$ such that $x_U\not\in\{\hat p_U^\pm\}$ $U\in \mathfrak U_{x(U)}$, using Lemma \ref{lem: Q is connected}.
		
		By Lemma \ref{lem: Q is a metric space}.\eqref{item:rel-finite}, $\relevant_Q(x(U),x(V))$ is finite for all $U,V\in\mathfrak U^i$, so the totally ordered set $(\mathfrak U^i,<)$ has finite intervals, so there is an interval $I\subseteq \reals$ such that $\mathfrak U^i=\{U^n:n\in\integers\cap I\}$, where the $U^n$ are labelled so that $n<m$ if and only if $U^n<U^m$.  Note that, if $U^n$ has a successor (predecessor), then the interval $\widehat T_{U^n}$ has a maximum (minimum).  Hence there is a space $L^i$, isometric to $\reals,[0,\infty),$ or a bounded interval, equipped with isometric embeddings $\iota^n:\widehat T_{U^n}\to L^i$ such that $\bigcup_n\iota^n(\widehat T_{U^n})=L^i$, and $\iota^m,\iota^n$ have disjoint images unless $|m-n|\leq 1$, and $\iota^n(\widehat T_{U^n})\cap \iota^{n+1}(\widehat T_{U^{n+1}})=\iota^n(p_{U^n}^+)=\iota^{n+1}(p_{U^{n+1}}^-)$.  Define $\pi^i:Q\to L^i$ as follows.  Given $x\in Q$, let $n$ be maximal such that $x_{U^n}=p_{U^n}^+$.  If $n+1\not \in I$, then $L^i$ has a maximum, which we take as $\pi^i(x)$.  If $n+1$ exists, we let $\pi^i(x)=\iota^{n+1}(x_{U^{n+1}})$.  If there is no such $n$, then there is a unique minimal $n\in I\cap\integers$, and we take $\pi^i(x)=\iota^n(x_{U^n})$.  Hence $\prod_{i=1}^k\pi^i:Q\to \prod_{i=1}^kL_i$ is an isometric embedding (where the codomain has the $\ell_1$ metric), so $Q$ is locally compact and hence, since it is complete by Lemma \ref{lem: Q is complete}, proper.
	\end{proof}
	
	\subsection{Defining the map {$\widehat \Psi: X \rightarrow Q$}}\label{subsec:map-X-to-Q}
	For each $U \in \mathfrak U$, let $\psi_U \coloneqq f_U \circ \pi_U$. Then define $\Psi: X \rightarrow \mathcal Y$ by $\Psi(x) \coloneqq (\psi_U(x))_{U \in \mathfrak U}$.  By Definition \ref{defn:HHS-automorphism} and Proposition \ref{prop:quasi-linear-domains}, the map $\Psi$ is $G$-equivariant. 
	
	\begin{defn}
		Let $\alpha \geq 0$. An element $x=(x_U)_{U\in\mathfrak U} \in \mathcal Y$ is $\alpha$\textit{-consistent} if, for all $U, V \in \mathfrak U$,
		\begin{itemize}
			\item if $U \transverse V$ then $\min\{d_{T_U}(x_U, \delta^V_U), \dist_{T_V}(x_V, \delta^U_V)\} \leq \alpha$;
			\item if $U \propnest V$ then $\min\{d_{T_V}(x_V, \delta^U_V), \diam(\{x_U\} \cup \delta^V_U(x_V))\} \leq \alpha$.
		\end{itemize}
		Let $\mathcal Z_\alpha$ be the set of $\alpha$-consistent tuples in $\mathcal Y$. 
	\end{defn}

	Next is an elementary consequence of Lemma~\ref{lem: pseudo HHS} and the HHS consistency axioms:
	\begin{lem}
		\label{lem: consistent image 1}
		We have $\Psi(x) \in \mathcal Z_{\alpha}$ for all $x \in X$ and $\alpha \geq E'$. 
	\end{lem}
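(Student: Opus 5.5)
The statement asks that, for every $x\in X$ and every pair $U,V\in\mathfrak U$, the relevant consistency inequality holds for $\Psi(x)$ with constant $E'$. It is enough to treat $\alpha=E'$, since $\mathcal Z_{E'}\subseteq\mathcal Z_\alpha$ for $\alpha\geq E'$. Fix $x$ and set $x_U=\psi_U(x)=f_U(\pi_U(x))$. The plan is to transport the HHS consistency axiom \cite[Defn. 1.1.(4)]{BehrstockHierarchically19}, which holds for $(\pi_U(x))_U$ with constant $E$, through the $(L,L)$--quasi-isometries $f_U$. The choice $E'=L(5E+L+1)$ is designed to absorb this change of constants.

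\textbf{Transverse case.} Suppose $U\transverse V$. The consistency axiom gives, without loss of generality, $\dist_U(\pi_U(x),\rho^V_U)\leq E$. Since $\delta^V_U=\hull(f_U(\rho^V_U))\supseteq f_U(\rho^V_U)$ and $f_U$ is $(L,L)$--coarsely lipschitz, we obtain $\dist_{T_U}(x_U,\delta^V_U)\leq LE+L+L\xi$. Here $\xi\leq E$ accounts for the diameters of $\pi_U(x)$ and $\rho^V_U$, and the total is at most $E'$.

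\textbf{Nested case.} Suppose $U\propnest V$. Either $\dist_V(\pi_V(x),\rho^U_V)\leq E$, or $\diam(\pi_U(x)\cup\rho^V_U(\pi_V(x)))\leq E$.
\begin{itemize}
\item In the first alternative, the same estimate as in the transverse case gives $\dist_{T_V}(x_V,\delta^U_V)\leq E'$.
\item In the second alternative, we need to bound $\diam(\{x_U\}\cup f_U\rho^V_U f_V^{-1}(x_V))$. The point is that $f_V^{-1}(x_V)=f_V^{-1}f_V(\pi_V(x))$ lies within roughly $2L^2$ of $\pi_V(x)$ in $\mathcal CV$. The map $\rho^V_U$ is only controlled away from $\rho^U_V$, so we split further.
\begin{itemize}
\item If $\dist_V(\pi_V(x),\rho^U_V)$ is large compared to $2L^2+E$, then every geodesic from $\pi_V(x)$ to $f_V^{-1}(x_V)$ avoids $\neb_E(\rho^U_V)$. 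Bounded geodesic image then gives $\diam(\rho^V_U(\pi_V(x))\cup\rho^V_U(f_V^{-1}(x_V)))\leq E$. Combining with the second alternative and applying $f_U$ yields a bound of the form $L(2E+\xi)+L\leq E'$.
\item Otherwise $\pi_V(x)$ is within a bounded distance of $\rho^U_V$, say $2L^2+2E$. In that case we instead bound $\dist_{T_V}(x_V,\delta^U_V)$ directly, by $L(2L^2+3E)+L$.
\end{itemize}
\end{itemize}

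\textbf{Main obstacle.} The last sub-case is where the stated $E'$ may be too small: the bound $L(2L^2+3E)+L$ involves $L^3$, while $E'$ is only of order $L^2$. If this happens, I would exploit the freedom in choosing the quasi-inverse $f_V^{-1}$ (for instance, choosing it so that $f_V^{-1}f_V$ moves points by at most $L$), so that the threshold becomes of order $L+E$. Alternatively, I would note that the downward maps $\rho^V_U$ of Lemma \ref{lem:invariant-down-rho} are coarsely lipschitz with constants in $E$, which settles the near-$\rho^U_V$ case without bounded geodesic image. Tuning these constants so that everything lands under $E'$ is the only delicate point. Conceptually, the whole argument is the routine fact that quasi-isometric relabelling of the coordinate spaces preserves consistency, up to uniform constants.
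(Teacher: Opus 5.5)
Your argument is correct and is essentially the paper's, which simply records the lemma as an elementary consequence of the HHS consistency and bounded geodesic image axioms (as packaged in Lemma~\ref{lem: pseudo HHS}), transported through the $(L,L)$--quasi-isometries $f_U$. The obstacle you flag is removed by your first fix, which matches the paper's normalisation of the fixed quasi-inverses $f_V^{-1}$: after enlarging $L$, the composition $f_V^{-1}f_V$ moves points at most $L$, so the near case gives $\dist_{T_V}(x_V,\delta^U_V)\leq L(3E+L)+L\leq E'$. Drop your second suggestion, since the downward maps $\rho^V_U$ are not coarsely lipschitz near $\rho^U_V$, and that case needs no control on them anyway.
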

	
	Define $\Phi: \mathcal Y \rightarrow \widehat{\mathcal Y}$ by $(x_U)_U\mapsto (q_U(x_U))_U$, and let $\widehat \Psi \coloneqq \Phi \circ \Psi$. For each $U \in \mathfrak U$, also define $\widehat \psi_U: X \rightarrow \widehat T_U$ by $\widehat \psi_U \coloneqq q_U \circ \psi_U$.  By construction, $\widehat{\Psi}$ is $G$--equivariant.
	
	\begin{lem} \label{lem: consistent image 2}
		The image of $\widehat\Psi$ is consistent in the sense of Definition \ref{defn:Q}, i.e. $\widehat \Psi(X) \subseteq Q$.
	\end{lem}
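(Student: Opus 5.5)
We have to show that for every $x\in X$ the tuple $\widehat\Psi(x)=(q_U(\psi_U(x)))_{U\in\mathfrak U}$ satisfies both clauses of Definition \ref{defn:Q}. The strategy is to start from the coarse statement of Lemma \ref{lem: consistent image 1}: $\Psi(x)$ is $E'$--consistent in $\mathcal Y$. We then argue that the collapse maps $q_U$ convert $E'$--closeness to the relevant projections into exact equality. The point is that every projection $\delta^V_U$ lies inside a cluster whose $R$--neighbourhood ($R=50E'\gg E'$) is contained in that cluster. We treat the transverse and nested cases separately, using Lemma \ref{lem: pseudo HHS} and Lemma \ref{lem: collapsed projections} throughout.

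\emph{Transverse case.} Let $U\transverse V$ in $\mathfrak U$. By $E'$--consistency we may assume that $\dist_{T_U}(\psi_U(x),\delta^V_U)\le E'$. By Lemma \ref{lem: pseudo HHS}.\eqref{it: transverse consistency in Y}, $\delta^V_U$ is one of the endpoints $p^\pm_U$. We claim that the basic cluster $\{p_U^\pm\}$, together with the points within distance $E'$ of it, all collapse to $\widehat p_U^\pm$. This needs care, since a basic cluster at an endpoint is just a point and is not automatically thickened. If $U$ is non-minimal, the endpoint $p^\pm_U$ lies within $E'$ of some $\delta^W_U$ with $W\propnest U$. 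To see this, take $W$ close to $p_U^\pm$ in the direction of $V$, using the consistency axiom with $W\nest$ minimal domains, or alternatively use that $\rho^V_U$ lies within $10E$ of $\pi_U(p^\pm)$, which $f_U$ collapses to the point $p_U^\pm$. The latter route is cleaner: $f_U(\mathcal N_{10(EM+E)}(\pi_U(p^\pm)))=p^\pm_U$, and $\psi_U(x)$ is within $E'$ of $p^\pm_U$. If $\psi_U(x)\neq p_U^\pm$, we fall back on the $r$--clustering: the interval between $\psi_U(x)$ and $p_U^\pm$ has length at most $E'<r$. We therefore enlarge the basic cluster at the endpoint if necessary. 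Concretely, we verify that points at distance less than $r$ from a basic cluster lie in the same cluster whenever they are sandwiched between two basic clusters. If $\psi_U(x)$ is not sandwiched, it lies in an edge component adjacent to the endpoint, which is impossible because the endpoint is extremal. For $\nest_{\mathfrak U}$--minimal $U$ there is no collapsing, so we must show directly that $\psi_U(x)=p^\pm_U$ or that the other coordinate works. This is the delicate spot. The plan there is to replace $E'$--consistency by the sharper HHS consistency for $\pi$ combined with the $\pcollapse$--collapse built into $f_U$: $\dist_U(\pi_U(x),\rho^V_U)\le E$ and $\dist(\rho^V_U,\pi_U(p^\pm))\le E$ force $f_U(\pi_U(x))=p^\pm_U$ exactly.

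\emph{Nested case.} Let $U\propnest V$. If $\dist_{T_V}(\psi_V(x),\delta^U_V)\le E'$, then $\psi_V(x)$ lies in the basic cluster $\mathcal N_R(\delta^U_V)$, so $\widehat\psi_V(x)=\widehat\delta^U_V$. Otherwise $\psi_V(x)$ is more than $E'$ from $\delta^U_V$, and we want $q_U(\psi_U(x))\in\widehat\delta^V_U(\widehat\psi_V(x))$. We split according to whether $\psi_V(x)$ lies in $\mathcal N_R(\delta^U_V)$ (which gives equality again) or outside it. In the latter case, Lemma \ref{lem: pseudo HHS}.\eqref{it: BGI in Y} says that $\delta^V_U(\psi_V(x))=\{p^\pm_U\}$. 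Bounded geodesic image and the $\pcollapse$--collapse again make $\psi_U(x)$ equal to $p_U^\pm$ or $E'$--close to it, and the endpoint argument above finishes. The main obstacle I expect is exactly this exact-versus-coarse issue at the endpoints for minimal domains. I would handle it by working directly with $\pi_U$ and the defining collapse property of $f_U$, rather than through Lemma \ref{lem: consistent image 1}.
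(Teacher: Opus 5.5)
Your overall structure (transverse/nested split, then collapsing to endpoints) is the paper's, and your nested case is essentially the paper's argument. Drop the hedge ``or $E'$--close'' there. Once $\psi_V(x)\notin\mathcal N_R(\delta^U_V)$ you have $\dist_V(\pi_V(x),\rho^U_V)>50E$. The nested consistency axiom then puts $\pi_U(x)$ within $E$ of $\rho^V_U(\pi_V(x))$. Bounded geodesic image puts the latter within $5E$ of $\pi_U(p^+)$ or $\pi_U(p^-)$. The collapse of $\mathcal N_{\pcollapse}(\pi_U(p^\pm))$ therefore gives $\psi_U(x)=p^\pm_U$ exactly, the same endpoint as $\delta^V_U(\psi_V(x))$, and no ``endpoint argument'' is needed.

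The step that fails is your transverse argument for non-$\nest_{\mathfrak U}$--minimal $U$. The basic cluster at an endpoint is only the singleton $\{p^\pm_U\}$. A point at distance in $(0,E']$ from $p^\pm_U$ lies in the cluster of $p^\pm_U$ only if some $\mathcal N_R(\delta^W_U)$ with $W\propnest U$ comes within $r$ of $p^\pm_U$, and nothing guarantees this. Your first suggestion, that $p^\pm_U$ is $E'$--close to some $\delta^W_U$, is likewise unjustified. Your claim that otherwise $\psi_U(x)$ would lie in an edge component abutting the endpoint, ``which is impossible because the endpoint is extremal'', is false: such edge components occur whenever no $\delta^W_U$ is near that endpoint. ``Enlarging the basic cluster'' changes $\widehat T_U$ and hence $Q$, which you are not free to do.

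The repair is the argument you reserved for minimal $U$, which never uses minimality. Transverse consistency gives, say, $\dist_U(\pi_U(x),\rho^V_U)\le E$. The proof of Lemma \ref{lem: pseudo HHS}.\eqref{it: transverse consistency in Y} gives $\dist_U(\rho^V_U,\pi_U(p^\pm))<E$ for the endpoint with $\delta^V_U=\{p^\pm_U\}$. Hence $\pi_U(x)\subseteq\mathcal N_{\pcollapse}(\pi_U(p^\pm))$ and $\psi_U(x)=p^\pm_U=\delta^V_U$ on the nose, so $\widehat\psi_U(x)=\widehat\delta^V_U$ for every $U$ with no cluster analysis at all. Your worry is legitimate: the paper's one-line transverse case (``$\le E'<R$'') passes over exactly this point. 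It is this exactness, coming from the HHS axioms rather than from the clusters, that makes it work.
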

	\begin{proof}
		Let $x \in X$ and $U,V \in \mathfrak U$. If $U \transverse V$ then $\min\{d_{T_U}(\psi_U(x), \delta^V_U), \dist_{T_V}(\psi_V(x), \delta^U_V)\} \leq E' < R$ so either $\widehat \psi_U(x) = \widehat \delta^V_U$ or $\widehat \psi_V(x) = \widehat \delta^U_V$. Suppose $U\propnest V$. If $\widehat \psi_V(x) \neq \widehat \delta^U_V$, then $d_{T_V}(\delta^U_V, \psi_V(x)) > R > 50E'$ so $d_V(\rho^U_V, x) > 50E$. Therefore $\diam_{\mathcal CU}(\pi_U(x) \cup \rho^V_U(\pi_V(x))) \leq E$ by the consistency axiom for HHSes (\cite[Defn. 1.1.(4)]{BehrstockHierarchically19}). It follows from the bounded geodesic image axiom for HHSes (\cite[Defn. 1.1.(7)]{BehrstockHierarchically19}) that $$\min\{\diam_{\mathcal CU}(\rho^V_U(\pi_V(x)) \cup \pi_U(p^-)), \diam_{\mathcal CU}(\rho^V_U(\pi_V(x)) \cup \pi_U(p^+))\} \leq 5E,$$ so $\psi_U(x) = p_U^+$ or $p_U^-$ and $\psi_U(x) \in \delta^V_U(\psi_V(x))$. Thus $\widehat \psi_U(x) \in \widehat \delta^V_U(\widehat \psi_V(x))$, as required.
	\end{proof}
	
	\subsection{Coarse surjectivity}\label{subsec:coarse-surjectivity-consistent}
	Next we check that $X$ coarsely surjects to $Q$, by proving:
	
	\begin{prop}
		\label{prop: coarsely surjective}
		There exists $M_1$, depending only on the HHS parameters, such that $\widehat \Psi$ is $M_1$-coarsely surjective.
	\end{prop}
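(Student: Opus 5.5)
Given $\widehat x=(\widehat x_U)_{U\in\mathfrak U}\in Q$, the plan is to lift it to a coarsely consistent tuple in $\prod_{U\in\mathfrak S}\mathcal CU$, realise that tuple by a point of $X$ using Theorem~\ref{thm:hhs_realisation}, and check that the image of this point under $\widehat\Psi$ lies within uniformly bounded $d_Q$--distance of $\widehat x$.

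\emph{Lifting through the collapse maps.} For each $U\in\mathfrak U$, choose $t_U\in T_U$ with $q_U(t_U)=\widehat x_U$. If $\widehat x_U$ is the image of a cluster, take $t_U$ to be a point of that cluster, chosen close to the relevant $\delta^V_U$ or to $p_U^\pm$ whenever consistency of $\widehat x$ forces this. If $\widehat x_U$ equals $\widehat\delta^V_U$, take $t_U\in\mathcal N_R(\delta^V_U)$; if it equals $\widehat p^\pm_U$, take $t_U=p^\pm_U$, or a point far along $T_U$ towards the end when $p^\pm_U\in\partial T_U$. Then set $b_U:=f_U^{-1}(t_U)\in\mathcal CU$. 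For $U\in\mathfrak S-\mathfrak U$, $\diam(T_U)<K$, so $\mathcal CU$ is uniformly bounded and any choice $b_U\in\pi_U(X)$ will do, for instance $b_U\in\pi_U(z)$ for a fixed $z\in X$. Where there is flexibility, the choice at non-minimal $U$ should be made compatibly with the choices at nested domains, as described below.

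\emph{Consistency check (main obstacle).} The difficulty is that points inside a cluster do not record where in the cluster they lie. A cluster can be long, since it is a chain of basic clusters at mutual distance $<r$, so two transverse or nested domains could disagree by much more than $\kappa$ inside it. I would argue case by case, using Lemma~\ref{lem: collapsed projections} and Lemma~\ref{lem: pseudo HHS}.

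\begin{itemize}
\item If $U\transverse V$, consistency of $\widehat x$ gives, say, $\widehat x_U=\widehat\delta^V_U$. Choosing $t_U\in\mathcal N_R(\delta^V_U)$ yields $d_U(b_U,\rho^V_U)\le L\alpha_1+LK+2L$.
\item If $U\propnest V$ and $\widehat x_V\ne\widehat\delta^U_V$, then $t_V$ lies in an edge component or in a different cluster. By Lemma~\ref{lem: pseudo HHS}.\eqref{it: BGI in Y}, $\delta^V_U(t_V)=p^\pm_U$, and consistency gives $\widehat x_U=\widehat p_U^\pm$. Taking $t_U=p_U^\pm$ then gives bounded $\diam(\rho^V_U(b_V)\cup b_U)$.
\end{itemize}

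The conflicting situation is when $U$ is required to be close to $\delta^V_U$ and $\delta^W_U$ simultaneously for several $V,W$ whose projections lie in the same cluster but far apart. I expect to handle this by a choice rule: when $\widehat x_U$ is a cluster point, order the constraints by $\nest$ and use the fact that the constraints from domains transverse to or nested into $U$ pairwise coarsely coincide unless they are separated (Lemma~\ref{lem: collapsed projections}.\eqref{it: non-transverse nested domains in hatY},\eqref{it: consistency in hatY: transverse + nested}). Roughly speaking, only one ``active'' constraint survives, the others being satisfied on the other side of the min in the consistency inequality. The constants $\kappa=r_0(L\alpha_1+LK+2L)$ and $r$ were set up precisely for this estimate, so I expect the resulting tuple to be $\kappa/r_0$--consistent.

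\emph{Realisation and comparison.} Theorem~\ref{thm:hhs_realisation} gives $x\in X$ with $d_U(x,b_U)\le\kappa$ for all $U$. Hence $d_{T_U}(\psi_U(x),t_U)\le L\kappa+L<r$, and collapsing gives $d_{\widehat T_U}(\widehat\psi_U(x),\widehat x_U)\le r$. The same bound is $0$ whenever $t_U$ and $\psi_U(x)$ lie in a common cluster; this covers every $U$ with $\widehat x_U\in\{\widehat p_U^\pm\}$ except at most $\hhscomp'$ domains, by Lemma~\ref{lem: Q is a metric space}.\eqref{item:domains-with-x-interior}. For the remaining domains I would apply the same Ramsey argument to $\relevant_Q(\widehat\Psi(x),\widehat x)$, together with the fact that edges collapse near $p^\pm$. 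This bounds the number of domains where the two points differ by a function of $\hhscomp$, so $d_Q(\widehat\Psi(x),\widehat x)\le M_1$ with $M_1$ depending only on $\hhscomp$, $r$ and the HHS parameters.
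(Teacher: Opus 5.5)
Your overall architecture (lift $\widehat x$ to a coarsely consistent tuple, realise it, compare images in $Q$) matches the paper's. However, the two steps that carry the actual content are not supplied, and the second is argued incorrectly.

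\textbf{The lift.} You correctly locate the difficulty inside long clusters, but ``order the constraints by $\nest$ \dots\ only one active constraint survives'' is not an argument. It is also not quite the right picture.

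When the lift at a nested domain $V\propnest U$ is an endpoint $p_V^\pm$, the constraint on $t_U$ is not ``be near $\delta^V_U$''. It is ``lie on the $p_U^\pm$ side of $\mathcal N_R(\delta^V_U)$'', i.e.\ a half-interval of the cluster.

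The paper handles this in a separate lemma (Lemma~\ref{lem: coarsely surjective part 2}):
\begin{enumerate}
\item For a cluster $C\subseteq T_U$, it uses only those $V\propnest U$ with $\delta^V_U\subseteq C$ and $\widehat x_V\in\widehat T^e_V$, so that the lift at $V$ is forced.
\item It attaches to each such $V$ an interval $C_V\subseteq C$ of one of the two types above.
\item It shows the $C_V$ pairwise intersect, using consistency of $\widehat x$ and Lemma~\ref{lem: pseudo HHS}.
\item It picks $t_U$ in the common intersection, taking $\sup C$ or $\inf C$ when all the lifts are endpoints.
\item It then checks $10R$--consistency case by case. The transverse case needs Lemma~\ref{lem: pseudo HHS}.\eqref{it: consistency in Y, transverse + nested},\eqref{it: consistency in Y: transverse + orthogonal} to reconcile ``be near $p_{U_1}^+$'' with the constraints coming from domains nested in $U_1$.
\end{enumerate}
Your ``point far along $T_U$ towards the end'' when $p_U^\pm\in\partial T_U$ gives no uniform control. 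The paper instead shows that an unbounded cluster with all lifts equal to $p_V^+$ cannot occur, since it would make $\relevant_Q(\widehat\Psi(z),\widehat x)$ infinite. Hence $\sup C<\infty$.

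\textbf{The final count.} Lemma~\ref{lem: Q is a metric space}.\eqref{item:domains-with-x-interior} only says where $\widehat x_U$ lies. It does not put $t_U$ and $\psi_U(x)$ in a common cluster.

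For $\nest_{\mathfrak U}$--minimal $U$ nothing is collapsed at all. Since $\kappa\geq r_0LK$, we have $L\kappa+L>K$. So $\widehat x_U=\widehat p_U^-$ together with $\widehat\psi_U(x)=\widehat p_U^+$ is perfectly compatible with the realisation bound whenever $\diam(T_U)\le L\kappa+L$.

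Relevant domains of this kind are all $\nest_{\mathfrak U}$--minimal (this uses $r>L\kappa+L$), so no two are nested. Orthogonal subfamilies are bounded by the complexity. But these domains can be pairwise transverse, and the size of a pairwise transverse family is not controlled by $\hhscomp$. So Ramsey's theorem alone gives nothing. Collapsing the $\pcollapse$--neighbourhoods of $\pi_U(p^\pm)$ is far too small an effect to help.

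The missing ingredient is the paper's passing-up argument bounding $|\mathcal V|$:
\begin{enumerate}
\item Order a pairwise transverse subfamily by $\prec$.
\item Take test points in the product regions of its extremal elements.
\item Apply \cite[Prop.~4.3]{DurhamCubulating23} to get $W\in\mathfrak U$ properly containing many of these domains, with $\bigcup_V\rho^V_W$ of large diameter.
\item Use $\alpha_1$--consistency of both your lift and $\Psi(x)$ to force $d_{T_W}(t_W,\psi_W(x))>L\kappa+L$, a contradiction.
\end{enumerate}
Only after this bound on transverse families does Ramsey bound $|\mathcal V|$ in terms of the HHS parameters.
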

	
	We first need the following lemma; recall that $\alpha_1=10R$.
	
	\begin{lem}
		\label{lem: coarsely surjective part 2}
		The map $\Phi|_{\mathcal Z_{\alpha_1}}: \mathcal Z_{\alpha_1} \rightarrow Q$ is surjective.
	\end{lem}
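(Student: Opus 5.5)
Given a consistent tuple $\widehat x=(\widehat x_U)_{U\in\mathfrak U}\in Q$, I will construct a preimage $x=(x_U)_{U\in\mathfrak U}\in\mathcal Y$ with $q_U(x_U)=\widehat x_U$ for every $U$, and verify that $x$ is $\alpha_1$--consistent. The choice is coordinatewise. If $\widehat x_U\in\widehat T_U^e$, then $q_U$ is injective over $\widehat x_U$ (it restricts to an isometry on edge components, Remark \ref{rem:clusters-are-connected}), so $x_U$ is forced. If $\widehat x_U=q_U(c)$ for a cluster $c$, then $x_U$ must be some point of $c$, and this point must be chosen carefully. The guiding principle is to make $x_U$ as close as possible to the relevant projection $\delta^V_U$ or $\delta^V_U(x_V)$ that the consistency of $\widehat x$ at $(U,V)$ requires.

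To make these choices coherently, I proceed by induction down the nesting order. This is possible because $\mathfrak U$ has complexity at most $\hhscomp$ and each coordinate of a tuple in $Q$ is determined by its relations to the other coordinates.

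First, let $U$ be $\nest$--maximal among the domains with $\widehat x_U$ a cluster point. If $\widehat x_U=\widehat\delta^V_U$ for some $V\propnest U$, take $x_U\in\delta^V_U$. By Lemma \ref{lem: pseudo HHS}.\eqref{it: non-transverse nested domains in Y}, any other non-transverse $W\propnest U$ with $\widehat\delta^W_U=\widehat x_U$ then has $\delta^W_U$ within $E'$ of $\delta^V_U$.

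The problem is that a single cluster can be a chain of basic clusters, each pair within distance $r$. I must therefore check that whenever consistency of $\widehat x$ at a pair $(U,V)$ is witnessed by "$\widehat x_U=\widehat\delta^V_U$", the chosen $x_U$ actually lies within $\alpha_1$ of $\delta^V_U$ in $T_U$. This does not follow from the definitions alone: a cluster can have large diameter, since chaining is transitive.

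My first approach is the following. If $\widehat x_U=\widehat\delta^{V}_U=\widehat\delta^{W}_U$ and $\delta^V_U,\delta^W_U$ are far apart in $T_U$, then by Lemma \ref{lem: pseudo HHS}.\eqref{it: non-transverse nested domains in Y}, $V\transverse W$. Lemma \ref{lem: pseudo HHS}.\eqref{it: BGI special case in Y} then forces $\delta^U_V(\delta^W_U)=\delta^W_V$ to be an endpoint $p_V^\pm$, and symmetrically for $W$. Consistency of $\widehat x$ on the transverse pair $V,W$ therefore pins $\widehat x_V$ or $\widehat x_W$ at an endpoint. This should show that only one of the far-apart basic clusters is "active", meaning it is the one whose nested domain has an interior coordinate, or it is compatible with the other coordinates. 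I choose $x_U$ inside that active basic cluster. The endpoint basic clusters $\{p_U^\pm\}$ are handled the same way using Lemma \ref{lem: pseudo HHS}.\eqref{it: BGI in Y}.

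Then I move down to smaller domains. For $V\propnest U$ with $\widehat x_U\neq\widehat\delta^V_U$, consistency gives $\widehat x_V\in\widehat\delta^U_V(\widehat x_U)$, which is an endpoint by Lemma \ref{lem: collapsed projections}.\eqref{it: BGI in hatY}. I set $x_V=\delta^U_V(x_U)$, which equals $p_V^\pm$ by Lemma \ref{lem: pseudo HHS}.\eqref{it: BGI in Y}. This requires $x_U$ to lie at distance more than $E'$ from $\delta^V_U$, which holds because $x_U$ is outside the $R$--neighbourhood basic cluster.

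Finally, I check the $\alpha_1$--consistency inequalities pairwise, splitting by the relation type. For transverse pairs, use Lemma \ref{lem: pseudo HHS}.\eqref{it: transverse consistency in Y}: the chosen point is either an endpoint or within $R+r$ of $\delta^V_U$ by the active-cluster choice, and $R+r$ must be at most $\alpha_1$. I expect this last bound to need care, since $r$ is defined in terms of $\kappa$, and I may need to choose $x_U$ adjacent to the correct basic cluster so that the relevant distance is at most $R\le\alpha_1$. For nested pairs, the inequalities follow from the construction together with Lemma \ref{lem: pseudo HHS}.\eqref{it: consistency in Y, transverse + nested}. \textbf{Main obstacle:} the step I expect to be hardest is showing that the active basic cluster within a chained cluster is well defined, i.e. unique up to $E'$, simultaneously for all relevant $V$.
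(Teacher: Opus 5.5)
There is a genuine gap in how you choose $x_U$ inside a cluster. Your framework matches the paper's: lift coordinatewise, note that edge coordinates are forced, and recognise that the difficulty is choosing $x_U$ inside a cluster. You also correctly identify the obstacle. But the proposal does not supply the idea that resolves it.

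\textbf{Missing choice rule.} Your ``active basic cluster'' rule only covers the case where some $V\propnest U$ with $\delta^V_U$ in the cluster $C$ has an interior coordinate. (Your argument that such $V$ have pairwise $E'$--close $\delta^V_U$ is fine.) When every relevant nested coordinate is an endpoint, you give no rule. ``Compatible with the other coordinates'' is exactly what must be proved, and a wrong choice fails: if $x_W=p_W^-$ and $\delta^W_U$ lies far to the left of your $x_U$ inside $C$, the pair $(W,U)$ is not $\alpha_1$--consistent.

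The paper's missing ingredient is as follows. Let $\mathfrak U_C$ be the set of $V\propnest U$ with $\delta^V_U\subseteq C$ and $\widehat x_V\in\widehat T^e_V$. For these $V$ the lift $x_V$ is already forced, and $\mathfrak U_C$ contains every $\nest_{\mathfrak U}$--minimal such $V$. Attach to each $V\in\mathfrak U_C$ an interval $C_V$:
\begin{itemize}
\item $\neb_R(\delta^V_U)$ if $x_V$ is interior;
\item $C\cap\hull(\{p_U^-\}\cup\neb_R(\delta^V_U))$ if $x_V=p_V^-$;
\item $C\cap\hull(\neb_R(\delta^V_U)\cup\{p_U^+\})$ if $x_V=p_V^+$.
\end{itemize}
These intervals pairwise intersect. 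For non-transverse pairs this is Lemma \ref{lem: pseudo HHS}.\eqref{it: non-transverse nested domains in Y}. For transverse pairs, consistency of $\widehat x$ forces one coordinate to an endpoint, and items \eqref{it: BGI in Y} and \eqref{it: BGI special case in Y} put the other $\delta$ on the correct side. So for bounded $T_U$ the Helly property of intervals gives a common point. Take $x_U$ there, using $\sup C$ or $\inf C$ when all constraints point the same way. Since this uses only forced data, all coordinates can be chosen simultaneously. Your top-down induction is then unnecessary; as stated it also leaves open what to do for $V\propnest U$ with $\widehat\delta^V_U=\widehat x_U$ and $\widehat x_V$ a cluster point.

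\textbf{Two further problems with the verification.} First, clusters are chained with gaps up to $r$, and $r=L\kappa+L+1>\alpha_1=10R$ because $\kappa\geq r_0L\alpha_1$. So any bound of the form $R+r$ is useless. You need $x_U$ within $R$ or $2R$ of the relevant set, which is what the $C_V$ construction gives.

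In particular, the transverse check does not follow from Lemma \ref{lem: pseudo HHS}.\eqref{it: transverse consistency in Y} alone. Suppose $U_1\transverse U_2$, $\delta^{U_2}_{U_1}=p^+_{U_1}$, and $\widehat x_{U_1}=\widehat p^+_{U_1}$ is a long cluster. In the subcase $\widehat x_{U_2}\neq\widehat p^-_{U_2}$, the paper shows $x_{U_1}$ is $R$--close to $p^+_{U_1}$. It does so by noting that each $V\in\mathfrak U_C$ either is transverse to $U_2$, whence consistency of $\widehat x$ forces $x_V=p_V^+$, or has $\delta^V_{U_1}$ that is $E'$--close to $p^+_{U_1}$ by items \eqref{it: consistency in Y, transverse + nested} and \eqref{it: consistency in Y: transverse + orthogonal}. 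The subcase where also $\widehat x_{U_2}=\widehat p^-_{U_2}$ needs a separate argument comparing witnesses $V_1\in\mathfrak U_{C_1}$ and $V_2\in\mathfrak U_{C_2}$.

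Second, for $U\in\support(p^\pm)$ a cluster can be unbounded in $T_U$. Then $\widehat T_U$ attains its maximum at $\widehat x_U$ while $p_U^+\in\boundary T_U$. If all constraints point towards $p_U^+$, there is no ``far end'' to choose, and one must show $\sup C<\infty$. The paper does this by producing, for any $z\in X$, infinitely many domains in $\relevant_Q(\widehat\Psi(z),\widehat x)$, contradicting Lemma \ref{lem: Q is a metric space}. Your proposal does not address this case.
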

	\begin{proof}
		Let $\widehat x = (\widehat x_U)_{U \in \mathfrak U} \in Q$. Let us construct an $\alpha_1$--consistent tuple $x = (x_U)_{U \in \mathfrak U} \in \mathcal Y$ such that $\Phi(x) = \widehat x$. 
		Let $U \in \mathfrak U$. 
		If $\hat x_U\in \widehat T^e_U$, then $q_U^{-1}(\hat x_U)$ contains a single point, which we denote $x_U$.  Otherwise, let $C \subseteq T_U$ be the cluster such that $q_U(C) = \widehat x_U$. Define
		\[
		\mathfrak U_C \coloneqq \{V \in \mathfrak U : V \propnest U, \; \delta^V_U \subseteq C \text{ and } \widehat x_V\in \widehat T_V^e\}.
		\]
		Note that $\mathfrak U_C$ contains at least all $\nest_{\mathfrak U}$-minimal $V \nest U$ such that $\delta^V_U \subseteq C$, by Remark~\ref{rem: collapsed trees}. For each $V\in\mathfrak U_C$, we have a unique point $x_V\in q_V^{-1}(\widehat x_v)$.  Given $V \in \mathfrak U_C$, define $C_V \subseteq T_U$ as follows. If $x_V \notin \{p_V^-, p_V^+\}$ then define $C_V \coloneqq \mathcal N_{R}(\delta^V_U)$. If $x_V = p_V^-$ then let $C_V \coloneqq C \cap \hull_{T_U}(\{p_U^-\} \cup \mathcal N_R(\delta^V_U))$ and if $x_V = p_V^+$ then let $C_V \coloneqq C \cap \hull_{T_U}(\mathcal N_R(\delta^V_U) \cup \{p_U^+\})$.
		
		\setcounter{claim}{0}
		\begin{claim}\label{claim:pairwise-intersecting-intervals}
			If $V,W \in \mathfrak U_C$ then $C_V \cap C_W \neq \emptyset$. 
		\end{claim}
		\begin{proof}
			\renewcommand{\qedsymbol}{$\blacksquare$}
			Suppose $V \neq W$. If $V$ and $W$ are not transverse then Lemma~\ref{lem: pseudo HHS}.\eqref{it: non-transverse nested domains in Y} implies that $d_{T_U}(\delta^V_U, \delta^W_U) < R$, so $C_V \cap C_W \neq \emptyset$ by construction. If $V \transverse W$, then consistency of $\widehat x$ implies that either $x_V \in \delta^W_V \in \{p_V^\pm\}$ or $x_W \in \delta^V_W \in \{p_W^\pm\}$. Suppose $x_V = p_V^+$; the other cases are similar. If $d_{T_U}(\rho^V_U, \rho^W_V) > R > 10E'$ then Lemma~\ref{lem: pseudo HHS}.(\ref{it: BGI in Y},\ref{it: BGI special case in Y}) imply that $\delta^W_U$ is in the same connected component of $T_U - \mathcal N_R(\delta^V_U)$ as $p_U^+$, so $\delta^W_U \subseteq C_V \cap C_W$.
		\end{proof}
		
		Identify $T_U$ isometrically with a subset of $\mathbb R$ such that, if $p_U^+,p_U^- \in T_U$ then $p_U^- < p_U^+$ and, if $p_U^+ \in \partial T_U$ (resp. $p_U^- \in \partial T_U$) then, extending the identification to the boundary, $p_U^+ = + \infty$ (resp. $p_U^- = - \infty$).
		
		We define $x_U\in T_U$ as follows, distinguishing two cases.  First, if $U\not\in\support(p^-)\cup\support(p^+)$, then $T_U$ is a finite interval. Claim \ref{claim:pairwise-intersecting-intervals} implies that $\cap_{V\in\mathfrak U_C}C_V\neq \emptyset$. In this case, choose $x_U \in \cap_{V \in \mathfrak U_C} C_V$ such that:
		\begin{itemize}
			\item if $x_V = p_V^+$ for all $V \in \mathfrak U_C$ then $x_U \coloneqq \sup C$;
			\item if $x_V = p_V^-$ for all $V \in \mathfrak U_C$ then $x_U \coloneqq \inf C$;
			\item otherwise, there exists $W\in\mathfrak U_C$ such that $x_W\not \in \{p_W^\pm\}$, so $C_W=\neb_R(\delta^W_U)$, and we choose $x_U\in\cap_{V\in\mathfrak W}C_V\subseteq \neb_R(\delta^W_U)$ arbitrarily.
		\end{itemize}
		Since $T_U$ is a finite interval,  $x_U \in T_U$ is well defined.
		
		The second case is where $U\in\support(p^+) \cup \support(p^-)$; without loss of generality, suppose $U\in\support(p^+)$. If there exists $W\in\mathfrak U_C$ such that $x_W\not\in \{p^\pm_W\}$, then, exactly as in the first case, $C_W=\neb_R(\delta^W_U)$, and we choose $x_U\in\cap_{V\in\mathfrak W}C_V\subseteq \neb_R(\delta^W_U)$ arbitrarily.  Hence we can assume that $x_V=p_V^+$ for all $V\in\mathfrak U_C$ (the case where $x_V = p_V^-$ for all $V \in \mathfrak U_C$ is similar). 
		
		Suppose $\sup C=p_U^+$, so, by the definition of a cluster, $\sup(\cup_{V\in\mathfrak U_C}\delta^V_U)=p_U^+$.  (So, $p_U^+\in\boundary T_U$ but $\max\widehat T_U=\hat x_U$.)  Hence there is a sequence $\{V_n\}_{n\in\naturals}$ of $\nest_{\mathfrak U}$--minimal elements in $\mathfrak U_C$ such that $V_n\transverse V_m$ and $\delta^{V_m}_{V_n}=p^+_{V_n}$ and $\delta^{V_n}_{V_m}=p^-_{V_m}$ whenever $n<m$, and $\lim_{n \to \infty} (\sup \delta^{V_n}_U) = p_U^+$.  By our assumption, $x_{V_n}=p_{V_n}^+$ for all $n$.  Now, for any $z\in X$, we have $\widehat \psi_{V_n}(z)= \widehat p_{V_n}^-$ for all but finitely many $n$.  By Lemma \ref{lem: consistent image 2}, $\widehat\Psi(z)\in Q$, but we have just shown that $\relevant_Q(\widehat\Psi(z),\hat x)$ is infinite, contradicting Lemma \ref{lem: Q is a metric space}. Thus $\sup C < \infty$ and we define $x_U \coloneqq \sup C$.
		
		Let $x=(x_U)_{U\in\mathfrak U}\in \mathcal Y$.  By construction, $\Phi(x) = \widehat x$, so it remains to show that $x$ is $10R$-consistent.\\
		
		Let $U_1, U_2 \in \mathfrak U$ be such that $U_1 \propnest U_2$. Suppose first that $\widehat x_{U_2} \neq \widehat \delta^{U_1}_{U_2}$, so $\widehat x_{U_1} \in \widehat \delta^{U_2}_{U_1}(\widehat x_{U_2}) = \{\widehat p_{U_1}^-\}$ or $\{\widehat p_{U_1}^+\}$. Suppose the latter holds; the other case is similar. We then have $d_{U_2}(x_{U_2}, \delta^{U_1}_{U_2}) > R > E'$ and $x_{U_2}$ is in the same connected component of $T_{U_2} - \mathcal N_{E'}(\delta^{U_1}_{U_2})$ as $p_{U_2}^+$, so Lemma~\ref{lem: pseudo HHS}.\eqref{it: BGI in Y} implies that $\delta^{U_2}_{U_1}(x_{U_2}) = p_{U_1}^+$. If $x_{U_1} = \widehat x_{U_1}$, then this implies immediately that $\delta^{U_2}_{U_1}(x_{U_2}) = x_{U_1}$. Otherwise, let $C \subseteq T_{U_1}$ be the cluster containing $x_{U_1}$. Then, for each domain $V \in \mathfrak U_C$, we have $\diam_{T_{U_2}}(\delta^V_{U_2} \cup \delta^{U_1}_{U_2}) < R$ by Lemma~\ref{lem: pseudo HHS}.\eqref{it: non-transverse nested domains in Y} so $\widehat \delta^V_{U_2} = \widehat \delta^{U_1}_{U_2} \neq \widehat x_{U_2}$. 
		By consistency of $\widehat x$, we have $\widehat x_V \in \widehat \delta^{U_2}_V(\widehat x_{U_2}) = \{p_V^+\}$ which, since $V \in \mathfrak U_C$, implies that $x_V = p_V^+$. Then $x_{U_1} = \rho_{U_1}^+ = \delta^{U_2}_{U_1}(x_{U_2})$.
		
		Next suppose that $\widehat x_{U_2} = \widehat \delta^{U_1}_{U_2}$ and let $C \subseteq T_{U_2}$ be the cluster containing $\delta^{U_1}_{U_2}$ and $x_{U_2}$. Suppose $d_{T_{U_2}}(x_{U_2}, \delta^{U_1}_{U_2}) > 2R$. Then Lemma \ref{lem: pseudo HHS}.\eqref{it: BGI in Y} implies that, either $x_V = p_V^+$ for all $V \in \mathfrak U_C$ such that $V \nest U_1$, or $x_V = p_V^-$ for all $V \in \mathfrak U_C$ such that $V \nest U_1$. Suppose the former case holds; the latter case is similar. If $\widehat x_{U_1} \notin \widehat T_{U_1}^e$, then $x_{U_1} = p_{U_1}^+$ by construction and, if $\widehat x_{U_1} \in \widehat T_{U_1}^e$, then $U_1 \in \mathfrak U_C$ so $x_{U_1} = p_{U_1}^+$. Moreover $x_{U_2} \in \cap_{V \in \mathfrak U_C} C_V - \mathcal N_{2R}(\delta^{U_1}_{U_2})$, which implies that $\delta^{U_2}_{U_1}(x_{U_2}) = p_{U_1}^+$.
		\\
		
		Now let $U_1, U_2 \in \mathfrak U$ be such that $U_1 \transverse U_2$. Up to relabelling, we can assume that $\delta^{U_1}_{U_2} = \{p_{U_2}^-\}$ and $\delta^{U_2}_{U_1} = \{p_{U_1}^+\}$, so we have $\widehat x_{U_1} = \widehat p_{U_1}^+$ or $\widehat x_{U_2} = \widehat p_{U_2}^-$. We first suppose that $\widehat x_{U_1} = \widehat p_{U_1}^+$ and $\widehat x_{U_2} \neq \widehat p_{U_2}^-$. If $\widehat x_{U_1} \in \widehat T_{U_1}^e$, then $x_{U_1} = p_{U_1}^+ \in \delta^{U_2}_{U_1}$. Otherwise, $p_{U_1} \in C$ for some cluster $C \subseteq T_{U_1}$ such that $\mathfrak U_C \neq \emptyset$. 
		Let $V \in \mathfrak U_C$. If $V \transverse U_2$ then $x_V = \widehat x_V = \widehat p_V^+ = p_V^+$. Otherwise $V \orth U_2$ and Lemma \ref{lem: pseudo HHS}.\eqref{it: consistency in Y: transverse + orthogonal} implies that $\diam(\{p_{U_1}^+\} \cup \delta^V_{U_1}) = \diam(\delta^{U_2}_{U_1} \cup \delta^V_{U_1}) \leq E' \leq R$. It follows that $x_{U_1} \in \mathcal N_R(p_{U_1}^+)$. If $\widehat x_{U_1} \neq \widehat p_{U_1}^+$ and $\widehat x_{U_2} = \widehat p_{U_2}^-$ then a symmetric argument shows that $d_{T_{U_2}}(x_{U_2}, p_{U_2}^-) \leq R$.
		
		Suppose that  $\widehat x_{U_1} = \widehat p_{U_1}^+$ and $\widehat x_{U_2} = \widehat p_{U_2}^-$. 
		Suppose that $x_{U_1} \neq p_{U_1}^+$ and $x_{U_2} \neq p_{U_2}^-$. Then there exist clusters $C_1 \subseteq T_{U_1}, C_2 \subseteq T_{U_2}$, containing $p_{U_1}^+, p_{U_2}^-$ respectively, such that $\mathfrak U_{C_1}, \mathfrak U_{C_2} \neq \emptyset$. If $\widehat x_V = \widehat p_V^+$ for all $V \in \mathfrak U_{C_1}$ then $x_{U_1} = p_{U_1}^+$ and, if $\widehat x_V = \widehat p_V^-$ for all $V \in \mathfrak U_{C_2}$ then $x_{U_2} = p_{U_2}^-$. So there are $V_1 \in \mathfrak U_{C_1}$ and $V_2 \in \mathfrak U_{C_2}$ with $\widehat x_{V_1} \neq \widehat p_{V_1}^+$ and $\widehat x_{V_2} \neq \widehat p_{V_2}^-$. 
		
		Suppose $V_1 \orth V_2$. If $V_2 \not\orth U_1$ then Lemma~\ref{lem: pseudo HHS}.\eqref{it: consistency in Y, transverse + nested} implies that $\diam(\delta^{V_2}_{U_1} \cup \{p_{U_1}^+\}) = \diam(\delta^{V_2}_{U_1} \cup \delta^{U_2}_{U_1}) \leq E'$ and $\diam(\delta^{V_1}_{U_1} \cup \delta^{V_2}_{U_1}) \leq E'$ so $\diam(\delta^{V_1}_{U_1} \cup \{p_{U_1}^+\}) < R$. 
        If $V_2 \orth U_1$ then Lemma \ref{lem: pseudo HHS}.\eqref{it: consistency in Y: transverse + orthogonal} implies that $\diam(\delta^{V_2}_{U_2} \cup \delta^{U_1}_{U_2}) \leq E'$.
        
        By construction, $d_{T_{U_2}}(x_{U_2}, \delta^{V_2}_{U_2}) \leq R$, so $$d_{T_{U_2}}(x_{U_2}, \delta^{U_1}_{U_2}) \leq E' + R < 10R.$$
		If $V_1 \not\orth V_2$ then, since $\widehat x$ is consistent, $V_1$ and $V_2$ are $\nest$-related, so $V_i \nest U_1$ and $V_i \nest U_2$ for some $i \in \{1,2\}$. Hence $d_{U_1}(\rho^{V_i}_{U_1}, \rho^{U_2}_{U_1}) \leq E$ so $\delta^{V_i}_{U_1} = p_{U_1}^+ = \delta^{U_2}_{U_1}$. Then $x_{U_1} \in \mathcal N_R(\delta^{V_1}_{U_1}) \subseteq \mathcal N_{2R}(\delta^{U_2}_{U_1})$ by construction and Lemma~\ref{lem: pseudo HHS}.
	\end{proof}

    \medskip
    
	\begin{proof}[Proof of Proposition~\ref{prop: coarsely surjective}]
		For later use, fix $z \in X$ and, for all $V \in \mathfrak S - \mathfrak U$, let $z_V \coloneqq f_V(\pi_V(z))$. 
		
		Let $\widehat x \in Q$; we need to find $y\in X$ with $d_Q(x,\widehat\Psi(y))\leq M_1$, where $M_1$ is uniform.  
		
		Using Lemma~\ref{lem: coarsely surjective part 2}, let $x \in \mathcal Z_{\alpha_1}$ be such that $\Phi(x) = \widehat x$. For each $U \in \mathfrak U$, there exists $y_U \in \mathcal CU$ such that $d_{T_U}(f_U(y_U), x_U) \leq L$. If $U \in \mathfrak S - \mathfrak U$ then let $y_U \coloneqq z_U$. The fact that $x$ is $\alpha_1$-consistent and $\diam(\mathcal CU) \leq LK + L$ for all $U \in \mathfrak{S - U}$ implies that $y = (y_U)_{U \in \mathfrak S}$ is $(L\alpha_1 + LK + 2L)$-consistent. Hence, by Theorem~\ref{thm:hhs_realisation} and the definition of $\kappa$, there exists $y \in X$ such that $d_U(y, y_U) \leq \kappa$ for all $U \in \mathfrak U$. Thus $d_{\widehat T_U}(\widehat \psi_U(y), \widehat x_U) \leq \dist_{T_U}(\psi_U(y), x_U) \leq L\kappa + L$. By Lemma~\ref{lem: Q is a metric space}.(1),  $$|\{V \in \relevant_Q(\widehat x, \widehat \Psi(y)) : \{\widehat x_V, \widehat \psi_V(y)\} \neq \{\widehat p_V^\pm\}\}| \leq \hhscomp'.$$
		Let $\mathcal V$ be the set of elements in $\relevant_Q(\widehat x, \widehat \Psi(y))$ such that $\{\widehat x_V, \widehat \psi_V(y)\} = \{\widehat p_V^\pm\}$.  Then
		\[d_Q(\hat x,\widehat{\Psi}(y))\leq (\hhscomp'+|\mathcal V|)(L\kappa+L),\]
		so to conclude, it is sufficient to prove:

        \setcounter{claim}{0}
		\begin{claim}\label{claim:bounded-V-cardinality}
			Let $\mathcal V' \subseteq \mathcal V$ be a set of pairwise transverse domains with maximal cardinality.  Then $|\mathcal V'|\leq P$, where $P$ depends only on the HHS parameters.  Hence there exists $P_1\in\naturals$, depending only on the HHS parameters, such that $|\mathcal V|\leq P_1$.
		\end{claim}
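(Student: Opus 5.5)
We have to bound the cardinality of a pairwise transverse family $\mathcal V'\subseteq\mathcal V$. For every $V\in\mathcal V$ we have $\{\widehat x_V,\widehat\psi_V(y)\}=\{\widehat p_V^-,\widehat p_V^+\}$, and also $\dist_{\widehat T_V}(\widehat\psi_V(y),\widehat x_V)\leq L\kappa+L$. So each such $V$ satisfies $\dist_{\widehat T_V}(\widehat p_V^-,\widehat p_V^+)\leq L\kappa+L$; this is a uniform bound. The plan is to show that a large pairwise transverse family makes this distance large in $\widehat T_V$ for some member, which is a contradiction.

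First, order $\mathcal V'$. By Lemma \ref{lem: collapsed projections}.\eqref{it: transverse consistency in hatY}, transverse $U,V$ are comparable in the order $<$ from the proof of Lemma \ref{lem:Q-proper}. So $\mathcal V'$ is a $<$--chain $V_1<V_2<\cdots<V_N$. For each $V_i$, either $\widehat x_{V_i}=\widehat p^+_{V_i}$ and $\widehat\psi_{V_i}(y)=\widehat p^-_{V_i}$, or the reverse. By consistency of $\widehat x$ and of $\widehat\Psi(y)$ (Lemma \ref{lem: consistent image 2}), the pattern is forced along the chain: for $i<j$ we cannot have $\widehat x_{V_i}=\widehat p^-_{V_i}$ together with $\widehat x_{V_j}=\widehat p^+_{V_j}$, and similarly for $y$. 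Hence, after discarding at most one index, all the $V_i$ go the same way. Say $\widehat x$ sits at $\widehat p^+$ and $\widehat\Psi(y)$ at $\widehat p^-$ on every $V_i$.

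Second, use the real point $y\in X$ together with the constructed tuple $x\in\mathcal Z_{\alpha_1}$. Because $\dist_{T_V}(\psi_V(y),x_V)\leq L\kappa+L$ holds before collapsing, the points $\psi_{V_i}(y)$ and $x_{V_i}$ are uniformly close in the uncollapsed tree $T_{V_i}$. At the same time, $\psi_{V_i}(y)$ lies in the cluster of $p^-_{V_i}$ and $x_{V_i}$ lies in the cluster of $p^+_{V_i}$. Now consider the middle domain $V_k$ with $k\approx N/2$. By the transverse consistency axiom, $y$ must project near $\rho^{V_j}_{V_k}$ for many $j$, namely all $j>k$ or all $j<k$. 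Using the bounded geodesic image and the fact that the $\rho^{V_j}_{V_k}$ lie at the two ends $p^\pm_{V_k}$, together with the passage up through a common nesting domain, we get that $\pi_{V_j}(y)$ is near $\pi_{V_j}(p^-)$ for many $j$. A cleaner route is to apply the distance formula and the bounded-complexity count. Realise a point $y'$ whose projections agree with $x$ on the $V_i$; this is possible because $x$ is $\alpha_1$--consistent, as in the proof of Proposition \ref{prop: coarsely surjective}. Then, since $\diam(T_{V_i})\geq K$ for $V_i\in\mathfrak U$, each $V_i$ is $K/L-L$--relevant for the pair $y,y'$. Yet $\dist(y,y')$ is uniformly bounded, because $\dist_U(y,y')$ is bounded for all $U$ and the uniqueness axiom applies. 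By the distance formula (Theorem \ref{thm:distance-formula}) applied with threshold $K/L-L$, the number of such relevant domains is bounded in terms of $\dist(y,y')$. This bounds $N$ by a uniform $P$.

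Finally, $\mathcal V$ contains no $\propnest$--chains longer than $\hhscomp$ and no orthogonal families larger than $\hhscomp$. Ramsey's theorem on the three-coloured complete graph on $\mathcal V$ (transverse, nested, orthogonal) then converts the bound $P$ on transverse cliques into a bound $P_1$ on $|\mathcal V|$.

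The main obstacle is the middle step. Collapsing means $\widehat x_V=\widehat p^+_V$ does not pin $x_V$ near $p^+_V$, so the relevance of each $V_i$ must be extracted from the consistency and cluster structure. In particular, one must show that $x_{V_i}$ and $\psi_{V_i}(y)$ lie in distinct clusters separated in $T_{V_i}$ by more than the allowed distance, using the constant $r$ in the definition of clusters. If this direct argument fails, I would fall back on the infinite-chain argument of Lemma \ref{lem: coarsely surjective part 2}, quantifying it through the bound of \cite[Lemma~13.5]{Casals-RuizReal24} on $|\relevant_{K/L-L}(y,y')|$.
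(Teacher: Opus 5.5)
There is a genuine gap in your middle step. You assert that each $V_i$ is $(K/L-L)$--relevant for the pair $y,y'$, but this does not follow.

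\begin{itemize}
\item A point $y'$ obtained by realising the lift of $x$ only satisfies $\dist_{T_V}(\psi_V(y'),x_V)\lesssim L\kappa+L$.
\item The given $y$ is already exactly such a realisation, so you may as well take $y'=y$.
\item Since $\kappa=r_0(L\alpha_1+LK+2L)$ contains the term $r_0LK$, this error swamps the only lower bound you have, $\diam(T_{V_i})\geq K$. Indeed, $y$ projects to $p^-_{V_i}$ while $x_{V_i}=p^+_{V_i}$.
\end{itemize}

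So nothing forces $\dist_{V_i}(y,y')$ to be large, and the distance-formula count (or \cite[Lem.~13.5]{Casals-RuizReal24}) has nothing to count. Your fallback has the same defect. The $LK$ term comes from domains outside $\mathfrak U$, so enlarging $K$ cannot fix this.

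The collapsing issue you flag as the main obstacle is actually the easy part. Because $r>L\kappa+L$, every $V\in\mathcal V$ must be $\nest_{\mathfrak U}$--minimal; otherwise the clusters representing $\widehat p_V^\pm$ would lie within $r$ of each other and merge. Hence $q_V$ is the identity and $\{x_V,\psi_V(y)\}=\{p_V^-,p_V^+\}$ exactly. But this only gives $K\leq \dist_{T_V}(p_V^-,p_V^+)\leq L\kappa+L$, which is consistent. No single $V\in\mathcal V$ yields a contradiction, which is exactly why the claim needs a real argument.

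The missing idea is passing up. It converts many moderate disagreements into one disagreement exceeding $L\kappa+L$, and that bound is the only thing actually contradicted. The argument runs as follows.

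\begin{itemize}
\item Order the transverse chain as you did and take test points $z_1\in P_{V_{\min}}$ and $z_2\in P_{V_{\max}}$ for its extremal elements.
\item For every intermediate $V$, $\pi_V(z_1)$ is near $\rho^{V_{\min}}_V$, hence near $\pi_V(p^-)$, and $\pi_V(z_2)$ is near $\pi_V(p^+)$. So $\dist_V(z_1,z_2)\geq K/L-L-4E$.
\item If the chain is long, \cite[Prop.~4.3]{DurhamCubulating23} gives $W\in\mathfrak U$ and many intermediate $V\propnest W$ with $\diam_W\bigl(\bigcup_V\rho^V_W\bigr)>L(L(\kappa+2)+2\alpha_1)$.
\item Because all these $V$ have $x$ at one end and $\Psi(y)$ at the other, $\alpha_1$--consistency of $x$ and of $\Psi(y)$ at $W$ puts $x_W$ outside the $\alpha_1$--neighbourhood of one extreme $\delta^{V}_W$ and $\psi_W(y)$ outside that of the other, on opposite sides.
\item Hence $\dist_{T_W}(x_W,\psi_W(y))>L\kappa+L$, contradicting the bound $\dist_{T_U}(x_U,\psi_U(y))\leq L\kappa+L$ that holds for every $U\in\mathfrak U$.
\end{itemize}

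Your first step (the sign pattern along the chain is forced; in fact no index needs discarding) and your final Ramsey step are fine.
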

		
		\begin{proofofclaim}{\ref{claim:bounded-V-cardinality}}
			Since $r > L\kappa + L$, each $V \in \mathcal V$ is $\nest_{\mathfrak U}$-minimal, so $T_V=T_V^e$ and $q_V:T_V\to \widehat T_V$ is the identity. 
			
			Since both $\widehat x$ and $\widehat \Psi(y)$ are consistent, one of the following holds:
			\begin{enumerate}
				\item for all $V \in \mathcal V'$, we have $\widehat x_V = \widehat p_V^-$ and $\widehat \psi_V(y) = \widehat p_V^+$;
				\item for all $V \in \mathcal V'$ we have $\widehat x_V = \widehat p_V^+$ and $\widehat \psi_V(y) = \widehat p_V^-$.
			\end{enumerate}
			We assume that (1) holds; the other case is similar. \\
			
			\noindent\textbf{Test points and passing up.}  Since $\mathcal V'$ consists of transverse elements and $d_V(\pi_V(p^+),\pi_V(p^-))>K/L-L\geq 50E$ for all $V\in\mathcal V'$, we have a total order $\prec$ on $\mathcal V'$ given by $V\prec W$ if $\dist_V(\rho^W_V,\pi_V(p^+))\leq E$ (see \cite[Sec. 2]{BehrstockHierarchically19}).  Since $\hat x,\widehat{\Psi}(y)\in Q$, Lemma \ref{lem: Q is a metric space} implies $|\mathcal V'|<\infty$, and we may assume $\mathcal V'\neq\emptyset$.  Hence $\mathcal V'$  contains a unique $\prec$--minimal element $V_1$ and a unique $\prec$--maximal element $V_2$.  Let $\mathcal V''=\mathcal V'-\{V_1,V_2\}$, and, for $i\in \{1,2\}$, let $z_i\in P_{V_i}$.  (Recall that $P_{V_i}$ is the standard product region from Section \ref{subsec:product-regions}.)    
			
			Then for all $V\in\mathcal V''$, we have $\dist_V(\rho^{V_1}_V,z_1)\leq E$, and $\dist_V(\rho^{V_2}_V,z_2)\leq E$.  Now, since $V_1\prec V\prec V_2$, we also have $\dist_V(\rho^{V_1}_V,\pi_V(p^-))\leq E$ and $\dist_V(\rho^{V_2}_V,\pi_V(p^+))\leq E$, so $d_V(z_1,\pi_V(p^-))\leq 2E$ and $d_V(z_2,\pi_V(p^+))\leq 2E$.  Hence $d_V(z_1,z_2)\geq K/L-L-4E\geq 50E$.
			
			Let $P$ be the constant from \cite[Proposition~4.3]{DurhamCubulating23}, applied to the pair $(K/L - L-4E, L(L(\kappa + 2) + 2\alpha_1)+KL+L)$. This proposition then implies that, if $|\mathcal V''| \geq P$, then  there exists $W \in \mathfrak S$ and $\mathcal V''' \subseteq \mathcal V''$ such that 
			\begin{itemize}
				\item $d_W(z_1,z_2)\geq KL+L$, so $d_{T_W}(\psi_W(z_1),\psi_W(z_2))\geq K$ and hence $W\in\mathfrak U$.
				\item $V \propnest W$ for all $V \in \mathcal V'''$.
				\item $\mathcal V'''$ satisfies
				\[
				\diam_W \Big( \bigcup_{V \in \mathcal V'''} \rho^V_W \Big) >  L(L(\kappa + 2) + 2\alpha_1).
				\]
			\end{itemize}
			
			It follows that $\diam_{T_W}(\cup_{V \in \mathcal V'} \delta^V_W) > L(\kappa + 1) + 2\alpha_1$. Let $\mathcal V'''=\{V_1, \dots, V_k\}$ be numbered so that $i \leq j$ if and only if $\delta^{V_i}_{V_j} = p_{V_j}^-$. Since $L(L(\kappa + 2) + 2\alpha_1) > L(E' + L)$, we have $k \geq 2$ by Lemma \ref{lem: pseudo HHS}.\eqref{it: bounded projections in Y}. Then, since $x$ and $\Psi(y)$ are $\alpha_1$-consistent, we have $x_W \in \hull_{T_W}(p_W^- \cup \mathcal N_{\alpha_1}(\delta^{V_1}_W))$ and $\psi_W(y) \in \hull_{T_W}(\mathcal N_{\alpha_1}(\delta^{V_k}_W) \cup p_W^+)$. But then $d_{T_W}(x_W, \psi_W(y)) > L \kappa + L$, which is a contradiction. \\
			
			\noindent\textbf{Bounding $|\mathcal V|$.}  We have shown that $|\mathcal V'|\leq P+2$, which depends only on the HHS parameters, so, by Ramsey's theorem, we can conclude by taking $P_1=\mathrm{Ram}(\hhscomp,P+2)$, where $\mathrm{Ram}(\cdot,\cdot)$ denotes the Ramsey number, because no two elements of $\mathcal V$ are $\propnest$--related, pairwise orthogonal sets have cardinality at most $\hhscomp$, and pairwise transverse subsets have cardinality at most $P+2$.
		\end{proofofclaim}
				
		Claim \ref{claim:bounded-V-cardinality} and the inequality preceding it imply  
		\[
		d_Q(\widehat x, \widehat \Psi(y)) \leq (\hhscomp' + P_1) L (\kappa +1),
		\]
		which completes the proof.
	\end{proof}

	\subsection{Quasi-isometry}
	
	Given $B,C \geq 0$, let $\ignore{B}{C} \coloneqq 0$ if $B < C$ and $B$ otherwise. Given $x \in X$, denote $\widehat x \coloneqq \widehat \Psi(x)$, $x_U \coloneqq \psi_U(x) \in T_U$ and $\widehat x_U \coloneqq \widehat \psi_U(x) \in \widehat T_U$.
	
	\begin{prop}
		There exists a constant $L'$ depending only on the HHS parameters such that $\widehat \Psi$ is an $(L',L')$-quasi-isometry.
	\end{prop}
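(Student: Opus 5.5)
Since Proposition \ref{prop: coarsely surjective} already gives uniform coarse surjectivity, it remains to show that $\widehat\Psi$ is uniformly coarsely lipschitz and uniformly coarsely expanding. The plan is to compare $\dist_Q(\widehat x,\widehat y)=\sum_{U\in\mathfrak U}\dist_{\widehat T_U}(\widehat x_U,\widehat y_U)$ with the distance formula (Theorem \ref{thm:distance-formula}) for $(X,\mathfrak S)$. The distance formula says that for a uniform threshold $s_0$, $\dist(x,y)\asymp\sum_{U\in\mathfrak S}\ignore{\dist_U(x,y)}{s_0}$.

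\textbf{Upper bound.} Since $X$ is uniformly quasigeodesic, it suffices to bound $\dist_Q(\widehat x,\widehat y)$ uniformly when $\dist(x,y)\le 1$ (or $\le q$). In that case $\dist_U(x,y)\le 2E$ for every $U$, so $\dist_{T_U}(x_U,y_U)\le 3EL$. Since $q_U$ is $1$--lipschitz, each term of $\dist_Q$ is uniformly bounded. It then remains to bound $|\relevant_Q(\widehat x,\widehat y)|$ uniformly, and here we rerun the argument of Claim \ref{claim:bounded-V-cardinality}. By Lemma \ref{lem: Q is a metric space}.\eqref{item:domains-with-x-interior}, all but $2\hhscomp'$ relevant $U$ have $\{\widehat x_U,\widehat y_U\}=\{\widehat p_U^\pm\}$. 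For such $U$ one has $\dist_{\widehat T_U}(\widehat p_U^-,\widehat p_U^+)\le 3EL$, which forces $U$ to be $\nest_{\mathfrak U}$--minimal with $\diam T_U\ge K>3EL$. This is a contradiction, so in fact only boundedly many domains are relevant. Hence $\widehat\Psi$ is uniformly coarsely lipschitz.

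\textbf{Lower bound.} Given $x,y$, we show $\dist_Q(\widehat x,\widehat y)\succeq\sum_{U}\ignore{\dist_U(x,y)}{s}$ for some uniform $s$ larger than $K L+L$. By Definition \ref{defn:support-nested}, any $U$ with $\dist_U(x,y)\ge s$ lies in $\mathfrak U$. The problem is that $q_U$ collapses clusters, so $\dist_{\widehat T_U}(\widehat x_U,\widehat y_U)$ may be much smaller than $\dist_{T_U}(x_U,y_U)$. The key step is a passing-down estimate. The length of $\hull_{T_U}(x_U,y_U)\cap T_U^c$ should be bounded by a uniform multiple of $\sum_{V\propnest U,\ \delta^V_U\cap\hull(x_U,y_U)\neq\emptyset}(\text{contribution of }V)$. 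Each cluster is a chain of basic clusters $\neb_R(\delta^V_U)$ at mutual distance $<r$, so its length is at most $(2R+E'+r)$ times the number of basic clusters. Each basic cluster $\delta^V_U$ lying strictly between $x_U$ and $y_U$ (at distance $>E'$ from both) gives, by bounded geodesic image (Lemma \ref{lem: pseudo HHS}.\eqref{it: BGI in Y}), $\widehat x_V,\widehat y_V$ equal to $\widehat p_V^-,\widehat p_V^+$ respectively. If $V$ is $\nest_{\mathfrak U}$--minimal, it then contributes $\ge K$ to $\dist_Q$. If $V$ is not minimal, we recurse into $T_V$ by induction on the level of $V$ in $\mathfrak U$, which is bounded by $\hhscomp$.

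Summing over $U$ requires control of multiplicity: one domain $V$ must not be charged by too many $U$. We would handle this as in \cite[Sec. 6]{DurhamCubulating23}, charging each basic cluster to a $\nest$--minimal domain below it and noting that a fixed $V$ is charged only by $U$ with $V\propnest U$ and $\delta^V_U$ between $x_U$ and $y_U$. There are at most $\hhscomp$ such $U$ along any chain, but transverse $U$'s need the Ramsey/Dilworth argument used in Lemma \ref{lem:Q-proper}. We also lose the clusters $\{p_U^\pm\}$ at the ends; these only affect segments adjacent to $x_U,y_U$ and are absorbed into the threshold.

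\textbf{Main obstacle.} I expect this passing-down estimate with controlled multiplicity to be the hardest step. The first thing to try is proving $\dist_{T_U}(x_U,y_U)\le C\,\dist_{\widehat T_U}(\widehat x_U,\widehat y_U)+C\sum_{V\propnest U}\dist_{\widehat T_V}(\widehat x_V,\widehat y_V)+C$ by induction on complexity. Combining this estimate with the distance formula then yields $\dist(x,y)\le L'\dist_Q(\widehat x,\widehat y)+L'$.
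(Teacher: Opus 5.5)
\textbf{Coarse-Lipschitz half.} This half is correct, and it takes a different and simpler route than the paper. The paper compares $\dist_Q$ with the distance-formula sum at a large threshold $S$. It then has to count the domains with $\widehat x_U\neq\widehat y_U$ but $\dist_{T_U}(x_U,y_U)<S$, which it does with the passing-up result \cite[Prop.~4.14]{DurhamCubulating23}, adapted from rays to points.

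You instead bound $\dist_Q$ locally and chain along a quasigeodesic. For $\dist(x,y)$ bounded, any relevant domain other than the at most $2\hhscomp'$ from Lemma \ref{lem: Q is a metric space} would have $\{\widehat x_U,\widehat y_U\}=\{\widehat p_U^-,\widehat p_U^+\}$. Such domains cannot exist. If $U$ is not $\nest_{\mathfrak U}$--minimal, distinct cluster images in $\widehat T_U$ are separated by an edge component of length at least $r>K$; this follows from the definition of clusters, and it is what your phrase ``forces $U$ to be minimal'' silently uses. If $U$ is minimal, the two endpoints are $\diam T_U\geq K$ apart.

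This avoids the passing-up machinery, at the cost of using the quasigeodesic structure of $X$. Two small points. You never actually rerun Claim \ref{claim:bounded-V-cardinality}. Also, consecutive points on a $(q,q)$--quasigeodesic can be up to about $2q$ apart, not $1$. So either pass to the graph model (Remark \ref{rem:HHS-are-graphs}), or check that $K$ dominates the resulting bound on $\dist_{T_U}$, enlarging $K$ harmlessly if needed.

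\textbf{Reverse inequality.} Here your outline is the paper's. The length lost in $T_U$ lies in clusters, and the $\delta^V_U$ are $(R+r)$--dense in each cluster. Any $V$ with $\delta^V_U$ more than $E'$ from both $x_U$ and $y_U$ has $x_V,y_V$ near opposite endpoints, by Lemma \ref{lem: pseudo HHS}(6). A minimal such $V$ then contributes roughly $K$ to $\dist_Q$, and you sum with bounded multiplicity. Two steps need changing.

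First, drop the recursion into non-minimal $V$. As an inductive inequality with an additive constant it cannot work, because that constant must be of order at least $r$. To see this, put $x_U,y_U$ just inside a gap of length slightly less than $r$ between two basic clusters of one cluster. Then the left side is about $r$, while $\dist_{\widehat T_U}=0$ and every $V\propnest U$ has $x_V=y_V$. Since $r>K$, knowing $\dist_{T_V}(x_V,y_V)\gtrsim K$ tells you nothing about the terms below $V$. The recursion is also unnecessary. If $V\propnest U$ is not $\nest_{\mathfrak U}$--minimal, pick a $\nest_{\mathfrak U}$--minimal $V'\nest V$. Lemma \ref{lem: pseudo HHS}(3) gives $\diam(\delta^{V'}_U\cup\delta^V_U)\leq E'$, so projections of minimal domains are already $2(R+r)$--dense in each cluster. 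This is exactly what the paper uses.

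Second, the multiplicity bound does not come from the Dilworth argument of Lemma \ref{lem:Q-proper}. That argument bounds $<$--antichains, which are families with no transverse pair. Pairwise-transverse families are $<$--chains, and those can be infinite. What you need is that each $V\in\mathfrak U$ is properly nested in at most $N$ elements of $\mathfrak U$; the paper cites \cite[Lem.~2.15]{DurhamMinskySisto:stable}.

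You can also prove it directly. The $U\in\mathfrak U$ with $V\propnest U$ are pairwise $\nest$--related or transverse. Suppose $U_1,U_2,U_3$ among them were pairwise transverse. By Lemma \ref{lem: pseudo HHS}(4), for each $j$ both sets $\delta^{U_i}_{U_j}$ with $i\neq j$ lie within $E'$ of $\delta^V_{U_j}$. Since $\diam T_{U_j}\geq K>2E'$, they are the same endpoint of $T_{U_j}$. But Lemma \ref{lem: pseudo HHS}(1) forces opposite signs on each transverse pair, which is impossible for three domains. Ramsey's theorem then bounds the count in terms of $\hhscomp$.

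With these two fixes, and with your threshold $s$ taken large compared with $R+r$ so that the per-domain additive constants are absorbed, your argument goes through.
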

	\begin{proof}
		By Proposition~\ref{prop: coarsely surjective}, $\widehat \Psi$ is coarsely surjective, with constant depending only on the HHS parameters, so it remains to show that $\widehat \Psi$ is a quasi-isometric embedding. 
		
		Let $x,y \in X$. 
		For each $U \in \mathfrak U$, the map $f_U$ is an $(L,L)$-quasi-isometry and $q_U$ is 1-lipschitz, so 
		\[
		\frac{1}{L} \dist_{\widehat T_U}(\widehat x_U, \widehat y_U) - L \leq \frac{1}{L} \dist_{T_U}(x_U, y_U) - L \leq \dist_U(x,y) \leq L \dist_{T_U}(x_U, y_U) + L.
		\] 
		By Theorem \ref{thm:distance-formula}, there exist $S \geq L(K + 50E) + L$ and $\tau \geq 1$ depending only on $L$ and the HHS parameters such that
		\begin{equation}
			\label{eq1}
			\frac{1}{\tau} \sum_{U \in \mathfrak U} \ignore{d_{T_U}(x_U, y_U)}{S} - \tau \leq \dist_X(x,y) \leq \tau \sum_{U \in \mathfrak U} \ignore{d_{T_U}(x_U, y_U)}{S} + \tau.
		\end{equation}
		
		To bound the left-hand side of \eqref{eq1} from below in terms of $d_Q(\widehat x, \widehat y)$, we need to control the contribution to $d_Q(\widehat x,\widehat y)$ from the set $\mathcal V \coloneqq \{U \in \mathfrak U : \dist_{T_U}(x_U, y_U) < S$ and $\widehat x_U \neq \widehat y_U\}$.
		By Lemma~\ref{lem: Q is a metric space}.(2), there are at most $2\hhscomp'$ domains $U \in \mathcal V$ such that $\{\widehat x_U, \widehat y_U\} \not\subseteq \{\widehat p_U^-, \widehat p_U^+\}$, so these contribute at most $2\hhscomp'S$ to $d_Q(x,y)$. For all remaining $U \in \mathcal V$, we have $d_{U}(x,y) \geq K/L - L > 50E$ (using that $r > K$ for the case where $U$ is not $\nest_{\mathfrak U}$-minimal). It follows from \cite[Proposition~4.14]{DurhamCubulating23}\footnote{The proposition in \cite{DurhamCubulating23} is stated for rays, but the proof works for points in $X$.  Indeed, it uses Lemmas 4.4 and 4.10 in \emph{loc. cit.}, which are written for interior points as well as rays, and the rest of proof of Proposition 4.14 does not depend on any of the sets involved being unbounded.} that there exist constants $P_1, P_2 \in \mathbb N$, depending only on $LS + L$ and the HHS parameters, such that:
		\begin{itemize}
			\item for all but $P_1$ elements $V \in \mathcal V$, there exists $W \in \mathfrak U$ such that $d_W(x,y) \geq LS + L$ and $V \nest W$;
			\item for any $W \in \relevant_{LS + L}(x,y)$, if $\mathcal W \coloneqq \{V \in \mathcal V : V \propnest W\}$, then $|\mathcal W| \leq P_2 \dist_W(x,y) + P_2$.
		\end{itemize}
		Therefore
		\[
		\dist_Q(\widehat x, \widehat y) \leq 2\hhscomp'S + P_1S + (4SP_2L+1) \sum_{U \in \mathfrak U} \ignore{d_{T_U}(x_U,y_U)}{S}.
		\]
		Combined with \eqref{eq1}, we have shown that $d_Q(\widehat x, \widehat y) \leq \tau_1 \dist_X(x,y) + \tau_1$, where $\tau_1 \geq 1$ depends only on the HHS parameters. \\
		
		It remains to bound the right-hand side of \eqref{eq1} from above in terms of $d_Q(\widehat x, \widehat y)$.
		Let $U \in \mathfrak U$ and suppose that $d_{T_U}(x_U, y_U) > \dist_{\widehat T_U}(\widehat x_U, \widehat y_U)$. Let $\{C_1, \dots, C_k\}$ be the clusters in $T_U$ whose intersections with the geodesic $[x_U, y_U]_{T_U} \subseteq T_U$ from $x_U$ to $y_U$ have strictly positive diameter. For each $i$, let $C_i' \coloneqq (C_i \cap [x_U, y_U]_{T_U}) - (\mathcal N_{E'}(x_U) \cup \mathcal N_{E'}(y_U))$ and let $\mathcal V_i \coloneqq \{V \in \mathfrak U : V \propnest U$ and $\delta^V_U \subseteq C_i\}$. The set $\{\delta^V_U : V \in \mathcal V_i\}$ is $(R + r)$--dense in $C_i$ and it follows that there is a subset $\mathcal V_i' \subseteq \mathcal V$ such that: 
		\begin{itemize}
			\item each $V \in \mathcal V_i'$ is $\nest_{\mathfrak U}$-minimal;
			\item $\delta^V_U \subseteq C_i'$ for each $V \in \mathcal V_i'$;
			\item $\{\delta^V_U : V \in \mathcal V_i'\}$ is $2(R + r)$-dense in $C_i'$.
		\end{itemize}
		We then have $|\mathcal V'_i| \geq \diam(C_i') / 4(R + r) - 1$, using that $\diam(\delta^V_U) \leq E' < R$ for each $V \propnest U$. For each $V \in \mathcal V_i'$, the fact that $\delta^V_U \subseteq C_i'$ implies that $x_V = p_V^-$ and $y_V = p_V^+$, or vice versa, so, by Remark~\ref{rem: collapsed trees}, $d_{\widehat T_V}(\widehat x_V, \widehat y_V) = \dist_{T_V}(x_V, y_V) \geq K$. Thus we have:
		\begin{align*}
			d_{T_U}(x_U, y_U) & = d_{\widehat T_U}(\widehat x_U, \widehat y_U) + \sum_{i=1}^k \diam(C_i \cap [x_U, y_U]_{T_U}) \\
			& \leq d_{\widehat T_U}(\widehat x_U, \widehat y_U) + 4(R + r)\sum_{i=1}^k (|\mathcal V_i'| + 1) + 2kE' \\
			& \leq (1+2E')d_{\widehat T_U}(\widehat x_U, \widehat y_U) + 4(R + r)\sum_{i=1}^k K |\mathcal V_i'|  + 4(R + r)\\
			& \leq (1+2E')d_{\widehat T_U}(\widehat x_U, \widehat y_U) + 4(R + r) \sum_{i=1}^k \sum_{V \in \mathcal V_i} d_{\widehat T_V}(\widehat x_V, \widehat y_V) + 4(R + r).
		\end{align*}
		Now, \cite[Lemma~2.15]{DurhamMinskySisto:stable} provides a constant $N$, depending only on the HHS parameters, such that each $V \in \mathfrak U$ is properly nested in at most $N$ elements of $\mathfrak U$. Therefore:
		\[
		\sum_{U \in \mathfrak U} d_{T_U}(x_U, y_U) \leq (1 + 2E' + 4N(R + r)) d_Q(\widehat x, \widehat y) + 4(R + r).
		\]
		This completes the proof.
	\end{proof}
	
	\subsection{Quasi-median}
	
	\begin{prop}
		The map $\widehat \Psi$ is $\hhscomp'(L(C_0 + C_1) + 2L)$-quasimedian.
	\end{prop}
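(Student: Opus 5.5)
We have to show that for $x,y,z\in X$ the median $m=(m_U)_{U\in\mathfrak U}$ in $Q$ of $\widehat x,\widehat y,\widehat z$ is within $\hhscomp'(L(C_0+C_1)+2L)$ of $\widehat\Psi(\mu(x,y,z))$. Recall from the proof of Lemma~\ref{lem: Q is a metric space}.\eqref{it: median metric and rank of Q} that $m$ is computed coordinatewise, $m_U$ being the median in the interval $\widehat T_U$ of $\widehat x_U,\widehat y_U,\widehat z_U$. Since $d_Q$ is the $\ell_1$--sum over $U\in\mathfrak U$, the plan is to bound each coordinate discrepancy $\dist_{\widehat T_U}(m_U,\widehat\psi_U(w))$, where $w\in\mu(x,y,z)$. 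We will show it vanishes for all but at most $\hhscomp'$ domains $U$, and is at most $L(C_0+C_1)+2L$ for the remaining ones.

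\textbf{Step 1: a per-coordinate bound.} Fix $U\in\mathfrak U$. By Construction~\ref{cons:coarse-median}, $\pi_U(w)$ lies within $C_1$ of every $\mathcal CU$--geodesic joining two of $\pi_U(x),\pi_U(y),\pi_U(z)$, and $\diam\mu(x,y,z)\leq C_0$. Push this through $f_U$, an $(L,L)$--quasi-isometry onto the interval, line or ray $T_U$. Since $\mathcal CU$ is uniformly quasi-isometric to an interval, the median of three points in $T_U$ is the unique point lying (up to the quasi-isometry error) on all three hulls. Hence $\dist_{T_U}(\psi_U(w),\mathrm{med}(x_U,y_U,z_U))\leq L(C_0+C_1)+2L$. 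The map $q_U$ is $1$--lipschitz and monotone, so it commutes with medians in intervals. Therefore $\dist_{\widehat T_U}(\widehat\psi_U(w),m_U)\leq L(C_0+C_1)+2L$.

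\textbf{Step 2: coordinates where the discrepancy vanishes.} We claim that if $m_U\in\{\widehat p_U^-,\widehat p_U^+\}$ and $\widehat\psi_U(w)\in\{\widehat p_U^-,\widehat p_U^+\}$, then they coincide, unless both values occur in a way already excluded by Step 1. If $m_U=\widehat p_U^+$, then at least two of $\widehat x_U,\widehat y_U,\widehat z_U$ equal $\widehat p_U^+$. For a $\nest_{\mathfrak U}$--minimal $U$, or for an endpoint in a collapsed cluster, $\dist_{\widehat T_U}(\widehat p_U^-,\widehat p_U^+)$ is either $0$ or at least $K$, and $K$ exceeds the Step 1 bound by the choice of $K$ in Definition~\ref{defn:support-nested}. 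So both points are endpoints at distance less than $K$, and they coincide. More carefully, the endpoint clusters absorb the $\pcollapse$--neighbourhoods of $\pi_U(p^\pm)$, so $\psi_U(w)$ is forced to $p_U^\pm$ once two inputs are. This yields equality. Consequently, discrepancies can only occur at $U$ where $m_U$ or $\widehat\psi_U(w)$ is not an endpoint. Both $m$ and $\widehat\Psi(w)$ lie in $Q$ (Lemma~\ref{lem: consistent image 2}, and the fact that $Q$ is median), so Lemma~\ref{lem: Q is a metric space}.\eqref{item:domains-with-x-interior} bounds the number of such domains by $\hhscomp'$ for each tuple. Refining this gives the stated count $\hhscomp'$, or in the worst case $2\hhscomp'$, which is absorbed into the constant.

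\textbf{Main obstacle.} The delicate point is Step 2, namely ensuring that an endpoint-valued coordinate of $m$ is \emph{exactly} matched by $\widehat\psi_U(w)$, rather than only coarsely matched. Without exact matching, infinitely many domains could each contribute a small error. To handle this, we would first use that two of the three inputs sit in the collapsed endpoint cluster. Then Step 1 places $\psi_U(w)$ within $L(C_0+C_1)+2L<r$ of that cluster, which by the clustering rule (distance less than $r$ merges clusters) means $\widehat\psi_U(w)$ equals the endpoint. If this fails near unbounded ends, we would pass, as in the proof of Proposition~\ref{prop: coarsely surjective}, to $\nest_{\mathfrak U}$--minimal domains and use the $K$--gap.
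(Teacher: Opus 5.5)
Your argument is essentially the paper's. You get a per-coordinate bound $L(C_0+C_1)+2L$ by pushing the HHS median estimate through $f_U$ and the $1$--lipschitz monotone collapse $q_U$. You then observe that two distinct endpoints of $\widehat T_U$ are at distance at least $\min\{K,r\}$. For non-minimal $U$ the clean reason is that distinct clusters are separated by an edge component of length at least $r$, on which $q_U$ is isometric. Hence every domain with $\widehat\psi_U(w)\neq m_U$ is a non-endpoint coordinate of $\widehat\Psi(w)$ or of the median, and item (1) of Lemma~\ref{lem: Q is a metric space} bounds each of these two sets by $\hhscomp'$.

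However, the ``main obstacle'' you describe is not an obstacle, and the auxiliary claims you make to overcome it are false, so delete them.

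\emph{No exact matching is needed.} If $m_U$ is an endpoint and $\widehat\psi_U(w)$ is not, then $U$ is already among the at most $\hhscomp'$ non-endpoint coordinates of $\widehat\Psi(w)\in Q$. So there is no risk of infinitely many small errors.

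\emph{$\widehat\psi_U(w)$ is not forced to equal the endpoint.} The clustering rule merges basic clusters at distance less than $r$, not arbitrary points within $r$ of a cluster. So $\psi_U(w)$ can sit in an edge component next to the endpoint cluster, and for $\nest_{\mathfrak U}$--minimal $U$ nothing is collapsed at all. Likewise, $f_U$ only collapses the $\pcollapse$--neighbourhood of $\pi_U(p^\pm)$, and the HHS median need not enter it.

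\emph{The count is not refined.} Nothing in your write-up reduces the count to $\hhscomp'$: the union bound gives $2\hhscomp'$ domains, hence the constant $2\hhscomp'(L(C_0+C_1)+2L)$. The paper's proof asserts that at most $\hhscomp'$ domains are relevant, citing the same lemma, but as written that argument also supports only $2\hhscomp'$. Only uniformity of the constant is used downstream, so this is harmless.
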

	\begin{proof}
		Let $x,y,z \in X$. We keep the same convention as in the previous section. Let $m \in \mu(x,y,z)$, let $\ddot m = (\ddot m_U)_{U \in \mathfrak U} \in Q$ be the median of $\widehat x, \widehat y, \widehat z$, and let $a,b \in \{x,y,z\}$ be distinct points. For any $U \in \mathfrak U$ and any geodesic $\gamma$ in $\mathcal CU$ joining $\pi_U(a)$ and $\pi_U(b)$, we have $d_U(\pi_U(m), \gamma) \leq C_0 + C_1$. It follows that $d_{T_U}(m_U, [a_U, b_U]_{T_U}) \leq L(C_0 + C_1) + 2L$, so $d_{\widehat T_U}(\widehat m_U, [\widehat a_U, \widehat b_U]_{\widehat T_U}) \leq L(C_0 + C_1) + 2L$. By definition, $\ddot m_U$ lies on $[\widehat a_U, \widehat b_U]_{\widehat T_U}$ for all distinct $a,b \in \{x,y,z\}$, so this implies that $d_{\widehat T_U}(\widehat m_U, \ddot m_U) \leq L(C_0 + C_1) + 2L$ for all $U \in \mathfrak U$. Then, since $r,K > L(C_0 + C_1) + 2L$, we have $\{\widehat m_U, \ddot m_U\} \neq \{\widehat p_U^\pm\}$ for all $U \in \relevant_Q(\widehat m, \ddot m)$. By Lemma \ref{lem: Q is a metric space}.(1) we then have $|\relevant_Q(\widehat m, \ddot m)| \leq \hhscomp'$, so $d_Q(\widehat m, \ddot m) \leq \hhscomp'(L(C_0 + C_1) + 2L)$.
	\end{proof}
	
	\subsection{Cocompact quasilines}\label{subsec:cocompact}
	We conclude the section with:
	
	\begin{proof}[Proof of Proposition \ref{prop:short-mountains}]
		Fix a point $z_0 \in X$.
		Let $\support(p^\pm)=\{U_1,\ldots,U_r\}$, where $r\leq \hhscomp$ since $\support(p^\pm)$ is a set of pairwise orthogonal domains.  Since $X=H_{M,z_0}(p^-,p^+)$, the points $\pi_{U_i}(p^-),\pi_{U_i}(p^+)$ are distinct points of $\boundary CU_i$ for each $i$, and $\pi_{U_i}(X)$ is $M$--close to some $(1,20E)$--quasigeodesic joining $\pi_{U_i}(p^{\pm})$.  Hence $\mathcal CU_i$ is a uniform quasiline, for $1\leq i\leq r$.  Moreover, Proposition \ref{prop:quasi-linear-domains} provides a finite-index subgroup $G'\leq G$ fixing $p^-,p^+$ and (passing if needed to a further finite-index subgroup) $G'\cdot U_i=U_i$ and $G'$ therefore acts on $\mathcal CU_i$ by isometries, fixing the two-point set $\boundary\mathcal CU_i$ pointwise.  Since $\pi_{U_i}(G\cdot z_0)$ is unbounded by hypothesis, so is $\pi_{U_i}(G'\cdot z_0) = G'\cdot\pi_{U_i}(z_0)$; therefore $G'$ contains elements acting on $\mathcal CU_i$ loxodromically, so the action of $G'$ on $\mathcal CU_i$ is cobounded.  Proposition \ref{prop:quasi-linear-domains} provides a $G'$--action on a line $T_{U_i}$ and an equivariant quasi-isometry $f_i:\mathcal CU_i\to T_{U_i}$, for each $i$.  
		
		For each $i$, let $\mathfrak U_i=\{V\in\mathfrak U:V\nest U_i\}$.  Then $V\orth V'$ whenever $V\in\mathfrak U_i$ and $V'\in\mathfrak U_j$ with $i\neq j$.  Let $\mathfrak S_i=\mathfrak S-\bigcup_{j\neq i}\mathfrak U_j$, so that we have a canonical cone-off $c_i:(X,\mathfrak S)\to (\newcone X_i,\mathfrak S_i)$.  Applying Theorem \ref{thm: median hulls} to each $G'$--action on $(\newcone X_i,\mathfrak S_i)$, we obtain a median space $Q_i$ and a $G'$--equivariant quasi-isometry $\Psi_i:\newcone X_i\to Q_i$ such that the diagram:
		\begin{center}
			$
			\begin{diagram}
				\node{X}\arrow{e,t}{c_i}\arrow{s,l}{\Psi}\node{\newcone X_i}\arrow{s,r}{\Psi_i}\\
				\node{Q}\arrow{e,b}{\bar c_i}\node{Q_i}
			\end{diagram}
			$
		\end{center}
		commutes, where $\bar c_i$ is a $G'$--equivariant $1$--lipschitz median-preserving map.  Indeed, $Q_i$ is obtained from $Q\subseteq \widehat{\mathcal Y}$ by discarding the $V$--coordinates from consistent tuples for all $V\not\in\mathfrak U_i$.  In other words, $\bar c_i$ is the restriction of the natural projection $\prod_{U\in\mathfrak U}\widehat T_U\to \prod_{U\in\mathfrak U_i}\widehat T_U$.  From this, one easily checks that $\prod_i\bar c_i:Q\to\prod_iQ_i$ is an isometry.
		
		Moreover, since $\bar c_i$ is the restriction to $Q$ of the natural projection $\prod_{V\in\mathfrak U}\widehat T_V\to \prod_{U\in\mathfrak U_i}\widehat T_U$, and the decomposition $\mathfrak U=\bigsqcup_i\mathfrak U_i$ is preserved by $G$, there is an isometric action of $G$ on $\prod_iQ_i$ that extends the diagonal action of $G'$ and makes $\prod_i\bar c_i$ a $G$--equivariant map.
		
		Since Theorem \ref{thm: median hulls} implies that the $Q_i$ are proper spaces, $G'$--cocompactness follows once we prove that $G'$ acts on $Q_i$ coboundedly.  This, plus the fact that $Q_i$ is a quasiline, will follow once we prove that $\pi_{U_i}:\newcone X_i\to\mathcal CU_i$ is a quasi-isometry, since we already saw that the $G'$--action on $\mathcal CU_i$ is cobounded and the latter space is a quasiline.  By the distance formula for $(\newcone X_i,\mathfrak S_i)$, the fact that $\pi_{U_i}$ is a quasi-isometry will follow once we have proved the ``moreover'' clause of the proposition.
		
		So, to conclude, it suffices to exhibit $B$ such that $\diam(\mathcal CV)<B$ unless $V=U_i$ for some $i$.  If $V\transverse U_i$ or $U_i\propnest V$ for some $i$, then consistency uniformly bounds $\diam(\mathcal CV)$, and if $V\orth U_i$ for all $i$, then Lemma \ref{lem: orthogonal supports are bounded} gives the desired bound.  Hence it remains to consider the case of $V\propnest U_i$.  For any such $V$, we have $\diam(\mathcal CV)<\infty$ by Proposition \ref{prop:quasi-linear-domains}, so it suffices to show that $\mathfrak U_i$ contains finitely many $G'$--orbits.
		
		Let $\Omega\subseteq \mathcal CU_i$ be a bounded set such that $G'\cdot \Omega=\mathcal CU_i$ and $\mathcal CU_i-\Omega$ has two unbounded components $\Omega^-,\Omega^+$.  Choose $z^-,z^+\in X$ such that $\pi_{U_i}(z^\pm)\subseteq \Omega^\pm$ and $\dist_{U_i}(z^\pm,\Omega)>100E$.  Recall that $\diam(\mathcal CV)\geq K$ for all $V\in\mathfrak U_i$.  Moreover, by consistency and bounded geodesic image, if $V\in\mathfrak U_i$ and $\rho^V_{U_i}\cap \Omega\neq\emptyset$, then (up to relabelling) $\dist_V(z^\pm,\pi_V(p^\pm))\leq E$, so $\dist_V(z^-,z^+)>K-2E>E$.  Hence \cite[Lem. 13.5]{Casals-RuizReal24} implies that $|\{V\in\mathfrak U_i:\rho^V_{U_i}\cap \Omega\neq\emptyset\}|<\infty$.  But if $V\in\mathfrak U_i$, then $G'\cdot V$ contains some $gV$ in the above set, so we are done. 
	\end{proof}
	
	\newsection{HHS actions with bounded orbits}\label{sec:bounded-orbits}
	The purpose of this section is to prove:
	
	\begin{prop}\label{prop:bounded-orbits}
		Let $(X,\mathfrak S)$ be an HHS.  Then there exists $k_0$, depending only on the HHS parameters, such that for all $k_1\geq k_0(2k_0+1)$, there exist $k_2,\dots,k_7$, depending only on $k_1$ and the HHS parameters, and $T'$ depending only on $k_1,\ldots,k_4$ and the HHS parameters, such that the following holds.
		
		Let $G\leq \Isom(X)$ act on $(X,\mathfrak S)$ by HHS automorphisms, with bounded orbits in $X$. Let $Z = Z(G,X,k_1) \coloneqq \{x\in X:\dist(x,gx)\leq k_1\ \forall g\in G\}$. Then
		\begin{enumerate}
			\item \label{item:uniformly-bounded-orbits}   $Z\neq \emptyset$;
			
			\item \label{item:injective-coarse-fixed-point-set} there exists a non-empty injective space $Y$, equipped with the trivial $G$ action, and a $G$--equivariant $(k_2,k_2)$--quasi-isometry $q:Z\to Y$, where $Z$ has the subspace metric inherited from $X$;
			
			\item \label{item:coarse-fixed-point-EHQC} $Z$ is $(k_3,k_4)$--EHQC in $(X,\mathfrak S)$ and a $k_5$-coarse median subalgebra;

            \item \label{it: fixed point set coarse lipschitz retract} $Z$ is a $k_6$--coarse lipschitz retract of $X$;
			
			\item \label{it:zs-are-close}
			there is an unbounded, increasing function $\omega_0:[0,\infty)\to[0,\infty)$, depending only on the HHS parameters, such that $\diam_X(G\cdot x)\geq \omega_0(\dist(x,Z))$ for all $x\in X$;
			
			\item \label{it:finite index hqc fixed point set} if $G_0 \unlhd G$ has the property that $gV\notorth V$ for all $V\in\mathfrak S$ such that $\pi_V(Z)$ has diameter at least $T'$, then $H_{\theta}(Z) \subseteq Z(G_0, X,k_7)$. This holds in particular if $G_0$ is contained in the intersection of all subgroups of $G$ with index $\leq \hhscomp!$.
		\end{enumerate}
        Finally, suppose that there exists $B$ such that $\diam(\mathcal CU)\leq B$ or $\diam(\mathcal CU)=\infty$ for all $U\in\mathfrak S$. Then one can replace $\hhscomp$ in item \eqref{it:finite index hqc fixed point set} with the maximal $r$ such that $\mathfrak S$ contains a pairwise-orthogonal set $\{U_1,\ldots,U_r\}$ with each $\mathcal CU_i$ unbounded.
	\end{prop}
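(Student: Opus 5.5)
We will work throughout with the coarsely injective metric $\sigma$ on $X$ from Theorem \ref{thm:coarse-injective} and the injective hull $\injhull(X)$. We assume, by Remark \ref{rem:HHS-are-graphs}, that $X$ is a graph. $G$ acts isometrically on $(X,\sigma)$ and on $\injhull(X)$, and $i:X\to\injhull(X)$ is equivariant with $M_1$--coarsely dense image (Corollary \ref{cor: coarsely dense in injective hull}).

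\textbf{Items (1) and (2).} Item \eqref{item:uniformly-bounded-orbits} is Corollary \ref{cor: uniformly bounded orbits}; this is where $k_0$ comes from, namely $k_0\ge B$. For item \eqref{item:injective-coarse-fixed-point-set}, let $Y=\Fix_G(\injhull(X))$. By Lang's results \cite{LangInjective13}, the fixed set of a group with bounded orbits on an injective space is itself injective and nonempty. Define $q:Z\to Y$ in three steps: send $z\in Z$ to $i(z)$, which is moved by at most $M_2k_1+M_2$ by every $g\in G$, and then to a nearby fixed point. To produce the fixed point, use the $1$--lipschitz retraction onto fixed sets of an injective space. Concretely, in an injective space the set of points moved at most $t$ is within uniform distance (linear in $t$) of $\Fix$, using the barycentre/Lang fixed point of the bounded orbit. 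Conversely, for $y\in Y$ pick $x\in X$ with $d_\infty(i(x),y)\le M_1$; then $x\in Z$ once $k_1\ge k_0(2k_0+1)$, which is exactly where the threshold on $k_1$ enters. Hence $q$ is a coarsely surjective quasi-isometry, and it is equivariant because $G$ acts trivially on $Y$.

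\textbf{Items (3)--(5).} For item \eqref{item:coarse-fixed-point-EHQC}, first show that $Z$ is a coarse median subalgebra. The median $\mu(z,z',z'')$ is coarsely $G$--equivariant, so it moves by at most a bounded multiple of $k_1$ under each $g$, by (C1). Its distance to $Z$ is then controlled by the quasi-isometry of item (2): points with uniformly bounded displacement lie uniformly close to $i^{-1}(Y)$, and so close to $Z(G,X,k_1)$ provided $k_1$ is large relative to $k_0$. Coarse connectedness follows from geodesicity of the injective space $Y$ transported through $q$. Corollary \ref{cor:subalgebra-EQHC} then gives EHQC.

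For item \eqref{it: fixed point set coarse lipschitz retract}, compose: $X\to\injhull(X)$, then the $1$--lipschitz map to $Y$, then $q^{-1}$. For the retraction onto $Y$ we use that $\Fix_G$ of an injective space is a $1$--lipschitz retract, via Lang's construction applied to the orbit.

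For item \eqref{it:zs-are-close}, let $D=\diam(G\cdot x)$. Then $i(x)$ is moved at most $M_2D+M_2$, so it is linearly close to $Y$, and hence $\dist(x,Z)$ is linearly bounded in $D$. Inverting this bound gives $\omega_0$.

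\textbf{Item (6) and the main obstacle.} Item \eqref{it:finite index hqc fixed point set} is genuinely hierarchical and is the main obstacle. Take $h\in H_\theta(Z)$ and $g\in G_0$; we must bound $\dist(h,gh)$ using the uniqueness axiom, so we need to bound $\dist_V(h,gh)$ for each $V$.

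If $\diam\pi_V(Z)<T'$, then $\pi_V(h)$ is near $\pi_V(Z)$, and the bound follows whenever $gV=V$, since $g$ moves points of $Z$ by at most $k_1$. If $gV\neq V$, we need consistency between $V$ and $gV$; these are not orthogonal by hypothesis.

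If $\diam\pi_V(Z)\ge T'$, the plan is to show that $gV=V$ and that $g$ acts on $\mathcal CV$ nearly trivially on $\pi_V(Z)$. Since $Z$ is coarsely fixed, points of $Z$ far apart in $\mathcal CV$ are moved only boundedly. If $gV\transverse V$ or the two were properly nested, consistency would force $\pi_V(Z)$ to be near $\rho^{gV}_V$, contradicting large diameter. Orthogonality is excluded by hypothesis, so $gV=V$. Then $g$ coarsely fixes the quasiconvex hull of $\pi_V(Z)$, which contains $\pi_V(h)$ up to $\theta$.

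For the index claim, the domains $V$ with large $\pi_V(Z)$ that are $G$--orbit-orthogonal organize into pairwise orthogonal families of size at most $\hhscomp$. So $G$ permutes such families, and a subgroup of index at most $\hhscomp!$ acts trivially on each. The ``finally'' clause follows by counting only the unbounded domains, since bounded ones automatically satisfy the diameter threshold failure once $T'>B$.

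I expect the case analysis for $gV\neq V$ with small $\pi_V(Z)$ to be the fiddliest step. The first approach to try there is bounded geodesic image applied along hierarchy paths in $Z$ furnished by item (3).
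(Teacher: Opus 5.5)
\textbf{The gap is in item \eqref{it:finite index hqc fixed point set}, in the case you leave open.} For $\diam\pi_V(Z)<T'$ and $gV\neq V$ you say the domains ``are not orthogonal by hypothesis''. But the hypothesis only gives $gV\notorth V$ when $\diam\pi_V(Z)\geq T'$. For these small domains $gV\orth V$ is entirely possible, and then there is no consistency inequality between $\pi_V$ and $\pi_{gV}$ for your bounded-geodesic-image idea to use. The missing, and easy, observation is that you never need to compare $V$ with $gV$ here. Since $Z$ is $G$--invariant, so is $H_\theta(Z)$. Hence for $h\in H_\theta(Z)$ both $\pi_V(h)$ and $\pi_V(gh)$ lie in $\pi_V(H_\theta(Z))\subseteq\neb_\theta(\hull_V(Z))$, whose diameter is bounded in terms of $T'$ and the HHS parameters. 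This is exactly how the paper handles every $V$ outside $\bigcup_{a,b\in Z}\relevant_{T'}(a,b)$ before applying the distance formula.

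\textbf{The large-diameter case also needs its mechanism corrected.} Consistency does not put $\pi_V(Z)$ near $\rho^{gV}_V$. Suppose $gV\transverse V$. Consistency at $a\in Z$, combined with $\pi_{gV}(ga)=g\pi_V(a)$, $g\rho^{g^{-1}V}_V=\rho^V_{gV}$ and $\dist(a,ga)\leq k_1$, only shows one of two things: $\pi_V(a)$ is $E$--close to $\rho^{gV}_V$, or it is roughly $(Ek_1+4E)$--close to $\rho^{g^{-1}V}_V$. The contradiction comes from exhibiting $a\in Z$ whose projection is far from \emph{both} of these bounded sets. That step uses the coarse connectedness/EHQC of $Z$ from item \eqref{item:coarse-fixed-point-EHQC} together with $\diam\pi_V(Z)\geq T'$, which is why $T'$ depends on $k_3,k_4$. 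Proper nesting of $gV$ and $V$ is excluded simply by finite complexity. Because this argument works for every $g\in G$, it is also what makes $G\cdot V$ pairwise orthogonal in your index count.

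\textbf{Two further remarks.} First, your EHQC argument goes through Corollary \ref{cor:subalgebra-EQHC}, which the paper states without proof and explicitly does not use. The paper instead applies Theorem \ref{thm: median hulls} to $G\cdot H_\theta(p^+,p^-)$ for $p^\pm\in Z$. It then takes the geodesic for Bowditch's CAT(0) metric on the $G$--equivariant median model, and uses convexity to see that every point on it is coarsely $G$--fixed, hence close to $Z$ by item \eqref{it:zs-are-close}. That route avoids controlling accumulated coarse-median errors when turning a coarse subalgebra into uniform hierarchy paths. Second, for $q$ to be equivariant into the trivially-acted-on $Y$, it must be constant on $G$--orbits. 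So define it on orbit representatives; this works because a fixed point near $i(z)$ is equally near $i(gz)$.
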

	
	\begin{proof}
    In view of Lemma \ref{lem:assume-graph}, Theorem \ref{thm:coarse-injective} and Corollary \ref{cor: coarsely dense in injective hull}, we can and shall assume that $X$ has been chosen in its $G$--equivariant quasi-isometry class so that there is a constant $k_0$, depending only on the HHS parameters, and a $G$--equivariant, $k_0$--coarsely surjective, $(k_0,k_0)$--quasi-isometry $\iota:X\to \injhull(X)$, where $\injhull(X)$ is the injective hull of $X$.

    \medskip

		\noindent\textbf{Construction of $Y$.} Since $G$ acts on $\injhull(X)$ with bounded orbits, the set $Y\subseteq \injhull(X)$ of points fixed by $G$ is nonempty and injective by \cite[Prop. 1.2]{LangInjective13}.  Hence $Z_0:=\iota^{-1}(\neb_{k_0}(Y))$ is a nonempty, $G$--invariant subset of $X$.  The same computation as in the proof of Corollary \ref{cor: uniformly bounded orbits} shows that $\dist(x,gx)\leq k_0(2k_0+1)$ for all $x\in Z_0$.  

        \setcounter{claim}{0}
		\begin{claim}\label{claim:z_0-t}
			There is an unbounded, increasing function $t:[0,\infty)\to[0,\infty)$, depending only on the HHS parameters, such that $\diam_X(G\cdot x)\geq t(\dist(x,Z_0))$ for all $x\in X$.
		\end{claim}
		
		\begin{proofofclaim}{\ref{claim:z_0-t}}
			Let $x\in \injhull(X)$ and let $R=\diam_{\injhull(X)}(G\cdot x)$.  Consider $G\cdot x$ with the metric inherited from $\injhull(X)$, and the canonical isometric embedding $\iota:G\cdot x\to \injhull(G\cdot x)$ (see \cite[Sec. 3]{LangInjective13}).  Note that $\diam(\injhull(G\cdot x))\leq R$.  On the other hand, by \cite[Prop. 3.5]{LangInjective13}, the inclusion $G\cdot x\to \injhull(X)$ extends to an embedding $h:\injhull(G\cdot x)\to \injhull(\injhull(X))=\injhull(X)$, and $h$ is $G$--equivariant.  In other words, $\injhull(X)$ contains a $G$--invariant injective subspace $\injhull(G\cdot x)$ that has diameter $R$ and contains $G\cdot x$.  Proposition 1.2 of \cite{LangInjective13}, applied to this subspace, shows that $\dist_{\injhull(X)}(Y,G\cdot x)\leq R$, so $\dist_{\injhull(X)}(Y,x)\leq 2R$.  In particular, if $z\in X$, then $\dist_{\injhull(X)}(\iota(z),Y)\leq 2\diam_{\injhull(X)}(G\cdot\iota(z))$, which implies the claim since $\iota$ is an equivariant, $k_0$--coarsely surjective, $(k_0,k_0)$--quasi-isometry.
		\end{proofofclaim}
		
		Let $k_1\geq k_0(2k_0+1)$ be given, and let $Z = Z(G,X,k_1)$ be as in the statement.  
		
		\begin{claim}\label{claim:z_0-z-close}
			The set $Z$ is nonempty and $G$--invariant, and the Hausdorff distance in $X$ between $Z$ and $Z_0$ is bounded above in terms of $k_0$ and $k_1$.
		\end{claim}
		
		\begin{proofofclaim}{\ref{claim:z_0-z-close}}
			That $Z$ is $G$--invariant is immediate from the definition.  Since $z\in Z_0$ implies $\diam(G\cdot z)\leq k_0(2k_0+1)$, we have $Z_0\subseteq Z$; in particular, $Z\neq \emptyset$.  Let $t$ be as in Claim \ref{claim:z_0-t}.  Then $Z\subseteq \neb_{t^{-1}(k_1)}(Z_0)$, yielding the required Hausdorff distance bound. 
		\end{proofofclaim}
		
		Item \eqref{it:zs-are-close} follows immediately from Claims \ref{claim:z_0-t} and \ref{claim:z_0-z-close}. Since $Y$ is injective --- and therefore an absolute 1-lipschitz retract (see e.g. \cite[Prop. 2.2]{LangInjective13}) --- and an isometrically embedded subspace of $\injhull(X)$, there is a 1-lipschitz retraction $\injhull(X) \rightarrow Y$. Using Claim \ref{claim:z_0-z-close} and the fact that $\iota$ is $(k_0,k_0)$--quasi-isometry, we can therefore compose uniformly coarsely lipschitz maps $X \rightarrow \injhull(X) \rightarrow Y \rightarrow Z_0 \rightarrow Z$ to produce a coarsely lipschitz map $\rho:X \rightarrow Z$. Up to uniformly perturbing $\rho$ (here, uniformity allows dependence on $k_0,k_1$, and the HHS parameters, as in the statement), we can assume it is the identity on $Z$, proving Item \eqref{it: fixed point set coarse lipschitz retract}.
		
		\medskip
		
		\noindent\textbf{Construction of $q$.}
		By Claim \ref{claim:z_0-z-close}, $\iota:Z\to \injhull(X)$ is a $(k_0,k_0)$--quasi-isometric embedding whose image is $G$--invariant and at Hausdorff distance at most $k_0(t^{-1}(k_1)+2)$ from $Y$. Recall that $Y$, with the subspace metric inherited from $\injhull(X)$, is injective.  Let $\{z_i\}_i$ be a subset of $Z$ containing one point in each $G$--orbit.  For each $i$, let $q(z_i)$ be some point in $Y$ that is $k_0(t^{-1}(k_1)+2)$--close to $\iota(z_i)$.  Given $z\in Z$, let $z_i$ be the element in our set of orbit representatives belonging to $G\cdot z$.  Let $q(z)=q(z_i)$.  Note that for any $g\in G$ such that $gz_i=z$, we have $q(gz_i)=q(z_i)=gq(z_i)$ since $G$ acts on $Y$ trivially.  Hence $q:Z\to Y$ is $G$--equivariant.  Moreover, for any $z=gz_i$, since $\iota$ is also equivariant, we have $$\dist_{\injhull(X)}(q(z),\iota(z))=\dist_{\injhull(X)}(q(z_i),g\iota(z_i))= \dist_{\injhull(X)}(q(z_i),\iota(z_i))\leq k_0(t^{-1}(k_1)+2),$$ so $q$ is at uniformly bounded distance from the restriction of $\iota$ and is therefore a uniform quasi-isometry.  
		\medskip
		
		\noindent\textbf{Proof of \eqref{item:coarse-fixed-point-EHQC}.}
		We first check that $Z$ is a coarse median subalgebra (Definition \ref{defn:coarse-median-subalgebra}) with constant depending only on the HHS parameters and $k_1$.  First recall that by Condition (C1) of the definition of a coarse median (Definition \ref{defn:coarse-median}), there exists $\alpha$, depending only on the coarse median parameters (and hence, by Corollary \ref{cor:coarse-median}, only on the HHS parameters) such that the coarse median $\mu:X^3\to X$ is $(\alpha,\alpha)$--coarsely lipschitz.  Now let $z_1, z_2, z_3 \in Z$, let $m \coloneqq \mu(z_1,z_2,z_3)$, and let $g\in G$.  Then $\dist(z_i,gz_i)\leq k_1$ for $i\in\{1,2,3\}$, by the definition of $Z$.  Hence $\dist(m,gm)\leq 3\alpha k_1+\alpha$, which depends only on $k_1$ and the HHS parameters.  Therefore Claims \ref{claim:z_0-t} and \ref{claim:z_0-z-close} imply that $\dist(m, Z)$ is bounded in terms of $k_1$ and the HHS parameters.
				
		Let us now check that $Z$ is uniformly EHQC. To this end, let $p^+,p^- \in Z$. We will use Theorem~\ref{thm: median hulls} to find to the required hierarchy path.
		
		Let $W \coloneqq G \cdot H_\theta(p^+,p^-)$ and let $M \coloneqq \theta + Ek_1 + 3E$. Then $H_\theta(p^+,p^-) \subseteq W \subseteq H_M(p^+, p^-)$.
		It follows from Lemma~\ref{lem: hulls are quasiconvex} that $W$ is $\kappa'$-hierarchically quasiconvex, where $\kappa'(0) \coloneqq h_M(0)$ and $\kappa'(r) \coloneqq h_\theta(r)$ if $r > 0$. Let $\mathfrak S_W$ be the HHS structure on $W$ from Lemma~\ref{lem: HHS structure on HQC subspaces} and recall from that lemma that the action of $G$ on $(W, \mathfrak S_W)$ is by HHS automorphisms. The hierarchical hull $H^W_M(p^+,p^-)$ of $p^+,p^-$ in $W$ with respect to the constant $M$ is just the intersection $W \cap H^X_M(p^+, p^-)$, so $W = H^W_{M, z_0}(p^+, p^-)$ for all $z_0 \in W$. Thus, by Theorem~\ref{thm: median hulls}, there is a complete connected median space $(Q, d_Q)$, of rank at most $\hhscomp$, equipped with a $G$--action and a $G$-equivariant $L$-quasi-isometry $\Psi:W \rightarrow Q$, where $L \geq 1$ only depends on $M$ and HHS parameters for $(W, \mathfrak S_W)$, and hence only on the HHS parameters for $(X, \mathfrak S)$. 
		
		By \cite[Thm.~1.1]{BowditchProperties16}, $Q$ has a metric $\sigma$ such that $(Q,\sigma)$ is a CAT(0) space, any isometry of $(Q,d_Q)$ is an isometry of $(Q,\sigma)$, and $d_Q / \sqrt{\hhscomp} \leq \sigma \leq d_Q$. Let $\gamma_Q \subseteq Q$ be the unique $\sigma$--geodesic connecting $\Psi(p^+)$ and $\Psi(p^-)$. Then  $\gamma_W \coloneqq \Psi^{-1} (\gamma_Q)$ is a $(C,C)$-quasigeodesic, where the constant $C$ depends only on $L$ and $\hhscomp$. The $\sigma$--geodesic $\gamma_Q$ is also a (reparametrised) $d_Q$-geodesic (see the proof of \cite[Thm.~8.3]{BowditchProperties16}). Since $\Psi$ is quasimedian, it follows from Proposition \ref{prop:hierarchy-path-char} that $\gamma_W$ is a $k_3$--hierarchy path in $(W, \mathfrak S_W)$ and therefore in $(X, \mathfrak S)$, where $k_3$ depends only on the HHS parameters of $X$. Given $x \in \gamma$, convexity of the metric $\sigma$ implies that $\sigma(x, g \gamma) \leq \max\{\sigma(g_1 \Psi(p^+), g_2 \Psi(p^+)), \sigma(g_1 \Psi(p^-), g_2 \Psi(p^-))\}$ for all $g \in G$. Therefore $\diam_\sigma(G \cdot x) \leq Lk_1 + L$ for all $x \in \gamma$ and $\diam_X(G \cdot y) \leq L^2(k_1 + 1) + L$ for all $y \in \gamma_W$. By Claim \ref{claim:z_0-t} and Claim \ref{claim:z_0-z-close}, we then have $\gamma_W \subseteq \mathcal N_{k_4}(Z)$ for some $k_4 \geq 0$ depending only on the HHS parameters.
        \medskip
		
		\noindent\textbf{Proof of \eqref{it:finite index hqc fixed point set}.}
        Let $T \coloneqq Ek_1 + 4E$, let $T' \coloneqq k_3(2T + k_3) + k_4$ and let $\mathfrak U \coloneqq \bigcup_{a,b \in Z} \relevant_{T'}(a,b)$.  Since $G$ preserves $Z$, the set $\mathfrak U$ is $G$--invariant.  

        Fix $V\in\mathfrak U$ and $g\in G$.  Note that $gV$ and $V$ cannot be $\propnest$--related.  We next argue that $gV$ and $V$ cannot be transverse.  Indeed, suppose $gV \transverse V$. It follows from the definition of $\mathfrak U$ and the fact that $Z$ is $(k_3,k_4)$-EHQC that there exists $a \in Z$ such that $\dist_V(a,\rho^{gV}_V) > T$ and $\dist_V(a,\rho^{g^{-1}V}_V) = \dist_{gV}(ga, \rho^V_{gV}) > Ek_1 + 4E$. However the consistency axiom \cite[Def. 1.1.(4)]{BehrstockHierarchically19} implies that $\dist_{gV}(a,\rho^V_{gV}) \leq E$, so $\dist_{gV}(a,ga) \geq Ek_1 + 3E$, so $\dist(a,ga) \geq k_1 + 2E$, which contradicts the assumption that $a \in Z$. Therefore either $gV = V$ or $gV \orth V$.

        Now assume that $G_0\unlhd G$ has the property that $gV\notorth V$ for all $V\in\mathfrak U$ and $g\in G_0$.  The preceding discussion implies that $G_0$ acts on $\mathfrak U$ trivially.  Let $x \in H_{\theta}(Z)$ and $g \in G_0$. A standard hyperbolic geometry argument shows that $\dist_U(x,gx) \leq k_7'$ for all $U \in \mathfrak U$, where $k_7'$ is a constant which depends only on $E$ and $k_1$. If $U \in \mathfrak S - \mathfrak U$, then $\hull_U(Z)$ is uniformly bounded in terms of $E$ and $T'$, so $\pi_U(H_{\theta}(Z))$ is uniformly bounded in terms of $E$ and $T'$.
		It follows by the distance formula (Theorem \ref{thm:distance-formula}) that $\dist(x,gx) \leq k_7$, where $k_7$ depends only on $k_1$ and the HHS parameters.  This proves the first part of item \eqref{it:finite index hqc fixed point set}.    

        To conclude, suppose that $G_0$ is contained in the intersection of all subgroups of $G$ with index $\leq \hhscomp!$.  We need to show that $G_0\cdot V=\{V\}$ for each $V\in\mathfrak U$.  Now, we have shown that $G \cdot V$ is a pairwise orthogonal set, and therefore $|G \cdot V| \leq \hhscomp$.  Hence $\stabilizer_G(V)$ has index at most $\hhscomp!$ in $G$.  Therefore, $G_0\leq \stabilizer_G(V)$ for all $V\in\mathfrak U$, as required. \\

        Finally, suppose that there exists $B<\infty$ such that for all $U\in\mathfrak S$, either $\mathcal CU$ is unbounded, or $\diam(\mathcal CU)\leq B$.  In the preceding argument, we can assume that $T'\geq B$, by, for instance, choosing $k_1$ sufficiently large.  Thus $\mathfrak U$ consists only of elements $U$ with $\mathcal CU$ unbounded, and the above argument worked as long as $G_0$ is contained in each subgroup of index at most $r!$, where $r$ bounds the size of pairwise orthogonal sets in $\mathfrak U$. 
       \end{proof}
		
	\newsection{Some invariant subspaces}\label{subsec:initial-quasiflat}
	The next definition abstracts a property of the action by HHS automorphisms of an HHG on itself (see Definition \ref{defn:HHG}) by \cite[Thm.~3.1]{DurhamCorrection20}.
	
	\begin{defn}[Hierarchically semisimple]\label{defn:hierarchically-semisimple}
		An action of a group $H$ on an HHS $(X,\mathfrak S)$ by HHS automorphisms is \emph{hierarchically semisimple} if, for all $g\in H$ and all $U\in\mathfrak S$ such that $gU=U$ and $\langle g\rangle$ has unbounded orbits in $\mathcal CU$, the isometry $g$ of $\mathcal CU$ is loxodromic.
	\end{defn}
	
	Let $(X, \mathfrak S)$ be an HHS and let $A\leq \Isom(X)$ be an infinite finitely generated virtually abelian group acting hierarchically semisimply on $(X,\mathfrak S)$ by HHS automorphisms, with the $A$--orbits in $X$ unbounded. The goal of this section is to analyse some canonical $A$-invariant HQC and EHQC subspaces, using the results of the previous sections.

	By Lemma \ref{lem:assume-graph} and Lemma \ref{lem:graph-free}, we can assume that $X$ is a graph and the $A$--action on $X$ is free. For convenience, up to a uniform enlargement of the HHS parameters, we can and shall assume that $(X,\mathfrak S)$ is \emph{normalised} in the sense of \cite[Rem. 1.3]{BehrstockHierarchically19}, which is to say that $\pi_U$ is $E$--coarsely surjective for all $U\in\mathfrak S$. Recall that $\hhscomp$ is the complexity of $\mathfrak S$.

	\begin{lem} \label{lem: fixed boundary points}
		There exists a finite index abelian subgroup $A' \leq A$ and a pair of points $p^+,p^- \in \partial X$ such that:
		\begin{enumerate}
			\item \label{it: invariant boundary points} $ap^+ = p^+$ and $ap^- = p^-$ for all $a \in A'$;
			\item \label{it: identical support} $\support(p^+) = \support(p^-)$;
			\item \label{it: distinct projections} for all $U \in \support(p^\pm)$ we have $\pi_U(p^+) \neq \pi_U(p^-)$;
			\item \label{it: unbounded projections} for any $x \in X$, the projection $\pi_U(A \cdot x)$ is unbounded if and only if $U \in \support(p^\pm)$.
		\end{enumerate}
	\end{lem}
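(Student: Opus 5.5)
We first pass to a finite-index free abelian subgroup $A_0\le A$ (possible since $A$ is finitely generated virtually abelian), and then use \cite[Thm. 5.1]{PetytUnbounded23}, as in the proof of \cite[Prop. 2.17]{HRSS:3-manifold}. The aim is to find the set $\mathfrak B$ of all $U\in\mathfrak S$ for which $\pi_U(A\cdot x)$ is unbounded, and to show that its elements are pairwise orthogonal. Independence of $x$ is immediate, since $\pi_U$ is coarsely lipschitz and $\dist(x,ax)$ differs from $\dist(y,ay)$ by at most $2\dist(x,y)$. Petyt--Spriano's classification then says that $\mathfrak B$ is a nonempty (because $A$-orbits in $X$ are unbounded, by the uniqueness axiom) $A$-invariant set of pairwise orthogonal domains. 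In particular $|\mathfrak B|\le\hhscomp$. So there is a finite-index $A_1\le A_0$ fixing each $U\in\mathfrak B$, for instance the kernel of the permutation action of $A_0$ on $\mathfrak B$.

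Next we build the boundary points. For each $U\in\mathfrak B$, the group $A_1$ acts on $\mathcal CU$ by isometries with unbounded orbits. Hierarchical semisimplicity (Definition \ref{defn:hierarchically-semisimple}) makes every element with unbounded orbits loxodromic, so some $a_U\in A_1$ is loxodromic on $\mathcal CU$. Since $A_1$ is abelian, every element of $A_1$ commutes with $a_U$ and therefore preserves the fixed pair $\{a_U^{\pm\infty}\}\subseteq\partial\mathcal CU$. Passing to an index $\le 2$ subgroup fixes both points; doing this for every $U\in\mathfrak B$ gives a subgroup of index at most $2^{\hhscomp}$. Call the result $A'$.

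Now define $p^\pm\in\partial X$ by $\support(p^\pm)=\mathfrak B$, $\lambda^U=a_U^{\pm\infty}$, and weights $a_U=1/|\mathfrak B|$. The sign choices have to be made coherently, but any choice suffices, since both $p^+$ and $p^-$ have every pairing of endpoints fixed. Items \eqref{it: identical support} and \eqref{it: distinct projections} then hold by construction: the two endpoints of a loxodromic are distinct. Item \eqref{it: invariant boundary points} follows because $A'$ fixes each $U\in\mathfrak B$ and each chosen endpoint, and acts on $\partial X$ via the action on supports and Gromov boundaries described before Lemma \ref{lem:invariant-down-rho}. Weights are preserved since $a_{gU}=a_U$. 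Item \eqref{it: unbounded projections} is exactly the characterisation of $\mathfrak B$.

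The main obstacle is the first step: extracting from \cite[Thm. 5.1]{PetytUnbounded23} that the domains with unbounded orbit projections are pairwise orthogonal. This should rule out transverse or nested pairs, for example via transverse $U,V$ both having unbounded projections of an abelian orbit, which forces free subgroups, or via a nested pair where bounded geodesic image bounds the orbit in the smaller domain. If that theorem is only stated for cyclic groups, I would apply it to a finite generating set of $A_0$ and to suitable products, and use commutation together with the consistency axioms to rule out transversality between the big domains of different generators.
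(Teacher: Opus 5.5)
There is a genuine gap, at exactly the step you flag. As quoted in the paper, \cite[Thm. 5.1]{PetytUnbounded23} gives pairwise orthogonal, $A$--permuted domains $U_1,\ldots,U_n$ with $\pi_{U_i}(A\cdot x)$ unbounded, such that every $W$ with $\pi_W(A\cdot x)$ unbounded satisfies $W\nest U_i$ for some $i$. It does not say that every such $W$ \emph{is} one of the $U_i$. So your $\mathfrak B$ is only known to consist of the $U_i$ together with possibly some domains properly nested in them. Domains nested under different $U_i$ are orthogonal, so every failure of orthogonality in $\mathfrak B$ comes from some $W\propnest U_i$. Unless you exclude those, a point with support $\mathfrak B$ is not a point of $\boundary X$ at all.

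Your suggested repair via bounded geodesic image does not close this. For $W\propnest U_i$, that axiom only bounds $\dist_W(ax,bx)$ when the $\mathcal CU_i$--geodesic from $\pi_{U_i}(ax)$ to $\pi_{U_i}(bx)$ avoids $\neb_E(\rho^W_{U_i})$. In other words, it shows that orbit points far from $\rho^W_{U_i}$ project near $\pi_W(p^+)\cup\pi_W(p^-)$. It says nothing about orbit points whose $\mathcal CU_i$--projection lies near $\rho^W_{U_i}$. When $\rank A\geq 2$ there can be infinitely many of these, for example an orbit of a subgroup acting on $\mathcal CU_i$ with bounded orbits. Their $W$--projections are exactly what must be bounded.

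The paper sidesteps this by taking $\support(p^\pm)=\{U_1,\ldots,U_n\}$ rather than $\mathfrak B$, so $p^\pm$ are genuine boundary points using only the Petyt--Spriano output. With that choice, the rest of your argument gives (1)--(3) and the ``if'' half of (4). Your use of commutation with a loxodromic to get the invariant pair of endpoints is a neat elementary substitute for the paper's appeal to the lineal/focal/general-type classification. You should add one line producing a single element with unbounded orbits on $\mathcal CU$: if every generator of the abelian group $A_1$ had bounded orbits, so would $A_1$.

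The ``only if'' half of (4) for $W\propnest U_i$ still needs an argument, which the paper's proof does not spell out either. One way is as follows. Let $\gamma_V=\hull_V(p^-,p^+)$; by Claim \ref{claim: almost invariant hull (non-orthogonal)}, $a\gamma_V=\gamma_{aV}$ for $a\in A$. Realisation applied to the consistent tuple of Claim \ref{claim: consistent z} gives $z$ and a constant $c$ with $\dist_V(z,\gamma_V)\leq c$ for all $V\notin\support^\perp(p^\pm)$. Hence $\dist_V(az,\gamma_V)=\dist_{a^{-1}V}(z,\gamma_{a^{-1}V})\leq c$ for all $a\in A$. Finally, $\gamma_V$ is bounded when $V\propnest U_i$, because $\pi_V(p^\pm)$ are then bounded sets.
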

	\begin{proof}
		Since $A$ is assumed to be finitely generated and to act with unbounded orbits, \cite[Thm. 5.1]{PetytUnbounded23} produces $1 \leq n \leq \hhscomp$ and domains $U_1,\ldots,U_n\in\mathfrak S$ such that the following hold for any choice of basepoint $x_0\in X$:
		\begin{itemize}
			\item $U_i\orth U_j$ for $1\leq i<j\leq n$ (which is why $n\leq \hhscomp$);
			\item for all $W\in\mathfrak S$, if the set $\pi_W(A\cdot x_0)$ is unbounded, then $W\nest U_i$ for some $i\leq n$;
			\item $\pi_{U_i}(A \cdot x_0)$ is unbounded for all $i\in\{1,\ldots,n\}$;
			\item $A$ preserves the set $\{U_i\}_{i=1}^n$.
		\end{itemize}
		
		Let $\dddot A\leq A$ be the kernel of the $A$--action on $\{U_i\}_{i=1}^n$; note that $[A:\dddot A]\leq \hhscomp!$.  Let $\ddot A\leq \dddot A$ be a finite index abelian subgroup.  
		
		For each $i\leq n$, the subgroup $\ddot A$ acts on $\mathcal CU_i$ with unbounded orbits.  Let $a_1,\ldots,a_k\in \ddot A$ generate $\ddot A$.  If each $\langle a_j \rangle$ acts on $\mathcal CU_i$ with bounded orbits, then since $\ddot A$ is abelian, $\ddot A\cdot\pi_{U_i}(x_0)$, and hence $\pi_{U_i}(A\cdot x_0)$, is bounded, contradicting the choice of $U_i$.  Combined with the hierarchical semisimplicity assumption, this implies that some $b_i \in \ddot A$  acts on $\mathcal CU_i$ loxodromically, for each $i \in \{1, \dots, n\}$.  The classification of actions on hyperbolic spaces (see, for instance, \cite[Prop. 3.1]{CCMT:amenable}) therefore allows three possibilities for the $\ddot A$--action on $\mathcal CU_i$: it can be \emph{lineal, focal,} or \emph{general-type}.  Since $\ddot A$ is abelian, it contains no rank-$2$ free semigroup, which rules out general-type and focal actions.
		
		Hence $\boundary\mathcal CU_i$ contains exactly two points, denoted $p_i^\pm$, such that $\{p_i^+, p_i^-\}$ is $\ddot A$-invariant; these are the unique fixed points for each loxodromic element of $\ddot A$. Let $p^+, p^- \in \partial X$ be the boundary points whose supports are $\support(p^+) = \support(p^-) = \{U_1, \dots, U_n\}$, whose constituent boundary points are $p_i^+, p_i^- \in \partial \mathcal CU_i$, respectively, for each $i \in \{1, \dots, n\}$, and with coefficients $\{a_U = 1/n : U \in \support(p^\pm)\}$. Let $A' \leq \ddot A$ be a further finite index subgroup which fixes each $p_i^\pm$. Then Items~\eqref{it: invariant boundary points}--\eqref{it: unbounded projections} all hold.
	\end{proof}

	\begin{defn} \label{defn:P(A)}
		Let $P = P(A) \subseteq X$ be the set of $x$ such that $\dist_V(x,\rho^U_V)\leq 10E + \theta$ whenever $U\in\support(p^\pm)$ and $V\in\mathfrak S$ satisfies $U\propnest V$ or $V\transverse U$. 
	\end{defn}
	
	\begin{remark}
		Note that $P$ is $A$--invariant, nonempty, and $\kappa'$--median quasiconvex, where $\kappa'$ just depends on the HHS parameters. Thus, by Proposition \ref{prop:RST} and Lemma \ref{lem: HHS structure on HQC subspaces}, $P$ is an HHS with parameters depending only on the HHS parameters of $X$. Up to enlarging $E$ uniformly, we can assume that $E$ is the HHS constant for $P$. We also normalise (see \cite[Rem. 1.3]{BehrstockHierarchically19}) so that $\diam(\mathcal CU) \leq 30E+2\theta$ for all $U \in \mathfrak S$ such that $V\propnest U$ or $V\transverse U$ for some $V \in \support(p^+)$. From now on, we work mostly with $P$ and its HHS structure.  It may be helpful to observe that in the case where $\support(p^+)$ has a single element $U$, the subspace $P$ is defined exactly like the standard product region $P_U$, except using a different constant; $P$ and $P_U$ are at finite Hausdorff distance.  
	\end{remark}
	
	\begin{defn} \label{defn:U,P and X(A)}
		Let $\mathfrak U \coloneqq \{U \in \mathfrak S : U \nest V$ for some $V \in \support(p^+)\}$ and let $k \geq 0$.
		\begin{itemize}
			\item For each $U \in \mathfrak S - \support^\perp(p^\pm)$, let $\gamma_U \coloneqq \hull_U(\{p^+,p^-\})$ (i.e. the union of all $(1,20E)$--quasi-geodesics joining $\pi_U(p^-)$ to $\pi_U(p^+)$).
			\item Let $(\newcone P, \mathfrak S - \mathfrak U)$ be the canonical $\mathfrak U$--cone-off space of $P$ from Definition \ref{defn:modern-cone-off} and Corollary \ref{cor:uniform-cone-off} and let $c:P \rightarrow \newcone P$ be the (set-theoretic) identity.
			\item Let $\overline P_k = \overline P_k(A) \coloneqq \{x \in P : \dist_V(x, \gamma_V) \leq k$ for all $V \in \mathfrak S - \support^\perp(p^\pm)\}$.
			\item Let $X_k(A) \coloneqq \{x \in \overline P_k : c(x) \in Z(A,\newcone P,k)\}$, where $Z(A,\newcone P, k) = \{x \in \newcone P: \dist(x,ax) \leq k \; \forall a \in A\}$.
			\item Let $k_0$ be the constant obtained by applying Proposition \ref{prop:bounded-orbits} to the HHS $(\newcone P, \mathfrak S - \mathfrak U)$. Let $k_1 \coloneqq k_0(2k_0 + 1)$ and let $k_2,k_3,k_4$ be the corresponding constants from Proposition \ref{prop:bounded-orbits}. Let $\widecheck Z \coloneqq Z(A,\newcone P,k_1)$.
            \qedhere
		\end{itemize}
	\end{defn}
	
	\begin{prop} \label{prop: almost invariant hull}
		There exist constants $k \geq k_1$, $M \geq M' \geq \theta$ and $L_1,L_2,L_3.L_4 \geq 1$, depending only on the HHS parameters, such that the following hold.
		\begin{enumerate}
			\item \label{it:P(A) nonempty and HQC} $\overline P = \overline P(A) \coloneqq \overline P_k$ is nonempty, $A$--invariant, and uniformly median quasi-convex.
			
			\item \label{it:X(A) nonempty and a median subalgebra} $X(A) \coloneqq X_k(A)$ is a nonempty, $A$--invariant uniformly coarse median subalgebra.
			
			\item \label{it:choice-of-z}
			The image $c(X(A))$ is $L_1$--Hausdorff close to $\widecheck Z$. In addition, there is a $(L_1,L_1)$--quasi-isometry $\Phi: c(X(A)) \to Y$, where $Y$ is an injective space and, equipping $Y$ with the trivial $A$--action, $\Phi$ is $A$--equivariant.
			
			\item \label{it: almost an invariant convex hull} 
			$H_{M',z}(p^-,p^+) \neq \emptyset$ and $A \cdot H_{M',z}(p^-,p^+) \subseteq H_{M,z}(p^-, p^+)$ for all $z \in X(A)$. 
			
			\item \label{it: properties of H} Fix $z_0 \in X(A)$ and let $H \coloneqq A \cdot H_{M',z_0}(p^-,p^+)$. 
			Then 
			\begin{itemize}
				\item $H$ is $\kappa$--median convex in $P$, where $\kappa$ depends only on the HHS parameters;
				\item there is an $A$--equivariant, $L_2$--quasi-median gate map $\gate_H: P \rightarrow H$;
				\item there is a  connected, proper, finite rank median space $Q$, equipped with an $A$--action and an $A$--equivariant, $L_2$--quasi-median, $(L_2,L_2)$--quasi-isometry $\Psi: H \rightarrow Q$. 
			\end{itemize}
			\item \label{it:barP(A) is a quasi-product}The product map $c|_{\overline P} \times \gate_H|_{\overline P} : \overline P \rightarrow \newcone P \times H$ is an $A$--equivariant, $L_3$--quasi-median, $(L_3,L_3)$--quasi-isometry.
			\item \label{it:X(A) is a quasi-product} Let $\xi \coloneqq \Phi \circ c|_{X(A)}$ and $\eta \coloneqq \Psi \circ \gate_H|_{X(A)}$. Then $\xi \times \eta: X(A) \rightarrow Y \times Q$ is an $A$--equivariant $(L_4,L_4)$--quasi-isometry. 
            \item \label{it: X(A) is EHQC and a coarse retract} $X(A)$ is uniformly EHQC and a uniformly coarse lipschitz retract of $X$.
		\end{enumerate}
	\end{prop}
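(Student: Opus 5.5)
I would prove the items roughly in order, working throughout inside the HHS $(P,\mathfrak S)$ and using two tools. The first is the coordinate description of $P$: for $V\in\mathfrak U$ coordinates are free, for $V\in\support^\perp(p^\pm)$ they are free, and all other coordinates are uniformly bounded by the normalisation. The second is the canonical cone-off $c:P\to\newcone P$, which kills exactly the $\mathfrak U$--coordinates.

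\textbf{Items (1) and (2).} For item (1), $A$--invariance of $\overline P_k$ is immediate from Lemma \ref{lem: fixed boundary points}: $A'$ fixes $p^\pm$, and $A$ permutes $\{p^+,p^-\}$ and $\support(p^\pm)$. Lemma \ref{lem: equivariant boundary projections} then gives $a\gamma_V=\gamma_{aV}$.

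Nonemptiness is by realisation (Theorem \ref{thm:hhs_realisation}). Choose $b_V\in\gamma_V$ coherently for $V\in\mathfrak U$, starting from some $x\in P$. Take $b_V$ close to $\pi_V(p^\pm)$ or $\rho$--sets for the remaining non-orthogonal $V$, and keep $b_V=\pi_V(x)$ for $V\in\support^\perp$. Consistency of this tuple is a routine check with bounded geodesic image.

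Median quasiconvexity holds coordinatewise: each $\gamma_V$ is quasiconvex, so the median of points of $\overline P_k$ projects near $\gamma_V$. Then Proposition \ref{prop:RST} applies.

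For item (2), $X(A)=\overline P\cap c^{-1}(Z(A,\newcone P,k))$. It is a coarse subalgebra because $c$ is quasimedian (Corollary \ref{cor:uniform-cone-off}) and $Z$ is a coarse subalgebra by Proposition \ref{prop:bounded-orbits}(3). Intersections of a median-quasiconvex set with a subalgebra remain coarse subalgebras, after enlarging $k$.

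Nonemptiness of $X(A)$ needs a point with both properties. I would start from $\check z\in\widecheck Z$ (Proposition \ref{prop:bounded-orbits}(1)) and realise a point with $\mathfrak S-\mathfrak U$ coordinates equal to those of $\check z$ and $\mathfrak U$ coordinates on the $\gamma_V$. This point is coarsely $\check z$ in $\newcone P$, since the cone-off only forgets the $\mathfrak U$--coordinates (distance formula).

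\textbf{Items (3)--(5).} This same realisation argument gives item (3): every point of $\widecheck Z$ is close to $c(X(A))$, and conversely $c(X(A))\subseteq Z(A,\newcone P,k)$, which is Hausdorff close to $\widecheck Z$ by Proposition \ref{prop:bounded-orbits}(5). Composing with $q$ from Proposition \ref{prop:bounded-orbits}(2) gives $\Phi$.

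For item (4), the hull $H_{M',z}(p^-,p^+)$ is nonempty because $z\in X(A)$ already lies close to all the relevant hulls. It remains to check $aH_{M',z}\subseteq H_{M,z}$. For $V\notin\support^\perp$ the hulls are $z$--independent and $A$--invariant. For $V\in\support^\perp$ the relevant quantity is $\pi_V(az)$ versus $\pi_V(z)$, and these are $\newcone P$--coordinates, bounded because $c(z)\in Z(A,\newcone P,k)$ and the distance formula for $\newcone P$ applies.

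For item (5): Lemma \ref{lem: hulls are quasiconvex} and Proposition \ref{prop:RST} give median convexity of $H$ (sandwiched between two hulls). The gate map is Lemma \ref{lem:equivariant-gate}, using freeness. The median space $Q$ comes from Theorem \ref{thm: median hulls}, applied to $H$ with its induced structure from Lemma \ref{lem: HHS structure on HQC subspaces}; this uses $H=H^H_{M,z}(p^-,p^+)$ for all $z\in H$, as in the proof of Proposition \ref{prop:bounded-orbits}.

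\textbf{Items (6)--(8).} For item (6) I would use the distance formula. On $\overline P$, the coordinates split into three kinds. Those in $\mathfrak U$ are seen by $\gate_H$, since the gate only changes coordinates off $\mathfrak U$ and $\pi_V(H)$ coarsely equals $\gamma_V$. Those in $\mathfrak S-\mathfrak U$ are seen by $c$. The remaining coordinates are bounded. The equivariant quasi-inverse is realisation applied to the combined tuple; coarse surjectivity onto $\newcone P\times H$ is again realisation. I expect this to be the main obstacle, specifically checking consistency between $\mathfrak U$ and $\support^\perp$ coordinates for arbitrary pairs. Orthogonality should make those pairs automatic, but the relations between $\mathfrak U$ and transverse domains with bounded diameter need care.

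Item (7) follows by restricting item (6) to $X(A)$ and using items (3) and (5). Surjectivity uses that $X(A)$ is, coarsely, the preimage of $c(X(A))\times H$.

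For item (8), $X(A)$ is quasi-isometric to $Y\times Q$. EHQC paths are then built as products: in $H$ take $\Psi^{-1}$ of Bowditch CAT(0) geodesics, as in Proposition \ref{prop:bounded-orbits}; in the $\newcone P$ factor use the EHQC paths of $Z$. The product is quasimedian, hence a hierarchy path by Proposition \ref{prop:hierarchy-path-char}.

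The retraction is the composite of three maps: the gate $X\to P$, the retraction of $\overline P$ inside $P$, and $(\text{retraction onto }Z)\times\mathrm{id}$, pulled back through item (6).
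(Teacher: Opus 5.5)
Your outline follows the paper's architecture closely: realisation for nonemptiness, Proposition~\ref{prop:bounded-orbits} for (3), a coordinate computation for (4), and Theorem~\ref{thm: median hulls} for (5). For (6) you split the distance formula into $\mathfrak U$--terms controlled by $\gate_H$ and $(\mathfrak S-\mathfrak U)$--terms controlled by $c$, and for (7)--(8) you use restriction, product paths and $\rho\times\mathrm{id}_H$.

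\textbf{The gap.} The step that fails is your justification of $A$--invariance in (1), on which (4) also rests. Lemma~\ref{lem: fixed boundary points} only says that the finite-index subgroup $A'$ fixes $p^\pm$. It is not true that $A$ permutes $\{p^+,p^-\}$. For instance, let $A=\integers^2\rtimes\langle r\rangle$ with $r(x,y)=(-x,y)$ act on $\integers^2$ with its product HHS structure, where $U_1\orth U_2$ and $\mathcal CU_1,\mathcal CU_2$ are the coordinate lines. Then $r$ fixes $U_1$ and $U_2$, swaps the two points of $\partial\mathcal CU_1$ and fixes those of $\partial\mathcal CU_2$. So $rp^+$ is neither $p^+$ nor $p^-$, whatever labelling is chosen.

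Hence Lemma~\ref{lem: equivariant boundary projections} alone only gives $a\gamma_V=\hull_{aV}(\{ap^-,ap^+\})$. What you need is the coordinatewise statement that $a$ maps the unordered pair $\{\pi_U(p^+),\pi_U(p^-)\}$ onto $\{\pi_{aU}(p^+),\pi_{aU}(p^-)\}$ for $U\in\support(p^\pm)$, and this requires an argument. The paper takes $b\in\ddot A$ loxodromic on $\mathcal CU$. Then $aba^{-1}$ is loxodromic on $\mathcal CaU$ with fixed pair $a\{\pi_U(p^\pm)\}$. Some nonzero power of $aba^{-1}$ lies in $\ddot A$, and all loxodromics of $\ddot A$ on $\mathcal CaU$ have fixed pair $\{\pi_{aU}(p^\pm)\}$, because that action is lineal. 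Since $\gamma_U$ depends only on this unordered pair, and $\pi_V(p^+)=\pi_V(p^-)$ is a union of $\rho$--sets whenever $V$ is transverse to or properly contains a support element, one gets $a\gamma_V=\gamma_{aV}$ for all $V\notin\support^\perp(p^\pm)$.

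\textbf{Smaller points.}
\begin{itemize}
\item To apply Proposition~\ref{prop:bounded-orbits} you need $A$ to have bounded orbits on $\newcone P$. This does not follow just from each projection $\pi_V(A\cdot y)$, $V\in\mathfrak S-\mathfrak U$, being bounded; the paper gets it from \cite[Thm.~5.1]{PetytUnbounded23}.
\item The consistency check you call routine is the main technical part of the paper's proof. It takes $x_0\in P'$, which exists by partial realisation, uses closest-point projections of $\pi_U(x_0)$ to $\gamma_U$, and takes the $\support^\perp$--coordinates from an arbitrary $y_0\in\newcone P$. Taking $y_0\in\widecheck Z$ gives nonemptiness of $\overline P$ and $X(A)$ at once, and this is what fixes $k=\kappa r_0+k_1$.
\item ``A median-quasiconvex set intersected with a coarse subalgebra is a coarse subalgebra'' fails in general with uniform constants. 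In (2) it works only because the same realisation argument moves a point that is near $\overline P$ and whose $c$--image is near $Z(A,\newcone P,k)$ to a nearby point of $X_k(A)$.
\item In (6), consistency with the bounded domains is not an obstacle. After normalising $P$ they have diameter at most $30E+2\theta$, so those inequalities hold automatically.
\end{itemize}
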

	
	\begin{proof}
		We start with a claim which will help to establish the $A$--invariance of the various subspaces we are interested in.
		
		\setcounter{claim}{0}
		\begin{claim} \label{claim: almost invariant hull (non-orthogonal)}
			If $U \in \mathfrak S - \support^\perp(p^\pm)$ and $a \in A$ then $a \gamma_U = \gamma_{aU}$.
		\end{claim}
		\begin{proofofclaim}{\ref{claim: almost invariant hull (non-orthogonal)}}
			Let $a \in A$ and $U \in \support(p^\pm)$. Recall that there is an element $b \in \ddot A$ which acts loxodromically on $\mathcal CU$. So $aba^{-1}$ acts loxodromically on $\mathcal CaU$ with quasi-axis $a\gamma_U$. Some nonzero power of $aba^{-1}$ belongs to $\ddot A$, and therefore has $\{\pi_{aU}(p^+),\pi_{aU}(p^-)\}$ as its unique fixed points. Thus $a\gamma_U = \gamma_{aU}$. It follows that $a \gamma_V = \gamma_{aV}$ for all $V \in \mathfrak S - \support^\perp(p^\pm)$.
		\end{proofofclaim}

		We now prepare to use the realisation theorem to find $k$. Let 
        \[
        P' \coloneqq \{x \in P : \dist_V(x, \gamma_V) \leq E \; \forall \; V \in \mathfrak S \text{ such that } V \transverse W \text{ or } W \nest V \text{ for some } W \in \support(p^\pm)\}.
        \]
        By the partial realisation axiom (\cite[Defn. 1.1.(8)]{BehrstockHierarchically19}), $P'\neq\emptyset$.
		
		Recall that $(\newcone P, \mathfrak{S - U})$ is an HHS with parameters $(\check E, {\check\theta_u})$ depending only on the HHS parameters of $(P, \mathfrak S)$, and $A$ acts on $\newcone P$ by isometries and on $(\newcone P, \mathfrak{S - U})$ by HHS automorphisms.  As a set, $\newcone P = P$, and the identity map $c: (P,\dist) \rightarrow (\newcone P, \check \dist)$ is $A$--equivariant and uniformly quasi-median.  Since the set $\pi_V(A\cdot y)$ is bounded for all $U \in \mathfrak {S - U}$ and $y \in P$, \cite[Thm. 5.1]{PetytUnbounded23} implies that $A$ acts on $\newcone P$ with bounded orbits. Thus Proposition \ref{prop:bounded-orbits} implies that $\widecheck Z \neq \emptyset$.
				
		\begin{claim} \label{claim: consistent z}
			There exists $\kappa > 0$, depending only on the HHS parameters such that the following holds.
			Let $x_0 \in P'$ and $y_0 \in \newcone P$. For each $U \in \support^\orth(p^\pm)$, let $z_U \in \pi_U(y_0)$ and, for each $U \in \mathfrak S - \support^\orth(p^\pm)$, let $z_U \in \gamma_U$ be the closest point projection of $\pi_U(x_0)$ onto $\gamma_U$. Then $(z_U)_{U \in \mathfrak S}$ is $\kappa$-consistent.
		\end{claim}
		\begin{proofofclaim}{\ref{claim: consistent z}}
			Let $U, V \in \mathfrak S$ and suppose $U \transverse V$ or $U \propnest V$. If $U,V \in \support^\perp(p^\pm)$ then the required inequality follows from the consistency axioms applied to $y_0 \in \newcone P$. So suppose without loss of generality that $V \notin \support^\perp(p^\pm)$. 
			
			Suppose that $V$ is not nested in any $W \in \support(p^\pm)$. Then the set $\mathcal W_V \coloneqq \{W \in \support(p^\pm) : W \propnest V$ or $W \transverse V\}$ is nonempty and $\pi_V(p^+) = \pi_V(p^-) = \cup_{W \in \mathcal W_V} \rho^W_V$. So, since the elements of $\mathcal W_V$ are pairwise orthogonal, \cite[Lem.~1.5]{DurhamBoundaries17} implies that $\diam(\gamma_V)$ is uniformly bounded in terms of $E$. Also, $d_V(x_0,z_V) \leq E$, by the definition of $z_V$.
			
			Hence, if $d_V(\rho^U_V, \gamma_V) \leq 10E$, then $\dist_V(\rho^U_V,z_V)$ is bounded uniformly in terms of $E$, and we are done.
			Suppose that $\dist_V(\rho^U_V,\gamma_V)>10E$, so in particular, $\dist_V(z_V,\rho^U_V) > 10E$ and $d_V(x_0, \rho^U_V) > 9E$.  If $W\in\mathcal W_V$, then $\dist_V(\rho^U_V,\rho^W_V)>10E$, so $U,W$ are not $\nest$--related (because of \cite[Defn. 1.1.(4)]{BehrstockHierarchically19}) and are not orthogonal (by \cite[Lem. 1.5]{DurhamBoundaries17}), so $U\transverse W$.  In particular, $W\in \mathcal W_U$, which is therefore nonempty, and $\pi_U(p^+) = \pi_U(p^-) = \cup_{W \in \mathcal W_U} \rho^W_U$.  Thus $\gamma_U$ has uniformly bounded diameter (in terms of $E$) and $d_V(x_0,z_U) \leq E$.
			
			If $U\transverse V$, then $\dist_U(x_0,\rho^V_U)\leq E$, by consistency of $x_0$.  Hence $\dist_U(\rho^V_U,z_U)\leq 2E$, as required.  The other possibility is that $U\propnest V$. 
            In this case $\diam_U(\rho^V_U(\pi_V(x_0))\cup\pi_U(x_0))\leq E$, by the consistency axiom for nested domains, applied to $x_0$ (see \cite[Defn. 1.1.(4)]{BehrstockHierarchically19}). The bounded geodesic image axiom \cite[Defn. 1.1.(7)]{BehrstockHierarchically19} further implies that $d_U(\rho^V_U(\pi_V(x_0)),\rho^V_U(z_V)) \leq E$, which implies that $d_U(z_U,\rho^V_U(z_V)) \leq 3E$.

			It remains to consider the case where $V \nest W$ for some $W \in \support(p^\pm)$.
			Then $U \notin \support^\perp(p^\pm) \cup \support(p^\pm)$. If $W' \propnest U$ or $W' \transverse U$ for some $W' \in \support(p^\pm)$ then $V \orth W'$ by \cite[Defn. 1.1.(3)]{BehrstockHierarchically19}, so
            $d_U(\rho^V_U, z_U) \leq d_U(\rho^V_U,\rho^{W'}_U) +\diam(\rho^{W'}_U)+ d_U(\rho^{W'}_U, z_U) \leq 10E$ (using \cite[Lem.~1.5]{DurhamBoundaries17} and the definition of $z_U$). So suppose that $U \propnest W'$ for some $W' \in \support(p^\pm)$. Since $U \not\perp V$, we have $W = W'$. 
            
            Suppose that $V = W$ and $d_V(z_V, \rho^U_V) > 10E + 2\Morse_E(1,20E)$. Since $x_0 \in P'$, we have $\dist_V(x_0,z_V) \leq E$. Assume without loss of generality that $\pi_V(x_0)$ and $z_V$ are in the same connected component of $\mathcal CV - \mathcal N_{10E + 2\Morse_E(1,20E)}(\rho^U_V)$ as $\pi_V(p^+)$. Then, using \cite[Defn. 1.1.(4,7)]{BehrstockHierarchically19}, we have 
            \begin{align*}
                &\dist_U(x_0, \pi_U(p^+)) \leq \dist_U(x_0,\rho^V_U(\pi_V(x_0)) + \dist_U(\rho^V_U(\pi_V(x_0)), \pi_U(p^+)) + E \leq 3E; \\
                &\dist_U(x_0,\rho^V_U(z_V)) \leq \dist_U(x_0,\rho^V_U(\pi_V(x_0))) + \dist_U(\rho^V_U(\pi_V(x_0)), \rho^V_U(z_V)) + E \leq 3E.
            \end{align*}
            We therefore have $\dist_U(x_0,z_U) \leq 3E$, so $\dist_U(z_U,\rho^V_U(z_V)) \leq 10E$.

			Finally, suppose $V \propnest W$. There exist uniform neighbourhoods $B_U, B_V$ of the coarse projections of $\rho^U_W, \rho^V_W$ onto the uniform quasiline $\gamma_W$ such that $B_U$ and $B_V$ each disconnect $\mathcal N_\delta(\gamma_W)$, where $\delta \geq 0$ is a uniform constant (depending only on $E$ and $\theta$) such that $\mathcal N_\delta(\gamma_W)$ is connected. Let $b^-, b^+ \in X$ be such that $\pi_W(b^-), \pi_W(b^+)$ are  points in the connected components of $\mathcal N_\delta(\gamma_W) - (B_U \cup B_V)$ containing $\pi_W(p^-)$ and $\pi_W(p^+)$ respectively. Using the definition of $z_U$, hyperbolicity of $\mathcal CV$ and the bounded geodesic image axiom, we find that $\pi_U(b^\pm) \in \mathcal N_E(\pi_U(p^\pm))$ and $\pi_V(b^\pm) \in \mathcal N_E(\pi_V(p^\pm))$. Applying \cite[Lem. 2.6]{BehrstockHierarchically19} to the triple $b^+, b^-, x$ shows that $z_U, z_V$ satisfy the $\kappa'$-consistency condition, where $\kappa'$ depends only on the HHS parameters.
		\end{proofofclaim}
		
		Let $r_0$ be the constant from the realisation theorem (Theorem~\ref{thm:hhs_realisation}) and let $k \coloneqq \kappa r_0 + k_1$.
		Then the realisation theorem together with Claim \ref{claim: consistent z} imply that $\overline P \coloneqq \overline P_k$ and $X(A) \coloneqq X_k(A)$ are nonempty, and $\overline P$ is uniformly median quasi-convex. Claim \ref{claim: almost invariant hull (non-orthogonal)} implies that $\overline P$ is $A$--invariant. Thus item \eqref{it:P(A) nonempty and HQC} holds. 
		
		Using Proposition \ref{prop:bounded-orbits}, there is a constant $k' \geq k$ such that :
		\begin{itemize}
			\item $d_{Haus}(c(X(A)), \widecheck Z)\leq k'$ and $\dist_{Haus}(c(X(A)),Z(A,\newcone P,k)) \leq k'$;
			\item there is a non-empty injective space $Y$ equipped with the trivial $A$--action and a $(k',k')$--quasi-isometry $q: Z(A,\newcone P,k) \rightarrow Y$;
			\item the sets $\widecheck Z$ and $Z(A,\newcone P,k)$ are $(k',k')$-EHQC and $k'$-coarse median subalgebras of $\newcone P$.
		\end{itemize}
		It follows that, for some $L_1 \geq k'$ depending only on $k'$ and $E$, the subset $c(X(A))$ is $(L_1,L_1)$-EHQC and a $L_1$-coarse median subalgebra of $\newcone P$, and the restriction $\Phi$ of $q$ to $c(X(A))$ is an $(L_1,L_1)$-quasi-isometry $c(X(A)) \rightarrow Y$. For all $U \in \mathfrak{S-U}$ the projection $\pi_U(X(A))$ is uniformly quasiconvex in $\mathcal CU$, so it follows that $X(A)$ is also a uniformly coarse median subalgebra of $P$. Thus items \eqref{it:X(A) nonempty and a median subalgebra} and \eqref{it:choice-of-z} hold.
		
		\medskip
		
		\noindent\textbf{Almost invariant hulls.}
		Let $M' \coloneqq k + \theta$ and let $z \in X(A)$.
		The hull $H_{M',z}(p^-,p^+)$ consists of all points $x \in X$ such that $d_U(x,z) \leq M'$ if $U \in \support^\perp(p^\pm)$ and $d_U(x,\gamma_U) \leq M'$ if $U \in \mathfrak S - \support^\perp(p^\pm)$. In particular, $z \in H_{M',z}(p^-,p^+)$, so $H_{M',z}(p^-,p^+) \neq \emptyset$.
		
		Let $x \in H_{M',z}(p^-,p^+)$ and $U \in \mathfrak S$. If $U \in \support^\perp(p^\pm)$ then $a^{-1}U \in \support^\perp(p^\pm)$ and 
		\[
		d_{U}(ax,z) = d_{a^{-1}U}(x, a^{-1}z) \leq d_{a^{-1}U}(x,z) + d_{a^{-1}U}(z, a^{-1}z) \leq M' + k.
		\]
		If $U \notin \support^\perp(p^\pm)$ then $d_U(ax, \gamma_U) = d_{a^{-1}U}(x, \gamma_{a^{-1}U}) \leq M'$. Taking $M \coloneqq M' + k$, this proves that $A\cdot H_{M',z}(p^-,p^+)\subseteq H_{M,z}(p^-,p^+)$. Thus \eqref{it: almost an invariant convex hull} holds.

		Fix $z_0 \in X(A)$ and let $H \coloneqq A \cdot H_{M',z_0}(p^-,p^+)$.  Since $H_{M',z_0}(p^-,p^+) \subseteq H \subseteq H_{M,z_0}(p^-,p^+)$, Lemma \ref{lem: hulls are quasiconvex} implies that $H$ is uniformly HQC, and Lemma \ref{prop:RST} implies that $H$ is uniformly median-quasiconvex.  Hence it admits a gate map and, since the $A$--action on $X$ is free, we can assume that the gate map $\gate_H: P\to H$ is $A$--equivariant by Lemma \ref{lem:equivariant-gate}. The gate map  $\gate_H$ is uniformly coarsely lipschitz and uniformly quasimedian, by Construction \ref{cons:gate}. Recall that there is an HHS structure $(H, \mathfrak S_H)$ on $H$ with parameters depending only on the HHS parameters of $(P, \mathfrak S)$ and the quasiconvexity function of $H$ (see Lemma \ref{lem: HHS structure on HQC subspaces} and Remark \ref{rem:HQC-HHS}), which in turn depends only on the HHS parameters of $(P, \mathfrak S)$. By Theorem \ref{thm: median hulls}, there is a complete, connected, finite rank, proper median space $Q$, equipped with an isometric action of $A$ and a $A$-equivariant, uniformly quasi-median, uniform quality quasi-isometry $\Psi: H \rightarrow Q$. This proves item \eqref{it: properties of H}.

		\medskip
		
		\noindent\textbf{Quasi-isometry to a product.}

		\begin{claim}\label{claim:product-H-terms}
			There exists a constant $T$, depending only on the HHS parameters of $(X,\mathfrak S)$, such that for all $T'\geq T$ there exists $L_H$, depending only on the HHS parameters of $(P,\mathfrak S)$ and $T'$, such that
			\[\sum_{U\in\mathfrak U}\ignore{\dist_U(x_1,x_2)}{T'}\leq L_H\dist_H(\gate_H(x_1),\gate_H(x_2))+L_H\]
			for all $x_1,x_2\in \overline P$. 
		\end{claim}
		
		\begin{proofofclaim}{\ref{claim:product-H-terms}}
			Since the hierarchical quasiconvexity parameters for $H$ are uniform, there is a uniform constant $B_0$ such that for all $U\in\mathfrak U$ and $x \in P$, the projection $\pi_U(\gate_H(x))$ is $B_0$--close to the image of $\pi_U(x)$ under the coarse closest-point projection to the uniformly quasiconvex set $\pi_U(H)\subseteq \mathcal CU$ (see Construction \ref{cons:gate}).  
			
			If $U\in\mathfrak S-\mathfrak U$, then $\pi_U(H)$ has diameter bounded by a uniform constant $T_0$.  Indeed, if $U\transverse U_i$ or $U_i\propnest U$ for some $U_i$, then $\diam(\mathcal CU) \leq 30E+2\theta$ (since we are working in $P$, whose HHS structure has been normalised).  If $U\orth U_i$ for all $i$, then $\pi_U(H)$ uniformly coarsely coincides with $\pi_U(z_0)$, so it is bounded.  Hence $\dist_U(\gate_H(x_1), \gate_H(x_2))\leq \diam(\pi_U(H))+2B_0\leq T_0+2B_0$ for all such $U$.
			
			On the other hand, if $U\in\mathfrak U$ (i.e. $U\nest V$ for some $V \in \support(p^\pm)$) and $x \in \overline P$, then $\dist_U(\pi_U(x), \pi_U(H))$ is uniformly bounded, so $\pi_U(x)$ is uniformly close to $\pi_U(\gate_H(x))$. Thus, up to uniformly enlarging $B_0$, we have $\dist_U(x,\gate_H(x))\leq B_0$ for such $U$ and $x$.  Hence 
			\[
				\dist_U(x_1,x_2)-2B_0\leq \dist_U(\gate_H(x_1),\gate_H(x_2))\leq \dist_U(x_1,x_2)+2B_0
			\]
			for $U\in\mathfrak U$.
			
			By definition, $\dist_H(\gate_H(x_1),\gate_H(x_2))=\dist_P(\gate_H(x_1),\gate_H(x_2))$.  By the strong distance formula (Theorem \ref{thm:distance-formula}), applied to $(H,\mathfrak S_H)$, there is a uniform $T\geq T_0+2B_0$ such that for all $T'\geq T$, we have $L_H'$ (depending uniformly on $T'$) such that
			\begin{align*}
				\sum_{U\in\mathfrak S}\ignore{\dist_U(\gate_H(x_1),\gate_H(x_2))+2B_0}{T'} 
				&= \sum_{U\in\mathfrak U}\ignore{\dist_U(\gate_H(x_1),\gate_H(x_2))+2B_0}{T'} \\
				&\leq L_H' \dist_H(\gate_H(x_1),\gate_H(x_2))+L_H',
			\end{align*}
			where we used the description of $\mathfrak S_H$ from Remark \ref{rem:HQC-HHS} and the above bound on the distance formula terms corresponding to $U\in\mathfrak S-\mathfrak U$.  The claim follows since $\dist_U(x_1,x_2)\leq \dist_U(\gate_H(x_1),\gate_H(x_2))+2B_0$ for $U\in\mathfrak U$.
		\end{proofofclaim}

		\begin{claim}\label{claim:product-cone-terms}
			There exists $T''$, depending only on the HHS parameters of $(P,\mathfrak S)$, such that for all $T'\geq T''$, there exists $\widecheck{L}$, depending only on the HHS parameters of $(P,\mathfrak S)$ and $T'$, such that 
			\[\sum_{U\in\mathfrak S-\mathfrak U}\ignore{\dist_U(x_1,x_2)}{T'}\leq \widecheck L\dist_{\newcone P}(c(x_1),c(x_2))+\widecheck{L}\]
			for all $x_1,x_2\in P$.
		\end{claim}
		
		\begin{proofofclaim}{\ref{claim:product-cone-terms}}
			The pair $(\newcone P,\mathfrak S-\mathfrak U)$ is an HHS with uniform parameters, by Corollary \ref{cor:uniform-cone-off}, with the projections being specified in the proof of said corollary. Theorem \ref{thm:distance-formula}, applied to this HHS structure, yields the claim.
		\end{proofofclaim}
		
		Both $\gate_H$ and $c$ are uniformly coarsely lipschitz, coarsely surjective and quasi-median maps.
		To get the other bound needed to show that $c|_{\overline P} \times \gate_H|_{\overline P}$ is a quasi-isometry, let $T,T''$ be the uniform constants from Claims \ref{claim:product-H-terms} and \ref{claim:product-cone-terms}, let $T'\geq\max\{T,T''\}$, let $L_H,\widecheck{L}$ be the constants provided by those claims, for the given $T'$, and let $L \coloneqq \max\{L_H,\widecheck{L}\}$.  We can moreover assume that $T'$ is sufficiently large to be a threshold in the distance formula for $(P,\mathfrak S)$ (with the affine function in Theorem \ref{thm:distance-formula} taken to be the identity).  Then Theorem \ref{thm:distance-formula} yields a uniform constant $L'$ such that, for all $x_1,x_2\in \overline P$,
		\begin{eqnarray*}
			\dist_X(x_1,x_2)&\leq& L'\sum_{U\in\mathfrak S}\ignore{\dist_U(x_1,x_2)}{T'}+L'\\
			&= &L'\left[\sum_{U\in\mathfrak S-\mathfrak U}\ignore{\dist_U(x_1,x_2)}{T'}+\sum_{U\in\mathfrak U}\ignore{\dist_U(x_1,x_2)}{T'}\right]+L'\\
			&\leq& L'\left[L\left(\dist_{\newcone P}(c(x_1),c(x_2))+\dist_H(\gate_H(x_1),\gate_H(x_2)\right)+2L\right]+L'.
		\end{eqnarray*}
		
		Thus $c|_{\overline P} \times \gate_H|_{\overline P}$ is a quasi-median quasi-isometry with constants depending only on the HHS parameters.
		Now let $\xi \coloneqq \Phi \circ c|_{X(A)}$ and $\eta \coloneqq \Psi \circ \gate_H|_{X(A)}$. The maps $\Phi$, $\Psi$ and $c|_{\overline P} \times \gate_H|_{\overline P}$ are all $A$--equivariant uniform quality quasi-isometries, so $\xi \times \eta$ is an $A$--equivariant uniform quality quasi-isometry. This proves items \eqref{it:barP(A) is a quasi-product} and \eqref{it:X(A) is a quasi-product}. \\

		Since $c(X(A))$ is uniformly EHQC in $\newcone P$ and $H$ is uniformly HQC in $P$, it also follows from this that $X(A)$ is uniformly EHQC. To see that $X(A)$ is a coarse lipschitz retract, define a map $X \rightarrow X(A)$ by first applying the gate map to $\overline{P}$, then applying the quasi-isometry to $\newcone P \times H$, followed by the map $\rho \times \id_H$, where $\rho$ is the coarsely lipschitz retraction $\newcone P \rightarrow Z(A,\newcone P,k)$ given by Proposition \ref{prop:bounded-orbits}.\eqref{it: fixed point set coarse lipschitz retract}. By perturbing the resulting map uniformly so that it is the identity on $X(A)$, we obtain a uniformly coarse lipschitz retraction $X \rightarrow X(A)$.
        We have thus proved item \eqref{it: X(A) is EHQC and a coarse retract}.
	\end{proof}
	
	If $H \leq G$ is a pair of groups, we denote the commensurator of $H$ in $G$ by $\commensurator{G}{H}$: i.e. $\commensurator{G}{H}$ is the subgroup of $G$ consisting of all $g \in G$ such that $gHg^{-1} \cap H$ has finite index in both $H$ and $gHg^{-1}$.  In the next lemma, we continue to work with an HHS $(X,\mathfrak S)$ and a finitely generated virtually abelian group $A\leq \Isom(X)$ acting hierarchically semisimply by HHS automorphisms.
	
	\begin{lem} \label{lem:commensurator stabilises U etc.}
		Suppose $A \leq B$, and the action of $A$ extends to an action of $B$ on $X$ by HHS automorphisms. Then the following hold:
		\begin{enumerate}
			\item \label{item:commensurated-invariant-boundary} Suppose $\commensurator{B}{A}=B$.  Then $\support(p^\pm)$ is $B$--invariant. Thus $P \subseteq X$ is $B$-invariant and there is a finite-index subgroup $B'\leq B$ which fixes each $U\in\support(p^\pm)$ and fixes $\pi_U(p^\pm)\in \boundary\mathcal CU$ for each $U\in\support(p^\pm)$.  
			
			\item \label{item:commensurated-cone-off} Suppose $\commensurator{B}{A}=B$.  Then $\mathfrak U$ is $B$--invariant.  Hence the $B$--action on $(X,\mathfrak S)$ induces an action of $B$ on $(\newcone P,\mathfrak S-\mathfrak U)$ by HHS automorphisms making  $c:P\to\newcone P$ a $B$--equivariant map.
			
			\item \label{item:normaliser stabilises X(A)} If $A$ is normal in $B$, then $B$ stabilises $X(A)$.
		\end{enumerate}
        \end{lem}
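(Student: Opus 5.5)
The plan is to first prove a characterisation of $\support(p^\pm)$ that only sees a finite-index subgroup of $A$, and then transport it along conjugation. By Lemma \ref{lem: fixed boundary points}.\eqref{it: unbounded projections}, $U\in\support(p^\pm)$ if and only if $\pi_U(A\cdot x)$ is unbounded. For any finite-index $A_1\leq A$ the orbit $A\cdot x$ lies within finite Hausdorff distance of $A_1\cdot x$ (with $x$ fixed; the distance depends on coset representatives). Since $\pi_U$ is coarsely lipschitz, $\pi_U(A\cdot x)$ is unbounded exactly when $\pi_U(A_1\cdot x)$ is. So $\support(p^\pm)$ is the set of $U$ with $\pi_U(A_1\cdot x)$ unbounded, for any finite-index $A_1\leq A$.

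\textbf{Item \eqref{item:commensurated-invariant-boundary}.} Let $b\in B$ and set $A_1=A\cap bAb^{-1}$, which has finite index in $A$ and in $bAb^{-1}$. Then
\[\pi_{bU}(bA\cdot x)=b\,\pi_U(A\cdot x)\]
by Definition \ref{defn:HHS-automorphism}, and $bA\cdot x=(bAb^{-1})\cdot bx$. Comparing with $A_1$ as above, $\pi_{bU}(bA\cdot x)$ is unbounded iff $\pi_{bU}(A\cdot bx)$ is unbounded. Hence $U\in\support(p^\pm)$ iff $bU\in\support(p^\pm)$. It follows that $P$ is $B$-invariant, because Definition \ref{defn:P(A)} is phrased in terms of $\support(p^\pm)$, the relations, and $\rho$'s, all of which $B$ preserves.

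For the finite-index subgroup $B'$, take first the kernel of the action on the finite set $\support(p^\pm)$. The remaining step, and the one I expect to be the main obstacle, is to show that $\pi_U(p^\pm)$ are fixed up to finite index. The plan is to show that $b$ permutes the two-point set $\{\pi_U(p^+),\pi_U(p^-)\}\subseteq\boundary\mathcal CU$. These points are the fixed points of a loxodromic $c\in A'$ on $\mathcal CU$ (from the proof of Lemma \ref{lem: fixed boundary points}). Some power $c^m$ lies in $A_1\cap bA_1b^{-1}$, so $b^{-1}c^mb\in A$. Some further power $c^{mn}$ has $b^{-1}c^{mn}b\in A'$; this element acts on $\mathcal CU$ loxodromically or elliptically, and since it is conjugate to a loxodromic it is loxodromic, so its fixed points are $\pi_U(p^\pm)$. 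Thus $b$ maps $\{\pi_U(p^\pm)\}$ to the fixed set of $c^{mn}$, which is the same set. Passing to the index $\le 2$ kernel on each such pair gives $B'$.

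\textbf{Item \eqref{item:commensurated-cone-off}.} $\mathfrak U$ is defined as the downward closure of $\support(p^\pm)$, and $B$ preserves $\nest$, so $\mathfrak U$ is $B$-invariant. Then $(P,B,\mathfrak U)$ is cone-off data (restricting the action to the HQC set $P$ via Lemma \ref{lem: HHS structure on HQC subspaces}). Lemma \ref{lem:cone-off-action} and Corollary \ref{cor:uniform-cone-off}, applied with $B$ in place of $A$, give the isometric HHS-automorphism action on $\newcone P$, and the identity $c$ is tautologically equivariant.

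\textbf{Item \eqref{item:normaliser stabilises X(A)}.} Normal implies commensurated, so the previous items apply. Claim \ref{claim: almost invariant hull (non-orthogonal)}, rerun for $b\in B$ using the argument of item \eqref{item:commensurated-invariant-boundary}, gives $b\gamma_U=\gamma_{bU}$, so $b\overline P_k=\overline P_k$. For $x$ with $c(x)\in Z(A,\newcone P,k)$ we have
\[\check\dist(bx,abx)=\check\dist(x,b^{-1}abx)\le k,\]
since $b^{-1}ab\in A$. Hence $bX(A)=X(A)$.
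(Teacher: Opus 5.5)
Your proposal is correct and follows essentially the same route as the paper. You characterise $\support(p^\pm)$ by unboundedness of projected orbits, which is insensitive to finite-index subgroups, and transport this along conjugation. You then show that $b$ permutes $\{\pi_U(p^+),\pi_U(p^-)\}$ using fixed points of loxodromics, deduce invariance of $\mathfrak U$ and apply Corollary \ref{cor:uniform-cone-off}, and finally combine $b\gamma_V=\gamma_{bV}$ with $B$-invariance of the coarse fixed set in $\newcone P$. Two differences are cosmetic: the paper detects the fixed pair via the fixed sets of the finite-index subgroups $A'$ and $bA'b^{-1}$ rather than a single element $c$, and your displacement computation $\check\dist(bx,abx)=\check\dist(x,b^{-1}abx)\le k$ spells out what the paper states briefly as ``$B$ stabilises $\widecheck Z$''.
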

		\begin{proof}
			First assume that $\commensurator{B}{A}=B$. Let $V\in\mathfrak S$ and $x\in X$.  Then $\pi_V(A\cdot x)$ is unbounded if and only if $V\in\support(p^+)$, by Lemma \ref{lem: fixed boundary points}.\eqref{it: unbounded projections}.  The same holds when $A$ is replaced by any finite-index subgroup.  Hence, for any $b\in B$, we have that $\pi_V(bAb^{-1}\cdot x)$ is unbounded if and only if $V\in \support(p^+)$, and also if and only if $V\in b\support(p^+)$, so $\support(p^+)$ is $B$--invariant.  Thus there is a finite-index $B''\leq B$ that fixes each of the finitely many $U\in\support(p^+)$.  
			
			Let $b\in B''$, so $b$ acts as an isometry of $\mathcal CU$ and hence a homeomorphism of $\boundary\mathcal CU$.  Let $A'\leq A$ be a finite index subgroup fixing all elements of $\support(p^\pm)$ and fixing $p^-$ and $p^+$.  Then the $A'$--action on $\boundary\mathcal CU$ has exactly two fixed points, namely $\pi_U(p^\pm)$, and the same is true of any finite-index subgroup. The fixed-point set of $bA'b^{-1}$ in $\boundary\mathcal CU$ is $\{b\pi_U(p^+),b\pi_U(p^-)\}$.  Since $bA'b^{-1}\cap A'$ has finite index in $A'$, we therefore have that $b\pi_U(p^\pm)\in \{\pi_U(p^+),\pi_U(p^-)\}$.  So, up to passing to a further finite-index subgroup, $B''$ fixes $p^-$ and $p^+$, proving \eqref{item:commensurated-invariant-boundary}.  Since $\support(p^\pm)$ is $B$--invariant, and $B$ preserves the $\nest,\orth,\transverse$ relations in $\mathfrak S$, the set $\mathfrak U$ is $B$--invariant, which implies item \eqref{item:commensurated-cone-off}, using Corollary \ref{cor:uniform-cone-off}.  This argument also shows that the $B$--action on $\prod_{V\in\mathfrak U}\mathcal CV$ preserves the subset $\prod_{V\in\mathfrak U}\gamma_V$.
			
			Now assume $A\unlhd B$, so items \eqref{item:commensurated-invariant-boundary} and \eqref{item:commensurated-cone-off} apply and $B$ stabilises $\widecheck Z$.
			Together with the $B$--invariance of $\prod_{V\in\mathfrak U}\gamma_V$, this shows that $B$ stabilises $X(A)$. 
        \end{proof}

	\newsection{Hierarchical flat torus theorem}\label{sec:main-theorem}
	Now we can state and prove the main theorem:
	
	\begin{thm}\label{thm:HHS-semisimple-coarse-minset}
		Let $(X,\mathfrak S)$ be an HHS of complexity $\hhscomp$.  Then there exist constants $p,q$ and $r$, depending only on the HHS parameters, such that the following holds.  Let $A\leq\Isom(X)$ be a finitely-generated, infinite, virtually abelian group acting properly and hierarchically semisimply by HHS automorphisms on $(X,\mathfrak S)$.	Then:
		\begin{enumerate}
			\item \label{item:main-rank} Let $n$ be the maximal rank of free abelian subgroups of $A$.  Then $n\leq \hhscomp$.
			
			\item \label{item:main-one-cocompact-quasiflat-1} There exists a $(p,q)$--EHQC, $A$--invariant subspace $F\subseteq X$, a proper, cobounded  action $\alpha:A\to\Isom(\Euclidean^n)$, and an $A$--equivariant $(p,p)$--quasi-isometry $F\to \Euclidean^n$, where $F$ has the subspace metric from $X$. 
			
			\item \label{item:main-full-minset}  There is an injective uniformly coarse median metric space $(Y,\dist_Y)$ and a CAT(0) space $(C,\dist_C)$ with the following properties.  Let $A$ act on $Y\times C\times \Euclidean^n$, trivially on the first and second factors and via $\alpha$ on the third.  Then there is a $(p,q)$--EHQC, $A$--invariant $r$--coarse lipschitz retract $\mathcal M_A \subseteq X$ and an $A$--equivariant $(p,p)$--quasi-isometry 
			\[
				\Theta:\mathcal M_A \to Y\times C\times  \Euclidean^n,
			\]
			where $\mathcal M_A$ has the metric inherited from $X$.  Moreover, for each $(y,c)\in Y\times C$, the $A$--invariant subset $$F_{(y,c)}:=\Theta^{-1}(\neb_k((y,c))\times \Euclidean^n)$$ is $(p,q)$--EHQC and an $r$--coarse lipschitz retract.
			
			\item \label{it: M is coarsely X(A)} There exists $r_A$, depending on $A$, and a free abelian subgroup $A_0 \unlhd A$ such that $\dist_{Haus}(\mathcal M_{A_0},X(A_0))\leq r_A$.  Hence there exists $\lambda\geq 0$, depending on $A$ and the HHS parameters, such that $\mathcal M_{A_0}$ is a $\lambda$--coarse median subalgebra of $X$.
			
			\item \label{it:coarse min set}
			There is a function $\omega:[1,\infty) \to [0,\infty)$, depending on $A$ and the HHS parameters, such that for all $t \in [0,\infty)$, if $K\subseteq H_\theta(X(A))$ is $A$--invariant and $(t,t)$--quasi-isometric to $\mathbb E^n$, then $K\subseteq \neb_{\omega(t)}(F_{(y,c)})$ for some $(y,c) \in Y \times C$.
		\end{enumerate}
	\end{thm}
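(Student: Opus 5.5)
The plan is to take $\mathcal M_A := X(A)$ from Proposition \ref{prop: almost invariant hull} and to read off the Euclidean factor from the median space $Q$ using Bowditch's CAT(0) metric and the classical Flat Torus Theorem.

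First I reduce to the setting of Section \ref{subsec:initial-quasiflat}. By Lemma \ref{lem:assume-graph} and Lemma \ref{lem:graph-free}, $X$ may be taken to be a graph with a free $A$--action; these changes are equivariant quasi-isometries, so the conclusions transfer back. Proposition \ref{prop: almost invariant hull}\eqref{it:X(A) is a quasi-product} then gives an $A$--equivariant uniform quasi-isometry $\xi\times\eta: X(A)\to Y\times Q$. Here $Y$ is injective with trivial $A$--action, and $Q$ is a proper, complete, finite-rank median space (rank $\le\hhscomp$ by Lemma \ref{lem: Q is a metric space}). Since $A$ acts properly on $X$ and trivially on $Y$, the action on $Q$ is proper.

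Next, by \cite[Thm. 1.1, Thm. 8.3]{BowditchProperties16}, $Q$ carries an $\Isom(Q)$--invariant CAT(0) metric $\sigma$ that is bilipschitz to $\dist_Q$. Exactly as in Proposition \ref{prop:ample-invariant-flat}, the $A$--action on $(Q,\sigma)$ is semisimple. The Flat Torus Theorem \cite[Thm. II.7.1]{BridsonHaefliger:metric} then gives $\Min(A')$ for a finite-index free abelian $A'\le A$, which splits as $C\times\Euclidean^n$ with $C$ closed convex. To handle all of $A$, I use the $A$--invariant convex subset $\bigcap_a\Min(aA'a^{-1})=\Min(A')$, which is $A$--invariant because $A'$ can be chosen normal. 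The product structure is $A$--invariant, $A$ acts on $\Euclidean^n$ via a crystallographic $\alpha$, and, after projecting to $C$ and using the Bruhat--Tits fixed point theorem, $A$ acts trivially on a suitable closed convex $C$.

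This gives item \eqref{item:main-rank}: the proper cocompact action on $\Euclidean^n$ has $n\le\operatorname{rank}Q\le\hhscomp$, since a bilipschitz flat in a rank-$\hhscomp$ median space has dimension at most $\hhscomp$. I then set $\mathcal M_A := (\xi\times\eta)^{-1}(Y\times\Min(A'))$. The EHQC and coarse retract properties come from:
\begin{itemize}
\item $\sigma$--geodesics are $\dist_Q$--geodesics, hence hierarchy paths after pulling back by the quasimedian map $\Psi$ (Proposition \ref{prop:hierarchy-path-char});
\item $Y$--coordinates are handled by the EHQC property of $c(X(A))$;
\item the retraction is the composite of the retraction from Proposition \ref{prop: almost invariant hull}\eqref{it: X(A) is EHQC and a coarse retract} with the CAT(0) nearest-point projection to $\Min(A')$.
\end{itemize}
The same argument applied to the slices $\{(y,c)\}\times\Euclidean^n$ gives items \eqref{item:main-full-minset} and \eqref{item:main-one-cocompact-quasiflat-1}.

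For item \eqref{it: M is coarsely X(A)}, I choose $A_0$ free abelian, normal, and fixing each $U\in\support(p^\pm)$. Proposition \ref{prop:short-mountains} applies to $H$ with $G=A_0$, giving $Q\cong\prod Q_i$ with each $Q_i$ an $A_0$--cocompact quasiline. Then $\Min(A_0)$ is coarsely all of $Q$, with a constant depending on $A$, so $\mathcal M_{A_0}$ is Hausdorff close to $X(A_0)$, which is a coarse subalgebra by Proposition \ref{prop: almost invariant hull}\eqref{it:X(A) nonempty and a median subalgebra}.

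For item \eqref{it:coarse min set}, I gate $K$ into $X(A)$ and pass to $Y\times Q$. The image of $K$ in $Y$ is bounded by an $A$--orbit-diameter argument combined with Proposition \ref{prop:bounded-orbits}\eqref{it:zs-are-close}. In $Q$, the image is an $A$--invariant quasiflat on which $A$ acts coboundedly, so the coarse orbit of any point lies within controlled $\sigma$--distance of a flat $\{c\}\times\Euclidean^n$; this uses that displacement is coarsely minimised there together with convexity of displacement functions.

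The main obstacle is this last step, together with making the equivariance in item \eqref{item:main-full-minset} precise. Bounding the distance from an $A$--cobounded quasiflat to $\Min$ in terms of $t$ needs a quantitative, $A$--dependent displacement estimate in $(Q,\sigma)$. I would first try comparing the displacement of generators of $A'$ at points of $K$ with their translation lengths.
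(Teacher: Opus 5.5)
Your treatment of items \eqref{item:main-rank}--\eqref{it: M is coarsely X(A)} follows the paper's architecture: Proposition \ref{prop: almost invariant hull}, Bowditch's CAT(0) metric, the Flat Torus Theorem, the $A$--fixed set $C\subseteq D$, and Proposition \ref{prop:short-mountains}. However, item \eqref{it:coarse min set} is not proved, and the route you suggest for it cannot work.

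On $\Min(A')\cong D\times\Euclidean^n$ the group $A'$ acts trivially on $D$, so every point of $\Min(A')$ realises the translation lengths of the generators of $A'$. Comparing displacements with translation lengths therefore cannot detect distance from $C\times\Euclidean^n$, and it sees nothing of the $Y$--direction. Nor is a large $A$--orbit inside $K$ contradictory by itself, since $K$ is unbounded. The missing argument has three steps.

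\emph{Producing a second point.} Suppose $x_0\in K$ is far from $\mathcal M_A$. The quasi-product structure of $H_\theta(X(A))$ leaves two cases.
\begin{itemize}
\item If $c(x_0)$ is far from $\widecheck Z$, then Proposition \ref{prop:bounded-orbits}\eqref{it:zs-are-close} gives $x_1\in A\cdot x_0\subseteq K$ with $c(x_1)$ far from $c(x_0)$.
\item Otherwise, use item \eqref{it: M is coarsely X(A)} to place $\Psi(\gate_H(x_0))$ near some $(d_0,e_0)\in D\times\Euclidean^n$. Then $d_0$ is far from $C$, and the circumcentre argument (Lemma \ref{lem:big rotations}) for the finite group $A/A_0$ acting on $D$ gives $a_1\in A$ with $a_1d_0$ far from $d_0$. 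Set $x_1=a_1x_0$.
\end{itemize}

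\emph{Uniform quasiflats.} The orbits $\Delta_i=A_0\cdot x_i\subseteq K$ must be $n$--quasiflats with constants depending only on $A$ and the HHS parameters. This is where the hypothesis $K\subseteq H_\theta(X(A))$ is used, via Proposition \ref{prop:bounded-orbits}\eqref{it:finite index hqc fixed point set}, which uniformly bounds $\diam(c(\Delta_i))$. Your proposal never uses this hypothesis.

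\emph{Borsuk--Ulam.} Composing the $(t,t)$--quasi-isometry $K\to\Euclidean^n$ with parametrisations of $\Delta_0,\Delta_1$ gives quasi-isometric embeddings $\Euclidean^n\to\Euclidean^n$. By Lemma \ref{lem:Borsuk-Ulam-redux} these are coarsely surjective, with constant depending only on $t$, $A$ and the HHS parameters. Hence $\Delta_0$ and $\Delta_1$ are close, contradicting that they are far apart. This bounds $\dist(K,\mathcal M_A)$ in terms of $t$. The same reasoning bounds the spread of $K$ in the $Y$-- and $C$--coordinates, placing $K$ near a single $F_{(y,c)}$.

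Two smaller points.

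In item \eqref{it: M is coarsely X(A)}, $A_0$ need not act cocompactly on $Q$: it has rank $n$, while $Q$ may have $r>n$ quasiline factors (e.g.\ the diagonal cyclic subgroup of $\integers^2$ with two orthogonal quasiline domains). So coarse density of $\Min(A_0)$ does not follow for an arbitrary invariant CAT(0) metric on $Q$. The paper takes $\sigma$ to be the $\ell^2$--product of Bowditch metrics $\sigma_i$ on the factors $Q_i$. Then $\Min_Q(A_0)=\prod_i\Min_{Q_i}(A_0)$, and each factor is coarsely dense by $A_0$--cocompactness on $Q_i$.

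In item \eqref{item:main-full-minset}, $\mathcal M_A$ should be the preimage of a neighbourhood of $Y\times C\times\Euclidean^n$ with $C=\Fix_A(D)$, not of $Y\times\Min(A')$. Since $A$ does not act trivially on $D$, your definition makes the slices non-$A$--invariant, and $\Theta$ fails to be equivariant for the trivial action on the $C$--factor.
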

	
	\begin{proof}
		By Lemma \ref{lem:assume-graph}, Remark \ref{rem:HHS-are-graphs}, and Lemma \ref{lem:graph-free}, we can and shall assume that $X$ is a graph (and $\dist$ is the usual graph metric) and $A$ acts on $X$ freely, so Proposition \ref{prop: almost invariant hull} applies.

		Let $A_0 \unlhd A$ be the intersection of a normal finite index free abelian subgroup of $A$ with every subgroup of $A$ with index $\leq \hhscomp!$. Since $A$ is finitely generated, this subgroup has finite index in $A$.
		
		Let $p^+,p^- \in \partial X$ be the boundary points provided by Lemma \ref{lem: fixed boundary points}. 
		Recall the notation from Definitions \ref{defn:P(A)} and \ref{defn:U,P and X(A)}.
		Let $k \geq k_1$, $M \geq M' \geq \theta$ and $L_1, L_2, L_3, L_4 \geq 1$ be the uniform constants from Proposition \ref{prop: almost invariant hull}. Fix $z_0 \in X(A)$, let $H \coloneqq A\cdot H_{M',z_0}(p^-,p^+)$, and let $Y,Q$ be the spaces and $\Phi, \gate_H, \Psi, \xi, \eta$ be the maps produced by Proposition \ref{prop: almost invariant hull}.

		The action of $A$ on $X$, and therefore on $H \subseteq X$ is proper so, since $\Psi$ is a quasi-isometry, the $A$--action on $(Q,d_Q)$ is proper. The median space $Q$ is obtained by applying Theorem \ref{thm: median hulls}, so Proposition \ref{prop:short-mountains} applies: there exist median quasi-lines $Q_1, \dots, Q_r$ such that $A$ acts isometrically on $\prod_{i=1}^r Q_i$ and there is an $A$-equivariant isometry $Q \rightarrow \prod_{i=1}^r Q_i$. Since the rank of $Q$ is at most $\hhscomp$ by Lemma \ref{lem: Q is a metric space}.\eqref{it: median metric and rank of Q}, we have $r \leq \hhscomp$. Thus $A_0$ acts trivially on the set of factors of $\prod_{i=1}^r Q_i$. Proposition \ref{prop:short-mountains} furthermore implies that the action of $A_0$ on each $Q_i$ is cocompact. Since $A_0$ is free abelian, this also implies that $A_0$ fixes the endpoints of each $Q_i$. 
		Identify $Q$ with $\prod_{i=1}^r Q_i$. \\

		\noindent\textbf{CAT(0) minset in $Q$ and proof of \eqref{item:main-rank}.} 
		Each $Q_i$ is a finite rank, connected complete median space so \cite{BowditchProperties16} provides a complete CAT(0) metric $\sigma_i$ on $Q_i$, bilipschitz equivalent to $d_{Q_i}$ (with uniform constant), such that any isometry of $(Q_i, d_{Q_i})$ is an isometry of $(Q_i,\sigma_i)$. Let $\sigma$ be the $\ell^2$ product metric on $Q$, where each factor $Q_i$ is equipped with $\sigma_i$. Then $\sigma$ is a complete CAT(0) metric on $Q$, the action $A \curvearrowright Q$ is by $\sigma$-isometries, and the identity map $(Q,d_Q) \rightarrow (Q,\sigma)$ is bilipschitz with uniform constant (in fact, one can take $\sqrt{\hhscomp}$ as the bilipschitz constant). In particular, the $A$--action on $(Q,\sigma)$ is also proper.
		
		Since the $A$--action on $X$ is hierarchically semisimple, each $\langle a\rangle\leq A$ acting with unbounded orbits has the property that $a$ has positive stable translation length on $X$ (combine Definition \ref{defn:hierarchically-semisimple} with the fact that $\pi_U$ is coarsely lipschitz for each $U\in\mathfrak S$), from which it follows that the $A$--action on $(Q,\sigma)$ is semisimple.

		Hence \cite[II.7.1]{BridsonHaefliger:metric} implies that the minset 
		\[
		\Min_Q(A_0) \coloneqq \{x \in Q : \sigma(x,ax) = \inf_{y \in Q} \sigma(y,ay) \text{ for all } a \in A_0\}
		\]
		is nonempty, convex, $A$--invariant and isometric to a product $D \times \mathbb E^n$, where $n$ is the rank of $A_0$. The action of $A$ on $D \times \mathbb E^n$ preserves the product decomposition, with $A$ acting properly and cocompactly on $\mathbb E^n$, and elliptically on $D$, since the finite index subgroup $A_0$ acts trivially on $D$. We henceforth identify $\Min_Q(A_0)$ with $D \times \mathbb E^n$ in an $A_0$--equivariant way.

		For any $x \in \mathbb E^n$, the subspace $D \times \{x\} \subseteq Q$ is closed and convex and -- when equipped with the metric induced by $\sigma$ -- isometric to $D$. Therefore $D$ is CAT(0). Let $C \subseteq D$ be the fixed point set of the $A$--action on $D$. In particular, $C$ is a nonempty, closed, convex subset of $D$, so it is also CAT(0).
		
		Since $Q$ has rank at most $\hhscomp$ by Lemma \ref{lem: Q is a metric space}, any locally compact subset of $Q$ has topological dimension at most $\hhscomp$ by \cite[Lem. 4.1]{Bowditch:large-scale-MCG}. So, since $\mathbb E^n$ is bilipschitz--embedded in $(Q,d_Q)$, we get $n\leq \hhscomp$, proving \eqref{item:main-rank}.\\
		
		\noindent\textbf{Description of $\mathcal M_A$ and $\Theta$ and proof of \eqref{it: M is coarsely X(A)}.} Combining Proposition \ref{prop: almost invariant hull}.\eqref{it:X(A) is a quasi-product} with the fact that $(Q,d_Q)$ and $(Q,\sigma)$ are $\sqrt \hhscomp$--bilipschitz equivalent, the product map $\xi \times \eta: X(A)\to Y\times (Q,\sigma)$ is an $(R,R)$--quasi-isometry for some uniform $R$. We can assume $R$ is large enough that $\eta$ is $R$--coarsely surjective with respect to $\sigma$.
		Let 
		\[
			\mathcal M_A\coloneqq\{x\in X(A):\sigma(\eta(x), C \times \mathbb E)\leq R\}.
		\]
		Since the closest point projection in $(Q,\sigma)$ from $\neb_R(C\times \mathbb E)$ to the CAT(0) convex set $C\times \mathbb E$ is an $A$--equivariant uniform quasi-isometry (it moves each point a uniformly bounded distance), we can perturb $\xi \times \eta$ to an equivariant, $p$--coarsely surjective, $(p,p)$--quasi-isometry $\Theta: \mathcal M_A\to Y\times C\times\Euclidean^n$, where $p$ is uniform. 
		
		Let $r_0\geq 0$ be chosen (depending on $A_0$ and the HHS parameters) so that for all $i$, the space $Q_i$ is contained in the $r_0$--neighbourhood of any $A_0$--orbit in $Q_i$.  Hence $\sigma_i(\Min_{Q_i}(A_0),x)\leq r_0$ for all $x \in Q_i$.  Therefore, since, by the definition of $\sigma$, we have $\Min_Q(A_0) = \prod_{i=1}^r \Min_{Q_i}(A_0)$, there exists $r_1\geq 0$, depending only on $A_0$ and the HHS parameters, such that all of $Q$ is contained in the $r_1$--neighbourhood of $\Min_Q(A_0)$.  Let $x \in X(A)$.  Choose $x'\in \Min_Q(A_0)$ such that $\sigma(x',\eta(x))\leq r_1$ and let $y \coloneqq \xi(x)$.
		Then choose $m \in \mathcal M_{A_0}$ such that $\xi \times \eta(m)$ is $p$--close in $Y \times \Min_Q(A_0)$ to $(y, x')$.  Then 
		\begin{align*}
			d_X(m,x)
			&\leq R(d_{Y\times Q}(\xi \times \eta(m),\xi \times \eta(x)) + R) \\
			&\leq R(k + r_1 + R).
		\end{align*}
		This proves \eqref{it: M is coarsely X(A)}.

        \medskip

		\noindent\textbf{Proof of \eqref{item:main-one-cocompact-quasiflat-1}-\eqref{item:main-full-minset}.} 
		Let us show $\mathcal M_A$ is uniformly EHQC.
		Let $x,x'\in \mathcal M_A$ and let $z,z' \in C \times \mathbb E$ be such that $\Theta(x) = (c(x),z)$ and $\Theta(x') = (c(x'),z')$.  Let $\check\alpha$ be a uniform-quality hierarchy path in $\newcone P$ which joins $c(x),c(x')$ and lies in $c(X(A))$.  As in the proof of Proposition \ref{prop:bounded-orbits}, the $\sigma$--geodesic $\beta$ in $C\times \mathbb E$ joining $z$ to $z'$ is a geodesic with respect to $d_Q$. 		
		Let $\alpha_1$ be the path in $Y\times Q$ given by $\alpha_1(t)=(\check\alpha(t),z)$ for all $t$, and let $\beta_1$ be the path in $Y\times Q$ given by $\beta_1(t)=(c(x'),\beta(t))$ for all $t$.  Let $\gamma_1$ be the concatenation of $\alpha_1$ and $\beta_1$.  Note that $\alpha_1$ and $\beta_1$ lie in $Y\times C\times\mathbb E$.  Hence, for each $t$, there exists $\gamma(t)\in \mathcal M_A$ such that $d_{Y\times Q}(\Theta(\gamma(t)),\gamma_1(t)) \leq p$. The path $t \mapsto \gamma(t)$ is a uniform quality quasigeodesic in $X(A)$ and, after a uniform perturbation, joins $x$ to $x'$.  	
		The map $\eta$ is uniformly quasimedian by Proposition \ref{prop: almost invariant hull}, and $c$ is uniformly quasimedian by Corollary \ref{cor:uniform-cone-off}.  This implies that $\gamma$ is a uniform quality hierarchy path, since $\alpha$ is a uniform quality hierarchy path in $\newcone X$ and $\beta$ is a geodesic in the median metric $d_Q$.  By construction, $\gamma$ is a path in $\mathcal M_A$.

        To see that $\mathcal M_A$ is a coarse lipschitz retract define a map $X \rightarrow \mathcal M_A$ by composing the uniformly coarse lipshcitz retract $X \rightarrow X(A)$ given by Proposition \ref{prop: almost invariant hull}.\eqref{it:X(A) is a quasi-product} with the quasi-isometry $\xi \times \eta: X(A) \rightarrow Y \times (Q,\sigma)$, followed by the map $\id \times \pi_{C \times \mathbb E^n}: Y \times Q \rightarrow Y \times C \times \mathbb E^n$, where $\id$ is the identity on $Y$ and $\pi_{C \times \mathbb E^n}$ is the closest point projection from $Q$ onto $C \times \mathbb E^n$ (which is well-defined and 1-lipschitz because $C \times \mathbb E^n$ is convex and $(Q,\sigma)$ is CAT(0)). Finally, compose with a quasi-inverse of $\xi \times \eta$. The resulting map is uniformly coarsely lipschitz since it is a composition of uniformly coarsely lipschitz maps, and it can be uniformly perturbed so that its image is contained in $\mathcal M_A$ and it is the identity on $\mathcal M_A$.
		
		Now fix $c\in C$ and $y\in Y$.  Let $F_{(y,c)}$ be the set of $m\in\mathcal M_A$ such that $\Theta(m)$ is $p$--close to the subset $\{(y,c)\}\times \mathbb E^n$ of $Y\times Q$.  Since $\Theta$ is $A$--equivariant and $A$ acts trivially on $Y\times C$, the set $F_{(y,c)}$ is $A$--invariant, and the choice of $p$ ensures that $\Theta$ restricts to a uniformly coarsely surjective, uniform quasi-isometry from $F_{(y,c)}$ to $B\times \mathbb E^n$, where $B$ is a uniformly bounded set where $A$ acts trivially.  So composing with the natural projection $B\times \mathbb E^n\to\mathbb E^n$ gives a uniform quasi-isometry $\tau:F_{(y,c)}\to \mathbb E^n$.
		
		Let $\alpha:A\to\Isom(\mathbb E^n)$ be induced by projecting the action on $Y\times C\times \mathbb E^n$ to the third factor. Then $\tau$ is $A$--equivariant. 
        
		To complete the proof of \eqref{item:main-one-cocompact-quasiflat-1} and \eqref{item:main-full-minset}, we therefore just have to observe that $F_{(y,c)}$ is uniformly EHQC and a uniformly coarse lipschitz retract, by essentially the same argument as was used for $\mathcal M_A$.  Indeed, if $x,x'\in F_{(y,c)}$, then $c(x),c(x')$ are uniformly close, so the hierarchy path $\gamma$ produced above stays, by construction, in a uniformly bounded neighbourhood of $F_{(y,c)}$. Moreover, the construction of a coarse lipschitz map $X \rightarrow \mathcal M_A$ above can be reproduced using the closest point projection $Q \rightarrow \{c\} \times \mathbb E^n$ rather than $Q \rightarrow C \times \mathbb E^n$ (since the former is also convex in $(Q,\sigma)$) resulting, up to a uniform perturbation, in a uniform quality coarsely lipschitz retraction $X \rightarrow F_{(y,c)}$. \\

        \noindent\textbf{Proof of \eqref{it:coarse min set}.} Let $\overline H_\theta(X(A))$ be the $\theta$-hull of $X(A)$ in $\overline P$.  It follows from Proposition \ref{prop: almost invariant hull} that there exists $L_3' \geq 1$, depending only on $L_3$, $M$ and the HHS parameters, such that $\overline H_\theta(X(A))$ is $(L_3',L_3')$--quasi-isometric to $H_\theta(\widecheck Z) \times H$. The Hausdorff distance between $\overline H_\theta(X(A))$ and $H_\theta(X(A))$ is uniformly bounded, so there exists $L$ depending only on the HHS parameters such that $H_\theta(X(A))$ is $(L,L)$-quasi-isometric to $H_\theta(\widecheck Z) \times H$. 

        Given $t\geq 1$, let $\mathcal K_t$ be the set of $A$--invariant subspaces $K\subseteq H_\theta(X(A))$ such that there is a $(t,t)$--quasi-isometry $K\to \Euclidean^n$.  Let $$\omega(t)=\sup\{\sup_{x_0\in K}\dist(x_0,\mathcal M_A):K\in\mathcal K_t\}.$$
        Note that $\omega$ is a non-decreasing function of $t$ since $\mathcal K_t\subseteq \mathcal K_{t'}$ for $t\leq t'$, so we need to show that $\omega$ takes finite values and is unbounded as $t\to\infty$.
			
		Let $s_0 \in [1,\infty)$ and suppose $K \subseteq H_\theta(X(A))$ is $A$--invariant and there exists $x_0 \in K$ such that $\dist(x_0,\mathcal M_A) > s_0$. Then either
			\begin{enumerate}
				\item\label{it:far-in-H} $\dist_H(\gate_H(x_0), \gate_H(\mathcal M_A)) > s_1$ or
				\item \label{it:far-in-cone}$\dist_{\widecheck X}(c(x_0), \widecheck Z) > s_1$,
			\end{enumerate}
			where $s_1 \coloneqq \frac{s_0-L}{2L}$.
            
            By \eqref{it: M is coarsely X(A)}, there is a point $x' \in \mathcal M_{A_0}$ such that $\dist_X(\gate_H(x_0), x') \leq r_A$ and, by definition, there exists $z_0 = (d_0,e_0) \in \Min_Q(A_0)=D\times \Euclidean^n$ such that $\sigma(z_0,\Psi(x')) \leq R$.

            Suppose that case \eqref{it:far-in-H} holds.  Then there exists $\kappa\geq 1$, depending on $L_2,r_A,R,\hhscomp$ such that $\sigma(d_0,C)\geq s_1/\kappa-\kappa$.  Hence Lemma \ref{lem:big rotations} below implies that there exists $d_1 \in D$ such that $d_1 \in A \cdot d_0$ and $\dist_D(d_0, d_1) \geq s_1/\kappa-\kappa$. In this case, let $a_1 \in A$ be such that $a_1d_0 = d_1$ and let $x_1 \coloneqq a_1x_0$. 

            Now assume that case \eqref{it:far-in-cone} holds.  Let $\omega_0$ be the increasing function from Proposition \ref{prop:bounded-orbits}, applied to the action of $A$ on $\widecheck X$.    Then $\dist_{\widecheck X}(c(x_0),\widecheck{Z})>s_1$, so $\diam(A\cdot c(x_0))> \omega_0(s_1)$, by Proposition \ref{prop:bounded-orbits}.\eqref{it:zs-are-close}, and thus there is a point $y \in c(K)$ such that $\dist_{\check X}(c(x_0), y) \geq \omega_0(s_1)$. In this case, let $x_1 \in K$ be such that $c(x_1) = y$.
			
			In either case, let $\Delta_i \coloneqq A_0 \cdot x_i \subseteq K$ for $i\in\{0,1\}$, which is a subspace of $H_\theta(X(A))$.  Hence $\Delta_i$ is $(L,L)$-quasi-isometric to $c(\Delta_i) \times \gate_H(\Delta_i)$. Indeed, $c(\Delta_i)$ has uniformly bounded diameter by the choice of $x_i$, so the claimed quasi-isometry holds up to uniformly increasing $L$. By Proposition \ref{prop:bounded-orbits}.\eqref{it:finite index hqc fixed point set}, $\diam(c(\Delta_i)) = \diam(A_0 \cdot c(x_i))$ is uniformly bounded, and by construction $\gate_H(\Delta_i)$ is $(L',L')$--quasi-isometric to $\mathbb E^n$, with constant $L'$ depending only on the HHS parameters and $A$.  Hence there exists $L''\geq 1$, depending only on the HHS parameters and $A$, and $(L'',L'')$-quasi-isometries $\Xi_i:\Euclidean^n\to \Delta_i$.

            Suppose that $K\in \mathcal K_t$ for some $t\geq 1$, and suppose that $\zeta:K\to \Euclidean^n$ is a $(t,t)$--quasi-isometry.  Then $\zeta\circ \Xi_i:\Euclidean^n\to\Euclidean^n$ is a $(t(L''+1),t(L''+1))$--quasi-isometric embedding for $i\in\{0,1\}$.  Hence, by Lemma \ref{lem:Borsuk-Ulam-redux}, each of the two maps $\zeta\circ\Xi_i$ is $\BU\left(t(L''+1)\right)$--coarsely surjective.  Hence there exist $x'_i\in\Delta_i$ such that $|\zeta(x'_0)-\zeta(x'_1)|\leq \BU\left(t(L''+1)\right)$, and therefore $\dist_X(\Delta_0,\Delta_1)\leq L''(\BU\left(t(L''+1)\right)+L'')$.

            On the other hand, in either of the above cases, we have shown that $\dist_X(\Delta_0,\Delta_1)\geq \min\{\kappa's_0-\kappa',\kappa'\omega_0(s_0-L)/2L-\kappa'\}$, where $\kappa'$ depends only on the HHS parameters and $A$ (but not on $K$).  Hence
            $$\min\{\kappa's_0-\kappa',\kappa'\omega_0(s_0-L)/2L-\kappa'\}\leq L''(\BU\left(t(L''+1)\right)+L''),$$
            so since $\omega_0$ is an ubounded, increasing function, it follows that $s_0\leq\omega(t)$ where $\omega:[1,\infty)\to[1,\infty)$ is a function that can be chosen in terms of the HHS parameters and $A$ only.

            Thus far, we have shown $K\subseteq \neb_{\omega(t)}(\mathcal M_A)$ whenever $K\in\mathcal K_t$.  The above argument also showed that $c(K)$ has diameter in $\widecheck X$ bounded above by a function of $t$ that can be chosen in terms of the HHS parameters and $A$.  Similarly, the image of $K$ in $C$ is bounded uniformly in terms of $t$.  Therefore, $\Theta(K)$ has diameter bounded uniformly in terms of $t$, so, up to uniform perturbation of $\omega$, we get that $K\subseteq \neb_{\omega(t)}(F_{(y,c)})$ for some $y\in Y,c\in C$.
    \end{proof}

	We used the following standard lemmas in the above proof.
	
	\begin{lem} \label{lem:big rotations}
		Let $D$ be a CAT(0) space, let $G$ be a group acting elliptically on $D$ and let $\Fix(G) \subseteq D$ be the fixed point set of $G$. Then $\diam(G \cdot x) \geq \dist(x,\Fix(G))$ for all $x \in D$.
	\end{lem}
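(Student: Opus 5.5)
The plan is to compare $x$ with its projection to the fixed set and use the CAT(0) geometry of $D$. Fix $x\in D$ and set $R\coloneqq\diam(G\cdot x)$; if $R=\infty$ there is nothing to prove, so assume $R<\infty$. The orbit $G\cdot x$ is then a nonempty bounded $G$--invariant set. Its circumcentre $c$, which exists and is unique by \cite[Prop. II.2.7]{BridsonHaefliger:metric}, is therefore fixed by $G$, since any isometry preserving the set preserves its unique circumcentre. So $c\in\Fix(G)$.

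It then suffices to bound $\dist(x,c)$ by $R$. By definition the circumradius $r(G\cdot x)=\sup_{y\in G\cdot x}\dist(c,y)$ is minimal among all centres. Using $x$ itself as a candidate centre gives
\[
r(G\cdot x)\leq \sup_{g\in G}\dist(x,gx)\leq R.
\]
Since $x\in G\cdot x$, we get $\dist(x,c)\leq r(G\cdot x)\leq R$. Hence $\dist(x,\Fix(G))\leq \dist(x,c)\leq \diam(G\cdot x)$, as required.

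The only point needing care is the existence and uniqueness of the circumcentre. This requires completeness of $D$, or at least that bounded sets have circumcentres in $D$. If $D$ is not assumed complete, I would pass to the completion $\overline D$, which is CAT(0) and carries the extended $G$--action, and find $c$ there. One must then check that $c$ actually lies in $D$, or else interpret $\Fix(G)$ inside $\overline D$. In the application in the proof of Theorem \ref{thm:HHS-semisimple-coarse-minset}, $D$ is a closed convex subset of the complete space $(Q,\sigma)$, so it is complete and the issue does not arise. Note also that ellipticity is not needed beyond guaranteeing $\Fix(G)\neq\emptyset$, which the circumcentre argument already provides whenever the orbit is bounded.
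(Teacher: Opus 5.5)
Your argument is correct and is essentially the paper's proof: you take the unique circumcentre $c$ of the bounded orbit $G\cdot x$ via \cite[Prop. II.2.7]{BridsonHaefliger:metric}, note that $c$ is $G$--fixed by uniqueness, and bound $\dist(x,c)$ by the circumradius, which is at most $\diam(G\cdot x)$. Your remark that this citation needs $D$ to be complete is a fair point that the paper leaves implicit, and, as you say, it causes no problem in the application, where $D$ is closed and convex in the complete space $(Q,\sigma)$.
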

	
	\begin{proof}
		Fix $x\in D$.  Let $r$ be the infimal radius of balls containing the bounded set $G\cdot x$; note that $r\leq \diam(G\cdot x)$.  Then \cite[Prop. II.2.7]{BridsonHaefliger:metric} provides a unique circumcentre for $G\cdot x$, i.e. a point $c\in D$ such that $G\cdot x\subset \neb_r(c)$.  Since $g\cdot c$ has this property for all $g\in G$, uniqueness of $c$ implies that $c\in\Fix(G)$, so $\dist(x,\Fix(G))\leq \dist(x,c)\leq r\leq \diam(G\cdot x)$.
	\end{proof}

	\begin{lem}\label{lem:Borsuk-Ulam-redux}
		For each $n\in\naturals$, there is a function $\BU:[1,\infty)\to [0,\infty)$ such that the following holds. Let $f:\Euclidean^n\to\Euclidean^n$ be a $(C,C)$--coarsely lipschitz map.  Suppose that $\sup_{x\in\Euclidean^n}\diam(f^{-1}(\neb_r(x)))<\infty$ for all $r\geq 0$.  Then $f$ is $\BU(C)$--coarsely surjective.
	\end{lem}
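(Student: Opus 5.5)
The plan is to deduce coarse surjectivity from a topological degree argument, first replacing $f$ by a continuous map at controlled distance. Triangulate $\Euclidean^n$ by a fixed cubical grid of mesh $1$, and let $g$ be the map that agrees with $f$ on the vertices and extends affinely over each simplex of a fixed subdivision of the grid. Since $f$ is $(C,C)$--coarsely lipschitz, $g$ is continuous and satisfies $|g(x)-f(x)|\leq C_1(C)$ for all $x$, where $C_1(C)$ depends only on $C$ and $n$. The hypothesis $\sup_x\diam(f^{-1}(\neb_r(x)))<\infty$ transfers to $g$, because $g^{-1}(\neb_r(x))\subseteq f^{-1}(\neb_{r+C_1}(x))$. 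In particular $g$ is proper, so it extends continuously to the one-point compactifications $S^n\to S^n$, sending $\infty$ to $\infty$.

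Next I will show that $g$ has nonzero degree, which makes $g$ surjective and hence $f$ $C_1$--coarsely surjective. The intended route is to show that $g$ is properly homotopic to a homeomorphism, or directly that the mod $2$ degree is $1$, by the usual Borsuk--Ulam-type argument. Take a large sphere $S_\rho$ of radius $\rho$. By properness, $g(S_\rho)$ avoids a large ball around $g(0)$, say of radius $s(\rho)\to\infty$. Antipodal points of $S_\rho$ are at distance $2\rho$ apart. The finiteness hypothesis, applied as uniform "coarse injectivity" at scale $r$, forces $g(y)$ and $g(-y)$ to be far apart once $\rho$ is large. I would then use the odd-map form of Borsuk--Ulam on $\frac{g(y)-g(0)}{|g(y)-g(0)|}$ to conclude that this map $S_\rho\to S^{n-1}$ has odd degree. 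Then $g(B_\rho)$ contains the ball about $g(0)$ of radius $\mathrm{dist}(g(0),g(S_\rho))$. Letting $\rho\to\infty$ gives surjectivity.

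The main obstacle is the middle step: turning the hypothesis into the antipodal separation needed to make the normalized map homotopic to an odd map. Coarse injectivity alone does not obviously give $\langle g(y)-g(0), g(-y)-g(0)\rangle<0$. What I would try first is a straight-line homotopy between $u(y)$ and $-u(-y)$, where $u$ is the normalized map; this homotopy fails exactly when $u(y)=u(-y)$. So I need to rule out $g(y)$ and $g(-y)$ lying in the same direction from $g(0)$.

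If that route gets stuck, the fallback is to prove the statement by contradiction using ultralimits. Suppose there are $(C,C)$--coarsely lipschitz maps $f_k$, satisfying the hypothesis, and points $p_k$ with $d(p_k,\image f_k)\geq k$. Rescale, so that after passing to a limit we obtain a lipschitz map $\Euclidean^n\to\Euclidean^n$ with finite fibres missing an open ball. Invariance of domain, applied to the injective pieces, then gives a contradiction. A caveat: this compactness argument only yields a dependence on $C$ that is uniform in the way needed if the hypothesis is used in a scale-invariant form. I would check that quantification carefully, since it is where $\BU$ could fail to be a function of $C$ alone.
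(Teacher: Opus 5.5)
Your first step is fine and is exactly how the paper begins: you replace $f$ by a continuous proper $g$ with $\sup_x|g(x)-f(x)|$ bounded in terms of $C$, and extend it to $\bar g$ on the one-point compactification. The gap is the topological core, and the criterion you propose for it is false. You would need, for some large $\rho$, that no $y\in S_\rho$ has $g(y)$ and $g(-y)$ on a common ray from $g(0)$. The hypothesis only makes $|g(y)-g(-y)|$ large, which carries no directional information. For instance, in $\mathbb{R}^2=\mathbb{C}$ let $\sigma(re^{i\theta})=re^{\cos\theta}e^{i\theta}$ and $\tau(w)=w|w|^{i\pi/2}$, with $\sigma(0)=\tau(0)=0$. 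Both are bilipschitz homeomorphisms ($\sigma$ is positively homogeneous and $\tau$ is a logarithmic spiral map; each has derivative uniformly bounded above and below away from $0$). So $g=\tau\circ\sigma$ satisfies every hypothesis, yet
\[
g(\rho)=e\rho\,e^{i\frac{\pi}{2}(\log\rho+1)},\qquad g(-\rho)=e^{-1}\rho\,e^{i\frac{\pi}{2}(\log\rho+1)}\qquad\text{for every }\rho>0.
\]
Thus $u(\rho)=u(-\rho)$ on every sphere $S_\rho$, and your straight-line homotopy breaks at every scale. The degree is of course odd here, but your method cannot detect it.

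The ultralimit fallback does not rescue this. The constant has to depend only on $C$, whereas the hypothesis $\sup_x\diam(f^{-1}(\mathcal N_r(x)))<\infty$ is purely qualitative, so nothing about injectivity survives rescaling along a sequence $f_k$. Already $f_k(x)=x/k$ satisfies the hypotheses with $C=1$, and $x\mapsto f_k(kx)/k$ converges to a constant. A lipschitz limit missing an open ball is therefore no contradiction, and your claims that the limit has finite fibres and that invariance of domain applies are unsupported. (If you could show each $g_k$ has nonzero degree, you would not need limits at all.)

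The missing idea is to apply Borsuk--Ulam on the compactified sphere $\mathbb{E}^n\cup\{\infty\}\cong S^n$ itself, with the antipodal map calibrated to the fibre size. The paper identifies it with the round sphere of radius $R=\sup_x\diam(g^{-1}(\mathcal N_1(x)))$ via stereographic projection from the north pole. Antipodal pairs are then $\{0,\infty\}$ and $\{x,-R^2x/|x|^2\}$, and $|x+R^2x/|x|^2|=|x|+R^2/|x|\ge 2R>R$. So $g$ identifies no such pair, and since $\bar g^{-1}(\infty)=\{\infty\}$, neither does $\bar g$. If $g$ missed a point $p$, then $\bar g$ would map $S^n$ into $S^n\setminus\{p\}\cong\mathbb{R}^n$, contradicting Borsuk--Ulam. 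Equivalently, in your degree language, $\bar g$ is homotopic to an odd map and so has odd degree. Hence $g$ is surjective, and $f$ is coarsely surjective with the approximation constant, which depends only on $C$.
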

	
	\begin{proof}
		By a standard triangulation argument, there is a continuous map $g:\Euclidean^n \to \Euclidean^n$ such that $\sup_{x\in\Euclidean^n}|f(x)-g(x)|\leq R$, and $g$ is $(K,K)$--coarsely lipschitz, for some $R,K$ depending only on $C$.  We will show that $g$ is surjective, from which it will follow that $f$ is $R$--coarsely surjective.
		
		Let $\bar E$ be the one-point compactification of $\mathbb E^n$.  Since $g$ is continuous and preimages of bounded sets are bounded, $g$ is proper map.  Hence $g$ extends to a continuous map $\bar g:\bar E\to\bar E$ with $\bar g(\infty)=\infty$.
		
		Define a homeomorphism $h:\bar E\to \mathbb S^n$ to be stereographic projection, where $\mathbb E^n$ is viewed as a coordinate plane in $\mathbb E^{n+1}$ and $\mathbb S^n$ is viewed as the sphere in $\mathbb E^{n+1}$ centred at the origin and with radius $R$, where $R=\sup_{x\in\Euclidean^n}\diam(g^{-1}(\neb_1(x)))<\infty$.  The ``north pole'' for the projection is in the direction normal to $\Euclidean^n$, and $\mathbb S^n$ is identified with $\bar E$ in such a way that the north pole is $\infty\in \bar E$.  
        
        This choice of $h$ has the following property: if $x,y\in \Euclidean^n$ and $g(x)=g(y)$, then $h(x)$ and $h(y)$ are not antipodal.
		
		If $g$ were not surjective, then $h\circ \bar g \circ h^{-1}:\mathbb S^n\to \mathbb S^n$ has range properly contained in $\mathbb S^n$, to the Borsuk-Ulam theorem provides antipodal points $h(x),h(y)\in\mathbb S^n$ such that $\bar g(x)=\bar g(y)$.  Since $\bar g^{-1}(\infty)$ has a single point, $x,y\in \Euclidean^n$ and $g(x)=g(y)$, a contradiction.
	\end{proof}

	A consequence of Theorem \ref{thm:HHS-semisimple-coarse-minset}.\eqref{item:main-full-minset} we will need is:
	
	\begin{cor}\label{cor:coarse-foliation}
		Let $(X,\mathfrak S)$ be an HHS on which the finitely generated virtually abelian group $A\leq\Isom(X)$ acts properly and hierarchically semisimply by HHS automorphisms.  Then there exist $k,r,s$, depending only on the HHS parameters, such that the following holds.  There is a proper left-invariant quasigeodesic metric $\dist_A$ on $A$ such that for all $x\in\mathcal M_A$, there is an $A$--equivariant $(s,s)$--quasi-isometric embedding $\phi_x:(A,\dist_A)\to \mathcal M_A$ such that $\phi_x(1)=x$.  Moreover, $\phi_x(A)$ is contained in the $(k,r)$--EHQC, $A$--invariant $(k,k)$--quasiflat $F$ from Theorem \ref{thm:HHS-semisimple-coarse-minset}.\eqref{item:main-full-minset}.
	\end{cor}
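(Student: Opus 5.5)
The plan is to read everything off the equivariant quasi-isometry $\Theta:\mathcal M_A\to Y\times C\times\Euclidean^n$ of Theorem \ref{thm:HHS-semisimple-coarse-minset}.\eqref{item:main-full-minset}, on which $A$ acts trivially on $Y\times C$ and via the proper cobounded action $\alpha$ on $\Euclidean^n$.

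\textbf{Step 1: the metric on $A$.} Define $\dist_A(a,b)\coloneqq|\alpha(a)e_0-\alpha(b)e_0|$ for a fixed basepoint $e_0\in\Euclidean^n$. This is left-invariant. It is only a pseudometric if the finite kernel of $\alpha$ or point stabilisers are nontrivial, so replace it by $\dist_A(a,b)\coloneqq|\alpha(a)e_0-\alpha(b)e_0|+\delta_{a\neq b}$. This remains left-invariant and is now a metric. Properness and cocompactness of $\alpha$ give, by Švarc--Milnor, that $a\mapsto\alpha(a)e_0$ is a quasi-isometry $A\to\Euclidean^n$. Consequently $\dist_A$ is proper and quasigeodesic, being quasi-isometric to a word metric. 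Its constants depend on $A$, but that is harmless, since the statement only asks for the constant $s$ to be uniform.

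\textbf{Step 2: the embedding.} Given $x\in\mathcal M_A$, write $\Theta(x)=(y,c,e)$ and set $\phi_x(a)\coloneqq ax$. This map is automatically $A$--equivariant and satisfies $\phi_x(1)=x$. Equivariance of $\Theta$ and triviality of the action on $Y\times C$ give $\Theta(ax)=(y,c,\alpha(a)e)$. Since $\Theta$ is a $(p,p)$--quasi-isometry, we obtain
\[
\tfrac1p|\alpha(a)e-\alpha(b)e|-p\le \dist(ax,bx)\le p|\alpha(a)e-\alpha(b)e|+p.
\]
Because $\alpha$ acts by isometries, $|\alpha(a)e-\alpha(b)e|=|\alpha(b^{-1}a)e-e|$.

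\textbf{Step 3: comparing with $\dist_A$ (the main obstacle).} The metric $\dist_A$ is defined using the fixed basepoint $e_0$, whereas the estimate in Step 2 uses $e$. The two differ by $\bigl||\alpha(g)e-e|-|\alpha(g)e_0-e_0|\bigr|\le 2|e-e_0|+2|L(g)|$-type errors, coming from the rotational part of $\alpha(g)$, and these are not uniformly bounded. To repair this, choose $e_0$ and use coboundedness: there is some $a_0$ with $|\alpha(a_0)e_0-e|\le D_A$. This suggests using $\phi_x(a)=a a_0^{-1}x$-type reparametrisations, but then $\phi_x(1)\ne x$, and in any case $D_A$ is non-uniform.

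A cleaner fix is to make $\dist_A$ translation-only. Restrict to a finite-index translation subgroup, and define $\dist_A(a,b)\coloneqq|\tau(b^{-1}a)|+\delta_{a\neq b}$, where $\tau$ is the translational part of $\alpha$. This is not quite well-defined for non-translations, so instead take $\dist_A(a,b)\coloneqq\sup_{e\in\Euclidean^n}|\alpha(b^{-1}a)e-e|$, averaged or truncated appropriately.

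The honest expectation is that the basepoint dependence is exactly the obstacle. The intended resolution is to choose $\dist_A(a,b)=\max\{\,|\alpha(b^{-1}a)e'-e'|:e'\in\Omega\}+\delta_{a\ne b}$, with $\Omega$ a compact fundamental domain. The discrepancy between $e$ and the nearest point of $A\cdot\Omega$ is then absorbed by conjugating with $a_0$; because $\alpha(a_0)$ is an isometry, this costs only an additive $2\operatorname{diam}\Omega$. It remains to be checked whether this can be made uniform in the quasi-isometry constants; if it cannot, $s$ will be allowed to absorb it.

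\textbf{Step 4: containment in the quasiflat.} Finally, $\Theta(\phi_x(A))\subseteq\{(y,c)\}\times\Euclidean^n$, so $\phi_x(A)\subseteq F_{(y,c)}$. By Theorem \ref{thm:HHS-semisimple-coarse-minset}, $F_{(y,c)}$ is the required uniformly EHQC, $A$--invariant quasiflat.
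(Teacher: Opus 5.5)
The gap is Step 3, and it cannot be closed, because the statement is false whenever $\alpha(A)$ is not a group of translations. Your Steps 1, 2 and 4 are the paper's proof. Its $\dist_A$ is the pullback of the orbit of $(\vec 0,1_A)$ in $\Euclidean^n\times Cay(A,A)$, i.e.\ your $|\alpha(a)e_0-\alpha(b)e_0|+\delta_{a\neq b}$ up to the choice of product metric. Equivariance forces $\phi_x(a)=ax$, and $\phi_x(A)\subseteq F_{(y,c)}$ is read off from $\Theta$.

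None of your candidate repairs for Step 3 works:
\begin{itemize}
\item The discrepancy is just $\bigl||\alpha(g)e-e|-|\alpha(g)e_0-e_0|\bigr|\le 2|e-e_0|$; there is no separate rotational term. The real trouble is that $|e-e_0|$ is unbounded as $x$ ranges over $\mathcal M_A$, and a nontrivial linear part of $\alpha(g)$ is what makes $|\alpha(g)e-e|$ depend on $e$.
\item $\sup_e|\alpha(g)e-e|$ is infinite as soon as $\alpha(g)$ has nontrivial linear part, so it does not define a metric.
\item The $\max_\Omega$ formula does give a left-invariant metric. But writing $e=\alpha(a_0)e'$ with $e'\in\Omega$ gives $|\alpha(a)e-e|=|\alpha(a_0^{-1}aa_0)e'-e'|$. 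Conjugating by $a_0$ replaces $a$ by $a_0^{-1}aa_0$, whose $\dist_A$--length is not uniformly comparable to that of $a$, so the cost is not an additive $2\diam\Omega$.
\item $s$ cannot absorb an error that depends on $x$.
\end{itemize}

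You are right that the basepoint is the real obstacle, and in fact no repair exists. Suppose some $\alpha(a)$ has nontrivial linear part, say $\alpha(a)e=L_ae+v_a$ with $L_a\neq I$. For $\Theta(x)=(y,c,e)$ equivariance gives $\Theta(ax)=(y,c,\alpha(a)e)$, so $\dist(ax,x)\ge\frac1p|(L_a-I)e+v_a|-p$. Since $\Theta$ is coarsely surjective, $e$ ranges over a coarsely dense subset of $\Euclidean^n$, on which this quantity is unbounded. But $\phi_x(a)=ax$ is forced and $\dist_A(a,1)$ is one fixed number, so no uniform $s$ exists for any fixed $\dist_A$. This case does occur: take $A=D_\infty$ acting on its Cayley line with the trivial HHS structure. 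Since $D_\infty$ is generated by involutions, none of its proper actions on $\Euclidean^1$ is by translations.

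The paper's proof passes over exactly this point with ``We can assume that $q(x)=\vec 0$''. Arranging that means post-composing $q$ with a translation. This keeps $q$ equivariant for the same $\alpha$, and hence keeps the same $\dist_A$, only if the translation commutes with $\alpha(A)$. Otherwise $\alpha$, and with it $\dist_A$, is replaced by an $x$--dependent conjugate.

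So the argument is valid when $\alpha(A)$ acts by translations, for instance when $A$ is free abelian, since abelian crystallographic groups consist of translations. Then $|\alpha(a)e-\alpha(b)e|$ is independent of $e$, Step 3 is vacuous, and your Steps 1, 2, 4 already give a complete proof with $s$ depending only on $p$.

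For general virtually abelian $A$ the statement has to be weakened. One option is to restrict to $x$ with $\Theta(x)\in Y\times C\times\Omega$, for a compact fundamental domain $\Omega\ni e_0$. There your Step 1 metric works with additive error $2|e-e_0|\le 2\diam\Omega$, so $s$ now depends on $A$. Every $A$--orbit in $\mathcal M_A$ meets this set. That is all the application in Corollary \ref{cor:normaliser} needs, provided the orbit representatives $x_i$ there are chosen in it.
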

	
	\begin{proof}
		By Theorem \ref{thm:HHS-semisimple-coarse-minset}.\eqref{item:main-one-cocompact-quasiflat-1},\eqref{item:main-full-minset}, there is a fixed proper cocompact action $\alpha:A\to\Isom(\Euclidean^n)$ and a uniform constant $k$ such that for all $x\in\mathcal M_A$, there is an $A$--equivariant $(k,k)$--quasi-isometry $q:F_x\to \Euclidean^n$, where $F_x\subseteq \mathcal M_A$ is an $A$--invariant, $(k,r)$--EHQC subspace containing $x$.  Let $\Omega=Cay(A,A)$, which is a graph of diameter $1$ equipped with a free $A$--action.  Let $\mathbf E=\Euclidean^n\times\Omega$, so that the diagonal $A$--action on $\mathbf E$ is free, metrically proper, and cobounded.  Let $\dist_A$ be the metric on $A$ obtained by pulling back the subspace metric on the orbit $A\cdot (\vec 0,1_A)\subset \mathbf E$.  Note that $\dist_A$ is uniformly quasi-isometric to the pseudometric coming from the orbit map $A\to \Euclidean^n$ given by $a\mapsto \alpha(a)\cdot\vec 0$.
             
        We can assume that $q(x)=\vec 0$.  Define $\phi_x:(A,\dist_A)\to (F_x,\dist_X)$ by $\phi_x(a)=ax$.  Then $q\circ\phi_x(a)=\alpha(a)\cdot \vec 0$, which is to say that $q\circ\phi_x$ is the orbit map associated to $\alpha$, which is, by construction, a quasi-isometry with uniform constants.  Since $q$ is a quasi-isometry with constants independent of $x$ and $A$, it follows that $\phi_x$ is also. 
	\end{proof}

	\begin{thm}[store=central extensions]\label{thm:B-central-extension} Let $(X,\mathfrak S)$ be an HHS and let $B\leq \Isom(X)$ act by HHS automorphisms. Suppose $A \unlhd B$ is a normal finitely generated infinite abelian subgroup, and that the action of $A$ on $(X,\mathfrak S)$ is proper and hierarchically semisimple.  Then there are finite index subgroups $A' \leq A$ and $B' \leq B$ such that $A'$ is central in $B'$, and the element of $H^2(B'/A',A')$ corresponding to the central extension $A'\hookrightarrow B'\twoheadrightarrow B'/A'$ is represented by a bounded cocycle.
	\end{thm}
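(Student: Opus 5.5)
Since $A\unlhd B$, Lemma \ref{lem:commensurator stabilises U etc.}.\eqref{item:normaliser stabilises X(A)} shows $B$ stabilises $X(A)$, and by Lemma \ref{lem:commensurator stabilises U etc.}.\eqref{item:commensurated-invariant-boundary} there is a finite-index $B_1\leq B$ fixing each $U\in\support(p^\pm)$ and the points $\pi_U(p^\pm)$. The plan is to obtain a coarse retraction $\beta:B'\to A'$, i.e. a map that is a quasi-isometry onto $A'$, restricts to the identity on $A'$ up to bounded error and is coarsely a homomorphism modulo $A'$; a central extension admitting a quasi-homomorphic splitting of this kind has bounded Euler class in the usual way (Gersten-style: if $s:B'/A'\to B'$ is a section and $\psi:B'\to A'$ satisfies $\psi(ab)=a+\psi(b)$ with bounded defect $\psi(gh)-\psi(g)-\psi(h)$, then $s$ can be replaced by $g\mapsto s(g)\psi(s(g))^{-1}$, giving a cocycle whose values are controlled by the defect).

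\textbf{Construction of $A'$ and $B'$.} First replace $A$ by the free abelian finite-index characteristic subgroup $A_0$ from the proof of Theorem \ref{thm:HHS-semisimple-coarse-minset} (intersection of subgroups of bounded index, hence normal in $B$). Conjugation gives a homomorphism $B\to\Aut(A_0)=GL_n(\integers)$. I would show its image is finite: $B$ acts on $Q$ (via $\Psi\circ\gate_H$, equivariant after the construction is run $B_1$-equivariantly using Lemma \ref{lem:commensurator stabilises U etc.}.\eqref{item:commensurated-cone-off}) preserving the product decomposition $\prod Q_i$ up to permutation, and hence preserving $\Min_Q(A_0)=D\times\Euclidean^n$ by \cite[II.7.1]{BridsonHaefliger:metric}; the translation vectors of $A_0$ on $\Euclidean^n$ form a lattice preserved by $B$ acting by isometries of $\Euclidean^n$, so the linear part of the $B$-action lies in a finite subgroup of $O(n)$ preserving a lattice. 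Let $B'$ be the kernel intersected with $B_1$ and $A'=A_0$; then $A'$ is central in $B'$.

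\textbf{The quasi-retraction.} Now $B'$ acts on $\Euclidean^n$ by isometries commuting with the lattice of translations by $A'$, and after passing to a further finite-index subgroup, $B'$ acts on the $\Euclidean^n$ factor by translations (its linear part commutes with the translation lattice spanning $\reals^n$ and lies in a finite group, so is trivial after finite index). This gives a homomorphism $\tau:B'\to\reals^n$ extending the inclusion $A'\hookrightarrow\integers^n\subset\reals^n$. Composing with a nearest-lattice-point map $\reals^n\to A'$ gives $\psi:B'\to A'$ with $\psi(ab)=a+\psi(b)$ up to bounded error and bounded defect, as needed. Actually a genuine homomorphism $\tau$ to $\reals^n$ already shows the extension class is bounded: the cocycle $c(g,h)=s(g)s(h)s(gh)^{-1}\in A'$ satisfies $\tau(c(g,h))=\tau s(g)+\tau s(h)-\tau s(gh)$, and choosing $s$ so that $\tau\circ s$ lands in a fundamental domain makes this bounded.

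\textbf{Main obstacle.} The delicate step is producing the $B'$-action on $\Euclidean^n$ compatible with that of $A'$: the median model $Q$ was built from the $A$-invariant hull $H=A\cdot H_{M',z_0}(p^-,p^+)$, which is not obviously $B$-invariant since $z_0$ varies, and Theorem \ref{thm: median hulls} requires $G$ virtually abelian when $p^\pm\in\boundary X$. I would first try to avoid $Q$ entirely: work in each quasiline $\mathcal CU_i$, $U_i\in\support(p^\pm)$, where $B'$ acts fixing both endpoints, and apply Lemma \ref{lem:quasi-line-action}'s homogenisation (Busemann quasi-character) to get $\ell_i:B'\to\reals$, which is a homogeneous quasimorphism, and a genuine homomorphism on the amenable-by-$A'$ part. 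Summing gives $\tau=(\ell_i)$, injective with discrete image on $A'$ by properness and semisimplicity, though the rank may exceed $n$ (the image of $A'$ would then lie in a subspace). Boundedness of defects of the quasimorphisms still suffices for the argument above, so the main check is that $\tau|_{A'}$ is a quasi-isometric embedding, which follows from the distance formula restricted to $X(A)$ as in Proposition \ref{prop:short-mountains}.
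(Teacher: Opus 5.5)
Your first route does not work, and the obstruction goes beyond the one you flag. For $B'$ not virtually abelian, the homogenisation in Lemma~\ref{lem:quasi-line-action} only produces a homogeneous quasimorphism, i.e.\ a quasi-action of $B'$ on $\reals$. So the trees $T_U$ and the median space $Q$ of Theorem~\ref{thm: median hulls} cannot be built $B$-equivariantly, and $H$ is not $B$-invariant in any case. No substitute can exist in general either. An isometric $B$-action on a CAT(0) space extending the proper semisimple action of $A_0$ would, by \cite[Thm.~II.7.1]{BridsonHaefliger:metric}, force every finitely generated such $B$ to virtually contain $A_0$ as a direct factor. That fails for the mapping class group subgroups in the paper's remark on virtual direct products. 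So your argument that $B\to GL_n(\integers)$ has finite image is unsupported, and with it the centrality of $A'$ in $B'$ that the statement requires.

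Your fallback is essentially the paper's proof:
\begin{itemize}
\item $B'$ acts on each quasiline $\hull_U(p^-,p^+)$, $U\in\support(p^\pm)$, fixing both ends, hence by rough translations;
\item $A'$ acts properly on $S=\prod_U\hull_U(p^-,p^+)$, by the distance formula on $X(A)$ together with the bound from Proposition~\ref{prop:short-mountains};
\item an $A'$-valued quasimorphism restricting to the identity on $A'$ is used to correct a section.
\end{itemize}
Two things must be supplied in this route.

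First, $|\support(p^\pm)|$ can exceed $n$, so $\tau(B')$ need not lie in the span $\mathbb F$ of $\tau(A')$. You must therefore project orthogonally onto $\mathbb F$ before rounding to the lattice $\tau(A')$, as the paper does.

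Second, and more importantly, you must prove centrality inside this route. The paper observes that each commutator $[b,a]\in A'$, with $a\in A'$ and $b\in B'$, acts on every quasiline with displacement bounded independently of $a,b$. By properness all such commutators lie in a finite set $\mathcal F$. Residual finiteness then gives a finite-index characteristic subgroup of $A'$ avoiding $\mathcal F\setminus\{1\}$.

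Your homogeneous quasimorphisms give a shorter argument. They are conjugation-invariant, so $\tau(bab^{-1})=\tau(a)$. Choose $A'\leq A\cap B'$ characteristic in $A$, so that $bab^{-1}\in A'$. Since $\tau|_{A'}$ is an injective homomorphism, $bab^{-1}=a$, with no further passage to finite index.
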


	\begin{proof}
        Let $p^+,p^- \in \partial X$ be the boundary points provided by Lemma \ref{lem: fixed boundary points} and let $S \coloneqq \prod_{U\in\support(p^+)}\hull_U(p^-,p^+)$.
        Recall Definitions \ref{defn:P(A)} and \ref{defn:U,P and X(A)}.
        Item \eqref{item:commensurated-invariant-boundary} of Lemma \ref{lem:commensurator stabilises U etc.} implies that $\support(p^\pm)$ and $P$ are $B$--invariant and there exists $B' \leq B$ which fixes each $U \in \support(p^\pm)$ and $\pi_U(p^\pm) \in \partial CU$, and item \eqref{item:normaliser stabilises X(A)} implies that $X(A)$ is $B$--invariant. In particular, there is a natural isometric action of $B$ on $\prod_{U\in\support(p^+)}\mathcal CU$, and this restricts to an action of $B'$ on $S$ which preserves the factors of the above product decomposition.

		\medskip
		
		\noindent\textbf{Normal implies virtually central.}
		Let $Y$ be the injective space, from Proposition \ref{prop: almost invariant hull}, such that there is an $A$--equivariant quasi-isometry $\Phi:c(X(A)) \rightarrow Y$, where $A$ acts on $Y$ trivially. Recall from the proof of that proposition and the proof of Proposition \ref{prop:bounded-orbits} that $Y$ is the fixed point set of the $A$--action on the injective hull $\injhull(\newcone P)$, and $\Phi$ is the restriction of the $\Isom (\newcone P)$--equivariant isometric embedding $\newcone P \hookrightarrow \injhull(\newcone P)$. Normality of $A$ in $B$ implies that $Y$ is $B$--invariant, so there is an action of $B$ on $Y$ such that $\Phi$ is $B$--invariant.

        Let $A' \leq A \cap B'$ be a finite index, free abelian, characteristic subgroup of $A$. Note that $A'$ is normal in $B'$.
		
		\setcounter{claim}{0}
		\begin{claim}\label{claim:B-product-action}
			The action of $A'$ on $S$ is proper, and if the $B'$--action on $X$ is proper, then the diagonal action of $B'$ on $Y\times S$ is proper.
		\end{claim}
		
		\begin{proofofclaim}{\ref{claim:B-product-action}}
			Proposition \ref{prop:short-mountains} provides a constant $\zeta$ such that $\pi_V(X(A))$ has diameter at most $\zeta$ for all $V\in\mathfrak U-\support(p^+)$.
			
			Consider the action of $B'$ on $X(A)$.  Since $c|_{\overline P} \times \gate_H|_{\overline P}$ is a quasi-isometry by Proposition \ref{prop: almost invariant hull}.\eqref{it:barP(A) is a quasi-product}, it follows from Theorem \ref{thm:distance-formula} and the existence of the constant $\zeta$ that $c|_{X(A)} \times \prod_{U\in\support(p^+)}\pi_U|_{X(A)}: X(A) \to Y\times S$ is a $B'$--equivariant quasi-isometry. Hence the $A'$--action on $Y \times S$ is proper and, if the $B'$--action on $X$ is proper, then so is the $B'$--action on $Y \times S$.  Since $A'\leq B'$ acts on $Y$ trivially, it follows that the $A'$--action on $S$ is proper.
		\end{proofofclaim}
		
		Let $N=|\support(p^+)|$, so $N\leq \hhscomp$ and $S$ is naturally quasi-isometric to $\Euclidean^N$.  For each $U\in\support(p^+)$, recall that $L_U:=\hull_U(p^-,p^+)$ is a quasi-line equipped with a $B'$--action fixing the endpoints, so by Lemma \ref{lem:quasi-line-action} and its proof, for each such $U$ there exists a quasi-action $q_U$ of $B'$ on $\reals$ such that (using Claim \ref{claim:B-product-action}):
		\begin{itemize}
			\item For all $b\in B'$, $q_U(b)$ has bounded orbits if and only if $b$ has bounded orbits in $L_U$.
			\item The restriction of $q_U$ to $A'$ is an isometric action.
			\item The diagonal quasi-action of $B'$ on $\reals^N$ given by $\prod_Uq_U$ restricts on $A'$ to a proper action.
			\item The resulting diagonal quasi-action of $B'$ on $Y\times \reals^N$ is quasi-conjugate to the original action of $B'$ on $Y\times S$.
		\end{itemize}
		
		Let $a\in A'$ and $b\in B'$.  Then $[b,a]\in A'$, since $A'$ is normal in $B'$, and therefore $[b,a]$ acts trivially on $Y$.  On the other hand, the displacement of $q_U([b,a])$ is bounded above independently of $a,b$, so by properness, there is a finite subset $\mathcal F\subseteq A'$ such that $[b,a]\in\mathcal F$ for all $b\in B',\ a\in A'$.  Since $A'$ is residually finite, there is a finite-index subgroup of $A'$ disjoint from $\mathcal F-\{1\}$.  Hence, replacing $A'$ by this finite index subgroup, $A'$ is central in $B'$.
		
		\medskip
		
		\noindent\textbf{Bounded cocycle.}  So far, we have a central extension $1\to A'\hookrightarrow B'\stackrel{\phi}{\longrightarrow} \bar B':=B'/A'\to 1$, and a quasi-action of $B'$ on the Euclidean space $\mathbb E^N$ whose restriction to $A'$ is a proper affine isometric action.  
		
		Let $\mathbb F\subseteq \mathbb E^N$ be an $A'$--invariant affine subspace on which $A'$ acts properly and cocompactly by translations, using that $A'$ is free abelian.  Without loss of generality, $0\in \mathbb F$.  Let $R\geq 0$ be chosen so that $\mathbb F\subseteq A'\cdot \neb_R^{\mathbb E^N}(0)$.
		
		Let $\delta'\geq 0$ be such that $\|bb'\cdot 0-b\cdot 0-b'\cdot 0\|_2\leq \delta'$ for all $b,b'\in B'$.  Define a map $q:B'\to A'$ as follows.  Let $\pi:\mathbb E^N\to\mathbb F$ be the orthogonal projection.  Fix $b\in B'$. If $b\in A'$, let $q(b)=b$.  Otherwise, let $q(b)$ be some element of $A'$ such that $\|q(b)\cdot 0-\pi(b\cdot 0)\|_2\leq R$.  
		
		Given $b,b'\in B'$, we have $\|q(bb')\cdot 0-q(b)\cdot 0-q(b')\cdot 0\|_2\leq \|\pi(bb')\cdot 0-\pi(b\cdot 0)-\pi(b'\cdot 0)\|_2+3R\leq \delta'+3R$.  Hence there exists $\delta\geq 0$ such that $|q(bb')-q(b)-q(b')|_{A'}\leq \delta$ for all $b,b'\in B'$, where $|-|_{A'}$ is a a word norm on ${A'}$ coming from some finite generating set.  In other words, $q$ is an ${A'}$--valued quasimorphism which is the identity on $A'$.
		
		Let $s_0:\bar B'\to B'$ be a set-theoretic section of the quotient map $\phi$, with $s_0(1)=1$.  Define a new section $s:\bar B'\to B'$ as follows.  Given $\bar b\in\bar B'$, let $a(\bar b)=q(s_0(\bar b))$, and let $s(\bar b)=s_0(\bar b)a(\bar b)^{-1}=a(\bar b)^{-1}s_0(\bar b)$.  Then $\phi(s(\bar b))=\phi(s_0(\bar b))=\bar b$, so $s$ is again a section of $\phi$.  But now $\|q(s(\bar b))\|_1\leq \delta$ for all $\bar b$.  Now, since $s$ is a section of $\phi$, the cohomology class corresponding to the central extension $B'$ of $B'/A'$ is represented by the cocycle $\bar {B'}^2\ni(\bar g,\bar h)\mapsto s(\bar g)s(\bar h)s(\bar g\bar h)^{-1}\in A'$.  Hence $|q(s(\bar g)s(\bar h)s(\bar g\bar h)^{-1})|_{A'}\leq |q(s(\bar g))+q(s(\bar h))-q(s(\bar g\bar h))|_{A'}+3\delta\leq 6\delta$ for all $\bar g,\bar h\in\bar B'$, so the cocycle is bounded.
	\end{proof}

	\newsection{Applications}\label{sec:applications}
    We now describe some applications to groups acting on HHSes.
	
	\begin{defn}\label{defn:highest-abelian}
		Let $\Gamma$ be a group and let $A\leq \Gamma$ be a virtually abelian subgroup.  Then $A$ is \emph{highest} if for all finite-index subgroups $A'\leq A$ and all virtually abelian subgroups $A''\leq G$ with $A'\leq A''$, we have $[A'':A']<\infty$.
	\end{defn}

    \begin{lem}\label{lem:isomorphism-types}
    Let $(X,\mathfrak S)$ be an HHS and suppose $G$ acts uniformly properly and hierarchically semisimply by isometric HHS automorphisms on $(X,\mathfrak S)$.  Then there are only finitely many abstract isomorphism types of finitely generated virtually abelian subgroups of $G$.    In particular, there are constants $B,N_0,R_0$, depending on $G$ but not on the specific choice of $(X,\mathfrak S)$, such that for all finitely generated virtually abelian $A\leq G$,
    \begin{itemize}
        \item  $|A|\leq B$ if $|A|<\infty$;
        \item  there is a free abelian subgroup $A_0\leq A$ with $[A:A_0]\leq N_0$;
        \item  $\rank(A)\leq R_0$. 
    \end{itemize}
    \end{lem}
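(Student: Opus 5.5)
The plan is to reduce everything to bounding three quantities: the rank of $A$, the order of finite subgroups, and the index of a free abelian subgroup. Then Lemma \ref{lem:virtually-abelian-crystallographic} and Lemma \ref{lem:finitely-many-crystallographic} give finiteness of isomorphism types.

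\textbf{Step 1: rank.} Let $A\leq G$ be finitely generated, infinite and virtually abelian. Uniform properness of the $G$--action implies that $A$ acts properly, and $A$ acts hierarchically semisimply by HHS automorphisms. So Theorem \ref{thm:HHS-semisimple-coarse-minset}.\eqref{item:main-rank} gives $\rank(A)\leq\hhscomp$. Since $\hhscomp$ depends on the structure, I will instead set $R_0$ to be the maximal rank of a free abelian subgroup of $G$. This is finite by the above, and it is intrinsic to $G$, so it does not depend on the choice of $(X,\mathfrak S)$.

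\textbf{Step 2: finite subgroups.} Let $K\leq G$ be finite. Then $K$ has bounded orbits, so Corollary \ref{cor: uniformly bounded orbits} gives $x_0$ with $\diam(K\cdot x_0)\leq B'$, where $B'$ depends only on the HHS parameters. This needs $X$ to be a graph, which Remark \ref{rem:HHS-are-graphs} allows. Uniform properness then bounds $|\{g:\dist(x_0,gx_0)\leq B'\}|$ independently of $x_0$, so $|K|\leq B$. To make $B$ independent of $(X,\mathfrak S)$, I will take $B$ to be the supremum of orders of finite subgroups of $G$. This is finite by the argument just given, and it is manifestly intrinsic.

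\textbf{Step 3: index.} Lemma \ref{lem:virtually-abelian-crystallographic} gives a finite normal $K\leq A$ with $A/K$ crystallographic of dimension $n\leq R_0$, and Step 2 gives $|K|\leq B$. Lemma \ref{lem:finitely-many-crystallographic}, applied for each $n\leq R_0$ and each of the finitely many groups of order at most $B$, shows that $A$ lies in finitely many isomorphism classes. Each such class contains a free abelian subgroup of some finite index, and taking the maximum over the classes gives $N_0$. Since these are statements about abstract subgroups of $G$, the constants are defined intrinsically as suprema over subgroups of $G$. The case of finite $A$ is covered by Step 2.

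\textbf{Expected obstacle.} The main subtlety is Step 2. Uniform properness has to give a bound on the size of $\{g:\dist(x,gx)\leq r\}$ that is uniform in $x$, and the reduction to a graph must preserve this, which holds because Lemma \ref{lem:assume-graph} is a $G$--equivariant quasi-isometry. I will not use the freeing trick of Lemma \ref{lem:graph-free} here, since it would destroy the information that finite subgroups have near-fixed points in the original space.
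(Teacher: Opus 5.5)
Your proposal is correct and follows essentially the same route as the paper: uniform properness plus Corollary~\ref{cor: uniformly bounded orbits} bounds the orders of finite subgroups, Theorem~\ref{thm:HHS-semisimple-coarse-minset}.\eqref{item:main-rank} bounds the rank, and Lemmas~\ref{lem:virtually-abelian-crystallographic} and~\ref{lem:finitely-many-crystallographic} then leave only finitely many isomorphism types. You define $B$, $N_0$, $R_0$ intrinsically as suprema over subgroups of $G$, which secures independence from $(X,\mathfrak S)$ more cleanly than the paper does: the paper uses the minimal complexity $\hhscomp_0$ for the rank, and its bound $f(B')$ on finite subgroups a priori depends on the chosen structure.
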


    \begin{proof}
    Uniform properness provides a function $f:[0,\infty)\to[0,\infty)$ such that  $|\{g\in G:\dist_G(x,gx)\leq r\}|\leq f(r)$ for all $x\in X$ and all $r\geq 0$.  Corollary \ref{cor: uniformly bounded orbits} provides a constant $B'$, depending only on the HHS parameters, so that any finite subgroup $F\leq G$ has some orbit in $X$ of diameter at most $B'$.  Hence $|F|\leq f(B')$ for all finite subgroups $F\leq G$, which proves the first assertion.  Let $\{F_1,\ldots,F_p\}$ be a set of finite subgroups containing exactly one representative of each abstract isomorphism class of finite subgroups of $G$.

    If $\hhscomp_0$ is the minimal complexity of HHSs admitting $G$--actions satisfying the hypotheses of the lemma, then Theorem \ref{thm:HHS-semisimple-coarse-minset}.\eqref{item:main-rank} implies that $\rank(A)\leq \hhscomp_0$ for all finitely generated virtually abelian subgroups of $G$.  In fact, Proposition \ref{prop: almost invariant hull} together with properness (see also \cite[Prop. 2.17]{HRSS:3-manifold} shows that $\rank(A)$ is bounded above by the maximal $r$ such that $X$ contains an $r$--dimensional quasiflat. So, Lemma \ref{lem:virtually-abelian-crystallographic} implies that $A$ fits into an exact sequence 
    $$1\to F_s\to A\to C\to 1$$
    for some $s$, where $C$ is a crystallographic group of dimension at most $\hhscomp_0$.  Lemma \ref{lem:finitely-many-crystallographic} then implies that there are only finitely many possibilities for the isomorphism type of $A$.
    \end{proof}
	
	\begin{cor}[store=cor:ascending chains, note=Ascending chain condition] \label{cor: ascending chain}
		Let $G$ be a group and let $H_1 \leq H_2 \leq \dots \leq G$ be an ascending chain of finitely generated virtually abelian subgroups. If $G$ acts uniformly properly and hierarchically semisimply by isometric HHS automorphisms on an HHS $(X, \mathfrak S)$, then $H_n = H_{n+1}$ for all sufficiently large $n$.
		
		In particular, any virtually abelian subgroup $H\leq G$ is finitely generated and virtually contained in a highest virtually abelian subgroup.
	\end{cor}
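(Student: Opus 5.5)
The plan is to reduce the ascending chain condition to the finiteness statements of Lemma \ref{lem:isomorphism-types}, namely the rank bound $R_0$, the index bound $N_0$ and the order bound $B$. Given the chain $H_1\leq H_2\leq\cdots$ of finitely generated virtually abelian subgroups, let $n_i=\rank(H_i)$, the maximal rank of a free abelian subgroup of $H_i$. The sequence $(n_i)$ is nondecreasing, since a free abelian subgroup of $H_i$ is one of $H_{i+1}$, and it is bounded by $R_0$, so it stabilises at some $n$ for all $i\geq i_0$. If $n=0$, all $H_i$ with $i\geq i_0$ are finite of order at most $B$, so the chain stabilises. Assume $n\geq 1$.

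For $i\geq i_0$, take a free abelian subgroup $A_i\leq H_i$ of rank $n$ with $[H_i:A_i]\leq N_0$. Then $A_{i_0}\cap A_i$ has finite index in $A_{i_0}$, hence rank $n$, and so has finite index in $A_i$. Thus $[H_i:A_{i_0}\cap A_i]$ is finite and $H_{i_0}$ has finite index in every later $H_i$. The key step is to bound $[H_i:H_{i_0}]$ uniformly in $i$. For this I would use the geometry of Theorem \ref{thm:HHS-semisimple-coarse-minset}.\eqref{item:main-one-cocompact-quasiflat-1} together with Corollary \ref{cor:coarse-foliation}.

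Since $H_{i_0}\leq H_i$ with finite index, both act properly and coboundedly on their respective quasiflats. Choose a basepoint $x\in\mathcal M_{H_i}$. The orbit $H_i\cdot x$ lies in a uniform quasiflat $F$, $(k,k)$--quasi-isometric to $\Euclidean^n$ with $k$ uniform. By Corollary \ref{cor:coarse-foliation}, the orbit map of $H_i$ is a uniform quasi-isometry onto it. Then $[H_i:H_{i_0}]$ is at most the number of $H_i$--orbit points in a fundamental domain for $H_{i_0}$ acting on $F$. Uniform properness bounds the number of group elements moving $x$ a bounded distance. So it suffices to show that $H_{i_0}\cdot x$ is $D$--coarsely dense in $F$ with $D$ independent of $i$.

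This density is the main obstacle, because the quasiflat $F$ depends on $H_i$. My first attempt would be to use the item \eqref{it:coarse min set} characterisation. The quasiflat $F$ is $H_{i_0}$--invariant and quasi-isometric to $\Euclidean^n$ with uniform constants, so by \eqref{it:coarse min set} applied to $A=H_{i_0}$ it lies within $\omega(t)$ of some $F_{(y,c)}$ for $H_{i_0}$. Here $\omega$ depends only on $H_{i_0}$, and $H_{i_0}$ acts on $F_{(y,c)}$ coboundedly with a constant depending only on $H_{i_0}$. This is what gives density independent of $i$, provided $F$ can be arranged inside $H_\theta(X(H_{i_0}))$. I would try to arrange that by checking that $X(H_i)$ coarsely lies in $X(H_{i_0})$, using Lemma \ref{lem:commensurator stabilises U etc.}, since $H_i$ commensurates $H_{i_0}$ and so has the same support and boundary points up to finite index.

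Granting the uniform bound on $[H_i:H_{i_0}]$, the chain has bounded index over $H_{i_0}$, so it stabilises. For the final statement, let $H$ be any virtually abelian subgroup. Write $H$ as the union of an ascending chain of finitely generated subgroups, which are virtually abelian. By the above, this chain stabilises, so $H$ is finitely generated. Next, if $H$ were not virtually contained in a highest subgroup, we would choose successively larger virtually abelian subgroups virtually containing $H$ that strictly increase rank or index. Each step replaces the current subgroup by a finite-index subgroup of it. Because the rank is bounded and the index is controlled by the argument above, this process yields an infinite strictly ascending chain, which contradicts the first part.
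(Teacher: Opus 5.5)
Your argument for the chain condition is correct and has the same skeleton as the paper's. You fix a base subgroup, apply Theorem \ref{thm:HHS-semisimple-coarse-minset}.\eqref{it:coarse min set} to it to force each larger group's invariant quasiflat to lie uniformly close to a base orbit, and then turn this into a uniform index bound via uniform properness.

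The execution differs from the paper's in two places.
\begin{enumerate}
\item The paper passes to a subsequence so that the base is a fixed free abelian $A_1=H_1\cap A_n$, normal in each abelian $A_n$. It then produces $F_n\subseteq H_\theta(X(A_1))$ by re-applying item \eqref{item:main-one-cocompact-quasiflat-1} to $A_n$ acting on the HHS $H_\theta(X(A_1))$, which is $A_n$--invariant by Lemma \ref{lem:commensurator stabilises U etc.}.\eqref{item:normaliser stabilises X(A)}. You keep the possibly non-normal $H_{i_0}$, so you need $X(H_i)\subseteq X(H_{i_0})$ instead. That containment does hold on the nose, provided you free the $G$--action once rather than subgroup by subgroup. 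Finite-index subgroups have the same support and the same unordered pairs $\{\pi_V(p^+),\pi_V(p^-)\}$. Hence they give the same $P$, $\mathfrak U$, $\newcone P$, $\gamma_V$ and $\overline P_k$, and trivially $Z(H_i,\newcone P,k)\subseteq Z(H_{i_0},\newcone P,k)$.
\item The paper extracts only $F_n\subseteq\neb_{\omega(k)}(\mathcal M_{A_1})$, and then needs Lemma \ref{lem:Borsuk-Ulam-redux} to see that the comparison map $F(y)\to F_n$ is coarsely surjective. You use the full conclusion $F\subseteq\neb_{\omega(t)}(F_{(y,c)})$, together with coboundedness of $H_{i_0}$ on every $F_{(y,c)}$ with a constant $D_0$ depending only on $\alpha$. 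This gives $(2\omega(t)+D_0)$--density of $H_{i_0}\cdot x$ in $F$ at once.
\end{enumerate}
So your route avoids subsequences, normality and an explicit Borsuk--Ulam step, at the cost of a bookkeeping check on how $X(\cdot)$ depends on the group. The paper's route uses only intrinsic invariance. Corollary \ref{cor:coarse-foliation} is superfluous in your argument: the count only uses $H_i\cdot x\subseteq F$.

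The last paragraph needs two small repairs.
\begin{itemize}
\item $H$ is not known to be countable, so do not write it as a countable union. Instead: if $H$ were not finitely generated, choosing $h_{j+1}\in H-\langle h_1,\dots,h_j\rangle$ would give a strictly ascending chain of finitely generated virtually abelian subgroups, contradicting the first part.
\item For the highest subgroup, your process does not produce an ascending chain, since each step first passes to a finite-index subgroup. So the first part is not what stops it. What stops it is that, once everything is finitely generated, $[A'':A']=\infty$ forces $\rank A''>\rank A'$, and ranks are bounded by $R_0$. Equivalently, any $A$ of maximal rank among virtually abelian subgroups with $[H:H\cap A]<\infty$ is already highest.
\end{itemize}
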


    \begin{proof}
    Let $H_1\leq H_2\leq \ldots$ be an ascending chain of finitely generated virtually abelian subgroups.  Let $R_0,N_0$ be the constants from Lemma \ref{lem:isomorphism-types}.  After passing to a subsequence, there exists $m\in\{0,\ldots,R_0\}$ such that for each $n\geq 1$, there is a normal subgroup $A_n\leq H_n$ such that $A_n\cong \integers^m$ and $[H_n:A_n]\leq N_0$.  Lemma \ref{lem:isomorphism-types} shows that we are done if $m=0$, so assume $m\geq 1$.

    For all $n$, $[H_1:H_1\cap A_n]\leq [H_n:A_n]\leq N_0$ and $H_1\cap A_n$ is free abelian. Since $H_1$ has finitely many subgroups with index $\leq N_0$, we can assume, up to passing to a subsequence, that $A_1=H_1\cap A_n$ for all $n \in \mathbb N$.  It therefore suffices to show that $[A_n:A_1]$, is bounded independently of $n$, since this will imply $H_n=H_{n+1}$ for sufficiently large $n$.

    Let $X(A_1)$ be as in Proposition \ref{prop: almost invariant hull}. Let $\mathcal M_{A_1}\subseteq X(A_1)$ be the subspace obtained by applying Theorem \ref{thm:HHS-semisimple-coarse-minset}.\eqref{item:main-full-minset} to $A_1$.  The same theorem provides $q\geq 0$, depending only on $A_1$ and the HHS parameters, such that for all $y\in\mathcal M_{A_1}$, there is an $A_1$--invariant $(k,k)$--quasiflat $F(y)\subseteq \mathcal M_{A_1}$ and $F(y)\subseteq A_1\cdot \neb_q(y)$.

    For any $n\geq 1$, we have $A_1\leq A_n$ and since $A_n$ is abelian, $A_1$ is normal in $A_n$.  Hence $A_n$ stabilises $X(A_1)$, by Lemma \ref{lem:commensurator stabilises U etc.}, and therefore each $A_n$ stabilises the uniformly HQC subset $H_\theta(X(A_1))$.  So, by Lemma \ref{lem: HHS structure on HQC subspaces}, $H_\theta(X(A_1))$ is an HHS on which each $A_n$ acts properly and hierarchically semisimply by HHS automorphisms.  Now apply Theorem \ref{thm:HHS-semisimple-coarse-minset}.\eqref{item:main-one-cocompact-quasiflat-1} to this action, so that for each $n$, we get an $A_n$--invariant subspace $F_n$ of $H_\theta(X(A_1))$ that is $(k,k)$-quasi-isometric to $\mathbb E^m$.  Now, Theorem \ref{thm:HHS-semisimple-coarse-minset}.\eqref{it:coarse min set} implies that $F_n\subseteq \neb_{\omega(k)}(\mathcal M_{A_1})$, where $\omega$ is the function provided by Theorem \ref{thm:HHS-semisimple-coarse-minset}.\eqref{it:coarse min set}.

    Choose $y\in\mathcal M(A_1)$ such that $\dist(y,y')\leq \omega(k)$ for some $y'\in F_n$.  Define $\pi_n:F(y)\to F_n$ as follows.  Given $x\in F(y)$, choose $a\in A_1$ such that $x\in \neb_q(a\cdot y)$, and then let $\pi_n(x)=a\cdot y'$.  Note that $\dist(x,\pi_n(x))\leq q+\omega(k)$.  So, $\pi_n$ is $C$--coarsely lipschitz, where $C$ depends on $k,q,\omega$ but not on $n$.  Lemma \ref{lem:Borsuk-Ulam-redux} provides $C'$, depending only on $C$ and $k$, such that $\pi_n$ is $C'$--coarsely surjective.  
        
    Now let $z\in F_n$.  Choose $x\in F(y)$ such that $\dist(z,\pi_n(x))\leq C'$.  Then $\dist(z,x)\leq C'+q+\omega(k)$.  Hence $F_n\subseteq A_1\cdot\neb_{C'+q+\omega(k)}(y)$.  Thus $[A_n:A_1]\leq f(2(\omega(k)+q+C'))$, so $[H_n:A_1]\leq N_0f(2(\omega(k)+q+C'))$ which is bounded independently of $n$, as needed. 
    \end{proof}

\begin{cor}[store=cor:solvable] \label{cor:solvable}
	Let $(X, \mathfrak S)$ be an HHS and let $G$ be a group acting uniformly properly and hierarchically semisimply by HHS automorphisms on $X$. If $G$ is virtually solvable then $G$ is virtually abelian. 
\end{cor}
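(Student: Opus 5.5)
We argue by induction on the derived length of a finite-index solvable subgroup of $G$; since finite-index subgroups inherit uniformly proper hierarchically semisimple actions, we may replace $G$ by a solvable subgroup of finite index. The base case (derived length $\le 1$) is trivial. For the inductive step, write $N=[G,G]$ (or the last nontrivial term of the derived series, working upward). By induction every solvable subgroup of smaller derived length is virtually abelian, so $N$ is virtually abelian. By Corollary \ref{cor: ascending chain}, $N$ is finitely generated. Let $A\le N$ be a finite-index free abelian subgroup which is characteristic in $N$ (for instance, the intersection of all subgroups of $N$ of index at most $N_0$, with $N_0$ from Lemma \ref{lem:isomorphism-types}, intersected with a free abelian finite-index subgroup and then made characteristic). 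Then $A\unlhd G$. If $A$ is finite then $N$ is finite and $G$ is virtually (finite-by-abelian), and a finite normal subgroup with abelian quotient still gives a virtually abelian group after passing to the centraliser of $N$ (finite index since $\mathrm{Aut}(N)$ is finite). So assume $A$ is infinite.

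Now apply Theorem \ref{thm:B-central-extension} with $B=G$: there are finite-index $A'\le A$, $G'\le G$ with $A'$ central in $G'$. Note $G'/A'$ is solvable, and we would like to show it is virtually abelian and then lift. The cleaner route is to work with finitely generated subgroups: the statement is local thanks to Corollary \ref{cor: ascending chain}, since if every finitely generated subgroup of $G$ is virtually abelian, then an ascending union of finitely generated virtually abelian subgroups stabilises, so $G$ itself is finitely generated and virtually abelian. Hence assume $G$ finitely generated.

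The key step is to show $G'/A'$ is virtually abelian. Here I would use the proof of Theorem \ref{thm:B-central-extension}: $G'$ acts (properly, as $G$ acts properly) on $Y\times S$ with $S$ quasi-isometric to $\Euclidean^N$, and $A'$ acts trivially on $Y$. The quotient $G'/A'$ then acts on the coarse minset, and I would instead induct on the rank via quotient HHS structures: cone off $\mathfrak U$ as in Corollary \ref{cor:uniform-cone-off} to get an action of $G'$ on $\newcone P$ in which $A'$ acts with bounded orbits, giving a proper-modulo-$A'$ action of $G'/A'$ on a coarse fixed set, itself an HHS-like space (via Proposition \ref{prop:bounded-orbits}). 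Properness of the $G'$ action on $Y\times S$ together with cocompactness of $A'$ on the $\Euclidean^n$ factor shows that $G'/A'$ acts properly on $Y$ times a bounded-ish factor; then the central extension with bounded cocycle (Gersten) gives $G'$ quasi-isometric to $A'\times G'/A'$. Since $G'/A'$ is solvable of smaller "complexity" (the rank of $A'$ is absorbed, and the remaining action has lower complexity since $\support(p^\pm)$ is coned off), induction on complexity of the HHS gives $G'/A'$ virtually abelian. This is the main obstacle: one must verify that $G'/A'$ inherits a uniformly proper, hierarchically semisimple action by HHS automorphisms on an HHS of smaller complexity (the cone-off $\newcone P$ restricted to $c(X(A))$), which I expect to require Lemma \ref{lem:commensurator stabilises U etc.} and Proposition \ref{prop:bounded-orbits}.

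Finally, $G'$ is a central extension of a virtually abelian group $\bar G$ by $\integers^n$ with bounded class; pass to a finite-index $\integers^m\le\bar G$ and its preimage $G''$, a central extension of $\integers^m$ by $\integers^n$, hence nilpotent of class $\le 2$, with commutator pairing bilinear into $A'$. Boundedness of the cocycle forces the commutator map $\integers^m\times\integers^m\to A'$, which is bilinear, to be bounded, hence zero. So $G''$ is abelian and $G$ is virtually abelian.
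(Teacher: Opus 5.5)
There is a genuine gap at exactly the step you flag as the main obstacle: you never prove that $G'/A'$ is virtually abelian, and the route you sketch does not work. The claim that $G'/A'$ acts properly on $Y$ times a bounded factor, or on a lower-complexity cone-off, fails in general. Elements of $G'$ outside $A'$ can act loxodromically on the very domains of $\support(p^\pm)$ that get coned off, and $A'$ need not act cocompactly on $S$.

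Here is an example. Let $X=\mathbb R^2$ with the standard $\ell^1$ HHS structure whose only unbounded domains are two orthogonal ones $U_1\orth U_2$, with $\mathcal CU_i$ the coordinate lines. Let $G=D_\infty\times\mathbb Z=\langle a,\tau\rangle\times\langle b\rangle$ act by $a(v)=v+(1,1)$, $b(v)=v+(1,-1)$ and $\tau(x,y)=(-y,-x)$. Then $N=[G,G]=\langle a^2\rangle$ is loxodromic on both $\mathcal CU_1$ and $\mathcal CU_2$, so $\mathfrak U=\{U_1,U_2\}$. Hence the cone-off $\newcone P$, and therefore $Y$, is bounded. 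Yet $G'/A'$ is infinite, since it contains the image of $b$. The quasi-product structure therefore does not give $G'/A'$ a proper action on anything of lower complexity. In any case, no induction on complexity is actually set up in your argument.

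The missing idea is purely algebraic. We have $A'\le G'$, $A'$ has finite index in $N$, and $[G',G']\le N\cap G'$. So the derived subgroup of $G'/A'$ is $[G',G']A'/A'\le (N\cap G')/A'$, which is finite. After your reduction to finitely generated $G$, the group $G'/A'$ is finitely generated with finite derived subgroup, hence virtually abelian: each generator has finitely many conjugates, so the centre has finite index. This is the same fact you already need in the case $A$ finite, and that case should also come after the reduction to finite generation. This is how the paper proceeds, via \cite[II.7.9]{BridsonHaefliger:metric}.

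With this in place, your last paragraph is correct, and it is a genuinely different and more elementary finish than the paper's. The paper uses the bounded class to get that $H$ is quasi-isometric to $A\times H/A$ (Gersten), then invokes quasi-isometric rigidity of virtually abelian groups (Pansu). You instead restrict the bounded cocycle $\omega(\bar g,\bar h)=s(\bar g)s(\bar h)s(\bar g\bar h)^{-1}$ to the preimage $G''$ of a finite-index $\mathbb Z^m\le G'/A'$. Since $A'$ is central, $[s(x),s(y)]=\omega(x,y)\omega(y,x)^{-1}$. So the bilinear commutator pairing $\mathbb Z^m\times\mathbb Z^m\to A'$ is bounded, hence trivial, and $G''$ is abelian. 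This avoids quasi-isometric rigidity entirely.
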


\begin{proof}
	Suppose $G$ is virtually solvable with derived length $k \geq 1$. We proceed by induction on $k$. If $k=1$ then $G$ is abelian, so suppose $k > 1$ and $G' \coloneqq [G,G]$ is virtually abelian. It follows from Corollary \ref{cor: ascending chain} that $G'$ is finitely generated, so there exists a finite index, finitely generated, free abelian subgroup $A \leq G'$. By Theorem \ref{thm:B-central-extension}, we can assume, up to taking a further finite index subgroup, that $A$ is central in a finite index subgroup $H \leq G$. Moreover the element of $H^2(H/A, A)$ corresponding to the central extension $1 \rightarrow A \rightarrow H \rightarrow H/A \rightarrow 1$ is bounded. This implies that $H$ is quasi-isometric to $A \times H/A$, by \cite{GerstenBounded92}. Let $\pi: H \rightarrow H/A$ be the quotient map and let $H' \coloneqq [H,H]$. Since $H' \leq G'$, the image $\pi(H')$ is finite. Moreover $\pi(H')$ is normal in $H/A$ and $(H/A) /  \pi(H') = H/H'$ is abelian, so \cite[II.7.9]{BridsonHaefliger:metric} implies that $H/A$ is virtually abelian. Therefore $A \times H/A$ is virtually abelian and, since virtually abelian groups are quasi-isometrically rigid \cite{PansuCroissance83} (see \cite[Sec. 4.3]{DrutuQuasi-isometry09}), this implies that $H$ is virtually abelian.
\end{proof}

As we remarked earlier, the hierarchical semisimplicity assumption holds for HHGs, so the theorems of Section \ref{sec:main-theorem} apply to HHGs, yielding the following ``coarse flat torus'' result:

\begin{cor}\label{cor:abelian-subgroups-HQC}
    Let $(G,\mathfrak S)$ be an HHG with complexity $\hhscomp$.
	There exists a constant $k$, depending only on the HHS parameters, such that for any virtually abelian subgroup $A\leq G$, there is an $A$--invariant $(k,k)$--EHQC subspace $F\subseteq G$, a proper, cocompact, isometric $A$--action on $\Euclidean^n$ for some $n\in\{0,\ldots\,\hhscomp\}$, and an $A$--equivariant $(k,k)$--quasi-isometry $F\to\Euclidean^n$. 
    
    In particular, virtually abelian subgroups of $G$ are finitely generated and belong to one of finitely many abstract isomorphism types.
\end{cor}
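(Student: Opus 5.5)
The plan is to deduce the corollary directly from Theorem \ref{thm:HHS-semisimple-coarse-minset}, after checking that an HHG satisfies the hypotheses of that theorem and of Lemma \ref{lem:isomorphism-types} and Corollary \ref{cor: ascending chain}.

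\textbf{Hypotheses.} Let $(G,\mathfrak S)$ be an HHG and regard $G$ with its word metric as the HHS $X$. By Definition \ref{defn:HHG}, $G$ acts on $(G,\mathfrak S)$ by HHS automorphisms, and the induced action on $X$ is left multiplication. This action is isometric, free, and uniformly proper: the number of $g$ with $\dist(x,gx)\leq r$ is the cardinality of an $r$--ball in the Cayley graph, independent of $x$. By \cite[Thm.~3.1]{DurhamCorrection20}, as noted before Definition \ref{defn:hierarchically-semisimple}, the action is hierarchically semisimple. These properties pass to every subgroup $A\leq G$, so $A$ acts properly and hierarchically semisimply by isometric HHS automorphisms.

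\textbf{Finite generation and isomorphism types.} Let $A\leq G$ be virtually abelian. Corollary \ref{cor: ascending chain} applies to $G$ and shows that $A$ is finitely generated. Lemma \ref{lem:isomorphism-types} applies to $G$ and gives finitely many abstract isomorphism types of finitely generated virtually abelian subgroups. This proves the ``in particular'' clause.

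\textbf{The quasiflat.} Two cases are needed, since Theorem \ref{thm:HHS-semisimple-coarse-minset} assumes $A$ is infinite.
\begin{itemize}
\item If $A$ is finite, take $n=0$ and $F=A\cdot x_0$, where Corollary \ref{cor: uniformly bounded orbits} gives an orbit of diameter at most $B$, with $B$ depending only on the HHS parameters. The map $F\to\Euclidean^0$ is then a $(B,B)$--quasi-isometry. A bounded set of diameter $B$ is $(k,k)$--EHQC for $k$ depending on $B$ and $E$: join two of its points by a $D$--hierarchy path from Proposition \ref{prop:hierarchy-path-char}, which stays within a distance of $F$ controlled by $D$ and $B$.
\item If $A$ is infinite, apply Theorem \ref{thm:HHS-semisimple-coarse-minset}.\eqref{item:main-one-cocompact-quasiflat-1}. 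It gives a $(p,q)$--EHQC, $A$--invariant $F$, a proper cobounded action $\alpha\colon A\to\Isom(\Euclidean^n)$, and an $A$--equivariant $(p,p)$--quasi-isometry $F\to\Euclidean^n$. Item \eqref{item:main-rank} gives $n\leq\hhscomp$. An action on $\Euclidean^n$ that is proper and cobounded is cocompact, since bounded closed sets in $\Euclidean^n$ are compact.
\end{itemize}

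\textbf{Uniform constant.} Use Remark \ref{rem:0-EHQC} to convert $(p,q)$--EHQC to $(k',0)$--EHQC, and hence to $(k,k)$--EHQC, where $(k,k)$--EHQC means Definition \ref{defn:EHQC} with $k$--hierarchy paths in $\neb_k(F)$. Then take $k$ to be the maximum of the constants from the two cases. All of them depend only on the HHS parameters.

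\textbf{Main obstacle.} The substantive inputs are hierarchical semisimplicity and the reduction to free actions on graphs; both are already handled inside Theorem \ref{thm:HHS-semisimple-coarse-minset}. For a Cayley graph the action is already free and the space is already a graph, so nothing further is needed. The only remaining care is bookkeeping: checking that each constant used depends only on the HHS parameters and not on $A$.
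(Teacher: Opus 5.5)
Your proposal is correct and follows the paper's proof: hierarchical semisimplicity from \cite[Thm.~3.1]{DurhamCorrection20}, finite generation from Corollary \ref{cor: ascending chain}, the quasiflat from Theorem \ref{thm:HHS-semisimple-coarse-minset}, and the isomorphism types from Lemma \ref{lem:isomorphism-types}. You also fill in details the paper leaves implicit: the separate treatment of finite $A$ (which Theorem \ref{thm:HHS-semisimple-coarse-minset} excludes), the passage from cobounded to cocompact, and the bookkeeping of constants, where you could simply take $k=\max\{p,q\}$ without going through Remark \ref{rem:0-EHQC}.
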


\begin{proof}
	Hierarchical semisimplicity comes from \cite[Thm. 3.1]{DurhamCorrection20}, at which point Theorem \ref{thm:HHS-semisimple-coarse-minset} applies, since $A$ is finitely generated by Corollary \ref{cor: ascending chain}.  This gives the subspaces $F$ from the statement, while the statement about isomorphism types comes from Lemma \ref{lem:isomorphism-types}. 
\end{proof}

\subsection{Centralisers, normalisers and commensurators}

We can now use the previous results to study the algebraic and geometric properties of the centralisers, normalisers and commensurators of virtually abelian subgroups of HHGS.

\begin{cor}\label{cor:normaliser}
	Let $(G,\mathfrak S)$ be an HHG.  Let $A\leq G$ be a virtually abelian subgroup.  Then $N_G(A)$ lies at finite Hausdorff distance from the set $\mathcal M_A$ from Theorem \ref{thm:HHS-semisimple-coarse-minset}.\eqref{item:main-full-minset}, and in particular is EQHC.  Moreover, $[N_G(A_0):C_G(A_0)]<\infty$, where $A_0$ is the finite-index subgroup of $A$ from Theorem \ref{thm:HHS-semisimple-coarse-minset}.\eqref{it: M is coarsely X(A)}.
    
    Finally, there is a free abelian subgroup $A_1\leq A$ such that $C_G(A_1)$ is HQC in any HHG structure on $G$, and $[A:A_1]\leq D$, where $D\in\naturals$ depends only on the group $G$ (and not on $A$ or the choice of HHG structure).
\end{cor}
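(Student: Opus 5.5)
The proof splits into three parts: Hausdorff closeness of $N_G(A)$ to $\mathcal M_A$, finiteness of $[N_G(A_0):C_G(A_0)]$, and hierarchical quasiconvexity of $C_G(A_1)$. Throughout, $G$ acts on itself by isometric HHS automorphisms, freely and properly, and hierarchically semisimply by \cite[Thm. 3.1]{DurhamCorrection20}. By Corollary \ref{cor: ascending chain}, $A$ is finitely generated, so Theorem \ref{thm:HHS-semisimple-coarse-minset} and Proposition \ref{prop: almost invariant hull} apply to $A\leq G$.

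\emph{Hausdorff closeness.} Take $B=N_G(A)$. By Lemma \ref{lem:commensurator stabilises U etc.}.\eqref{item:normaliser stabilises X(A)}, $B$ stabilises $X(A)$. By Theorem \ref{thm:HHS-semisimple-coarse-minset}.\eqref{item:main-full-minset}, $\mathcal M_A$ is a coarse retract lying inside $X(A)$. I would show $\mathcal M_A$ is coarsely $B$--invariant; the cleanest route is through $\mathcal M_{A_0}$ and item \eqref{it: M is coarsely X(A)}, together with the fact that $X(A)$ and $X(A_0)$ are at finite Hausdorff distance.
\begin{itemize}
\item $N_G(A)\subseteq \neb(\mathcal M_A)$: fix $z_0\in\mathcal M_A$. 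Then $N_G(A)\cdot z_0\subseteq X(A)$, and $X(A)$ is Hausdorff close to $\mathcal M_{A_0}$, hence to $\mathcal M_A$.
\item $\mathcal M_A\subseteq \neb(N_G(A))$, the harder inclusion: for $x\in\mathcal M_A$, let $g\in G$ be the group element with $x=gz_0$, up to bounded error, viewing $G$ as its own Cayley graph. Then the quasiflat $F_x$ through $x$ is $A$--invariant and uniformly quasi-isometric to $\Euclidean^n$. Hence $g^{-1}F_x$ is $g^{-1}Ag$--invariant and passes near $1$.
\end{itemize}
I need to deduce that $g^{-1}Ag$ is commensurable with $A$ in a way that forces $g$ to be close to $N_G(A)$. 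Here I would use properness and the fact that $A\cdot x$ coarsely fills $F_x$ with uniform constants (Corollary \ref{cor:coarse-foliation}). The elements $g^{-1}ag$ with $a$ in a finite generating set move $1$ a uniformly bounded amount. Properness then leaves only finitely many possibilities for the conjugate $g^{-1}A_0g$. By a pigeonhole argument, each $x$ lies within bounded distance of some $hz_0$ with $h^{-1}A_0h=A_0$, placing $x$ near $N_G(A_0)$. Finally $[N_G(A_0):N_G(A_0)\cap N_G(A)]<\infty$, since $A_0$ is characteristic-ish (the intersection of finite-index subgroups of bounded index, so it has finitely many conjugates inside the normaliser). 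EHQC of $N_G(A)$ then follows from Theorem \ref{thm:HHS-semisimple-coarse-minset}. I expect this to be the main obstacle: making the pigeonhole uniform enough to yield a genuine Hausdorff bound rather than only a coarse cover.

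\emph{Index of the centraliser.} For $[N_G(A_0):C_G(A_0)]<\infty$, the conjugation action gives $N_G(A_0)\to \Aut(A_0)\cong GL_n(\integers)$. I would show the image is finite. By Lemma \ref{lem:commensurator stabilises U etc.}, $N_G(A_0)$ acts on $\mathcal M_{A_0}\simeq Y\times C\times\Euclidean^n$ compatibly with $A_0$, coarsely preserving the flat directions. An element $h$ conjugating $a$ to $hah^{-1}$ must preserve translation lengths in $\mathcal CU_i$ up to the quasi-isometry. So the image in $GL_n(\integers)$ consists of matrices coarsely preserving a norm, namely the $\ell^1$--type norm given by the stable translation lengths. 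The set of integer matrices preserving a norm is finite.

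\emph{Centraliser.} Choose $A_1\leq A_0$ as in Proposition \ref{prop:bounded-orbits}.\eqref{it:finite index hqc fixed point set}: contained in all subgroups of index $\leq\hhscomp!$, and free abelian of bounded index $D$ via Lemma \ref{lem:isomorphism-types}. Then $C_G(A_1)$ coarsely equals $\{x: \dist(x,ax)\leq k\ \forall a\in$ generators$\}$ intersected with the hull structure. Item \eqref{it:finite index hqc fixed point set} supplies the hull control on the $\newcone P$ factor, so that $H_\theta$ of the coarse fixed set lies in the coarse fixed set of $A_1$. The $H$ factor is already HQC. Combining these, $C_G(A_1)$ is Hausdorff close to its own hull, hence HQC by Definition \ref{defn:HQC}. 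Independence from the HHG structure comes from $D$ depending only on $\hhscomp!$ and on Lemma \ref{lem:isomorphism-types}, via the minimal complexity.
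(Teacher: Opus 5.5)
\textbf{Gap 1: routing both inclusions through $A_0$.} Both inclusions in your Hausdorff-closeness argument go through $A_0$, and they rely on two claims that are false in general. The first is that $X(A)$ and $X(A_0)$ (hence $\mathcal M_A$ and $\mathcal M_{A_0}$) are at finite Hausdorff distance. The second is that $[N_G(A_0):N_G(A_0)\cap N_G(A)]<\infty$.

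For a counterexample, take $G=\integers\times D_\infty$ with its product HHG structure, and $A=\integers\times\langle t\rangle$ with $t$ a reflection. Then $A_0\leq\integers\times\{1\}$, an index--$2$ subgroup of $A$, so $A_0$ is central and $N_G(A_0)=G$. On the other hand $N_G(A)=\integers\times C_{D_\infty}(t)=A$. Geometrically, $A_0$ moves points of the cone-off $\newcone P$ (coarsely the $D_\infty$ factor) a bounded amount, while $t$ acts on it as a reflection. Hence $X(A_0)$ is coarsely all of $G$, but $X(A)$ is coarsely the coset $\integers\times\{1\}$. Note that $A_0$ being characteristic in $A$ gives $N_G(A)\leq N_G(A_0)$, not finite index in the other direction.

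So both inclusions must be proved for $A$ itself:
\begin{enumerate}
\item For $N_G(A)\subseteq\neb(\mathcal M_A)$, the tool you are missing is Theorem \ref{thm:HHS-semisimple-coarse-minset}.\eqref{it:coarse min set}. If $g\in N_G(A)$ and $x\in\mathcal M_A$, then $gF_x$ is an $A$--invariant uniform quasiflat. It lies in $X(A)$, which $N_G(A)$ stabilises by Lemma \ref{lem:commensurator stabilises U etc.}. Hence $gF_x$ lies in a uniform neighbourhood of $\mathcal M_A$.
\item For $\mathcal M_A\subseteq\neb(N_G(A))$, run your pigeonhole with a finite generating set $\mathcal S$ of $A$, not of $A_0$. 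Apply it to $A$--orbit representatives $x_i\in\mathcal M_A$ chosen with uniformly bounded $\mathcal S$--displacement, for instance with $\Euclidean^n$--coordinate under $\Theta$ in a fixed fundamental domain. Finitely many $x_{i_j}$ then satisfy $x_i\in C_G(A)x_{i_j}$, so every $x=ax_i$ lies in $N_G(A)x_{i_j}$. This already gives a uniform Hausdorff bound, so the obstacle you anticipate does not arise.
\end{enumerate}

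\textbf{Your index argument is a valid different route.} The paper deduces $[N_G(A_0):C_G(A_0)]<\infty$ from $\mathcal M_{A_0}\subseteq\neb(C_G(A_0))$ together with item \eqref{it: M is coarsely X(A)}; that is, $C_G(A_0)$ acts coboundedly on the $N_G(A_0)$--invariant set $X(A_0)$. Your route works once made precise, and it is more elementary:
\begin{enumerate}
\item $N_G(A_0)$ permutes $\support(p^\pm)$ by Lemma \ref{lem:commensurator stabilises U etc.}, so conjugation preserves $a\mapsto\sum_{U\in\support(p^\pm)}\tau_U(a)$ exactly.
\item On $A_0$ this function is $\sum_U|\ell_U|$, where the $\ell_U$ are the homomorphisms of Lemma \ref{lem:quasi-line-action}.
\item It extends to a norm because $A_0$ acts properly on $\prod_U\hull_U(p^-,p^+)$, as in the proof of Theorem \ref{thm:B-central-extension}.
\end{enumerate}

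\textbf{Gap 2: the centraliser part.} Your observation that $C_G(A_1)$ is coarsely the set of points moved boundedly by the generators of $A_1$ is correct. However, you never show that this set is close to something known to be HQC; ``intersected with the hull structure'' and ``Hausdorff close to its own hull'' are assertions, not arguments. The paper gets this from the coboundedness of $C_G(A_1)$ on $X(A_1)$, together with Proposition \ref{prop:bounded-orbits}.\eqref{it:finite index hqc fixed point set}, which produces an HQC set near $X(A_1)$.

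\textbf{Gap 3: independence of the HHG structure.} This cannot come from the complexity, for three reasons:
\begin{enumerate}
\item A single $A_1$ must serve every HHG structure on $G$.
\item Complexity is not bounded over such structures; for example, one can add a new $\nest$--maximal domain with one-point hyperbolic space.
\item Choosing $A_1$ via the minimal complexity only covers structures of minimal complexity.
\end{enumerate}
The paper instead uses the rank $\hhscomp_0$, the maximal number of pairwise orthogonal unbounded domains. By \cite[Thm. 1.15]{BehrstockQuasiflats21}, this equals the maximal dimension of quasiflats in $G$, so it does not depend on the structure. It combines this with the final clause of Proposition \ref{prop:bounded-orbits}, which lets $\hhscomp$ be replaced by the rank in item \eqref{it:finite index hqc fixed point set}. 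That clause applies because in a normalised HHG each $\mathcal CU$ is either uniformly bounded or unbounded.
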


\begin{proof}[Proofs of Corollary \ref{cor:normaliser} and Corollary \ref{cor:intro-centraliser}]
    By Corollary \ref{cor:abelian-subgroups-HQC}, the virtually abelian subgroup $A$ is finitely generated. Let $n \coloneqq \rank(A)$. The set $X(A)$ from Theorem \ref{thm:HHS-semisimple-coarse-minset} is $N_G(A)$--invariant by Lemma \ref{lem:commensurator stabilises U etc.}.\eqref{item:normaliser stabilises X(A)}.  Recall from Theorem \ref{thm:HHS-semisimple-coarse-minset} the EHQC set $\mathcal M_A$, which is $A$--invariant.  Let $\{x_i\}_{i\in I}$ contain exactly one point in each $A$--orbit in $\mathcal M_A$.  Let $\mathcal S$ be a fixed finite generating set of $A$. 
		
	Let $(A,\dist_A)$ be the metric group from Corollary \ref{cor:coarse-foliation}.  The same corollary then provides $k,r\geq 1$, depending only on the HHS parameters, such that for all $x\in\mathcal M_A$, there is an $A$--equivariant $(r,r)$--quasi-isometric embedding $\phi_x:(A,\dist_A)\to \mathcal M_A$ given by $\phi_x(a)=ax$, and $\phi_x(A)\subseteq F_x$, an $A$--invariant $(k,k)$--EHQC $(k,k)$--quasiflat in $\mathcal M_A$.

    \setcounter{claim}{0}
    \begin{claim}\label{claim:M-A-close-to-normaliser}
     There exists $R<\infty$ such that $\mathcal M_A\subseteq \neb_{R}(N_G(A))$.  If $A$ is abelian, the same holds with $N_G(A)$ replaced by $C_G(A)$.    
    \end{claim}

    \begin{proofofclaim}{\ref{claim:M-A-close-to-normaliser}}
    Let $\xi=r\max_{s\in \mathcal S}\dist_A(1,s)+r$.  Then $\phi_{x_i}(s)=sx_i\in \neb_\xi(x_i)$ for all $i\in I$ and $s\in \mathcal S$.  For each $i\in I$, we have $x_i^{-1}\neb_\xi(x_i)=\neb_\xi(1)$, which is finite.  The inner automorphism of $G$ given by conjugation by $x_i^{-1}$ restricts to an injective map $\mathcal S\to \neb_\xi(1)$.  Since there are finitely many possibilities for this map, there is a finite set $\{i_1,\ldots,i_p\}\subseteq I$ such that for all $i\in I$, there exists $j\leq p$ for which $x_{i_j}x_i^{-1}sx_ix_{i_j}^{-1}=s$ for all $s\in \mathcal S$.  Hence $x_i=bx_{i_j}$ for some $b\in C_G(A)$.  Now, any $x\in\mathcal M_A$ has the form $ax_i$ for some $a\in A$ and $i\in I$, so $x=abx_{i_j}\in N_G(A)\cdot x_{i_j}$.  Letting $R=\max_{j\leq p}\dist(1,x_{i_j})$, we therefore have $\dist(x,N_G(A))\leq \dist(abx_{i_j},ab)\leq R$, as required.  If $A$ is abelian, then $ab\in C_G(A)$, so $x=abx_{i_j}\in C_G(A)\cdot x_{i_j}$, and we get $\dist(x,C_G(A))\leq R$, as claimed.     
    \end{proofofclaim}

    On the other hand:

    \begin{claim}\label{claim:normaliser-close-to-M-A}
    There exists $R'<\infty$ such that $N_G(A)\subseteq \neb_{R'}(\mathcal M_A)$.
    \end{claim}

    \begin{proofofclaim}{\ref{claim:normaliser-close-to-M-A}}
    Let $x\in\mathcal M_A$ and let $g\in N_G(A)$.  Then $g\cdot\phi_x(A)$ is an $(r,r)$--quasiflat contained in the $(k,k)$--EHQC, $(k,k)$--quasiflat $gF_x$, which is invariant under $gAg^{-1}=A$.  Moreover, $F_x\subseteq X(A)$ since, by definition, $\mathcal M_A\subseteq X(A)$.  Since $X(A)$ is $N_G(A)$--invariant, $gF_x\subseteq X(A)$.  Hence, by Theorem \ref{thm:HHS-semisimple-coarse-minset}.\eqref{it:coarse min set}, $gF_x$, and in particular $gx$, is contained in  $\neb_{\omega_1(k)}(\mathcal M_A)$. We have just shown that $N_G(A)\cdot x\subseteq \neb_{\omega_1(k)}(\mathcal M_A)$.  Hence $N_G(A)\subseteq \neb_{R'}(\mathcal M_A)$, where $R'=\omega_1(k)+\dist(1,\mathcal M_A)$.
    \end{proofofclaim}

    The preceding two claims show that $N_G(A)$ lies at finite Hausdorff distance from $\mathcal M_A$.  Since $\mathcal M_A$ is EHQC, it follows that $N_G(A)$ is also EHQC, as claimed.

    \medskip

    Let $A_0\leq A$ be the finite-index free abelian subgroup obtained from Theorem \ref{thm:HHS-semisimple-coarse-minset}.  Claim \ref{claim:M-A-close-to-normaliser} implies that $\mathcal M_{A_0}\subseteq \neb_R(C_G(A_0))$ for some $R<\infty$.   Theorem \ref{thm:HHS-semisimple-coarse-minset}.\eqref{it: M is coarsely X(A)} says that $\mathcal M_{A_0}$ coarsely coincides with $X(A_0)$, so $N_G(A_0)$ acts on $X(A_0)$ coboundedly, and the induced action of $C_G(A_0)$ is also cobounded.  This proves that $[N_G(A_0):C_G(A_0)]<\infty$.

    Let $\hhscomp_0$ denote the maximal $r$ such that there is a subset $\{U_1,\ldots,U_r\}\subseteq \mathfrak S$ for which $U_i\orth U_j$ for $i\neq j$, and $\mathcal CU_i$ is unbounded for $i\neq j$.  (So, $\hhscomp_0$ is the \emph{rank} of $(G,\mathfrak S)$ in the sense of \cite[Defn. 1.10]{BehrstockQuasiflats21}.)

    Let $A_1\leq A$ be any free abelian subgroup that is contained in every subgroup of $A$ of index at most $\hhscomp_0!$.  Then by cofiniteness of the $G$--action on $\mathfrak S$, and Proposition \ref{prop:bounded-orbits}.(\ref{it:zs-are-close},\ref{it:finite index hqc fixed point set}), and Proposition \ref{prop: almost invariant hull}, there is an $A_1$--invariant hierarchically quasiconvex subset $\Theta$ of $X$ that lies in a neighbourhood of $X(A_1)$.  Since $C_G(A_1)$ acts on $X(A_1)$ coboundedly, as shown above, it follows that $C_G(A_1)$ acts on $\Theta$ coboundedly, and is therefore hierarchically quasiconvex with respect to the HHG structure $(G,\mathfrak S)$. 

    By \cite[Thm. 1.15]{BehrstockQuasiflats21} and coboundedness of the $G$--action, $\hhscomp_0$ coincides with the maximal $r$ so that there is a quasi-isometric embedding $\reals^r\to G$, and therefore depends only on (the QI type of) $G$.  Since Lemma \ref{lem:isomorphism-types} bounds the index to which one must pass to find a free abelian subgroup of $A$, we can therefore find the required $A_1$ with $[A:A_1]$ bounded in terms of $G$ only.  In particular, since the choice of $A_1$ in $A$ was made independently of the HHG structure, $C_G(A_1)$ is HQC with respect to any HHG structure.  This completes the proof of Corollary \ref{cor:normaliser}.

    Corollary \ref{cor:intro-centraliser} follows by a very similar argument.  Indeed, if no element of $\mathfrak S$ is orthogonal to its $G$--translates, and $A$ is a free abelian subgroup, then the above application of Proposition \ref{prop:bounded-orbits} and Proposition \ref{prop: almost invariant hull} allows us to take $A_1=A$ and conclude that $C_G(A)$ is hierarchically quasiconvex.
\end{proof}

Next we analyse commensurators; the following result is analogous to the statement for CAT(0) and cubical groups in \cite{HuangPrytula:commensurators}.  In fact, \cite[Prop. 9.1]{HuangPrytula:commensurators} shows that this statement is sharp, in the sense that the finitely generated subgroup $K$ cannot be replaced, in general HHGs, with the entire commensurator.

\begin{cor}[store=cor:commensurators]\label{cor:commensurators}
	Let $(G,\mathfrak S)$ be an HHG and let $A\leq G$ be a virtually abelian subgroup.  Let $\commensurator{G}{A}$ be the commensurator of $A$ in $G$.  Let $K\leq \commensurator{G}{A}$ be a finitely generated subgroup.  Then $K\leq N_G(A')$, where $A'\leq A$ is a finite-index subgroup.
\end{cor}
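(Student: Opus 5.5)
The plan is to reduce the statement to the ascending chain condition, Corollary \ref{cor: ascending chain}. By Corollary \ref{cor:abelian-subgroups-HQC}, $A$ is finitely generated, so fix a finite generating set $\{k_1,\ldots,k_m\}$ of $K$. Each $k_i$ lies in $\commensurator{G}{A}$, so $A\cap k_iAk_i^{-1}$ and $A\cap k_i^{-1}Ak_i$ have finite index in $A$. Since $A$ is finitely generated, it has only finitely many subgroups of each given index. I will set
\[A'':=\bigcap_{i=1}^m\left(A\cap k_iAk_i^{-1}\cap k_i^{-1}Ak_i\right),\]
which has finite index in $A$. Because $A''$ need not be $K$--invariant, I will instead take
\[A'=\bigcap_{k\in K}kA''k^{-1}\]
and show that this intersection is actually a finite intersection.

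The key observation is that all conjugates $kAk^{-1}$, $k\in K$, are commensurable with $A$. In particular each of them is a virtually abelian subgroup of $G$ of the same rank $n$, and all of them share finite-index subgroups with $A$. To make the intersection finite, I will bound the index $[A:A\cap kAk^{-1}]$ uniformly over $k\in K$. The argument I plan to use runs through subgroups generated by the conjugates. Let $A_0\leq A$ be a finite-index free abelian subgroup that is normal in $A$. The subgroup $B_j:=\langle A_0\cap k_1A_0k_1^{-1}\cap\cdots\rangle$, built from words of length $\leq j$ in the generators, is not obviously abelian, so this construction needs more care. The cleaner route is the following. For each $k\in K$, the group $\langle A_0, kA_0k^{-1}\rangle$ contains $A_0\cap kA_0k^{-1}$ with finite index in both factors, so it lies in the commensurability class of $A_0$. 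I will then use Corollary \ref{cor: ascending chain} in its ``highest'' form: $A_0$ is virtually contained in a highest virtually abelian subgroup $\hat A$, and I want to take $\hat A$ to be the unique maximal virtually abelian subgroup containing a fixed finite-index subgroup of $A_0$. To get this, I will take $\hat A$ to be the union of all virtually abelian subgroups $H$ with $H\cap A_0$ of finite index in $A_0$ and $H\supseteq A_0$. This requires showing that the class of such $H$ is directed: if $H_1$ and $H_2$ are both in it, then $\langle H_1,H_2\rangle$ should normalise a common finite-index subgroup of $A_0$. By ascending chains, the union $\hat A$ stabilises and is finitely generated.

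With $\hat A$ in hand, $\commensurator{G}{A}$ normalises $\hat A$ up to the same directedness, and $A\cap\hat A$ has finite index in $A$. The finitely generated group $\hat A$ has only finitely many subgroups of index $[\hat A:A\cap \hat A]$, and $K$ permutes these by conjugation. Hence the $K$--orbit of $A\cap\hat A$ is finite, and its intersection $A'$ is a finite-index subgroup of $A$ normalised by $K$.

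The main obstacle is the directedness step, namely that $\langle H_1,H_2\rangle$ is again virtually abelian. The natural tool is Theorem \ref{thm:B-central-extension} together with Corollary \ref{cor:normaliser}. The normaliser of a suitable finite-index free abelian $A_1\leq A_0$ contains finite-index subgroups of $H_1$ and $H_2$, and virtually centralises $A_1$. A subgroup of $C_G(A_1)$ that is commensurable-over-$A_1$ with $A_1$ is virtually abelian, via rank considerations and Lemma \ref{lem:isomorphism-types}. If this step fails, my fallback is to argue directly: by Theorem \ref{thm:HHS-semisimple-coarse-minset}.\eqref{it:coarse min set}, each conjugate $kA_0k^{-1}$ has its invariant quasiflat in a bounded neighbourhood of the same flat $F_{(y,c)}$. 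This gives a uniform bound on $[A_0:A_0\cap kA_0k^{-1}]$ via properness, as in the proof of Corollary \ref{cor: ascending chain}.
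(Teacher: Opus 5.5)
There is a genuine gap. Your central step, that $\langle A_0,kA_0k^{-1}\rangle$ lies in the commensurability class of $A_0$, is false in HHGs, and so is the directedness you need to build $\hat A$. Both examples below are HHGs.

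\emph{First example.} Take $G=D_\infty\times\mathbb Z$ with $D_\infty=\langle s,t\rangle$, let $A_0=\langle (s,1)\rangle\cong\mathbb Z$, and let $k=(t,0)$. Then $kA_0k^{-1}=\langle (tst,1)\rangle$ and $(s,1)^2=(tst,1)^2=(1,2)$, so $k\in\commensurator{G}{A_0}$. But $(s,1)(tst,1)^{-1}=((st)^2,0)$, so $\langle A_0,kA_0k^{-1}\rangle$ contains $\langle ((st)^2,0),(1,2)\rangle\cong\mathbb Z^2$, and $A_0$ has infinite index in it. In the same group, $\langle s\rangle\times\mathbb Z$ and $\langle t\rangle\times\mathbb Z$ both contain the central subgroup $\{1\}\times\mathbb Z$ with index $2$, yet they generate $G$. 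So there is no maximal overgroup commensurable with $\{1\}\times\mathbb Z$, even inside $C_G(A_1)=G$; Corollary~\ref{cor:normaliser} cannot rescue directedness.

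\emph{Second example.} In $F_2\times\mathbb Z=\langle a,b\rangle\times\mathbb Z$ with $A_0=\{1\}\times\mathbb Z$, the abelian groups $\langle a\rangle\times\mathbb Z$ and $\langle b\rangle\times\mathbb Z$ both contain $A_0$ and generate $G$. So your $\hat A$ would be all of $G$, which is not virtually abelian.

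In these examples the corollary itself is trivially true, so it is the method that breaks. Separately, your final counting step needs $[\hat A:A\cap\hat A]<\infty$, which fails as soon as $\hat A$ has larger rank than $A$.

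Your fallback is circular. The function $\omega$ in Theorem~\ref{thm:HHS-semisimple-coarse-minset}.\eqref{it:coarse min set} depends on the acting group. The set $kF$ is invariant only under $kA_0k^{-1}$ and $A_0\cap kA_0k^{-1}$, which vary with $k$, so you only get $k$-dependent control. In the proof of Corollary~\ref{cor: ascending chain}, the fixed group $A_1$ lies in every $A_n$. The analogue here would be a fixed finite-index subgroup of $A_0$ lying in every $kA_0k^{-1}$, and that is exactly the conclusion you are after. Without it, $\dist_{Haus}(F,kF)$ is only bounded linearly in the word length of $k$.

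The missing idea, which is what the paper does, is to use only finitely many generators and to bound \emph{commutators} uniformly in $a$.
\begin{enumerate}
\item Pass to the finite-index subgroup $K'\leq K$ fixing each $U\in\support(p^\pm)$ and each $\pi_U(p^\pm)$ (Lemma~\ref{lem:commensurator stabilises U etc.}), and take generators $k_1,\ldots,k_s$ of $K'$.
\item $A$ fixes a point $y$ of the injective hull of $\newcone P$, on which $K'$ acts. Hence $\dist([a,k_i]y,y)\leq 2\dist(y,k_iy)$ for every $a\in A$.
\item On each quasiline $\hull_U(p^-,p^+)$, both $a$ and $k_i$ fix the ends and so act as near-translations. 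Their commutator therefore has uniformly bounded displacement there.
\item Let $B\leq A\cap K'$ be finite-index free abelian and $B''=\bigcap_i B\cap k_iBk_i^{-1}$. For $a\in B''$ one has $[a,k_i]\in k_iBk_i^{-1}$. Via the quasi-isometric embedding $c\times\prod_U\pi_U$, its displacement at some fixed $x_0$ is bounded independently of $a$ and of the choice of $B$.
\item Choose $B$ in advance, by residual finiteness, so that its nontrivial elements, and hence those of its conjugates, move points further than this bound. This forces $[a,k_i]=1$, so $B''$ is central in $K'$.
\item Intersecting the conjugates of $B''$ over a transversal of $K'$ in $K$ (including $1$) gives the required $A'$.
\end{enumerate}
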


\begin{proof}
	Let $C=\commensurator{G}{A}$.  Let $K\leq C$ be a finitely generated subgroup.  Without loss of generality, $A\leq K$.
	
	Lemma \ref{lem:commensurator stabilises U etc.}.\eqref{item:commensurated-invariant-boundary} states that $C\cdot \support(p^\pm)=\support(p^\pm)$ (where $p^\pm$ come from applying Lemma \ref{lem: fixed boundary points} to $A$) and there is a finite-index subgroup $C'\leq C$ such that $C'$ fixes each $U\in\support(p^\pm)$ and each $\pi_U(p^\pm)$, and hence $C'$ acts by isometries on $S=\prod_{U\in\support(p^\pm)}\hull_U(p^-,p^+)$, preserving the factors.  Lemma \ref{lem:commensurator stabilises U etc.}.\eqref{item:commensurated-cone-off} gives a $C$--action on $(\newcone P,\mathfrak S-\mathfrak U)$ making $c:P\to\newcone P$ a $C$--equivariant map (recall Definitions \ref{defn:P(A)} and \ref{defn:U,P and X(A)}).  By replacing $\newcone P$ with its injective hull, we can assume that $\newcone P$ is injective.\footnote{When replacing $\newcone P$ by the injective hull, we keep the same HHS structure, except we modify the projections $\pi_U:\newcone P\to \mathcal CU$ using a fixed quasi-isometry $\injhull(\newcone P)\to \newcone P$, as in \cite[Prop. 1.10]{BehrstockHierarchically19}.  If desired, although it is not strictly necessary in this proof, we can further modify these maps as in \cite[Sec. 2.2]{DurhamCorrection20} so that the action is still by HHS automorphisms.}
	
	Let $K'=C'\cap K$ and note that $[K:K']<\infty$, so $K'$ is generated by elements $k_1,\ldots,k_s\in K'$.  Let $A'\leq A\cap K'$ be a finite index free abelian subgroup, and let $A''=\bigcap_{i=1}^sA'\cap k_iA'k_i^{-1}$.  Let $A'_i=k_iA'k_i^{-1}$.  Then each $A_i'$ has a nonempty fixed point set $Y(A_i')$ in the injective space $\newcone P$, and $A''$ has a nonempty fixed point set $Y(A'')$.  Note that $Y(A_i')\subseteq Y(A'')$ for all $i$.
	
	The map $\delta:=c\times \prod_{U\in\support(p^\pm)}\pi_U:X(A)\to \newcone P\times S$ is $C'$--equivariant, where the codomain is equipped with the diagonal action.  Moreover, exactly as in the proof of Claim \ref{claim:B-product-action} of the proof of Theorem \ref{thm:B-central-extension}, there exists $r$, depending on $A$ and the HHS parameters but independent of $K$ and the choice of $A'$ and the $k_i$, such that $\delta$ is an $(r,r)$--quasi-isometric embedding.
	
	Now let $i\leq s$ and let $a\in A''$.  Then $k_iak_i^{-1}\in A_i'$ since $a\in A''\leq A'$.  On the other hand, $a \in A''\leq A_i'$.  So, $[a,k_i]\in A_i'$, and hence $[a,k_i]$ fixes $Y(A_i')$ pointwise.  Meanwhile, $[a,k_i]$ displaces all points in $S$ by a uniformly bounded amount, because $a$ and $k_i$ both stabilise each quasiline $\hull_U(p^-,p^+)$ and fix the endpoints.  
	
	This shows that there exists $r'<\infty$ depending only on $r$ and the HHS constants, and $x_0\in G$ depending on $K$ and the choice of $k_1,\ldots,k_s$ but not on $A'$, such that for all $a\in A''$ and all $k_i$, we have $\dist_G([a,k_i]\cdot x_0,x_0)\leq r'$, and $[a,k_i]\in A_i'$.  Now, since $A$ is residually finite, we could have chosen $A'$ so that every nontrivial element of $A'$ moves every point in $G$ a distance more than $r'$.  Since $A_i'$ is conjugate in $G$ to $A'$, the same is true of $A_i'$.  With this choice of $A'$, it then follows that $[a,k_i]=1$ for all $i\leq s$ and all $a\in A''$.  Hence $A''\cap K'$ is central in $K'$.  But $A''':=A''\cap K'$ has finite index in $A$ since we are assuming that $A\leq K$.
	
	So, $A'''$ is a finite index subgroup of $A$ that is central in $K'$.  Let $\{\ell_1,\ldots,\ell_m\}\subseteq K$ be a left transversal for $K'$ and let $A_1=\bigcap_{i=1}^m\ell_iA'''\ell_i^{-1}$.  Now, $\ell_i\in C$, so $[A:A_1]<\infty$, and $A_1$ is central in $K'$, so from the definition, it follows that $A_1$ is normal in $K$, so we are done.
\end{proof}

\subsection{Ruling out HHG structures using highest virtually abelian subgroups}\label{subsec:coxeter}

Highest virtually abelian subgroups can be used to rule out the existence of HHG structures on a group using the following corollary.

\begin{cor}[store=cor:highest-abelian]\label{cor:highest-abelian}
	Let $(G,\mathfrak S)$ be an HHG. Every highest virtually abelian subgroup $A\leq G$ is hierarchically quasiconvex.  Thus every abelian subgroup is virtually contained in a hierarchically quasiconvex abelian subgroup.
\end{cor}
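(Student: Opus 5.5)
The plan is to show that for a highest virtually abelian $A$, the coarse minset $\mathcal M_A$ (equivalently $X(A)$, or $N_G(A)$) is Hausdorff close to $A$ itself. Then HQC will follow by upgrading from EHQC, using the hierarchically quasiconvex centraliser from Corollary \ref{cor:normaliser}. By Corollary \ref{cor:abelian-subgroups-HQC}, $A$ is finitely generated, and by the hierarchical semisimplicity of HHGs we may apply Theorem \ref{thm:HHS-semisimple-coarse-minset} and Corollary \ref{cor:normaliser}. This produces a free abelian $A_1\leq A$ of bounded index with $C_G(A_1)$ HQC, together with $[N_G(A_0):C_G(A_0)]<\infty$. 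Since HQC is invariant under finite Hausdorff distance, it suffices to prove $[C_G(A_1):A_1]<\infty$; then $A$ is at finite Hausdorff distance from the HQC set $C_G(A_1)$.

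The key step is the finiteness of $[C_G(A_1):A_1]$, via highestness. I would first show that $C_G(A_1)$ acts on $X(A_1)$, which lies coarsely near $\mathcal M_{A_1}\cong_{qi} Y\times C\times\Euclidean^n$ with $A_1$ acting cocompactly only on the $\Euclidean^n$ factor. If $C_G(A_1)/A_1$ were infinite, I would produce $g\in C_G(A_1)$ of infinite order with $\langle g\rangle\cap A_1=1$. The subgroup $\langle A_1,g\rangle$ is then abelian ($g$ centralises $A_1$) of rank $n+1$ and contains $A_1$ with infinite index, contradicting highestness. The issue is torsion: $C_G(A_1)/A_1$ could be infinite torsion. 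To rule this out, pass to the finitely generated situation. $C_G(A_1)$ is HQC in a group, hence finitely generated, so it is an HHG by Lemma \ref{lem: HHS structure on HQC subspaces}. Apply Lemma \ref{lem:isomorphism-types} and Corollary \ref{cor: ascending chain} inside $C_G(A_1)$. Every element $g$ gives a virtually abelian subgroup $\langle A_1,g\rangle$. If all of these contain $A_1$ with finite index, then $C_G(A_1)/A_1$ is a finitely generated group all of whose elements have order bounded by the index bound $N_0$ and the finite-subgroup bound $B$ of Lemma \ref{lem:isomorphism-types}. That alone does not force finiteness (Burnside), so I would instead argue geometrically, which I expect to be the main obstacle.

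For the geometric argument, the quotient $C_G(A_1)/A_1$ acts on $Y\times C$ (trivially on $Y$ and $C$ for $A_1$), and coboundedness of $C_G(A_1)$ on $\mathcal M_{A_1}$ (Corollary \ref{cor:normaliser}) means it acts coboundedly on $Y\times C$. If $Y\times C$ is bounded, then $\mathcal M_{A_1}$ is within finite distance of an $A_1$-orbit, and since $C_G(A_1)$ lies near $\mathcal M_{A_1}$ and $G$ is locally finite, $[C_G(A_1):A_1]<\infty$. So suppose $Y\times C$ is unbounded. Then $C_G(A_1)$ contains a quasiflat of dimension greater than $n$ transverse to the $A_1$-flat. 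I would apply the quasiflat-closing idea: the HHG $C_G(A_1)$ (or its quotient acting on the HHS obtained by coning off $\mathfrak U$) then admits an element acting with unbounded orbits on some domain outside $\mathfrak U$. Indeed, the bounded-orbit alternative (Proposition \ref{prop:bounded-orbits} plus \cite[Thm. 5.1]{PetytUnbounded23}) would put the whole group within bounded distance of a fixed-point set, contradicting unboundedness together with proper cocompactness. By hierarchical semisimplicity such an element $g$ is loxodromic on a domain $V\notin\mathfrak U$, so $g$ has infinite order and no power of it lies in $A_1$, since $A_1$ has bounded projection to $\mathcal CV$. This yields the rank-$(n+1)$ abelian subgroup $\langle A_1,g\rangle$ and contradicts highestness.

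The final sentence follows from Corollary \ref{cor: ascending chain}: every abelian subgroup is virtually contained in a highest one, which is HQC and has a bounded-index free abelian, hence HQC, subgroup.
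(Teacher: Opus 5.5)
There is a genuine gap in the key step. Your reduction to $[C_G(A_1):A_1]<\infty$ via Corollary~\ref{cor:normaliser} is the right one, and so is your diagnosis of the torsion/Burnside obstruction. The geometric argument for the unbounded case, however, does not work.

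First, unboundedness of $Y\times C$ need not come from $Y$. The factor $C$ sits inside the median model $Q$ of $H$, which only records domains nested in $\support(p^\pm)$, i.e.\ domains of $\mathfrak U$. The cone-off $\newcone P$ only sees $Y$. For example, take $G=\integers^2$ with its standard product HHG structure and $A=A_1$ the diagonal $\integers$. Then $\newcone P$ is bounded and $C\cong\reals$, yet every domain outside $\mathfrak U$ has bounded $\mathcal CV$. The element witnessing non-highestness is loxodromic only on domains of $\support(p^\pm)$, where your reason for $\langle g\rangle\cap A_1=1$ (bounded projection of $A_1$ to $\mathcal CV$) is unavailable.

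Second, even when $Y$ is unbounded, Proposition~\ref{prop:bounded-orbits} and \cite[Thm. 5.1]{PetytUnbounded23} only show that the \emph{group} $C_G(A_1)$ has unbounded projection to some $\mathcal CV$. Hierarchical semisimplicity applies only to an element that already has unbounded orbits. Extracting such an element from a non-abelian group, whose quotient by $A_1$ could a priori be torsion, is exactly the obstruction you set out to avoid. Lemma~\ref{lem: fixed boundary points} does this generator-by-generator only because $\ddot A$ is abelian.

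Two smaller points. $\Theta$ is only $A$--equivariant and $Q$ depends on the choice of $z_0$, so you are not given an action of $C_G(A_1)/A_1$ on $Y\times C$. Also, unboundedness of $Y\times C$ does not by itself yield a quasiflat of dimension $>n$.

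The missing ingredient is the Tits alternative for HHGs, \cite[Thm. 4.1]{DurhamCorrection20}, which the paper uses instead of any geometry. Since $A_1$ is central in $C_G(A_1)$, any nonabelian free subgroup $F\leq C_G(A_1)$ satisfies $F\cap A_1=1$. So any $1\neq g\in F$ gives $\langle g,A_1\rangle\cong\integers\times A_1$, contradicting highestness. Hence every finitely generated subgroup of $C_G(A_1)$ is virtually abelian. Corollary~\ref{cor: ascending chain} then makes $C_G(A_1)$ itself a finitely generated virtually abelian group containing $A_1$, and highestness gives $[C_G(A_1):A_1]<\infty$. So $A_1$, and hence $A$, is HQC. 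This rules out the infinite-torsion scenario directly. Your treatment of the final sentence of the statement is fine.
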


\begin{proof}
Suppose that $A$ is highest.  Corollary \ref{cor:normaliser} provides a finite-index subgroup $A_1\leq A$ such that $C_G(A_1)$ is hierarchically quasiconvex. If $C_G(A_1)$ has a nonabelian free subgroup, then there exists $g\in C_G(A_1)$ such that $\langle g,A_1\rangle\cong \integers\times A_1$, contradicting that $A$ is highest.  Hence, by \cite[Thm. 4.1]{DurhamCorrection20}, every finitely generated subgroup of $C_G(A_1)$ is virtually abelian, so Corollary \ref{cor: ascending chain} implies $[C_G(A_1):A_1]<\infty$, so $A_1$ (and thus $A$) is HQC.
\end{proof}

Using this, we can describe the possible crystallographic groups that can appear as highest virtually abelian subgroups of HHGs, by generalising \cite[Thm. 4.4]{PetytUnbounded23}.  Recall from Definition \ref{defn:hyperoctahedral} the notion of a hyperoctahedral virtually abelian group.

\begin{cor}\label{cor:abelian-subgroups-octahedral}
	Let $(G,\mathfrak S)$ be an HHG.  Let $A\leq G$ be a highest virtually abelian subgroup.  Then $A$ is hyperoctahedral.
\end{cor}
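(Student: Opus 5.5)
By Corollary \ref{cor:highest-abelian}, $A$ is hierarchically quasiconvex in $(G,\mathfrak S)$. The plan is to reduce to the characterisation of crystallographic HHGs of Petyt--Spriano \cite[Thm. 4.4]{PetytUnbounded23}, or alternatively to the characterisation of crystallographic groups acting geometrically on injective spaces \cite{hodaCrystallographicHellyGroups2023}.

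\textbf{Step 1: an HHG structure on $A$.} Since $A$ is HQC and $A$ acts on itself by left multiplication, Lemma \ref{lem: HHS structure on HQC subspaces} applies with $Y=A$. It gives an HHS structure $(A,\mathfrak S_A)$ on which $A$ acts by HHS automorphisms. To get an HHG structure we must check cofiniteness of the $A$--action on the index set. For this I would take $\mathfrak S_A$ to consist only of those $U\in\mathfrak S$ with $\diam\pi_U(A)$ larger than a fixed threshold, together with a single top element. By Lemma \ref{lem: fixed boundary points} and the proof of Proposition \ref{prop:short-mountains}, finitely many $A$--orbits of such $U$ survive. Here we use that $A$ is cocompact on its own Cayley graph, and that $\pi_U(A)$ is bounded unless $U$ is nested in some $U_i\in\support(p^\pm)$.

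\textbf{Step 2: passing from $A$ to $A/K$.} By Lemma \ref{lem:virtually-abelian-crystallographic}, $A/K$ is crystallographic for some finite normal $K\leq A$. The point group of $A$ coincides with that of $A/K$, since the conjugation action on a finite-index $\integers^n$ meeting $K$ trivially factors through $A/K$. Quotienting by the finite normal subgroup $K$ preserves being an HHG, since $A/K$ is quasi-isometric to $A$ and acts on the same structure with $K$ acting with uniformly bounded orbits. Alternatively, \cite[Thm. 4.4]{PetytUnbounded23} may be phrased for virtually abelian HHGs directly, in which case this step is unnecessary.

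\textbf{Step 3: conclusion.} Applying \cite[Thm. 4.4]{PetytUnbounded23} then gives that the point group is conjugate into $O_n(\integers)$, i.e. that $A$ is hyperoctahedral. If instead one wants to route through injective spaces, the alternative is as follows. Theorem \ref{thm:coarse-injective} gives $A$ a proper cocompact action on the injective hull of $(A,\sigma)$. Coarse density (Corollary \ref{cor: coarsely dense in injective hull}) then makes this action geometric, and \cite{hodaCrystallographicHellyGroups2023} yields the hyperoctahedral conclusion.

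\textbf{Main obstacle.} I expect Step 1 to be the hardest part, specifically verifying that the induced structure on an HQC subgroup is a genuine HHG structure with cofinite action on the index set. If this proves awkward, I would fall back on the injective-space route in Step 3. That route needs only that $A$ acts geometrically by isometries on a coarsely injective space, which follows from HQC plus Theorem \ref{thm:coarse-injective} applied to $(A,\mathfrak S_A)$, with no cofiniteness required.
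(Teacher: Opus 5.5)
Your fallback in Step 3 is exactly the paper's proof, and it is complete. Corollary \ref{cor:highest-abelian} makes $A$ hierarchically quasiconvex. Lemma \ref{lem: HHS structure on HQC subspaces} then gives an HHS structure on $A$ on which $A$ acts properly and coboundedly by isometric HHS automorphisms. Theorem \ref{thm:coarse-injective} and Corollary \ref{cor: coarsely dense in injective hull} turn this into a proper, cobounded action on an injective space. Finally, \cite[Thm.~A]{hodaCrystallographicHellyGroups2023} puts the point group into $O_n(\integers)$. As you say, no cofiniteness on the index set is needed, so you should lead with this argument.

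If you want to apply Hoda's theorem literally to a crystallographic group, your Step 2 worry is easy to handle on this route. Let $K$ be the finite normal subgroup from Lemma \ref{lem:virtually-abelian-crystallographic}. By Lang, $K$ has a nonempty injective fixed set $\Fix(K)$ in the injective hull, and $\Fix(K)$ is $A$--invariant by normality. Then $A/K$ acts on $\Fix(K)$ properly and coboundedly, and its point group is that of $A$.

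The Petyt--Spriano route you lead with, by contrast, has genuine gaps and should be dropped.

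\begin{itemize}
\item In Step 1, deleting every domain with small $\pi_U(A)$ and adjoining a new top element is not known to give an HHS. The orthogonality axiom needs containers, and the large link axiom needs the domains it produces. These can be exactly discarded domains, since containers frequently have bounded $\mathcal CU$. You would also have to build a hyperbolic space, projections and $\rho$--maps for the new top element.
\item Your cofiniteness justification in Step 1 does not cover what it must. Lemma \ref{lem: fixed boundary points} only controls domains where $\pi_U(A)$ is \emph{unbounded}. The orbit count at the end of the proof of Proposition \ref{prop:short-mountains} takes place inside a hull $H_{M,z_0}(p^-,p^+)$ and concerns only domains nested in the $U_i$. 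Domains with bounded but above-threshold projection are not addressed.
\item In Step 2, $K$ having bounded orbits in $A$ does not make $K$ act trivially on $\mathfrak S_A$ or on the spaces $\mathcal CU$. Moreover, $A/K$ does not act on the set $A$ at all, so there is no ``same structure'' for it to act on without rebuilding one.
\end{itemize}
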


\begin{proof}
	By Corollary \ref{cor:highest-abelian}, $A$ is hierarchically quasiconvex in $G$.  Hence $A$ acts properly and coboundedly on an HHS, by Lemma \ref{lem: HHS structure on HQC subspaces}, and is therefore coarsely injective, by Corollary \ref{cor: coarsely dense in injective hull}.  This, together with \cite[Thm. A]{hodaCrystallographicHellyGroups2023} implies that the point group is conjugate in $GL_n(\reals)$ into the isometry group of $(\Euclidean^n,\|\cdot\|_\infty)$, which is to say $O_n(\integers)$.
\end{proof}

\begin{remark}\label{rem:crystallographic-obstruction}
	One can use highest virtually abelian subgroups $A$ of a group $G$ as an obstruction to cocompact cubulation of $G$, as follows: if $G$ is cocompactly cubulated, then $A$ is also cocompactly cubulated, by \cite[Thm. 3.6]{WoodhouseWise:cubical}, so $A$ is hyperoctahedral by \cite{Hagen:crystallographic} or \cite{hodaCrystallographicHellyGroups2023}.  Corollary \ref{cor:abelian-subgroups-octahedral} can be used in an analogous way to rule out the existence of HHG structures in various examples.  We now illustrate this in the case of Coxeter groups.
\end{remark}

Let $(W,S)$ be a Coxeter system.  Following \cite[Sec. 6]{Krammer:Coxeter}, a \emph{standard abelian subgroup} $A\leq W$ has the form $\prod_{i=1}^kH_i$, where each $H_i$ is defined as follows: there are irreducible, non-spherical subsets $I_1,\ldots,I_k$ of $S$ such that $\langle W_{I_i},W_{I_j}\rangle_W\cong W_{I_i}\times W_{I_j}$ for $i\neq j$, and, if $I_i$ is affine, then $H_i\leq_{f.i.} W_{I_i}$ is the translation subgroup, and otherwise $H_i\cong\integers$.  A \emph{standard virtually abelian subgroup} has the form $\prod_i\widehat H_i$, where $\widehat H_i=W_{I_i}$ for $I_i$ irreducible affine, and $\widehat H_i=H_i\cong\integers$ otherwise. 

\begin{cor}\label{cor:coxeter-group}
	Let $(W,S)$ be a Coxeter system.  If $W$ is a hierarchically hyperbolic group, then every maximal standard virtually abelian subgroup is hyperoctahedral.
\end{cor}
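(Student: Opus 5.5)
The plan is to reduce to Corollary \ref{cor:abelian-subgroups-octahedral}: once we know that every maximal standard virtually abelian subgroup $A=\prod_i\widehat H_i$ of $W$ is a \emph{highest} virtually abelian subgroup of $W$, that corollary gives hyperoctahedrality directly, since $W$ is assumed to be an HHG. So essentially all the work goes into establishing highestness.

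To prove highestness, let $A'\leq A$ have finite index and let $A''\geq A'$ be virtually abelian in $W$; we must show $[A'':A']<\infty$. By Corollary \ref{cor:abelian-subgroups-HQC} (or Corollary \ref{cor: ascending chain}), $A''$ is finitely generated, so it suffices to show $\rank(A'')=\rank(A)$. For this we invoke Krammer's structure theory of abelian subgroups of Coxeter groups \cite[Sec. 6]{Krammer:Coxeter}: every free abelian subgroup of $W$ is virtually conjugate into a standard abelian subgroup. Take a finite-index free abelian subgroup $B\leq A''$. Then some conjugate $wBw^{-1}$ virtually lies in a standard abelian subgroup $\prod_j H'_j$, where the $H'_j$ come from irreducible non-spherical subsets $J_j$ that pairwise commute. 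The subgroup $wA'w^{-1}$, which virtually lies in $wBw^{-1}$, is itself a finite-index subgroup of the standard group $wAw^{-1}$.

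The main obstacle is the comparison of standard subgroups that follows. We need to argue, using Krammer's description of parabolic closures and the fact that the parabolic closure of a finite-index subgroup of a standard abelian subgroup is $\prod_i W_{I_i}$ (up to the product with its spherical part), that the family $\{I_i\}$ is, up to conjugation, contained in $\{J_j\}$. Concretely, the essential irreducible components of the parabolic closure of $wA'w^{-1}$ must be among those of $\prod_jW_{J_j}$. Maximality of $A$ among standard virtually abelian subgroups then forces equality of the families, and hence $\rank(B)\leq\rank(\prod_jH'_j)=\rank(A)$. I would try to obtain this from Krammer's result that the irreducible affine and non-affine factors of a standard subgroup are detected by the parabolic closure of the group's centraliser-type subgroups. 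I expect this comparison to be the most delicate step.

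Given equal ranks, $A'$ has finite index in $A''$, so $A$ is highest, and Corollary \ref{cor:abelian-subgroups-octahedral} finishes the proof.
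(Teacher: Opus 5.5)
Your proposal takes the same route as the paper: Krammer's theorem \cite[Thm.~6.8.3]{Krammer:Coxeter} shows that maximal standard virtually abelian subgroups are highest, and Corollary~\ref{cor:abelian-subgroups-octahedral} then gives hyperoctahedrality. The paper treats the highestness step as immediate from Krammer, whereas you spell out the parabolic-closure comparison. The one point you leave open is passing from containment \emph{up to conjugation} to genuine containment. It closes because standard parabolic subgroups with no spherical components are conjugate only when their generating sets coincide, since the elementary conjugation moves are then trivial. So if $I=\bigsqcup_i I_i$ and $K$ is the union of the $J_j$ matched with the $I_i$, then $wW_Iw^{-1}=W_K$ forces $K=I$. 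Any unmatched $J_j$ therefore lies in $I^{\perp}$, contradicting maximality of $A$.
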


\begin{proof}
	By \cite[Thm. 6.8.3]{Krammer:Coxeter}, every abelian subgroup of $W$ is virtually contained in a conjugate of a standard abelian subgroup.  Hence maximal standard virtually abelian subgroups are highest virtually abelian subgroups, and the claim follows from Corollary \ref{cor:abelian-subgroups-octahedral}. 
\end{proof}

For example, if $W$ contains the $(3,3,3)$ triangle group as a maximal standard virtually abelian subgroup, then $W$ is not an HHG.

\subsection{Quasiflat closing}\label{subsec:quasiflat-closing}
We now turn to the matter of the existence of higher rank free abelian subgroups in non-hyperbolic HHGs.  

\begin{lem} \label{lem: cobounded nest-minimal domains}
    Let $(G, \mathfrak S)$ be a normalised HHG and let $U \in \mathfrak S$ be such that $\mathcal CU$ is unbounded and $\mathcal CV$ is bounded for any $V \propnest U$. Then $\stabilizer_G(U)$ acts cocompactly on $P_U$ and coboundedly on $\mathcal CU$.
\end{lem}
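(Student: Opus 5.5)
The plan is to use the cofiniteness of the $G$--action on $\mathfrak S$, together with the coarse product structure $P_U \approx F_U\times E_U$, where $F_U$ is the factor built from domains nested in $U$ and $E_U$ the factor built from domains orthogonal to $U$.

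\textbf{Step 1: reduce to cocompactness on $P_U$.} Since $\pi_U$ is coarsely lipschitz and $E$--coarsely surjective (the structure is normalised), and $\pi_U(P_U)$ coarsely equals $\pi_U(F_U\times\{q\})$, which is coarsely all of $\mathcal CU$, coboundedness on $\mathcal CU$ follows from cocompactness on $P_U$. Equivariance holds because $gP_U=P_{gU}$ and $\pi_{gU}(gx)=g\pi_U(x)$.

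\textbf{Step 2: find an element whose translate fixes $U$.} Fix $x\in P_U$. The aim is a uniform $R$ with $P_U\subseteq \stabilizer_G(U)\cdot \neb_R(1)$. Since $G$ acts on itself by left multiplication, $x=x\cdot 1$. The quantity to control is the domain $x^{-1}U$: it satisfies $1\in x^{-1}P_U=P_{x^{-1}U}$. By cofiniteness, $x^{-1}U=hU_j$ for one of finitely many orbit representatives $U_1,\ldots,U_m$ of $G\cdot U$, with $1\in P_{hU_j}=hP_{U_j}$. The goal is to show that $h$ can be chosen with $|h|$ uniformly bounded. Granted this, $g:=xh$ satisfies $gU_j=U$. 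Fixing once and for all $c_j$ with $c_jU=U_j$ turns this into $xhc_j\in\stabilizer_G(U)$. Hence $d(x,\stabilizer_G(U))\le |h|+\max_j|c_j|$, which is uniform.

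\textbf{Step 3 (the main obstacle): bound $|h|$.} The difficulty is that the domains $V$ with $1\in P_V$ form an infinite set in general, because $1$ lies in infinitely many product regions. The hypothesis on $U$ is what resolves this. Every $V\propnest U$ has $\mathcal CV$ bounded, and by cofiniteness these diameters are uniformly bounded, since $\stabilizer_G(U)$--orbits of nested domains are translates of finitely many domains. By the distance formula (Theorem \ref{thm:distance-formula}), $F_U$ is therefore uniformly quasi-isometric to the hyperbolic quasi-line-like space $\mathcal CU$ via $\pi_U$.

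The intended fix is to replace $x$ by a nearby point before applying Step 2. First use $\stabilizer_G(U)$ acting on $E_U$: the $E_U$--coordinate is governed by domains orthogonal to $U$, and the orthogonal domains form a coarsely well-defined set. The plan is to handle these via the standard orthogonality container and a finiteness argument. Then use that $1$ lies in only boundedly many $P_V$ with $V$ in the orbit $G\cdot U$ and $\pi_V(1)$ near a fixed basepoint, and let $h$ range over the finitely many such choices.

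Concretely, the argument to run is as follows. Suppose the orbit $\stabilizer_G(U)\cdot x$ does not coarsely fill $P_U$. Then, using the uniformly bounded nested domains, one could produce an unbounded $\mathcal CV$ for some $V\propnest U$, contradicting the hypothesis. This contradiction is where the minimality hypothesis does the work.
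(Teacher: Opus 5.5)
\textbf{Gap in Step 3.} Your Steps 1 and 2 are sound. Step 2 is in fact a tidy direct replacement for the pigeonhole argument the paper uses. It reduces everything to one claim: the set $\{V\in G\cdot U : 1\in P_V\}$ is finite. By equivariance and local finiteness of $G$, this is the same as saying only finitely many translates $P_{gU}$ meet any given ball, which is exactly the Claim in the paper's proof. (Incidentally, $G\cdot U$ is a single orbit, so cofiniteness plays no role there.)

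Step 3 never proves this finiteness. It is simply asserted (``use that $1$ lies in only boundedly many $P_V$ with $V$ in the orbit $G\cdot U$''). The contradiction you sketch cannot be the mechanism: every $\mathcal CV$ with $V\propnest U$ is bounded by hypothesis, uniformly by some $C$, and nothing in your sketch could produce an unbounded one. ``$\pi_V(1)$ near a fixed basepoint'' has no meaning across different spaces $\mathcal CV$. The appeal to $\stabilizer_G(U)$ acting on $E_U$ presupposes the kind of coboundedness you are trying to prove.

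\textbf{The missing argument.} The hypothesis is used differently: it rules out nested domains as carriers of large projections.
\begin{enumerate}
\item Suppose infinitely many distinct $aU$ satisfy $x\in P_{aU}$. Choose $W$ $\nest$--minimal with $aU\nest W$ for infinitely many of them.
\item Gate $x$ into a parallel copy $F_W$ and work in $(F_W,\mathfrak S_W)$. This ensures the domain produced below by the uniqueness axiom is nested in $W$.
\item Each $P_{aU}$ is unbounded and uniformly coarsely connected. So pick $x_{aU}\in P_{aU}$ with $\theta_u(2(C+2E))\le d(x,x_{aU})\le \theta_u(2(C+2E))+D$. Local finiteness gives a single $y$ with $y=x_{aU}$ for infinitely many $aU$.
\item Uniqueness gives $V\nest W$ with $d_V(x,y)\ge 2(C+2E)$.
\item Since $x,y\in P_{aU}$, $V$ cannot be transverse to $aU$ or properly contain it, because both points project $E$--close to $\rho^{aU}_V$. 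It cannot be properly nested in $aU$, since its diameter is at most $C$. Hence $V=aU$ or $V\orth aU$.
\item Discard the at most one $aU$ equal to $V$. The orthogonality (container) axiom then yields $W'\propnest W$ containing infinitely many of the $aU$, contradicting the minimality of $W$.
\end{enumerate}
With that claim in hand, your Step 2 finishes the proof.
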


The above lemma is of independent interest for the theory of HHGs.  The lemma does not hold for arbitrary $U\in\mathfrak S$, as can be illustrated with a suitably-chosen HHG structure on $F_2$.  

\begin{proof}[Proof of Lemma \ref{lem: cobounded nest-minimal domains}]
    Let $C \geq 1$ be such that every bounded domain of $\mathfrak S$ has diameter at most $C$.

    \setcounter{claim}{0}
    \begin{claim} \label{claim: locally finite product regrions}
        If $I \subseteq G \cdot U$ is such that $\{P_i\}_{i \in I}$ all intersect the same ball in $G$, then $|I| < \infty$.
    \end{claim}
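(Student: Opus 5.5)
The plan is to turn the question into counting group elements in a ball, and to settle it by a Ramsey argument on the relations between the translates $g_iU$, using the hypothesis on $U$ to exclude large transverse families. Write $P_i=P_{g_iU}=g_iP_U$ and suppose every $P_i$ meets the ball $B(x,\rho)\subseteq G$; translating by $x^{-1}$, we may take $x=1$. For each $i$ choose $y_i\in P_i$ with $\dist(1,y_i)\le\rho$.

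\textbf{Step 1: no nesting.} Distinct elements of $G\cdot U$ are never $\propnest$--related. Indeed, if $gU\propnest U$ then $U\sqsupsetneq gU\sqsupsetneq g^2U\sqsupsetneq\cdots$ is an infinite $\propnest$--chain, contradicting finite complexity $\hhscomp$. Hence any two distinct $g_iU,g_jU$ are orthogonal or transverse. Pairwise orthogonal subsets of $\mathfrak S$ have cardinality at most $\hhscomp$ by \cite[Lemma~2.1]{BehrstockHierarchically19}. So if $I$ were infinite, Ramsey's theorem would give an infinite subset $I'\subseteq I$ whose domains $W_i:=g_iU$ are pairwise transverse.

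\textbf{Step 2: pairwise transverse data are coarsely pinned at $1$.} Let $i\neq j$ in $I'$. Since $y_j\in P_{W_j}$ and $W_j\transverse W_i$, we have $\dist_{W_i}(y_j,\rho^{W_j}_{W_i})\le E$. Since $\pi_{W_i}$ is $(E,E)$--coarsely lipschitz and $\dist(y_i,y_j)\le 2\rho$, this gives $\dist_{W_i}(y_i,\rho^{W_j}_{W_i})\le c(\rho)$, with $c(\rho)$ depending only on $\rho$ and $E$. Thus, in every $\mathcal CW_i$, all the sets $\rho^{W_j}_{W_i}$ for $j\in I'-\{i\}$ lie in a ball of radius $c(\rho)$ around $\pi_{W_i}(y_i)$.

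\textbf{Step 3: contradiction from unboundedness.} This is the step I expect to be the main obstacle. The idea is to use that each $\mathcal CW_i\cong\mathcal CU$ is unbounded. Choose $N$ indices $i_1,\ldots,i_N\in I'$. For each $m\le N$, use realisation in $F_{W_{i_m}}\times\{y_{i_m}\}$ (Theorem~\ref{thm:hhs_realisation} and Section~\ref{subsec:product-regions}) to produce $z_m\in P_{W_{i_m}}$ with $\dist_{W_{i_m}}(z_m,y_{i_m})=T$, where $T\gg c(\rho)+E$. By Step~2 and the consistency axiom, $\pi_{W_{i_l}}(z_m)$ lies within $c(\rho)+2E$ of $\pi_{W_{i_l}}(1)$ for $l\ne m$. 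The aim is then to obtain a single point $w\in G$ that has large projection in many of the $W_{i_m}$ at once, but whose distance from $1$ is bounded independently of $N$; the distance formula (Theorem~\ref{thm:distance-formula}) would then force $N$ to be bounded. The natural candidate for $w$ is a realisation of the tuple equal to $\pi_{W_{i_m}}(z_m)$ in each $W_{i_m}$ and to $\pi_V(1)$ elsewhere. Checking that this tuple is consistent is exactly where the transverse pinning of Step~2 must be used.

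If consistency of that tuple fails, I would instead argue directly with the $G$--action, which is what Step~4 does.

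\textbf{Step 4: fallback via properness.} Because the $W_{i}$ are pinned at $1$ and the configuration near $1$ is finite, I would expect only finitely many $g_i\Stab_G(U)$ cosets to contain an element moving a fixed point of $P_U$ boundedly close to $1$. This uses properness of $G$ acting on itself: there are only finitely many elements of $G$ in $B(1,\rho')$. It also uses that, for $\mathcal CV$ with $V\propnest U$ uniformly bounded by $C$, the set $P_U$ is coarsely a product of a quasiline-like $\mathcal CU$ factor with $E_U$. Either route yields $|I'|<\infty$, contradicting the choice of $I'$; hence $|I|<\infty$.
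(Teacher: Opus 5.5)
\textbf{There is a genuine gap: Steps~3 and~4 do not produce a contradiction.} Your Steps~1 and~2 are correct: translates of $U$ are never $\propnest$--related, Ramsey gives an infinite pairwise transverse family, and the sets $\rho^{W_j}_{W_i}$ are pinned near $\pi_{W_i}(y_i)$.

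In Step~3 the tuple you want to realise is \emph{not} consistent, and Step~2 is exactly what shows this. For $l\neq m$ we have $W_{i_l}\transverse W_{i_m}$. The proposed coordinate $\pi_{W_{i_l}}(z_l)$ lies at distance about $T$ from $\pi_{W_{i_l}}(y_{i_l})$, hence roughly $T-c(\rho)$ from $\rho^{W_{i_m}}_{W_{i_l}}$, and symmetrically in $W_{i_m}$. So the transverse consistency inequality fails as soon as $T\gg c(\rho)$. No point can be far from the $\rho$--sets in two transverse domains at once, so $w$ does not exist. Even if it did, nothing in your plan bounds $\dist(1,w)$ independently of $N$.

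Step~4 is circular. That $g_iP_U$ meets $B(1,\rho)$ only gives $g_ip_i\in B(1,\rho)$ for some $p_i\in P_U$ depending on $i$. Replacing $p_i$ by a fixed basepoint of $P_U$ requires $\stabilizer_G(U)$ to act coboundedly on $P_U$. That is precisely the conclusion the paper derives \emph{from} this claim. Moreover, $\mathcal CU$ need not look like a quasiline.

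\textbf{Why Steps~1--3 cannot work as set up.} They use only the HHS axioms. They never use local finiteness of $G$, nor the hypothesis that $\mathcal CV$ is bounded for $V\propnest U$. Without local finiteness the analogous statement is false. Take a wedge of infinitely many lines at a point $o$, with one domain per line (pairwise transverse, all $\rho^{U_i}_{U_j}=\{o\}$) and a bounded top domain. This is an HHS in which every product region contains $o$.

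\textbf{The missing idea} is to combine local finiteness with the uniqueness axiom.
\begin{enumerate}
\item Since balls in $G$ are finite, infinitely many $P_i$ contain a common point $x$.
\item Let $C$ bound the diameters of the bounded $\mathcal CV$, and let $D$ be a coarse-connectivity constant for product regions. Choose $x_i\in P_i$ with $\theta_u(2(C+2E))\le\dist(x,x_i)\le\theta_u(2(C+2E))+D$.
\item Again by finiteness of balls, $x_i=y$ for infinitely many $i$.
\item The uniqueness axiom gives $V$ with $\dist_V(x,y)\ge 2(C+2E)$. Since $x,y\in P_{aU}$ and proper subdomains of $aU$ have diameter at most $C$, this forces $V=aU$ or $V\orth aU$ for every such $aU$.
\end{enumerate}
The paper finishes by running this argument inside $F_W$, where $W$ is $\nest$--minimal among domains containing infinitely many of the translates. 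Then $V\nest W$, and the container axiom puts all but at most one of these translates inside some $W'\propnest W$, contradicting minimality.

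Alternatively, take $x_i$ in the parallel copy $F_{aU}\times\{x\}$. This also excludes $V\orth aU$, so $V$ would have to equal two distinct translates, which is impossible.
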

    \begin{proofofclaim}{\ref{claim: locally finite product regrions}}
    Suppose there is an infinite subset $I \subseteq G \cdot U$ such that $\{P_i\}_{i \in I}$ all intersect the same ball $B$ in $G$.    

    Since $\mathfrak S$ has a unique $\nest$--maximal element and $\nest$--chains are bounded, there exists a $W\in\mathfrak S$ that is $\nest$--minimal with the property that $aU\nest W$ for infinitely many $aU\in I$.  By passing to an infinite subset, we can assume that this holds for all $aU\in I$.  Fix a coarse parallel copy $F_W\subseteq P_W$ (recall the definition of $F_W$ from Section \ref{subsec:product-regions}).

    Since $G$ is the $0$--skeleton of a locally finite graph, up to replacing $I$ with an infinite subset, there exists $x\in \bigcap_{i \in I} P_i$.  Considering the gate of $x$ in $F_W$ shows that there is a ball of uniform radius in $F_W$ that intersects each $P_i$, so we can assume that $x\in F_W\cap \bigcap_{i\in I}P_i$. Recall from \cite[Prop. 5.11]{BehrstockHierarchically19} that $(F_W,\mathfrak S_W)$ inherits an HHS structure with uniform parameters, where $\mathfrak S_W=\{V\in\mathfrak S:V\nest W\}$.  Since the product regions $P_{aU}$ defined intrinsically in this HHS uniformly coarsely coincide with $P_{aU}\cap F_W$, we will work in $(F_W,\mathfrak S_W)$ and regard each $P_i$ as a standard product region for this HHS.
    
    For each $i \in I$ let $x_i \in P_i$ be such that $\theta_u(2(C+2E)) \leq d(x,x_i) \leq \theta_u(2(C+2E)) + D$, where $\theta_u$ is the uniqueness function from \cite[Defn. 1.1.(9)]{BehrstockHierarchically19} and $D$ is a uniform constant such that all standard product regions are $D$--coarsely connected. 
    Since $F_W$ is proper, there is an infinite subset $I' \subseteq I$ such that $x_i = x_j$ for all $i,j \in I'$. Let $y \coloneqq x_i$ for any $i \in I'$. By the uniqueness axiom, there exists $V \in \mathfrak S_W$ such that $d_V(x,y) \geq 2(C+2E)$ which, by the definition of $C$ and the fact that $x,y\in P_{aU}$ for all $aU\in I'$, implies that $V \orth  aU$ or $V = aU$ for all $aU \in I'$. Let $I_1 \coloneqq I' - \{V\}$.  By \cite[Defn. 1.1.(3)]{BehrstockHierarchically19}, there exists $W'\propnest W$ such that $aU\nest W'$ for all $aU\in I_1$.  Since $I_1$ is infinite, this contradicts the $\nest$--minimality of $W$.
    \end{proofofclaim}

    Claim \ref{claim: locally finite product regrions} implies, by the pigeonhole principle for cocompact actions (see e.g. \cite[Prop. 7.2]{HruskaConnectedness21} or \cite[Lem. 2.3]{HagenSusse}), that the action of $\stabilizer_G(P_U)$ on $P_U$ is cocompact. Moreover, since $P_U = P_{gU}$ for any $g \in \stabilizer_G(U)$, the claim also implies that the orbit $\stabilizer_G(P_U) \cdot U$ is finite, so the index of $\stabilizer_G(U)$ in $\stabilizer_G(P_U)$ is finite, and the action of $\stabilizer_G(U)$ on $P_U$ is also cocompact. Since $\pi_U$ is coarsely surjective, and coarsely factors through the gate map $\gate_{P_U}$, this implies that the action of $\stabilizer_G(U)$ on $\mathcal CU$ is cobounded.
\end{proof}

\begin{thm}[store=flat closing] \label{thm: flat closing}
    Let $(G, \mathfrak S)$ be an HHG. Then $G$ is hyperbolic if and only if $G$ contains no $\mathbb Z^2$ subgroups.
\end{thm}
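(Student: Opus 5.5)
The forward direction is immediate: hyperbolic groups contain no $\integers^2$ subgroups. For the converse, I would assume $G$ is not hyperbolic and produce $\integers^2\leq G$. Following the strategy announced in the introduction, the goal is to find a single $g\in G$ and two distinct domains $U_1\neq U_2$ in $\mathfrak S$ such that $\pi_{U_i}(\langle g\rangle)$ is unbounded for both $i$. Given such a $g$, Lemma \ref{lem: fixed boundary points} applied to $A=\langle g\rangle$ yields $|\support(p^\pm)|\geq 2$. The domains in the support are pairwise orthogonal, so the product $S=\prod_{U\in\support(p^\pm)}\hull_U(p^-,p^+)$ is quasi-isometric to $\Euclidean^{\geq 2}$. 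Now let $A$ be a highest virtually abelian subgroup containing a finite-index subgroup of $\langle g\rangle$; it exists by Corollary \ref{cor: ascending chain}. By Corollary \ref{cor:highest-abelian}, $A$ is HQC. If $\rank(A)=1$, then $A$ is virtually $\langle g\rangle$ and HQC, so $\pi_U(A)$ is uniformly quasiconvex. In that case the hierarchical hull of an $A$-orbit is coarsely $A$, but it must also coarsely contain the hull of $p^\pm$, which is a $\geq 2$-dimensional quasiflat (this is Proposition \ref{prop: almost invariant hull}\eqref{it: properties of H} together with Proposition \ref{prop:short-mountains}). That is a contradiction, so $\rank(A)\geq 2$ and $\integers^2\leq A\leq G$.

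To find $g$, I would use non-hyperbolicity. By the rank characterisation in \cite{BehrstockHierarchically19} (an HHS is hyperbolic if and only if it has no pair of orthogonal domains with unbounded $\mathcal C$'s, in the normalised setting), there are $U\orth V$ with $\mathcal CU$ and $\mathcal CV$ both unbounded. After replacing each by a $\nest$-smaller domain with unbounded curve graph, I may assume every proper nested domain of $U$ has bounded $\mathcal C$, and likewise for $V$. By cofiniteness only finitely many orbits occur, so the diameter bound $C$ used there is uniform. Lemma \ref{lem: cobounded nest-minimal domains} then gives that $\stabilizer_G(U)$ acts coboundedly on $\mathcal CU$ and cocompactly on $P_U$, and similarly for $V$.

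The key step is to use cocompactness of $\stabilizer_G(U)$ on $P_U\supseteq F_U\times E_U$, where the $E_U$-factor coarsely sees $\mathcal CV$, to produce elements of $\stabilizer_G(U)$ acting loxodromically on $\mathcal CU$ and also unboundedly on $\mathcal CV'$ for some $V'\orth U$. I would argue as follows. Since the action on $\mathcal CU$ is cobounded and by HHS automorphisms, it is hierarchically semisimple and of unbounded type, so it contains a loxodromic $h$; by a ping-pong or classification argument it is either lineal or of general type. Likewise, cocompactness on $P_U$ forces $\stabilizer_G(U)$ to act with unbounded orbits on some $\mathcal CV'$ with $V'\orth U$. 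Indeed, $P_U$ is unbounded in the $E_U$ direction, and by the distance formula and Petyt--Spriano \cite[Thm. 5.1]{PetytUnbounded23} some domain $V'\orth U$ sees unbounded orbits. So there is $k\in\stabilizer_G(U)$ (after passing to a finite-index subgroup fixing $V'$) that is loxodromic on $\mathcal CV'$. It remains to combine $h$ and $k$ into a single element, which I expect to be the main obstacle. I would try $g=h^Nk^N$ for large $N$: for domains orthogonal to one another the projections behave coordinatewise, so $\pi_U(g)$ and $\pi_{V'}(g)$ should both be loxodromic by a standard argument in which the bounded error from $k^N$ on $\mathcal CU$ is dominated by the translation length of $h^N$, and symmetrically on $\mathcal CV'$. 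The delicate point is controlling how $k$ acts on $\mathcal CU$ and how $h$ acts on $\mathcal CV'$; if either acts loxodromically with different axes, the combination could fail. In that case I would instead pass to the finite-index subgroup fixing $V'$ and apply Petyt--Spriano's classification to the whole subgroup $\stabilizer_G(U)\cap\stabilizer_G(V')$, extracting an element with two unbounded projection domains.
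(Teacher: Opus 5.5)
Your structure matches the paper's: non-hyperbolicity gives orthogonal, $\nest$-minimal $U,V$ with unbounded $\mathcal C$, and Lemma \ref{lem: cobounded nest-minimal domains} gives coboundedness. Once some $f$ has $|\bigset(f)|\geq 2$, a highest abelian subgroup virtually containing $\langle f\rangle$ is HQC by Corollary \ref{cor:highest-abelian}, hence has rank $\geq 2$.

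Your way of obtaining a common stabiliser differs from the paper's, and it works. $\stabilizer_G(U)$ is finitely generated, since it acts cocompactly on $P_U$. Applying \cite[Thm. 5.1]{PetytUnbounded23} to it gives an invariant, pairwise orthogonal family. This family contains $U$, because $\pi_W(P_U)$ is bounded whenever $U\propnest W$, and it contains some $V'\orth U$. A finite-index $H'$ fixes both domains and contains a power of $h$. This replaces the paper's hull/local-finiteness argument showing that powers of $g,h$ fix $V,U$.

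One justification is missing here. To get $k\in H'$ loxodromic on $\mathcal CV'$, unbounded orbits are not enough, since they allow horocyclic actions. Use instead that $\pi_{V'}(P_U)$ is coarsely all of $\mathcal CV'$ by partial realisation, so $H'$ acts on $\mathcal CV'$ coboundedly.

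The genuine gap is the final step: you never produce a single element loxodromic on both $\mathcal CU$ and $\mathcal CV'$. The $h^Nk^N$ attempt is unjustified. Moreover, the case you worry about is harmless. If $k$ is loxodromic on $\mathcal CU$, or $h$ on $\mathcal CV'$, that element already has two Big domains.

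By hierarchical semisimplicity, the only real case is $h$ elliptic on $\mathcal CV'$ and $k$ elliptic on $\mathcal CU$. Even then, $h^Nk^N$ need not be loxodromic for a given $N$. For example, if $k^N$ reverses the quasi-axis of $h$ in $\mathcal CU$, the product acts on that axis like a reflection (the infinite dihedral phenomenon). So the exponents must be chosen carefully on both spaces at once, and that requires an actual lemma.

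Your fallback does not supply it. As quoted in this paper, \cite[Thm. 5.1]{PetytUnbounded23} yields domains on which the \emph{group} has unbounded orbits, not an individual element with two Big domains. Even extracting one loxodromic per domain took extra work in Lemma \ref{lem: fixed boundary points}.

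The missing ingredient is a simultaneous-loxodromic statement. A group acting by isometries on finitely many hyperbolic spaces, with a loxodromic element for each, contains one element loxodromic on all of them. This is \cite[Prop. 6.68]{Genevois:hyperbolicities}, which the paper applies to $\stabilizer_G(U)\cap\stabilizer_G(V)$. Applied to your $H'$, with a power of $h$ and with $k$, it would complete your argument.
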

\begin{proof}
    First normalise $(G,\mathfrak S)$.
    Then there exists $C \geq 1$ such that every bounded domain of $\mathfrak S$ has diameter at most $C$. Given $g \in G$, let $\bigset(g) \coloneqq \{U \in \mathfrak S : \pi_U(\la g \ra)$ has unbounded diameter$\}$, any two of whose elements are orthogonal. Recall that the action of $G$ on $(G,\mathfrak S)$ is hierarchically semisimple.

    Suppose $G$ is not hyperbolic. We will show that there exists $f \in G$ such that $|\bigset(f)| \geq 2$. Then, by Corollary \ref{cor:highest-abelian}, there is a highest abelian subgroup $A \leq G$ which contains a finite index subgroup of $\la f \ra$, and $A$ is HQC and therefore must have rank $\geq 2$.
    
    By \cite[Cor. 2.15]{BehrstockQuasiflats21} and non-hyperbolicity of $G$, there exist $U,V \in \mathfrak S$ such that $U \orth V$ and $\mathcal CU, \mathcal CV$ are unbounded. Using \cite[Def. 1.1.(3),(5)]{BehrstockHierarchically19}, we can assume that, for any $U' \propnest U$ and $V' \propnest V$, the domains $\mathcal CU', \mathcal CV'$ are bounded. Therefore Lemma \ref{lem: cobounded nest-minimal domains} implies that there exist $g \in \stabilizer_G(U)$ and $h \in \stabilizer_G(V)$ such that $U \in \bigset(g)$ and $V \in \bigset(h)$. 
    
    If either $|\bigset(g)| > 1$ or $|\bigset(h)| > 1$ then we are done, so suppose $\bigset(g) = \{U\}$ and $\bigset(h) = \{V\}$. 
    
    Fix a point $x \in G$. Let $L \coloneqq H_\theta(\la g \ra \cdot x)$ and $M \coloneqq H_\theta(\la h \ra \cdot x)$. Note that $L$ is $g$--invariant and $M$ is $h$--invariant. Let $H \coloneqq H_\theta(L \cup M)$. 

    Lemma 6.6 in \cite{DurhamBoundaries17} provides a constant $B$ such that $\diam(\pi_W(L)) \leq B$ for all $W \neq U$ and $\diam(\pi_W(M)) \leq B$ for all $W \neq V$. If $W \in \mathfrak S - \{U,V\}$, then $\pi_W(H) \subseteq \mathcal N_B(\pi_W(x))$, so $\diam(\pi_W(H)) \leq 3B$.
    Therefore, if $x,y \in H$, then $\relevant_{4B}(x,y) \subseteq \{U,V\}$ and, if $n \in \mathbb Z$ and $x,y \in g^nH$, then $\relevant_{4B}(x,y) \subseteq \{U,g^nV\}$. If $x,y \in H \cap g^nH$ and $g^nV \neq V$, then  $\relevant_{4B}(x,y) \subseteq \{U\}$. Since $L \subseteq H \cap g^nH$, this, together with the uniqueness axiom and the definition of $H$, implies that $H \cap g^nH \subseteq \mathcal N_D(L)$, for some constant $D \geq 1$ which is independent of $n$. 

    Fix a point $x_0 \in L$. There exists $k \geq 2$ such that, for each $n \in \mathbb N$, there exists $x_n \in g^nH$ such that $d(x_0,x_n) \leq kD$ and $d(x_n,L) > D$. The space $G$ is locally finite, so there exists $n \neq m$ such that $x_n = x_m$. But then $g^{-n}x_n \in H \cap g^{m-n}H$ and $d(L, g^{-n}x_n) = d(L,x_n) > D$, so we have $g^{m-n}V = V$. By a symmetric argument, there is a non-zero power of $h$ which fixes $U$. It follows by \cite[Prop. 6.68]{Genevois:hyperbolicities} that there exists $f \in \stabilizer_G(U) \cap \stabilizer_G(V)$ such that $\{U,V\} \subseteq \bigset(f)$.
\end{proof}

Recall that the \emph{rank} of a hierarchically hyperbolic group $(G,\mathfrak S)$ is the maximal $k$ such that $\mathfrak S$ contains a set of $k$ pairwise orthogonal elements $U_1,\ldots,U_k$ such that $\mathcal CU_i$ is unbounded for all $i\leq k$.  Theorem \ref{thm: flat closing} generalises as follows:

\begin{thm}[store=higher-rank]\label{thm:higher-rank}
    Let $(G, \mathfrak S)$ be an HHG with rank $\nu$. Then there exists $\mathbb Z^\nu \leq G$.
\end{thm}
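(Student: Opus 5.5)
The plan is to generalise the proof of Theorem \ref{thm: flat closing}.  By Corollary \ref{cor:highest-abelian}, it suffices to find $f\in G$ with $|\bigset(f)|\geq\nu$.  Granting this, take a highest virtually abelian $A$ virtually containing $\langle f\rangle$.  Then $A$ is HQC, so by Lemma \ref{lem: HHS structure on HQC subspaces} it inherits an HHS structure on which it acts geometrically.  Since $\pi_U(A)$ is unbounded for each $U\in\bigset(f)$, Lemma \ref{lem: fixed boundary points} and Proposition \ref{prop:short-mountains} (via Theorem \ref{thm:HHS-semisimple-coarse-minset}) give a quasiflat of dimension at least $|\bigset(f)|$ on which $A$ acts coboundedly.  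Hence $\rank(A)\geq\nu$, and $A$ contains $\integers^\nu$.

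To produce $f$, fix pairwise orthogonal $U_1,\ldots,U_\nu$ with each $\mathcal CU_i$ unbounded.  Using \cite[Def. 1.1.(3),(5)]{BehrstockHierarchically19} and the maximality of $\nu$, replace each $U_i$ by a domain nested in it so that every proper nest-descendant of $U_i$ has bounded $\mathcal C$; orthogonality is preserved.  Lemma \ref{lem: cobounded nest-minimal domains} then gives $g_i\in\stabilizer_G(U_i)$ acting loxodromically on $\mathcal CU_i$.  The argument in the proof of Theorem \ref{thm: flat closing} applies to each pair: it uses hulls $H_\theta$ of orbits together with local finiteness, and it shows that some nonzero power of $g_i$ fixes $U_j$, assuming $\bigset(g_i)=\{U_i\}$ (otherwise we replace $U_i$ by the elements of $\bigset(g_i)$, which only helps).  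Inductively replacing each $g_i$ by a power, we obtain $g_1,\ldots,g_\nu$ such that each $g_i$ stabilises every $U_j$.

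Next, the combination step.  We want an element of $\bigcap_j\stabilizer_G(U_j)$ that is loxodromic on every $\mathcal CU_j$.  In the rank-two case the paper cites \cite[Prop. 6.68]{Genevois:hyperbolicities}; the plan is to iterate this.  Set $f=g_1^{m_1}\cdots g_\nu^{m_\nu}$ with $m_1\gg m_2\gg\cdots$, and argue by induction that $f$ acts loxodromically on each $\mathcal CU_j$.  The difficulty is that each $g_i$ acts on $\mathcal CU_j$, for $j\neq i$, with bounded orbits, but possibly as a nontrivial elliptic rather than trivially.  To control the product, use Lemma \ref{lem:quasi-line-action}, or the quasimorphism/translation-length behaviour of isometries fixing the ends of the quasiline $\hull_{U_j}$.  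The key point is that elements stabilising $U_j$ and loxodromic there fix the two boundary points, which gives a Busemann-type homomorphism on the subgroup fixing those points.

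I expect this combination step to be the main obstacle: making sure a suitable product is simultaneously loxodromic on all $\nu$ factors.  The first approach to try is passing to the finite-index subgroup $\Gamma$ of $\bigcap_j\stabilizer_G(U_j)$ that fixes the relevant boundary points in each $\partial\mathcal CU_j$.  On $\Gamma$ the Busemann quasimorphisms $\beta_j$ are defined, and $\beta_j(g_i)\neq0$ exactly when $i=j$.  A generic integer combination then has all $\beta_j$ nonzero, hence is loxodromic on every factor.  The remaining check is that the $g_i$ (after taking powers) lie in a common subgroup where each $\beta_j$ is a genuine homomorphism.  If the $g_i$ do not commute this may fail, and the fallback is to replace the product by a ping-pong style argument on each quasiline factor.
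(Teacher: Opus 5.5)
Your reduction to finding $f$ with $|\bigset(f)|\geq\nu$ is fine. So is the rank count for a highest subgroup $A$, which is HQC and hence acts geometrically on its induced HHS; you are more explicit there than the paper.

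The gap is in producing elements of $\bigcap_j\stabilizer_G(U_j)$ that are loxodromic on the individual $\mathcal CU_i$. The argument from the proof of Theorem \ref{thm: flat closing} genuinely needs $\bigset(g_i)=\{U_i\}$ and $\bigset(g_j)=\{U_j\}$, because that is what bounds $\pi_W$ of the hull for every other $W$. Your fallback of replacing $U_i$ by the elements of $\bigset(g_i)$ does not ``only help'': those elements are orthogonal to $U_i$ but need not be orthogonal to the other $U_j$.

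Here is an example. Take the right-angled Artin group on a triangle $v,u,x$ together with a vertex $w$ adjacent only to $v$, with its standard HHG structure. Then $vw$ is a legitimate output of Lemma \ref{lem: cobounded nest-minimal domains} for the domain $U_v$ of the $v$--line. Yet the domains $(vw)^nU_u$ are pairwise distinct, $U_w\transverse U_u$, and $\{U_v,U_w\}$ does not extend to three pairwise orthogonal unbounded domains.

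The missing ingredient is Lemma \ref{lem: nest-minimal intersections of product regions}: $\bigcap_{i\leq k}\stabilizer_G(U_i)$ acts cocompactly on $Q=\bigcap_{i\leq k}P_{U_i}$. This is proved by induction on $k$. For $h\in H=\bigcap_{i<k}\stabilizer_G(U_i)$, the translates $hQ=Q'\cap P_{hU_k}$ are locally finite by the claim in the proof of Lemma \ref{lem: cobounded nest-minimal domains}. The pigeonhole principle then gives cocompactness of $\stabilizer_H(Q)$ and finiteness of $H\cdot U_k$. Since $\pi_{U_i}|_Q$ is coarsely surjective, the common stabiliser acts coboundedly on each $\mathcal CU_i$. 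That directly gives, inside the common stabiliser, an element loxodromic on $\mathcal CU_i$, with no control of its $\bigset$ needed.

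Your combination step also fails as designed. The Busemann-homomorphism plan needs a finite-index subgroup of $\bigcap_j\stabilizer_G(U_j)$ fixing a pair of points in each $\boundary\mathcal CU_j$. But $\mathcal CU_j$ is only a hyperbolic space on which the stabiliser acts coboundedly, not a quasiline. For the product structure on $F_2\times F_2$, each $\mathcal CU_j$ is a tree on which the whole group acts with general type, so no such subgroup exists. The ping-pong fallback on ``quasiline factors'' has the same defect. None of this is needed: as in the paper, apply \cite[Prop. 6.68]{Genevois:hyperbolicities} once to the common stabiliser acting on the finitely many spaces $\mathcal CU_1,\ldots,\mathcal CU_\nu$. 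This gives $f$ loxodromic on all of them, so $\bigset(f)\supseteq\{U_1,\ldots,U_\nu\}$.
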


We first need a strengthening of Lemma \ref{lem: cobounded nest-minimal domains}.

\begin{lem} \label{lem: nest-minimal intersections of product regions}
    Let $(G, \mathfrak S)$ be a normalised HHG and let $U_1, \dots, U_k \subseteq \mathfrak S$ be pairwise orthogonal domains such that, for any $U \propnest U_i$, the space $\mathcal CU$ is bounded. Let $Q \coloneqq \bigcap_{i=1}^k P_{U_i}$. Then the restriction of $\pi_{U_i}$ to $Q$ is $10E$--coarsely surjective for each $i$, and $\bigcap_{i=1}^k \stabilizer_G(U_i)$ acts cocompactly on $Q$. Therefore $\bigcap_{i=1}^k \stabilizer_G(U_i)$ acts coboundedly on $\mathcal CU_i$ for each $i$.
\end{lem}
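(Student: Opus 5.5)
The argument follows the proof of Lemma \ref{lem: cobounded nest-minimal domains}, with the single product region $P_U$ replaced by the intersection $Q$. It has three parts: coarse surjectivity of $\pi_{U_i}|_Q$, local finiteness of the family of translates of $Q$, and a pigeonhole argument giving cocompactness and then coboundedness.

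\textbf{Coarse surjectivity.} Fix $i$ and $p\in\mathcal CU_i$. We build a consistent tuple whose realisation lies in $Q$ and projects near $p$. By the partial realisation axiom \cite[Defn. 1.1.(8)]{BehrstockHierarchically19}, applied to the pairwise orthogonal family $U_1,\ldots,U_k$ with $p$ in the $U_i$ coordinate and arbitrary points in the other coordinates, there is $x\in X$ with the following properties:
\begin{itemize}
\item $\dist_{U_i}(x,p)\leq E$;
\item $\dist_V(x,\rho^{U_j}_V)\leq E$ whenever $U_j\propnest V$ or $U_j\transverse V$, for every $j$.
\end{itemize}
The second property says exactly that $x\in P_{U_j}$ for all $j$. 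Hence $x\in Q$ and $\dist_{U_i}(x,p)\leq E\leq 10E$. A sanity check is needed that the constants in the definition of $P_U$ (which uses $E$) match, but normalisation permits this.

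\textbf{Local finiteness.} Let $\mathcal Q=G\cdot(U_1,\ldots,U_k)$, viewed as a set of ordered orthogonal tuples, each giving an intersection $Q_g=gQ$. Claim: only finitely many distinct tuples in $\mathcal Q$ have $Q_g$ meeting a fixed ball. This reduces to Claim \ref{claim: locally finite product regrions}. If infinitely many distinct tuples meet a ball, then infinitely many distinct first-coordinate (or some coordinate) domains $gU_j$ occur, since each coordinate ranges over a single $G$--orbit $G\cdot U_j$ and the tuples are distinct. Moreover $P_{gU_j}\supseteq gQ$ meets the ball. That claim's proof only used that $\mathcal CV$ is bounded for $V\propnest gU_j$, which holds here. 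This contradicts the claim applied to $U_j$.

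\textbf{Cocompactness and coboundedness.} By the pigeonhole principle for cocompact actions (\cite[Prop. 7.2]{HruskaConnectedness21} or \cite[Lem. 2.3]{HagenSusse}), $\stabilizer_G(Q)$ acts cocompactly on $Q$. Here $\stabilizer_G(Q)$ means the stabiliser of the set $\{P_{U_1},\ldots,P_{U_k}\}$, or of the tuple up to permutation.

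Next we pass to the subgroup fixing each $U_i$. Elements $g$ with $gQ$ coarsely equal to $Q$ give tuples whose translates meet a common ball. By local finiteness only finitely many tuples arise, so $\bigcap_i\stabilizer_G(U_i)$ has finite index in the group of $g$ with $gQ$ coarsely equal to $Q$. That group contains $\stabilizer_G(Q)$, so $\bigcap_i\stabilizer_G(U_i)$ also acts cocompactly, coarsely, on $Q$.

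Finally, $\pi_{U_i}|_Q$ is coarsely surjective and coarsely lipschitz. The cocompact action therefore pushes forward to a cobounded action of $\bigcap_i\stabilizer_G(U_i)$ on $\mathcal CU_i$.

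\textbf{Main obstacle.} The delicate point is that $Q$ is defined only coarsely, so "stabiliser of $Q$" must be interpreted via the tuple of domains rather than the set. The reduction to Claim \ref{claim: locally finite product regrions}, including handling distinct tuples that share coordinates, is where care is needed.
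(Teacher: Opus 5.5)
Your proof is correct in outline, but it is organised differently from the paper's, and one step needs repair.

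\textbf{Comparison with the paper.} The paper inducts on $k$. It assumes $H=\bigcap_{i<k}\stabilizer_G(U_i)$ acts cocompactly on $Q'=\bigcap_{i<k}P_{U_i}$ and writes $hQ=Q'\cap P_{hU_k}$. It then uses the local finiteness claim from the proof of Lemma~\ref{lem: cobounded nest-minimal domains}, for the single domain $U_k$, to see that $\{hQ\}_{h\in H}$ is locally finite in $Q'$. Finally it applies the pigeonhole principle to the $H$--action on $Q'$ and passes to $\stabilizer_H(U_k)$ as in the case $k=1$.

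You avoid the induction. Pigeonholing on coordinates reduces local finiteness of the whole family $\{gQ\}_{g\in G}$ to that claim for each $U_j$ separately, and you then use the transitive action of $G$ on itself. Both arguments rest on the same two ingredients: that claim (whose proof indeed needs only boundedness of $\mathcal CV$ for $V\propnest gU_j$) and the pigeonhole principle. Yours is a little shorter, and your surjectivity and coboundedness steps agree with the paper's.

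\textbf{The step that needs repair.} You assert that every $g$ with $gQ$ ``coarsely equal'' to $Q$ yields a tuple whose region meets a common ball. This is not justified, since nothing bounds $\dist_{Haus}(gQ,Q)$ uniformly in $g$. So you have not shown that $\bigcap_i\stabilizer_G(U_i)$ has finite index in that coarse stabiliser.

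The step is also unnecessary, and the worry behind it is unfounded. $P_U$ is an honest subset of $G$, and $gQ=\bigcap_iP_{gU_i}$ exactly. There are two easy fixes:
\begin{itemize}
\item Your local finiteness statement is already indexed by ordered tuples, so run the pigeonhole argument with that indexing. If $y\in Q$ then $1\in y^{-1}Q$, and only finitely many tuples $y^{-1}(U_1,\ldots,U_k)$ can arise. Hence $Q$ is a union of finitely many orbits of the tuple stabiliser $\bigcap_i\stabilizer_G(U_i)$, which gives cocompactness directly.
\item Alternatively, use the genuine setwise stabiliser of $Q$. Each $g$ with $gQ=Q$ gives a tuple whose region meets any fixed ball meeting $Q$, so the orbit of $(U_1,\ldots,U_k)$ under this stabiliser is finite. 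This is how the paper passes to $\stabilizer_H(U_k)$.
\end{itemize}

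\textbf{A minor point on surjectivity.} Partial realisation needs $p\in\pi_{U_i}(X)$. So first move $p$ by at most $E$, using normalisation; the realisation constant is already $\le E$ by the choice of $E$. This gives the bound $2E\le 10E$.
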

\begin{proof}
    The fact that the restriction of each $\pi_{U_i}$ to $Q$ is $10E$--coarsely surjective is a consequence of the partial realisation axiom \cite[Def. 1.1.(8)]{BehrstockHierarchically19}. To prove the cocompactness statement, we will induct on $k$.
    
    If $k=1$ then $Q = P_{U_1}$, so cocompactness holds by Lemma \ref{lem: cobounded nest-minimal domains}. Suppose $k > 1$ and suppose the claim holds for sets of up to $k-1$ domains. Then $H \coloneqq \bigcap_{i=1}^{k-1} \stabilizer_G(U_i)$ acts cocompactly on $Q' \coloneqq \bigcap_{i=1}^{k-1} P_{U_i}$. 

    Let $T \subseteq H$ be a left-transversal for $\stabilizer_H(U_k)$. Let $I \subseteq T$ be such that $\{hQ: h \in I\}$ all intersect the same ball in $Q'$. For each $h \in H$, we have $hQ = h(Q' \cap P_{U_k}) = Q' \cap P_{hU_k}$, so this implies that $\{P_{hU_k} : h \in I\}$ all intersect the same ball in $G$. By Claim \ref{claim: locally finite product regrions} from the proof of Lemma \ref{lem: cobounded nest-minimal domains}, we therefore have $|I| < \infty$. 
    The pigeonhole principle for cocompact actions then implies that $\stabilizer_H(Q)$ acts cocompactly on $Q$. Moreover, since $hQ = Q$ for any $h \in \stabilizer_H(U_k)$, this also implies that $H \cdot U_k$ is finite, so $\stabilizer_H(U_k)$ has finite index in $H$. Thus $\bigcap_{i=1}^k \stabilizer_G(U_i) = \stabilizer_H(U_k)$ acts cocompactly on $Q$.
\end{proof}

\begin{proof}[Proof of Theorem \ref{thm:higher-rank}]
    First normalise $(G,\mathfrak S)$.
    Then there exists $C \geq 1$ such that every bounded domain of $\mathfrak S$ has diameter at most $C$. Given $g \in G$, let $\bigset(g) \coloneqq \{U \in \mathfrak S : \pi_U(\la g \ra)$ has unbounded diameter$\}$. Recall that the action of $G$ on itself is hierarchically semisimple.

    By assumption, there exists a set $\mathfrak V \subseteq \mathfrak S$ of $\nu$ pairwise orthogonal elements such that $\mathcal CV$ is unbounded for all $V \in \mathfrak V$. 
    Using \cite[Def. 1.1.(3)]{BehrstockHierarchically19}, we can assume without loss of generality that, if $V \in \mathfrak V$ and $V' \propnest V$, then $\mathcal CV'$ is bounded. 
    It follows from Lemma \ref{lem: nest-minimal intersections of product regions} that, for each $W \in \mathfrak V$, there exists $g \in \bigcap_{V \in \mathfrak V} \stabilizer_G(V)$ such that $W \in \bigset(g)$. Therefore \cite[Prop. 6.68]{Genevois:hyperbolicities} implies that there exists $g \in G$ such that $\mathfrak V = \bigset(g)$. Let $A \leq G$ be a highest abelian subgroup which contains a finite index subgroup of $\la g \ra$. Then Corollary \ref{cor:highest-abelian} implies that $A$ is HQC and therefore has rank $\nu$.
\end{proof}

\appendix\newsection{Chasing constants}\label{app:constants}
Throguhout, we used some results from the literature on hierarchical hyperbolicity that take as input an HHS $(X,\mathfrak S)$ and yield some conclusion involving various quantities depending only on the HHS parameters for $(X,\mathfrak S)$.  This uniformity of constants is not always explicitly asserted in the versions of those statements in the literature, but can be extracted from their proofs.  In this appendix, we make the dependence of the constants explicit in these statements.

\subsection{Strong distance formula}\label{appsubsec:strong-DF}
The definition of an HHS is designed to enable one to prove several results that then serve as the main tools; most notable is the \emph{distance formula} from \cite{BehrstockHierarchically19}, of which special cases appear in \cite{MasurMinsky:II,SistoProjections13,BehrstockHierarchically17}.  In the statement, we use \emph{threshold} notation: given $A,T\in\reals$,  let $\ignore{A}{T} \coloneqq A$ if $A\geq T$ and $0$ if $A<T$.

\begin{thm}[Strong distance formula]\label{thm:distance-formula}
	For all $\lambda\geq 1$, the following holds.  Let $(X,\mathfrak S)$ be a hierarchically hyperbolic space and let $h:[0,\infty)\to[0,\infty)$ be such that $\lambda^{-1}r-\lambda \leq h(r)\leq \lambda r+\lambda$ for all $r\geq 0$.  Then there exists $T_0\geq 2\lambda$, depending only on $\lambda$ and the HHS parameters, such that for all $T\geq T_0$, there exists $\kappa\geq 0$, depending only on $T,\lambda,$ and the HHS parameters, such that 
	$$\kappa^{-1}\dist_X(x,y)-\kappa\leq \sum_{U\in\mathfrak S}\ignore{h(\dist_U(x,y))}{T} \leq \kappa\dist(x,y)+\kappa$$
	for all $x,y\in X$.
\end{thm}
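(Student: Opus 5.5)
This is the strong distance formula, and I would deduce it from the ordinary distance formula of \cite[Thm.~4.5]{BehrstockHierarchically19}. That result says: there is $s_0$, depending only on the HHS parameters, such that for every $s\geq s_0$ there are $K,C$ with
\[K^{-1}\sum_{U}\ignore{\dist_U(x,y)}{s}-C\leq \dist(x,y)\leq K\sum_U\ignore{\dist_U(x,y)}{s}+C.\]
The only new content is uniformity of the constants and the replacement of $\dist_U$ by $h(\dist_U)$.

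\textbf{Step 1: uniformity.} I would trace the proof in \cite{BehrstockHierarchically19}. The upper bound there comes from building hierarchy paths via realisation, whose constants depend only on $E,\hhscomp,\theta_u$ (as noted in \cite{Casals-RuizReal24}). The lower bound comes from the large link, bounded geodesic image and passing-up arguments. In both, the constants $K,C$ are functions of $s$ and the HHS parameters only. This gives the formula with the uniform dependence the statement asks for.

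\textbf{Step 2: inserting $h$.} Set $T_0=\max\{2\lambda,\ \lambda(s_0+\lambda)\}$ and let $T\geq T_0$. Put $s=\lambda(T+\lambda)$ and $s'=(T-\lambda)/\lambda$.
\begin{itemize}
\item If $\dist_U\geq s$, then $h(\dist_U)\geq\lambda^{-1}\dist_U-\lambda\geq T$. So that term survives the threshold, and it is comparable to $\dist_U$ up to factor $\lambda$ and additive $\lambda$. The additive error is absorbed, since $\dist_U\geq s\geq 2\lambda^2$ makes $h(\dist_U)\geq \dist_U/(2\lambda)$.
\item Conversely, if $h(\dist_U)\geq T$, then $\dist_U\geq s'$, and again $h(\dist_U)\leq 2\lambda\dist_U$ because $\dist_U\geq 1$ once $T\geq 2\lambda$.
\end{itemize}
It follows that
\[(2\lambda)^{-1}\sum\ignore{\dist_U}{s}\leq\sum\ignore{h(\dist_U)}{T}\leq 2\lambda\sum\ignore{\dist_U}{s'}.\]
Here $s'\geq s_0$ provided $T_0$ is large, which I would arrange by taking $T_0=\lambda(s_0+\lambda)+2\lambda$.

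\textbf{Step 3: conclude.} Apply the Step~1 formula with thresholds $s$ and $s'$ to both outer sums. This gives $\kappa$ depending on $T,\lambda$ and the HHS parameters.

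The main obstacle is Step~1: certifying that no constant in the original proof secretly depends on more than $E,\hhscomp,\theta_u$. I would address it by citing \cite[Sec.~15]{Casals-RuizReal24}, which records this dependence explicitly.
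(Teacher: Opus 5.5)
Your proposal is correct and takes the paper's route. Uniform constants for the case $h=\mathrm{id}$ are extracted from the proof of \cite[Thm.~4.5]{BehrstockHierarchically19}, and general $h$ is handled by the threshold-comparison argument that the paper imports from \cite[Thm.~2.9]{BehrstockCombinatorial24}, which your Step~2 writes out correctly (your first choice $T_0=\max\{2\lambda,\lambda(s_0+\lambda)\}$ already gives $s'\geq s_0+\lambda-1\geq s_0$, so the later enlargement is unnecessary, and for an independent uniform-constants reference the paper points to \cite[Rem.~12.10]{Casals-RuizReal24} rather than Section~15, which it uses for hulls).
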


\begin{proof}
	The case $\lambda=1$ is the original distance formula for HHSes, which is \cite[Thm. 4.5]{BehrstockHierarchically19}, whose proof makes clear that the threshold $T_0$ for $\lambda=1$ depends only on the HHS parameters and, for any $T\geq T_0$, one obtains $\kappa$ depending only on $T$ and the HHS parameters.  The general case follows from the case $\lambda=1$ by the exact same argument as is used in the proof of \cite[Thm. 2.9]{BehrstockCombinatorial24}; in fact, the latter theorem itself implies the present theorem as soon as we observe that the proof works for any $T$ chosen sufficiently large in terms of $\lambda$ and the ``threshold'' $T_0$ from the $\lambda=1$ case.
\end{proof}

The interested reader can refer to \cite[Rem. 12.10]{Casals-RuizReal24}, which is about an alternate proof due to Durham that also yields uniform constants.

\subsection{Cubical approximation theorem}\label{appsubsec:cubical-approx}
Next, we recall the cubical approximation theorem from \cite{BehrstockQuasiflats21}, of which there are several variants with different proofs, for instance in \cite{DurhamMinskySisto:stable,DurhamCubulating23,DurhamGeometry22,Casals-RuizReal24}.  The following version is the same as the statements from \cite{BehrstockQuasiflats21} and \cite{Casals-RuizReal24}, except it is explicit about what the constants depend on.

\begin{prop}[Cubical approximation]\label{prop:cubical_approximation}
	Let $(X,\mathfrak S)$ be a hierarchically hyperbolic space of complexity $\hhscomp$.  Then there exists $M_0$, depending on the HHS parameters only, such that for all $k\in\naturals$ there exists $C$, depending only on the HHS parameters and $k$, such that the following holds.  
	
	Let $M_1\coloneqq100M_0Ck$. Then for all $M\geq M_1$, there is a constant $Q\geq1$, depending on $M,k$, and the HHS parameters only, satisfying the following.  
	
	Let $A=\{x_1,\ldots,x_k\}\subset X$ and let $\mathfrak U$ be the set of $U\in\mathfrak S$ such that $\dist_U(x_i,x_j)\geq M$ for some $i,j$.  Then there exists a CAT(0) cube complex $\Box_A$ and a map $f:\Box_A\to X$ such that:
	\begin{itemize}
		\item \label{table:C} $f$ is a $(Q,Q)$--quasi-isometric embedding and $d_{Haus}(f(\Box_A),H_\theta(A))\leq Q$.
		\item There exist $\hat x_{i_1},\ldots,\hat x_{i_s}\in\Box_A$ such that $\dist_{X}(f(\hat x_{i_j}),x_j)\leq C$ for all $j$ and $\Box_A$ is equal to the convex hull in $\Box_A$ of $\{\hat x_{i_1},\ldots,\hat x_{i_s}\}$.  In particular, $\Box_A$ has finitely many cubes.
		\item The map $f$ is $Q$--quasimedian.
		\item Each hyperplane $h$ of ${\Box_A}$ is labelled by an element of $\mathfrak U\subset\mathfrak S$, and hyperplanes $h,h'$ cross if and only if their labels $U,U'$ satisfy $U\orth U'$.  In particular, $\dimension{\Box_A}\leq\hhscomp$.
	\end{itemize}
	Finally, if $\gamma$ is a combinatorial geodesic in ${\Box_A}$, then $f\circ\gamma$ is a $(D,D)$--hierarchy path in $X$, where $D$ depends only on $M,k$, and the HHS parameters.
\end{prop}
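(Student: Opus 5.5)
The plan is to prove the statement by induction on the complexity $\hhscomp$, following the strategy of \cite{BehrstockQuasiflats21} (or \cite[Sec. 15--16]{Casals-RuizReal24}). The only new task is to record, at each step, which quantities the constants depend on. Concretely, the statement is already proved in those references; the proof will be a careful re-reading rather than a new argument.

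\textbf{Step 1: construct the cube complex.} First fix $M_0$ larger than the thresholds required by the realisation theorem (Theorem \ref{thm:hhs_realisation}) and the distance formula (Theorem \ref{thm:distance-formula}); these depend only on the HHS parameters. Next, for each $U\in\mathfrak U$, replace $\hull_U(A)$ by a finite tree. It is a $k$-pointed quasi-tree, so it is approximated by a tree with constant depending on $E$ and $k$; this yields the constant $C$. Mark on this tree the coarse projections $\rho^V_U$ for $V\in\mathfrak U$ with $V\propnest U$. Then collapse the clusters of marked points, using the same cluster construction as in Section \ref{sec:median-model}, so that the remaining edges are long compared to $M$. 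A consistent tuple of points in the product of these trees defines the vertices, and Sageev's construction applied to the walls (midpoints of edges) produces $\Box_A$. Hyperplanes are labelled by the domain $U$ whose tree contains the wall. Two hyperplanes cross exactly when their labels are orthogonal, because transverse and nested pairs are forced to be consistent by the consistency axioms. Pairwise orthogonal sets have bounded cardinality, so this gives $\dimension\Box_A\leq\hhscomp$.

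\textbf{Step 2: the map $f$.} Define $f$ by realisation: a consistent tuple gives a uniformly consistent tuple in $\prod\mathcal CU$. For $U\notin\mathfrak U$, set the coordinate to $\pi_U(x_1)$; this coordinate has diameter below $M$ on $A$. Theorem \ref{thm:hhs_realisation} then gives a point of $X$, with constant $r_0\cdot(\text{consistency constant})$. This step is where the threshold $M_1=100M_0Ck$ enters. The quasi-isometric embedding bound comes from comparing the $\ell^1$ metric on $\Box_A$ with the distance formula (Theorem \ref{thm:distance-formula}) using threshold $M$. The number of collapsed clusters per tree is controlled by passing-up (\cite[Prop. 4.3]{DurhamCubulating23}, as in Claim \ref{claim:bounded-V-cardinality}), and those constants depend only on $M$, $k$ and the HHS parameters. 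The hull estimate, quasimedianity, and the hierarchy-path conclusion are coordinatewise. Median geodesics in trees map to unparametrised quasigeodesics, and Proposition \ref{prop:hierarchy-path-char} converts quasimedian quasigeodesics into $D$--hierarchy paths.

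\textbf{Main obstacle.} The delicate point is the lower bound for the quasi-isometric embedding, which must be uniform. Collapsing clusters in $\hull_U(A)$ loses length that has to be recovered from the domains nested in $U$. I would handle this exactly as in the quasi-isometry proposition of Section \ref{sec:median-model}: bound each collapsed length by the sum of contributions from $\nest$--minimal domains nested in $U$. Then use the uniform bound on the number of domains properly containing a given one (\cite[Lem. 2.15]{DurhamMinskySisto:stable}) to control the multiplicity with which each contribution is counted.
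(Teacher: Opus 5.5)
Your plan matches the paper's. The proposition is \cite[Thm. 2.1]{BehrstockQuasiflats21}, restated as \cite[Prop. 18.1]{Casals-RuizReal24}, and the proof is an audit of that argument showing that $M_0$ depends only on the HHS parameters, $C$ on these and $k$, and $Q$ on these together with $k$ and $M$; the hierarchy-path clause then follows, as you say, from $f$ being a quasimedian quasi-isometric embedding via Proposition \ref{prop:hierarchy-path-char}.

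Your sketch of the internal steps is looser than the argument being audited. The announced induction on $\hhscomp$ is never used. Also, turning a vertex of $\Box_A$ into a uniformly consistent tuple for realisation is not automatic: it is a separate lemma in \cite{BehrstockQuasiflats21}, the one producing the consistency constant $\eta$ that is fed into realisation to give $\xi=\eta r_0$. Since you defer to that proof for the details, neither point affects your conclusion.
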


\begin{proof}
	The only difference between this proposition and \cite[Prop. 18.1]{Casals-RuizReal24} is that the current proposition says explicitly what the constants $M_0$ and $Q$ depend on.  As noted in \cite[Sec. 18]{Casals-RuizReal24}, everything in the statement (except for the claim about the dependency of constants) is given by \cite[Thm. 2.1]{BehrstockQuasiflats21}, except for the final assertion about hierarchy paths.  Therefore, we will first explain why the proof of \cite[Thm. 2.1]{BehrstockQuasiflats21} gives effective constants, i.e., why $M_0$ can be chosen in terms of the HHS parameters, and $Q$ can be chosen in terms of $k$ and the HHS parameters.  Then we will verify the claim about hierarchy paths. \\
	
	\noindent\textbf{Constants in the proof from \cite{BehrstockQuasiflats21}.}  
    Now we trace through the proof of \cite[Thm. 2.1]{BehrstockQuasiflats21}, starting with the constructions in \cite[Sec. 2]{BehrstockQuasiflats21} before the theorem:
	\begin{enumerate}
		\item The finite trees $T_U$ are $(1,C)$--quasi-isometrically embedded in $\mathcal CU$, where $C\geq 1$ is a constant depending on $E$ and $k$.
		
		\item The constant $M$ in \cite{BehrstockQuasiflats21} is what we have here called $M_0C$.  In \cite{BehrstockQuasiflats21}, the set $\mathfrak U$ is chosen to consist of those $U\in\mathfrak S$ for which $\pi_U(A)$ has diameter at least $100M_0Ck$, in our notation.  So, once we have chosen $M_0$, the constant $M_1$ from our current statement is determined.  The constant $M_0$ is chosen by imposing various (finitely many) constraints as the proof proceeds, all in terms of the HHS parameters only; we will mention these as they arise.  The first is just that $M_0>E$, and hence $M>E$.
		
		\item The cardinality of $\mathfrak U$ is bounded in terms of $\diam_X(A)$ and the HHS parameters only (including $\hhscomp$), using the large link axiom, although what's needed is just that $\mathfrak U$ is finite.  (See the remark on \cite[p. 947]{BehrstockQuasiflats21}, or \cite[Lem. 13.5]{Casals-RuizReal24}.)
		
		\item The only constraints on constants arising in the construction of the walls in $H_\theta(A)$ (and hence the dual cube complex $\Box_A$, which is called $\mathcal Y$ in \cite{BehrstockQuasiflats21}) is that $M_0\geq E$.
		
		\item In Section 2.2 of \cite{BehrstockQuasiflats21}, there are three lemmas, Lemmas 2.3--2.5, whose hypotheses require that (in our notation) $M_0\geq 10E$, which is the most restrictive hypothesis in any of these lemmas.
		
		\item Lemma 2.6 introduces a constant $\tau=\tau(M,k)$.  The proof of this lemma makes it explicit that it is sufficient to choose $\tau=50Mk(k-2)$.
		
		\item Lemma 2.7 introduces a constant $\eta$; the statement says that this depends only on $M,k$, and the HHS $X$, but the dependency on $X$ is only via the HHS parameters; specifically, the proof shows that $\eta=100(Mk+C+2E)$ works.
		
		\item In the proof of Theorem 2.1 itself, the choice of $\eta$ determines another constant $\xi$, which depends only on $\eta$ and the HHS parameters; specifically, $\xi=\eta r_0$, where $r_0$ is the realisation theorem constant (Theorem \ref{thm:hhs_realisation}) discussed above.
		
		\item Lemma 2.8 does not influence the constants, and the constant $N$ from Lemma 2.9 depends only on an HHS parameter, the complexity, via Ramsey's theorem.
		
		\item Lemma 2.10 provides a constant $T$ depending on $M,k,\xi,\tau$ and the HHS $(X,\mathfrak S)$.  The proof of Lemma 2.10 again makes it clear that the dependence on $(X,\mathfrak S)$ is only via dependence on the HHS parameters (not on the specific choice of space $X$ etc.).  This uses \cite[Lem. 1.6]{BehrstockQuasiflats21}, the ``passing up large projections'' lemma, in which the output $N_0$ depends on the input and on the HHS parameters (in particular, the constants in the large link axiom and the complexity) only.
		
		\item Lemma 2.12 does not involve constants.  The proof of Lemma 2.11 makes clear that the constant implicit in the word ``coarsely'' in the statement depends on $E$ and $\theta$ and hence only on the HHS parameters.
		
		\item Lemma 2.13 provides a quasimedian constant.  One must read the proof to check that this depends only on the HHS parameters and $k$, but the key facts are that the constants $\tau$ and $\eta$ depend only on those things, and that the distance bounds involved in the construction of the coarse median map $\mu$ (see Section \ref{sec:coarse-median} above) also depend only on the HHS parameters.
		
		\item Finally, we examine the proof of Theorem 2.1 itself.  The constants listed just above appear in that proof, and are bounded in terms of the HHS parameters and $M,k$; hence the same is true of any quantity computed from those constants via some procedure that only depends on the HHS parameters. In particular, the constant $C_1'$ in the proof of Theorem 2.1 is obtained by evaluating the uniqueness function, and the hierarchical quasiconvexity function $h$, on uniform inputs, and is thus uniform.
		
		The constant $C_1''$ is a distance formula quasi-isometry constant, with uniform threshold (depending on $E$ and $M$), which is therefore uniform (see Theorem \ref{thm:distance-formula}).  The constant $C_1'''$ depends in a uniform way on the output of Lemma 2.6, i.e. on $\tau$, and is therefore uniform.  Finally, the constant $C_1''''$ is the uniform constant from Lemma 2.13.  
	\end{enumerate}
	This establishes that $M_0$ depends only on the HHS parameters, $C$ depends only on the HHS parameters and $k$, and $Q$ depends on the HHS parameters, $k$, and $M$.
	
	It just remains to prove the statement about hierarchy paths.  As noted in \cite[Rem. 18.2]{Casals-RuizReal24}, this follows from the fact that $f$ is a $(Q,Q)$--quasimedian quasi-isometric embedding together with Proposition \ref{prop:hierarchy-path-char}.  That $D$ depends only on $M,k$, and the HHS parameters follows from the latter proposition and the fact, established above, that $Q$ is uniform.
\end{proof}

Proposition \ref{prop:cubical_approximation} immediately yields the following (which is also proven in \cite[Sec. 7]{BehrstockHierarchically19} without using cubical approximation):

\begin{cor}\label{cor:coarse-median}
	Let $(X,\mathfrak S)$ be an HHS.  Let $\mu:X^3\to 2^X$ be the coarse map from Construction \ref{cons:coarse-median}.  Then $\mu$ is a coarse median in the sense of Definition \ref{defn:coarse-median}, and the coarse median parameters depend only on the HHS parameters.
\end{cor}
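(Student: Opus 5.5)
The plan is to combine the abstract equivariant statement of Corollary \ref{cor:coarse-median}'s context — more precisely, the cubical approximation theorem, Proposition \ref{prop:cubical_approximation} — with the definition of a coarse median space, Definition \ref{defn:coarse-median}. There are two axioms to check, and in each case I will also track that the constant depends only on the HHS parameters (and on $n$ for (C2)).

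\textbf{(C1).} I will use the characterisation of $\mu$ from Construction \ref{cons:coarse-median}. For each $U$, the projection $\pi_U(\mu(x,y,z))$ lies uniformly close to the coarse centre $m_U$ of the triangle in $\mathcal CU$ with vertices $\pi_U(x),\pi_U(y),\pi_U(z)$. In a $\delta$--hyperbolic space, the coarse centre is coarsely $1$--lipschitz in each vertex. Hence
\[
\dist_U(\mu(x,y,z),\mu(x',y',z'))\leq \dist_U(x,x')+\dist_U(y,y')+\dist_U(z,z')+c,
\]
with $c$ uniform. Since each $\pi_U$ is $(E,E)$--coarsely lipschitz, I need a bound in $X$ rather than just in each $\mathcal CU$, and I will get it from the distance formula, Theorem \ref{thm:distance-formula}, applied at a uniform threshold. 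Summing the bounds over $U$, the sum of thresholded terms for $\mu(x,y,z),\mu(x',y',z')$ is controlled by the thresholded sums for the three pairs, up to the usual threshold bookkeeping. Alternatively, one could reduce to the case where only one argument moves, and to moves of unit size using quasigeodesics, where the uniqueness function $\theta_u$ gives a uniform bound directly. This yields $\kappa(0)$ depending only on the HHS parameters.

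\textbf{(C2).} Given $A\subseteq X$ with $|A|\leq n$, I will apply Proposition \ref{prop:cubical_approximation} with $k=n$ and $M=M_1$. This gives a finite CAT(0) cube complex $\Box_A$, whose vertex set $M$ is a finite median algebra with median $\eta$, together with a map $f$ restricted to the vertices. For each $a=x_j$, I let $\bar f(a)=\hat x_{i_j}$, so that $\dist(a,f\bar f(a))\leq C$. Because $f$ is $Q$--quasimedian, $\dist(\mu(f(x),f(y),f(z)),f(\eta(x,y,z)))\leq Q$. Both $C$ and $Q$ depend only on $n$ and the HHS parameters, so I set $\kappa(n)=\max\{C,Q\}$.

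The main obstacle is making (C1) honest, since the distance formula has thresholds and additive errors. I expect the cleanest route is the unit-step reduction via $\theta_u$, with constants tracked as in Appendix \ref{app:constants}.
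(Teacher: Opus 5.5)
Your proposal is correct. For (C2) it is exactly the paper's argument: the paper's whole proof is the remark that Proposition \ref{prop:cubical_approximation} immediately yields the corollary. Concretely, the finite median algebra is the vertex set of $\Box_A$, $\bar f(x_j)=\hat x_{i_j}$, and $\kappa(n)$ is controlled by $C$ and $Q$. If $|A|<n$, just take the maximum of these constants over $k\leq n$.

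The only real difference is in (C1). The paper gives no separate argument and attributes everything to cubical approximation. One way to read that is to apply it to the six points $x,y,z,x',y',z'$, use that the median on $\Box_A$ is $1$--lipschitz in each variable, and use that $f$ is a quasimedian quasi-isometric embedding. You instead prove (C1) directly from Construction \ref{cons:coarse-median}:
\begin{itemize}
\item coarse centres of triangles in each $\mathcal CU$ are coarsely $1$--lipschitz in each vertex;
\item so a bounded move of one input moves $\mu$ a bounded amount in every $\mathcal CU$, hence a bounded amount in $X$ by the uniqueness axiom;
\item you then chain this along a discretised $(q,q)$--quasigeodesic.
\end{itemize}
This unit-step version is the cleaner of your two options and is self-contained. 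The thresholded distance-formula version also works, once the threshold on the $\mu$ side is taken large compared with the threshold on the input side.

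Your bounded-scale estimate is in fact needed on the cubical route as well. Comparing $\mu(x,y,z)$ with $\mu(f\hat x,f\hat y,f\hat z)$ when $\dist(x,f\hat x)\leq C$ is itself an instance of (C1) at bounded scale. So your uniqueness-axiom argument is exactly what makes ``immediately'' honest there too.
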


\subsection{Non-canonical cone-off}\label{appsubsec:cone-off}
The following proposition is almost exactly the same as Proposition~19.1 in \cite{Casals-RuizReal24} except for the clause about HHS parameters.

\begin{prop}[Equivariant cone-off, with parameters]\label{prop:cone-off-QI}
	Let $(X,\mathfrak S)$ be an HHS, let $A\leq \Isom(X)$ be a group of HHS automorphisms, and let $\mathfrak U\subseteq\mathfrak S$ be an $A$--invariant downward-closed set.  Let $(\oldcone X,\widehat{\dist})$ be the non-canonical cone-off from Definition \ref{defn:old-cone-off}. Then the pair $(\oldcone X,\mathfrak S-\mathfrak U)$ is a hierarchically hyperbolic space, whose HHS parameters are bounded above in terms of the HHS parameters of $(X,\mathfrak S)$ only.  Moreover, the $A$--action on $(X,\mathfrak S)$ induces an action of $A$ on $(\oldcone X,\mathfrak S-\mathfrak U)$ by HHS automorphisms for which the $A$--action on $(X,\widehat{\dist})$ is an action by $(C,C)$--quasi-isometries, where $C$ depends only on the HHS parameters of $(X,\mathfrak S)$.
\end{prop}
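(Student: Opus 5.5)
This proposition is a constant-tracking version of \cite[Prop. 19.1]{Casals-RuizReal24}. I will not reprove that result; I will extract from its proof that every constant is controlled by the HHS parameters of $(X,\mathfrak S)$. The proof there (following \cite[Sec. 2]{BehrstockHierarchically19} and \cite{BehrstockAsymptotic17}) keeps the index set $\mathfrak S-\mathfrak U$, the spaces $\mathcal CU$, and the maps $\pi_U$ and $\rho$ unchanged. So the axioms that mention only these data, namely projections, nesting, orthogonality, transversality/consistency, bounded geodesic image, partial realisation and large links, hold with literally the same constants. The only axioms to check are those involving the new metric $\widehat{\dist}$: the quasigeodesic constant, coarse lipschitzness of $\pi_U$, and uniqueness.

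First I show the projections are uniformly coarsely lipschitz for $\widehat{\dist}$. Since $\widehat{\dist}$ is a length metric induced by $D$, it suffices to bound $\dist_U(x,y)$ when $D(x,y)\le 1$ and $U\notin\mathfrak U$. Either $\dist(x,y)\le1$, or $x,y\in F_V\times\{e\}$ for some $V\in\mathfrak U$. In the second case, for $U\notin\mathfrak U$ we cannot have $U\nest V$, because $\mathfrak U$ is downward-closed. So $U\orth V$, giving $\dist_U(x,y)\le 2E$ from the $E_V$ coordinate, or $V\propnest U$ or $V\transverse U$, giving $\pi_U(x),\pi_U(y)$ within $E$ of $\rho^V_U$. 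In every case the bound depends only on $E$.

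The quasigeodesic constant comes directly from the construction: $\widehat{\dist}$ is a length metric built from a $q$--quasigeodesic metric with unit shortcuts.

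The main work is the uniqueness function. Suppose $\dist_U(x,y)\le r$ for all $U\in\mathfrak S-\mathfrak U$. Following \cite{Casals-RuizReal24}, I will use a hierarchy path in $X$ and the distance formula, Theorem \ref{thm:distance-formula}, whose threshold and constants are uniform. The path is split into boundedly many subpaths, the number depending on $r$ and the complexity. Each subpath either has all $\mathfrak S-\mathfrak U$ projections small, or lies uniformly close to a single $F_V\times\{e\}$ with $V\in\mathfrak U$, where it costs $\widehat{\dist}$--length at most $1$ plus uniform error. The counting bound uses passing-up (\cite[Lem. 1.6]{BehrstockQuasiflats21}) and Ramsey-type bounds in terms of $\hhscomp$, so the resulting $\widehat\theta_u(r)$ depends only on $r$ and the HHS parameters. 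I expect this step to be the main obstacle: each invoked lemma must be checked to output constants depending only on $E$, $\hhscomp$ and $\theta_u$, in the same way as for the cubical approximation above.

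Finally, the $A$--action. $A$ preserves $\mathfrak U$ and acts on $\mathfrak S-\mathfrak U$ compatibly with the projections, so the HHS automorphism axioms of Definition \ref{defn:HHS-automorphism} carry over verbatim. For $a\in A$, the map $a:(X,\widehat{\dist})\to(X,\widehat{\dist})$ need not be isometric, because the choice of $F_V\times\{e\}$ is not equivariant. However, $aF_V\times\{e\}$ and $F_{aV}\times\{ae\}$ are uniformly Hausdorff close by the realisation theorem (Theorem \ref{thm:hhs_realisation}) and uniqueness, with constant $\theta_u(r_0E)$. So $a$ sends $D$--unit pairs to pairs at $\widehat{\dist}$--distance bounded uniformly. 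Hence $a$ is $(C,C)$--coarsely lipschitz, and the same holds for $a^{-1}$, giving uniform $(C,C)$--quasi-isometries.
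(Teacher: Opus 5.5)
Your handling of the metric-free axioms, coarse lipschitzness of the $\pi_U$, and the quasigeodesic constant agrees with what the paper extracts from \cite{BehrstockAsymptotic17}.

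Your treatment of the $A$--action takes a genuinely different route, and it is fine. The paper obtains $C$ from the distance formula for $(\oldcone X,\mathfrak S-\mathfrak U)$, so it needs the new uniqueness function to be uniform first. You instead show directly that each $a\in A$ carries a coned-off copy $F_V\times\{e\}$ uniformly Hausdorff-close to a coned-off copy of $F_{aV}$. Hence $a^{\pm1}$ are uniformly coarsely lipschitz for $\widehat{\dist}$, and this does not use uniqueness at all.

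The gap is the uniqueness function, which the paper singles out as the one substantive step; your sketch does not establish it.

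\textbf{The first branch of your dichotomy is vacuous.} Every subpath already has all $\mathfrak S-\mathfrak U$ projections bounded by roughly $r$, since that is the hypothesis, so this gives no bound on $\widehat{\dist}$. You need pieces whose projections to \emph{all} of $\mathfrak S$, including the domains in $\mathfrak U$, are bounded, so that $\theta_u$ controls their $\dist$--length.

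\textbf{An arbitrary hierarchy path cannot be cut as you claim.} Suppose $V_1\orth V_2$ both lie in $\mathfrak U$ and have large projections. A hierarchy path may staircase through the product region, alternating progress in $V_1$ and $V_2$. It then needs unboundedly many pieces, each near a single $F_V\times\{e\}$. So the path, or a chain of points, has to be built. One way:
\begin{enumerate}
\item bound the number of $\nest$--maximal $V\in\mathfrak U$ with $\dist_V(x,y)>M$, via passing-up with output threshold exceeding $\max\{r,M\}$;
\item order the transverse ones among these;
\item move between $\gate_{P_V}(x)$ and $\gate_{P_V}(y)$ first inside one copy $F_V\times\{e\}$, then in the $E_V$ factor;
\item treat the $E_V$ factor, which can contain further such domains, by induction on complexity.
\end{enumerate}
That construction is the whole content of the step, and it is missing from your proposal.

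\textbf{This is not the argument of \cite{Casals-RuizReal24}.} The proof the paper traces applies cubical approximation (Proposition \ref{prop:cubical_approximation} with $k=2$) to $\{x,y\}$. It uses the constant $N(M)$ from \cite[Prop.~13.1]{Casals-RuizReal24} and an induction on complexity yielding $\nu^1$. It concludes $\widehat{\dist}(x,y)\le N(\nu+\nu^1)$, with $\nu,\nu^1$ depending only on the uniqueness input and the HHS parameters. You should either trace those inputs, as the paper does, or carry out the construction above in full.
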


\begin{proof}
	This is \cite[Prop. 19.1]{Casals-RuizReal24} (in turn a variant of \cite[Prop. 2.4]{BehrstockAsymptotic17}) except we need to justify the statements about the HHS parameters of $(\oldcone X,\mathfrak S-\mathfrak U)$ and $C$.  As noted in the proof of \cite[Prop. 19.1]{Casals-RuizReal24}, the $A$--action is by quasi-isometries, by the distance formula for the HHS $(\oldcone X,\mathfrak S-\mathfrak U)$.  Hence $C$ depends only on the HHS parameters of $(\oldcone X,\mathfrak S-\mathfrak U)$, so it suffices to prove the latter just depend on the HHS parameters for $(X,\mathfrak S)$.
	
	This follows by examining the proof that $(\oldcone X,\mathfrak S-\mathfrak U)$ is in HHS.  In \cite[Sec. 2]{BehrstockAsymptotic17}, the proof of Proposition 2.4 shows that all of the ``new'' HHS parameters except for the uniqueness function are bounded in terms of the ``old'' ones by a straightforward observation.  For example, the underlying set of $\oldcone X$ is just $X$, and for $U\in\mathfrak S-\mathfrak U$, the projection $X\to \mathcal CU$ is the same coarse map $\pi_U$ as in the original HHS structure.  So the consistency, bounded geodesic image, large links, and partial realisation axioms, which do not mention the metric on $X$, hold unchanged for the new HHS structure.  The same is true of the axioms that just concern the relations $\nest,\orth,\transverse$ on $\mathfrak S$ (such as finite complexity).  The first place where the difference between $\dist$ and $\widehat{\dist}$ matters is in the proof that $\pi_U:\oldcone X\to\mathcal CU$ is coarsely lipschitz, but the constant is computed in \cite{BehrstockAsymptotic17} in terms of the HHS parameters of $(X,\mathfrak S)$.  
	
	The remaining, and most complicated, thing to check is that the uniqueness function for $(\oldcone X,\mathfrak S-\mathfrak U)$ is bounded above in terms of the HHS parameters for $(X,\mathfrak S)$.  Here we follow the proof in \cite[Proposition~19.1]{Casals-RuizReal24}, which uses Proposition \ref{prop:cubical_approximation} and yields explicit estimates.	Tracing through that proof, the inputs are:
	\begin{itemize}
		\item The uniqueness function $\theta_u$ for $(X,\mathfrak S)$.
		\item The constant $\theta$, which depends only on $(X,\mathfrak S)$, given by \cite[Prop. 6.15]{BehrstockHierarchically19}.
		\item The HHS constant $E$.
		\item Constants $M_0,M_1,M$ that depend only on the HHS parameters of $(X,\mathfrak S)$ and are provided by Proposition \ref{prop:cubical_approximation}, with $k=2$.  
		\item A constant $N=N(M)$ which depends only on $M$ and the HHS parameters, via \cite[Proposition~13.1]{Casals-RuizReal24}, essentially a consequence of the large link and finite complexity axioms for $(X,\mathfrak S)$.
	\end{itemize}
	In the remainder of the proof, we are given an arbitrary $\kappa\geq 0$ and points $x,y\in\oldcone X$ with $\dist_U(x,y)\leq \kappa$ for all $U\in\mathfrak S-\mathfrak U$.  The goal is to bound $\widehat{\dist}(x,y)$ in terms of $\kappa$ and the HHS parameters of $(X,\mathfrak S)$.  First, Claim 12 in the proof of \cite[Proposition~19.1]{Casals-RuizReal24} produces a constant $\nu$ that depends only on $\kappa$ and the HHS parameters of $(X,\mathfrak S)$.  Next, the constant $N$ in Claim 13 is the one mentioned above.  In the final part of the proof, induction on the complexity of the original HHS yields a constant $\nu^1$, explicitly stated to depend only on the original HHS parameters and $\kappa$, and the estimate $\widehat{\dist}(x,y)\leq N(\nu+\nu^1)$.  This shows that the new uniqueness function is bounded in terms of the old HHS parameters, as required.
\end{proof}

\subsection{Coarse injectivity}\label{subsec:coarsely-injective-appendix}
We recall the following theorem from \cite[Theorem~A]{HaettelCoarse23}:

\begin{thm}\label{thm:coarse-injective}
	Let $(X, \mathfrak{S})$ be an HHS. There is a metric $\sigma$ on $X$ and constants $M_1, M_2$ such that $(X,\sigma)$ is $M_1$--coarsely injective and $M_2$--quasi-isometric to $(X,d)$, and any subgroup $H\leq \Isom(X,\dist)$ acting by HHS automorphisms on $(X,\mathfrak S)$ acts by isometries on $(X,\sigma)$. Moreover the constants $M_1, M_2$ depend only on the HHS parameters.
\end{thm}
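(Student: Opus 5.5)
The plan is to obtain the statement from the proof of \cite[Theorem~A]{HaettelCoarse23}, in the same way as the other results in this appendix. I would re-run that proof in outline and check two things: that the metric $\sigma$ is built only from the data of Definition \ref{defn:HHS-automorphism}, which gives invariance under HHS automorphisms; and that every constant produced depends only on $E$, $\hhscomp$ and $\theta_u$, which gives uniformity of $M_1,M_2$.

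\textbf{Step 1: the metric.} First, using Remark \ref{rem:HHS-are-graphs}, I would replace $X$ by a graph without changing the projections. Then I would define $\sigma$ as in \cite{HaettelCoarse23}. The idea is an $\ell^\infty$--type distance assembled from thresholded quantities $\ignore{\dist_U(x,y)}{T}$ over $U\in\mathfrak S$, with the threshold $T$ chosen uniformly. The sum over relevant domains is organised along hierarchy paths so that it becomes a max over ``orthogonal directions'' rather than an $\ell^1$ sum; the resulting distance is then made into a genuine metric by taking a path (length) metric.

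Each ingredient is a function only of the numbers $\dist_U(x,y)$, the relations $\nest,\orth,\transverse$, and the maps $\rho^U_V$. Definition \ref{defn:HHS-automorphism} preserves all of these when $x,y,U,V$ are translated by an automorphism, and the action on $(X,\dist)$ is assumed isometric. Hence any $H\le\Isom(X,\dist)$ acting by HHS automorphisms acts on $(X,\sigma)$ by isometries.

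\textbf{Step 2: quasi-isometry.} The quasi-isometry $(X,\dist)\to(X,\sigma)$ would follow from the strong distance formula, Theorem \ref{thm:distance-formula}. The number of pairwise orthogonal domains is at most $\hhscomp$, so $\ell^\infty$ and $\ell^1$ combinations of these terms differ by a factor depending only on $\hhscomp$. Theorem \ref{thm:distance-formula} is already stated with uniform threshold and uniform constant, so this gives an $M_2$ depending only on the HHS parameters.

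\textbf{Step 3: coarse injectivity (main obstacle).} I would use the characterisation of coarse injectivity via the coarse Helly property for balls. It suffices to check the property on finite families of balls, and by a standard reduction on families whose centres form a set of bounded cardinality $k$. For such centres $A=\{x_1,\dots,x_k\}$, I would apply Proposition \ref{prop:cubical_approximation} to obtain $f\colon\Box_A\to H_\theta(A)$. One then equips $\Box_A$ with its $\ell^\infty$ (orthoscheme) metric, which is injective. The aim is to show that $f$ is coarsely compatible with $\sigma$, with additive error independent of the radii; the Helly property then pulls back from $\Box_A$ to $X$.

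The delicate point is that the error must be additive and independent of the radii. In \cite{HaettelCoarse23} this comes from the construction of $\sigma$, and there the constants are given in terms of the cubical approximation constants and the distance formula. Since Proposition \ref{prop:cubical_approximation} makes those constants depend only on $k$, $M$ and the HHS parameters, and $k$ and $M$ are fixed uniformly in the reduction, the resulting $M_1$ is uniform.

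Finally, I would record where the remaining inputs enter: the coarse median constants of Corollary \ref{cor:coarse-median} (for quasimedian maps and hulls), $\theta$ (for the hulls), and $r_0$ from Theorem \ref{thm:hhs_realisation}. Each of these is already known to be uniform, which completes the constant-chasing.
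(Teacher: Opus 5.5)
\paragraph{The gap is in Step 3.} That step is where the content of the theorem lies. To pull the Helly property back from $(\Box_A,d_\infty)$, you need $f$ to distort $\sigma$--distances by a bounded \emph{additive} amount. Any multiplicative error breaks the argument.
\begin{itemize}
\item Suppose $f$ has multiplicative error $Q>1$. From $\sigma(x_i,x_j)\le r_i+r_j$ you only get pairwise intersection of $d_\infty$--balls of radii roughly $Qr_i+Q$ around the $\hat x_i$.
\item Pushing a common point $p$ back to $X$ then gives $\sigma(f(p),x_i)\lesssim Q^2r_i$, not $r_i+M_1$.
\item Proposition \ref{prop:cubical_approximation} only provides a $(Q,Q)$--quasi-isometric embedding.
\item There is no reason to expect a single metric on $X$ to be roughly isometric to all the models $\Box_A$. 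Each model depends on the whole set $A$: its hyperplane labels form the set $\mathfrak U$, which grows as centres are added. So $d_\infty(\hat x_i,\hat x_j)$ need not be determined by $x_i,x_j$ up to bounded error, while $\sigma(x_i,x_j)$ is.
\end{itemize}
Saying that in \cite{HaettelCoarse23} ``this comes from the construction of $\sigma$'' defers the key step without giving a mechanism. Separately, the ``standard reduction'' to families with at most $k$ centres does not exist. Any reduction from arbitrary families to boundedly many centres is itself a coarse Helly theorem, which is what you need to prove. Moreover $X$ need not be proper, so even passing to finite families needs justification.

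\paragraph{The missing idea.} The paper, following \cite{HaettelCoarse23}, never compares $\sigma$ with cube complexes at the level of radii. It argues in four steps.
\begin{itemize}
\item $(X,\sigma)$ is $C'_\sigma$--weakly roughly geodesic. Hence $\sigma(x_i,x_j)\le r_i+r_j$ forces the balls $B_\sigma(x_i,r_i+2C'_\sigma)$ to intersect pairwise.
\item $\sigma$--balls are uniformly median quasiconvex \cite[Lem.~2.23]{HaettelCoarse23}, hence uniformly hierarchically quasiconvex in $(X,\dist)$ by \cite[Prop.~5.11]{RussellConvexity23}.
\item Apply the coarse Helly theorem for \emph{arbitrary} collections of bounded hierarchically quasiconvex sets \cite[Thm.~3.5]{HaettelCoarse23}, with closeness parameter $0$.
\item The quasi-isometry constant $M_2$ is then applied only to the bounded error $R(0)$, never to the radii. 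This gives $M_1=M_2(R(0)+C'_\sigma)+M_2$.
\end{itemize}

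\paragraph{Steps 1 and 2.} These also do not match what is being cited. The constants the paper tracks for $\sigma$ are cubical-approximation constants for pairs, the coarse-median identities from \cite{NibloFour19}, and the constants of \cite{BowditchQuasiflats19} entering \cite[Lem.~2.22]{HaettelCoarse23}. These reflect that $\sigma$ is built from the coarse median, not from thresholded $\dist_U$. Likewise, $M_2$ comes from cubical approximation of intervals, not from the distance formula. Your factor-$\hhscomp$ comparison of $\ell^\infty$ and $\ell^1$ also conflates two things: the bound on pairwise orthogonal domains, and the number of relevant domains, which is unbounded.
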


\begin{proof}
	Using Lemma \ref{lem:assume-graph}, we can assume that $X$ is a graph with the usual combinatorial metric.  The theorem is now exactly the content of \cite[Theorem~A]{HaettelCoarse23}; we just need to go through their proof and check that the constants depend only on the HHS parameters.
	
	The definition of $\sigma$ requires an arbitrary choice of positive constant; we choose 1. 
	\begin{enumerate}
		\item By Proposition \ref{prop:cubical_approximation}, there exists $\kappa$, depending only on the HHS parameters, such that: for any pair of points $x,y \in X$, there exists a finite CAT(0) cube complex $Q$ with dimension at most $\hhscomp$ and a $\kappa$--quasimedian $(\kappa, \kappa)$--quasi-isometry $\lambda:Q \rightarrow H_\theta(x,y)$, with $\theta$ as in Lemma \ref{lem: hulls are quasiconvex}. There exists $\kappa'\geq 1$, depending only on $\kappa$ and $h_\theta(0)$, such that $\lambda$ has a quasi-inverse which is a $(\kappa',\kappa')$--quasi-isometry and is $\kappa'$--quasi-median.
		\item By \cite[Lemmas~2.18 and 2.19]{NibloFour19}, there exists $H_5 \geq 0$, which depends only the coarse median parameters of $X$, and hence, by Corollary \ref{cor:coarse-median}, only depends on the HHS parameters, such that:
		\begin{align*}
			&d(\mu(a,b,\mu(x,y,z)), \mu(\mu(a,b,x), \mu(a,b,y), z)) \leq  H_5, \\
			&d(\mu(a,b,\mu(x,y,z)), \mu(a,b,x), \mu(a,b,y), \mu(a,b,z)) \leq H_5
		\end{align*}
		for all $a,b,x,y,z \in X$.
		\item Let $L \coloneqq \max\{1 + h(0), 2 + H_5\}$. Then, according to the proof of \cite[Proposition~2.16]{HaettelCoarse23}, we can take $M_2 = \hhscomp {\kappa'}^2 L$.
		\item By \cite[Lemma~2.22]{HaettelCoarse23}, there exists $\varepsilon \geq 0$ such that, for any $x,y,z \in X$ such that $x = \mu(x',y,z)$ for some $x' \in X$, we have $d(x,\mu(x,y,z)) \leq \varepsilon$. 
		One can choose $\varepsilon \coloneqq r_0' + r_0 + h(0)(r_0' + 1)$, where $r_0, r_0'$ are constants provided by \cite[Lemma~8.1(2)]{BowditchQuasiflats19} which only depend on the coarse median parameters of $X$, which in turn only depend on the HHS parameters, again by Corollary \ref{cor:coarse-median}.
		
		\item Let 
		\[
		C_\sigma' \coloneqq 64 (1 + \kappa) \hhscomp + 4 M_2 (1 + \kappa) + 4 \kappa + 4 + 2(4h(0) + 4).
		\]
		Then $(X,\sigma)$ is $C_\sigma'$--weakly roughly geodesic \cite[Prop.~2.21]{HaettelCoarse23}.
		\item For $M_3 \coloneqq \varepsilon + 2 + 3C_\sigma'$, balls in $(X,\sigma)$ are $M_3$--median quasiconvex \cite[Lem.~2.23]{HaettelCoarse23}.
		\item There exists $k:[0, \infty) \rightarrow [0,\infty)$, depending only on $M_3$ and the HHS parameters, such that balls in $(X,\sigma)$ are $k$--hierarchically quasiconvex \cite[Prop.~5.11]{RussellConvexity23}.
		\item For all $r \geq 0$, there exists $R(r) \geq 0$, depending only on $r$ and $E$, such that the following holds. If $\mathcal{Q}$ is a collection of bounded $k$--hierarchically quasiconvex subspaces of $X$ which are $r$--close (with respect to $d$), then there is a point $x \in X$ with $d(x,Q) \leq R(r)$ for each $Q \in \mathcal{Q}$ \cite[Theorem~3.5]{HaettelCoarse23}. 
		\item Following the proof of \cite[Corollary~3.6]{HaettelCoarse23}, let us check that $(X,\sigma)$ is coarsely injective with constants only depending on the HHS parameters. Let $\{B_\sigma(x_i,r_i) : i \in I\}$ be a family of balls in $(X, \sigma)$ such that $\sigma(x_i, x_j) \leq r_i + r_j$ for all $i,j \in I$. Since $(X,\sigma)$ is weakly roughly geodesic with constant $C_\sigma'$, the balls $\{ B_\sigma(x_i, r_i + 2C_\sigma') : i \in I\}$ intersect pairwise. For each $i \in I$, let $B_i$ be the image of $B_\sigma(x_i, r_i + 2C_\sigma')$ under the identity $M_2$--quasi-isometry $(X,\sigma) \rightarrow (X,d)$. Each $B_i$ is $M_3$--coarsely median convex and therefore $k$--hierarchically quasiconvex. Thus there is a point $x \in X$ such that $d(x,B_i) \leq R(0)$ for each $i \in I$. So $(X,\sigma)$ is coarsely injective with constant $M_1 \coloneqq M_2 (R(0) + C_\sigma') + M_2$. \qedhere
	\end{enumerate}
\end{proof}

\bibliographystyle{alpha}
\hypertarget{References}{\bibliography{Biblio}}

\end{document}